\theoremstyle{plain}
\newtheorem{thm}{Theorem}[section]
\newtheorem*{thm*}{Theorem}
\newtheorem{prop}[thm]{Proposition}
\newtheorem{lemma}[thm]{Lemma}
\newtheorem{cor}[thm]{Corollary}
\theoremstyle{definition}
\newtheorem{dfn}[thm]{Definition}
\theoremstyle{remark} 
\numberwithin{equation}{section}
\newcommand{\alpheqn}[1][\relax]{
     \refstepcounter{equation}
     \if#1\relax \relax
       \else \label{#1}
     \fi  
     \setcounter{saveeqn}{\value{equation}}%
    \setcounter{equation}{0}%
    \renewcommand{\theequation}{\thealphequation}}
\newcommand{\reseteqn}{\setcounter{equation}{\value{saveeqn}}%
     \renewcommand{\theequation}{\thearabicequation}}
\providecommand{\mathscr}{\mathcal} 
\newcommand{\vertiii}[1]{{\left\vert\kern-0.25ex\left\vert\kern-0.25ex\left\vert #1 
    \right\vert\kern-0.25ex\right\vert\kern-0.25ex\right\vert}}
\newcommand{\Bvert}[1]{{\Big\vert\kern-0.25ex\Big\vert\kern-0.25ex\Big\vert #1 
    \Big\vert\kern-0.25ex\Big\vert\kern-0.25ex\Big\vert}}
\newcommand{\bvert}[1]{{\big\vert\kern-0.25ex\big\vert\kern-0.25ex\big\vert #1 
    \big\vert\kern-0.25ex\big\vert\kern-0.25ex\big\vert}}
\newcommand{\nvert}[1]{{\vert\kern-0.25ex\vert\kern-0.25ex\vert #1 
    \vert\kern-0.25ex\vert\kern-0.25ex\vert}}
\renewcommand{\leq}{\leqslant}
\renewcommand{\geq}{\geqslant}
\newcommand{\cd}{\cdot}
\newcommand{\ot}{\otimes}
\newcommand{\hot}{\widehat \otimes}
\newcommand{\op}{\oplus}
\newcommand{\ci}{\circ}
\newcommand{\ti}{\times}
\newcommand{\zz}{\mathbb{Z}}
\newcommand{\al}{\alpha}
\newcommand{\ga}{\gamma}
\newcommand{\de}{\delta}
\newcommand{\De}{\Delta}
\newcommand{\ep}{\varepsilon}
\newcommand{\io}{\iota}
\newcommand{\ka}{\kappa}
\newcommand{\la}{\lambda}
\newcommand{\si}{\sigma}
\newcommand{\ov}{\overline}
\newcommand{\C}[1]{\mathcal{#1}}
\newcommand{\T}[1]{\textup{#1}}
\newcommand{\B}[1]{\mathbb{#1}}
\newcommand{\fork}[2]{\left\{ \begin{array}{#1} #2 \end{array} \right.}
\newcommand{\ma}[2]{\left(\begin{array}{#1} #2 \end{array} \right)}
\newcommand{\su}{\subseteq}
\newcommand{\q}{\qquad}
\newcommand{\qq}{\qquad \qquad}
\newcommand{\binn}[1]{\big\langle #1 \big\rangle}
\newcommand{\sem}{\setminus}
\begin{document}
\title[Spectral localizers in $KK$-theory]{Spectral localizers in $KK$-theory}


\author{Jens Kaad}

\address{Department of Mathematics and Computer Science,
The University of Southern Denmark,
Campusvej 55, DK-5230 Odense M,
Denmark}

\email{kaad@imada.sdu.dk}


\keywords{$KK$-theory, Spectral localizer, Index theory, Unbounded Kasparov modules, Relative $K$-theory}
\subjclass[2020]{19K35; 46L80, 19K56, 58B34, 46L08}
%
%
%

\begin{abstract}
We study the index homomorphism of even $K$-groups arising from a class in even $KK$-theory via the Kasparov product. Due to the seminal work of Baaj and Julg, under mild conditions on the $C^*$-algebras in question such a class in $KK$-theory can always be represented by an unbounded Kasparov module. We then describe the corresponding index homomorphism of even $K$-groups in terms of spectral localizers. This means that our explicit formula for the index homomorphism does not depend on the full spectrum of the abstract Dirac operator $D$, but rather on the intersection between this spectrum and a compact interval. The size of this compact interval does however reflect the interplay between the $K$-theoretic input and the abstract Dirac operator. Since the spectral projections for $D$ are not available in the general context of Hilbert $C^*$-modules we instead rely on certain continuous compactly supported functions applied to $D$ to construct the spectral localizer. In the special case where even $KK$-theory coincides with even $K$-homology, our work recovers the pioneering work of Loring and Schulz-Baldes on the index pairing. 
\end{abstract}

\maketitle
\tableofcontents

\section{Introduction}
The spectral localizer approach to the index pairing was pioneered by Terry Loring and Hermann Schulz-Baldes in the papers \cite{LoSc:FVC,LoSc:SLE} and their work was soon followed by a range of applications to physics, see the non-exhaustive list \cite{FPL:AWT,CeLo:LIT,CeLo:CPT,FrGr:TZM}. One of the key points is that the spectral localizer can be identified with an invertible hermitian matrix over $\B C$ and half the signature of this matrix agrees with the output of the index pairing. In practice, this allows for an efficient numerical implementation of the index problem. At a more theoretical level spectral localizers also feature as motivation for the recent developments on $K$-theory for operator systems \cite{Sui:GKO,Sui:HKO} led by Walter van Suijlekom.

The index pairing comes in two different versions referred to as even and odd index pairings. The spectral localizer approach to the index pairing is carefully described in both cases in the recent book \cite{DSW:SF} and extensions to the semifinite setting is given in \cite{ScSt:SLS}. Both of these sources apply spectral flow as an integral part of their argumentation. In this paper we focus on the even case and we therefore proceed by explaining the spectral localizer approach in this setting as it appears in \cite{LoSc:SLE} and \cite[Chapter 10]{DSW:SF}. Notice that the conditions stated here below have been further weakened in the recent paper \cite{CeSc:DLT}. 

Consider an invertible selfadjoint bounded operator $H$ acting on a separable Hilbert space $G$ together with an invertible closed unbounded operator $D_0 : \T{Dom}(D_0) \to G$ satisfying the following conditions:
\begin{enumerate}
\item The selfadjoint unbounded operator $D := \ma{cc}{0 & D_0^* \\ D_0 & 0}$ has compact resolvent;
\item $H \op H$ preserves the domain of $D$ and the commutator $[D, H \op H] : \T{Dom}(D) \to G \op G$ extends to a bounded operator on $G \op G$.
\end{enumerate}
Define the projection $Q := \frac{1 + H |H|^{-1}}{2}$ and record that $Q D_0 Q : Q \T{Dom}(D_0) \to Q G$ is an unbounded Fredholm operator and hence gives rise to the integer valued index
\[
\T{Index}(Q D_0 Q) := \T{Dim}_{\B C}\big( \T{Ker}( Q D_0 Q) \big) - \T{Dim}_{\B C}\big( \T{Coker}( Q D_0 Q) \big) .
\]
One of the main results in \cite[Chapter 10]{DSW:SF}, see also \cite{LoSc:SLE}, then states that the above index can be computed by means of the spectral localizers which we now review. For every $\rho > 0$, let $P_\rho := 1_{[-\rho,\rho]}(D)$ denote the spectral projection associated with $D$ and the closed interval $[-\rho,\rho]$. For every $\ka > 0$, define the spectral localizer
\[
L_{\ka,\rho} := P_\rho \ma{cc}{H & \ka D_0^* \\ \ka D_0 & -H} P_\rho : P_\rho (G \op G) \to P_\rho(G \op G) 
\]
and notice that the Hilbert space $P_\rho (G \op G)$ is in fact finite dimensional (since $D$ has compact resolvent). By \cite[Theorem 10.3.1]{DSW:SF}, for $\ka$ small enough and $\rho$ large enough, it holds that $L_{\ka,\rho}$ is invertible (and selfadjoint) and we have the identity
\begin{equation}\label{eq:signintro}
\frac{1}{2} \T{sign}(L_{\ka,\rho}) = \T{Index}(Q D_0 Q) 
\end{equation}
between half the signature of the spectral localizer and the index. Remark that \cite[Theorem 10.3.1]{DSW:SF} also quantifies what it means for $\ka$ to be ``small enough'' and $\rho$ to be ``large enough'' in terms of the spectral gap $\| H^{-1} \|^{-1}_\infty$ and the operator norm of the closure of the commutator $[D,H \op H]$. 

Let us continue by explaining what the identity in \eqref{eq:signintro} has to do with the index pairing between $K$-theory and $K$-homology, see \cite{HiRo:AKH}. To this end, let $C^*(1,H)$ denote the unital $C^*$-algebra generated by $H$ and record that the projection $Q$ determines a class in the $K$-theory group $K_0\big( C^*(1,H) \big)$. Moreover, it holds that the phase $D|D|^{-1}$ determines a class in the $K$-homology group $K^0\big( C^*(1,H) \big)$. Applying the index pairing to these classes we arrive at the right hand side of \eqref{eq:signintro}, meaning that 
\[
\T{Index}(Q D_0 Q) = \binn{ [Q], [G \op G,\pi, D|D|^{-1}]},
\]
where $\pi$ is the diagonal representation of $C^*(1,H)$ on the $\zz/2\zz$-graded Hilbert space $G \op G$.

It turns out that the index pairing between $K$-theory and $K$-homology has a far reaching generalization which is implemented by the Kasparov product \cite{Kas:OFE,CoSk:LIF}. Instead of a class in $K$-homology one may consider a class in $KK$-theory and the Kasparov product induces index homomorphisms between $K$-groups. In this paper, we begin with a class in even $KK$-theory and describe the corresponding index homomorphism of even $K$-groups in terms of a spectral localizer construction. Our results are therefore related to, but different from, the results obtained in the recent paper \cite{LiMe:OSL}. An important difference is that Li and Mesland start out with a class in odd $KK$-theory and describe the index homomorphism from odd $K$-theory to even $K$-theory. Moreover, in the present text, we do not impose any conditions regarding spectral decompositions of our Hilbert $C^*$-modules, see \cite[Definition 4.1 and Remark 4.4]{LiMe:OSL}. Notice however that the main result of Li and Mesland regarding the $E$-theoretic index homomorphism works without assuming the existence of spectral decompositions, see \cite[Theorem 3.6]{LiMe:OSL}, 

As mentioned above, our starting point is a class $[X,\pi,F]$ which belongs to the even $KK$-group $KK_0(A,B)$ associated to a pair of $C^*$-algebras $(A,B)$ and our aim is to compute the associated index homomorphism
\[
\binn{\bullet, [X,\pi,F]} : K_0(A) \to K_0(B)
\]
by means of spectral localizers. In particular, our results should recover the identity in \eqref{eq:signintro} in the special case where $B = \B C$ (using the isomorphism $K_0(\B C) \cong \B Z$ which is implemented by the operator trace). Assuming that $A$ is separable and that $B$ has a countable approximate identity, a fundamental result due to Baaj and Julg says that the class $[X,\pi,F]$ can be represented by an even unbounded Kasparov module $(X,\pi,D)$ where the abstract Dirac operator $D$ has compact resolvent, see \cite[Proposition 2.3]{BaJu:TBK}. At the $K$-theoretic level we are interested in an even selfadjoint invertible bounded operator $H : X \to X$ which is assumed to be a Lipschitz operator with respect to $D$ in the following sense:
\begin{itemize}
\item $H$ preserves the domain of $D$ and the commutator $[D,H] : \T{Dom}(D) \to X$ extends to a bounded adjointable operator. 
\end{itemize}
We remark in passing that, given a class in the $K$-theory of $A$, a representative satisfying this kind of Lipschitz condition can always be found, see \cite[Th\'eor\`eme A.2.1]{Bos:OKA}.

Let us proceed by constructing the relevant spectral localizer. To ease the exposition somewhat, assume that the Hilbert $C^*$-module $X$ agrees with the standard module $\ell^2(\B N,B)$ and recall that the compact operators on $\ell^2(\B N,B)$ can be identified with the (minimal) tensor product $B \ot \B K$ between $B$ and the compact operators on the Hilbert space $\ell^2(\B N)$. In particular, we may identify the two $K$-groups $K_0\big(\B K(X)\big)$ and $K_0(B)$. Choose an even smooth function $\phi : \B R \to [0,1]$ satisfying the following two conditions:
\begin{enumerate}
\item The support of $\phi$ is contained in $[-1,1]$ and $\phi(x) = 1$ for all $x \in [-1/2,1/2]$;
\item $\phi(x_0) \geq \phi(x_1)$ for all $0 \leq x_0 \leq x_1$. 
\end{enumerate}
Applying the continuous functional calculus (see e.g. \cite[Theorem 10.9]{Lan:HCM}) we obtain an even bounded adjointable operator $\Phi_\rho := \phi(D/\rho)$ for every $\rho > 0$. For every $\ka > 0$, define the spectral localizer
\[
L_{\ka,\rho} := \Phi_\rho \ga H \Phi_\rho + \ka \cd \Phi_{2\rho} D \Phi_{2\rho} - (1 - \Phi_{2\rho}^4)^{1/2} \ga ,
\]
where $\ga : X \to X$ denotes the $\zz/2\zz$-grading operator on $X$. For $\ka$ small enough and $\rho$ large enough we show in this text that $L_{\ka,\rho}$ is invertible and hence, after applying excision in $K$-theory \cite[Theorem 1.5.9]{Ros:AKA}, we get that the pair $(-\ga, L_{\ka,\rho})$ defines a class in the even $K$-theory of the compact operators on $X$, $K_0\big(\B K(X)\big)$. The main result of this paper provides a spectral localizer description of the index homomorphism:

\begin{thm}\label{t:localizerintro}
  For $\ka$ small enough and $\rho$ large enough, upon identifying $K_0\big(\B K(X)\big)$ with $K_0(B)$, we have the identity
  \[
  \big[(-\ga, L_{\ka,\rho})\big] = \binn{ [ (1 + H|H|^{-1})/2], [X,\pi,F]}
  \]
  inside the $K$-group $K_0(B)$.
\end{thm}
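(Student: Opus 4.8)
\emph{The plan.} First, $(-\ga, L_{\ka,\rho})$ is a cycle for relative $K$-theory: $-\ga$ is an invertible selfadjoint operator, so is $L_{\ka,\rho}$ for admissible $(\ka,\rho)$, and $L_{\ka,\rho} + \ga \in \B K(X)$. Indeed each of $\Phi_\rho\ga H\Phi_\rho$, $\ka\cd\Phi_{2\rho}D\Phi_{2\rho}$ and $\ga\big(1-(1-\Phi_{2\rho}^4)^{1/2}\big)$ is a continuous, compactly supported function of $D$ multiplied by a bounded adjointable operator (compact support of the last since $1-(1-\phi^4)^{1/2}$ vanishes outside $[-1,1]$), and such functions of $D$ lie in $\B K(X)$ because $D$ has compact resolvent (\cite[Proposition 2.3]{BaJu:TBK}); in particular $L_{\ka,\rho} = -\ga$ modulo $\B K(X)$. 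The plan is to exhibit the right-hand side, with $Q := (1+H|H|^{-1})/2$, as a relative cycle $(-\ga, M)$ of the same shape, and then to join $L_{\ka,\rho}$ to $M$ by a norm-continuous path of invertible selfadjoint operators lying in $-\ga + \B K(X)$; under the isomorphism $K_0(\B K(X)) \cong K_0(B)$ coming from $X \cong \ell^2(\B N,B)$ and excision \cite[Theorem 1.5.9]{Ros:AKA}, this gives the claimed identity.

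\emph{Step 1: the index homomorphism as a relative cycle.} Let $F = D(1+D^2)^{-1/2}$ be the bounded transform of $D$. The Lipschitz condition on $H$ together with the compact resolvent of $D$ forces $[F,H]$ — written as an absolutely convergent integral of $(\la^2+D^2)^{-1}\overline{[D,H]}(\la^2+D^2)^{-1}$ against a scalar kernel, with $(\la^2+D^2)^{-1}$ compact — and hence $[F,Q]$, into $\B K(X)$. With respect to the $\zz/2\zz$-grading $X = X_0 \op X_1$ and $F = \ma{cc}{0 & F_- \\ F_+ & 0}$, the compression $QF_+Q$ is therefore invertible modulo $\B K$, and the even index homomorphism \cite{Kas:OFE} is computed by its $K_0(B)$-valued index; taking for $M$ an invertible selfadjoint compact perturbation of $\ma{cc}{0 & (QF_+Q)^* \\ QF_+Q & 0} - \ga(1-Q)$ presents that index as the relative cycle $(-\ga, M)$.

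\emph{Step 2: the homotopy.} Granted the quantitative invertibility of $L_{\ka,\rho}$ throughout the admissible parameter region, the straight-line homotopies in $\ka$ and in $\rho$ keep $L_{\ka,\rho}$ invertible — by the same estimates — and keep $L_{\ka,\rho}+\ga$ compact, so $[(-\ga,L_{\ka,\rho})]$ is locally constant there and $\ka$ may be taken small and $\rho$ large at will. Writing $L_{\ka,\rho} = -\ga + K_{\ka,\rho}$ with $K_{\ka,\rho} \in \B K(X)$, and in the spirit of Loring and Schulz-Baldes \cite{LoSc:SLE} (see also \cite[Chapter 10]{DSW:SF}), I would then rotate the $\Phi_\rho H \Phi_\rho$-summand of $K_{\ka,\rho}$ into its $\ka\cd\Phi_{2\rho}D\Phi_{2\rho}$-summand through a path $t\mapsto -\ga + K_t$ of invertible selfadjoint operators in $-\ga+\B K(X)$, the support and monotonicity properties of $\phi$ being what keeps the path invertible; the endpoint is purely off-diagonal, of the form $\ma{cc}{0 & W^* \\ W & 0} - \ga(1-Q)$ with $W$ a compact perturbation of $QF_+Q$, i.e.\ a cycle of the form $M$. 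Composing the homotopies gives $[(-\ga,L_{\ka,\rho})] = [(-\ga,M)] = \binn{[Q],[X,\pi,F]}$.

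\emph{The main obstacle.} The hard part is the quantitative invertibility of $L_{\ka,\rho}$ used in Step 2 (and already needed for the cycle to be defined). Since spectral projections of $D$ are not available over a Hilbert $C^*$-module, the clean scalar bound $L_{\ka,\rho}^2 \ge \|H^{-1}\|_\infty^{-2} - O(\ka) - O(1/\rho)$ of Loring and Schulz-Baldes must be reproved working only with the mollifiers $\Phi_\rho = \phi(D/\rho)$. In $L_{\ka,\rho}^2$ the off-diagonal cross terms cancel because $[\Phi_\rho,D] = [\Phi_{2\rho},D] = 0$ and $\ga D = -D\ga$; the commutator $[\Phi_\rho,H]$ is of size $O(1/\rho)\cd\|\overline{[D,H]}\|$ by smoothness of $\phi$; and, using the spectral gap $\|H^{-1}\|_\infty^{-1}$ together with the plateau, support and monotonicity conditions on $\phi$, one should reach a Pythagorean lower bound $L_{\ka,\rho}^2 \gtrsim \|H^{-1}\|_\infty^{-2}\Phi_\rho^4 + \ka^2 D^2 + (1-\Phi_{2\rho}^4)$, in which the summand $\ka^2D^2$ supplies invertibility precisely where $|D| \approx \rho$ and $\Phi_\rho^4$ has decayed — whence the coupling between the smallness of $\ka$ and the largeness of $\rho$ implicit in the admissible region. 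A secondary, more bookkeeping-type difficulty is to make the identifications in Steps 1 and 2 fully precise: that the Kasparov-product recipe really outputs the index of $QF_+Q$, and that for $\rho$ large the compact part of $X$ around the low-lying spectrum of $D$ is big enough to absorb $Q$ — which is why the requirement that $\rho$ be large must be quantified in terms of the $K$-theoretic input, and not only in terms of $\|\overline{[D,H]}\|$.
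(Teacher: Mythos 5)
Your framework is aligned with the paper's: relative $K$-theory via pairs $(-\gamma,\,\text{invertible})$, excision, the square estimate $L_{\ka,\rho}^2 \gtrsim g^2\Phi_\rho^4 + \ka^2D^2(\cdots) + (1-\Phi_{2\rho}^4)$ up to errors of order $\ka\|d(H)\|_\infty$ and $\|d(H)\|_\infty/\rho$, and local constancy of $[(-\ga,L_{\ka,\rho})]$ in $(\ka,\rho,\phi)$. Steps 1 and 2 as you describe them, however, go wrong in two ways, one a concrete error and one the central missing idea.

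\emph{The proposed relative cycle is not a cycle.} You take for $M$ a compact perturbation of $\ma{cc}{0 & (QF_+Q)^* \\ QF_+Q & 0} - \ga(1-Q)$. For $(-\ga,M)$ to lie in $D\big(\B L(X),\B K(X)\big)$ you need $M+\ga\in\B K(X)$, i.e.\ $\ga Q$ and $QF_+Q$ compact. Neither holds in general: $Q=(1+H|H|^{-1})/2$ is a bounded projection, not a compact one, and $F$ is a bounded transform, not a compact operator. The paper presents the index class in the invertible picture only after cutting down to a smaller module — the index class of $(H_+X,\,F_{H_+DH_+})$ is realized via the projection $Q_F$ of \eqref{eq:expproj}, which agrees with $\ga_-$ modulo $\B K(H_+X)$, and is then transported by an adjointable isometry (Definition \ref{d:index}, Theorem \ref{t:inviso}). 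Your $M$ on all of $X$ does not sit in $-\ga + \B K(X)$.

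\emph{The central trick is missing.} You propose, ``in the spirit of Loring and Schulz-Baldes,'' to rotate the $\Phi_\rho H\Phi_\rho$ summand into the $\ka\Phi_{2\rho}D\Phi_{2\rho}$ summand through a path of invertibles landing on $M$. This is precisely the route the paper does \emph{not} take: the introduction emphasizes that the DSW/LSB argument runs through spectral flow, which is unavailable over Hilbert $C^*$-modules, and that the paper's strategy is ``novel (also in the case $B=\B C$).'' What the paper actually does is (i) reduce to $H^2=1$ by Lemma \ref{l:phase} and Lemma \ref{l:locindepII}; (ii) use invariance of the localizer class under odd bounded selfadjoint perturbations of $D$ (Proposition \ref{p:continuity}) to replace $D$ by $H_+DH_+ + H_-DH_-$, which commutes with $H$ — this kills $d(H)$; (iii) in the resulting commuting case, decompose $X = H_+X\op H_-X$ and carry out an explicit functional-calculus manipulation (Lemmas \ref{l:compI}, \ref{l:compII}, Proposition \ref{p:zerocommu}) that matches $L(H,D)$ directly against the index-class projection $Q_{g(H_+DH_+)}$ — no rotation in $(\ka,\rho)$-space at all, since in this case the localizer is invertible for \emph{every} $(\ka,\rho)$. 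Without step (ii) your Step 2 needs a genuinely new homotopy argument reproducing the LSB conclusion without spectral flow or signatures, and you have not supplied one; the paper's reduction makes that entire difficulty disappear. Secondarily, the paper also pins down the right-hand side not as ``the index of $QF_+Q$'' but as $\T{Index}(H_+X, F_{H_+DH_+})$ via the explicit unbounded Kasparov product of Proposition \ref{p:unbddprodII} and naturality of the index isomorphism (Proposition \ref{p:naturality}), a step you leave as ``the Kasparov-product recipe.''
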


We are also able to quantify what it means for $\ka$ to be ``small enough'' and $\rho$ to be ``large enough''. This quantification involves the spectral gap $\| H^{-1} \|_\infty^{-1}$, the operator norm of the closure of the commutator $[D,H]$ as well as the $L^1$-norm of the Fourier transform of the derivative of $\phi$.

The strategy we follow to prove Theorem \ref{t:localizerintro} is novel (also in the case where the second argument $B$ is equal to $\B C$). Indeed, we start out by proving Theorem \ref{t:localizerintro} in the special case where $H^2$ equals the identity operator and the commutator $[D,H]$ is equal to zero. Afterwards, we easily reduce the proof to this special case by using the invariance of the $K$-theory class of the spectral localizer under selfadjoint bounded perturbations of $D$. To understand the advantage of this approach, let us briefly explain how it works in the Hilbert space setting (thus when $B = \B C$). Indeed, if $H = 2Q-1$ for a projection $Q = \ma{cc}{Q_+ & 0 \\ 0 & Q_-}$ and $[D,H]$ is zero, then the spectral localizer is invertible for all values of $\ka$ and $\rho$. Hence, choosing $\rho$ small enough we get that both $\Phi_\rho$ and $\Phi_{2\rho}$ agrees with the projection $P$ onto the kernel of $D = \ma{cc}{0 & D_0^* \\ D_0 & 0}$ and hence that
\[
L_{\ka,\rho} = P \ga H P - (1 - P) \ga .
\]
Letting $\T{TR}$ denote the operator trace it is then not difficult to verify the following identities
\[
\frac{1}{2} \T{sign}( P \ga H P) + \frac{1}{2}\T{sign}(P \ga P) = \T{TR}(\ga P Q P) = \T{Index}(Q_- D_0 Q_+)
\]
which establish the desired link between the spectral localizer and the index. Remark in this respect that the term $\frac{1}{2}\T{sign}(P \ga P)$ agrees with half the index of $D_0$. 

The structure of the present paper is as follows: In Section \ref{s:func} we revisit the continuous functional calculus for selfadjoint and regular unbounded operators on Hilbert $C^*$-modules focusing on the Cayley-transform. This section also contains commutator estimates which are crucial for the remaining text. In Section \ref{s:ktheory} we deepen the understanding of $K$-theory for selfadjoint invertible operators as it appears in \cite{GrSc:IPS,BoLo:KRU}. Our approach is based on the excision isomorphism between relative $K$-theory and $K$-theory of the ideal in question. In Section \ref{s:unbdd} we review some basic aspects of unbounded $KK$-theory together with a simple instance of the unbounded Kasparov product which is relevant for describing the index homomorphism. In Section \ref{s:index} we turn our attention to the well-known isomorphism between $KK_0(\B C,B)$ and $K_0(B)$ providing an explicit description of this isomorphism and its inverse. In Section \ref{s:spectral} and Section \ref{s:specprop} we introduce the spectral localizer in the context of Hilbert $C^*$-modules and study its invariance under various operations including homotopies and selfadjoint bounded perturbations. The core technical part of this paper is contained in Section \ref{s:compu} and Section \ref{s:specindex} where we relate the spectral localizer to the index homomorphism. In Section \ref{s:specproj} we study the special situation where the spectral projections for $D$ belong to the $C^*$-algebra of bounded adjointable operators on $X$ and we explain how our results recover the original results of Loring and Schulz-Baldes, see \cite{LoSc:SLE}, in the generality presented in \cite{DSW:SF}.

\subsection{Acknowledgements}
The author gratefully acknowledge the financial support
from the Independent Research Fund Denmark through grant no. 9040-00107B and
1026-00371B. This research is moreover part of the EU Staff Exchange project 101086394
"Operator Algebras That One Can See”.

The author would also like to thank Hermann Schulz-Baldes for our nice email conversation this summer and for pointing out a couple of important references on spectral localizers. As always, many thanks are due to Walter van Suijlekom and this time in particular for his mini-course during the 2024 INdAM conference ``Noncommutative Geometry and Applications'' in Cortona. This mini-course and subsequent discussions with Walter sparked the idea of studying spectral localizers in a $KK$-theoretic context. I would finally like to thank Koen van den Dungen and Bram Mesland for our conversation on spectral localizers during the 2025 NSeaG-workshop at the ICMS in Edinburgh. At this workshop I also benefitted a lot from Bram Mesland's talk on the $E$-theoretic approach to spectral localizers developed by Bram and his PhD-student Yuezhao Li.  

\subsection{Standing conventions}\label{ss:standing}
All Hilbert $C^*$-modules appearing in this text are right Hilbert $C^*$-modules. 

The $C^*$-algebra of bounded adjointable operators on a Hilbert $C^*$-module $X$ over a $C^*$-algebra $A$ is denoted by $\B L(X)$ and the notation $\| \cd \|_\infty$ refers to the operator norm on $\B L(X)$. The norm on $X$ is denoted by $\| \cd \|_X$. The compact operators on $X$, in the sense of Hilbert $C^*$-module theory, form a closed $*$-ideal $\B K(X)$ sitting inside $\B L(X)$.

We apply the notation $\ell^2(\B N,A)$ for the standard Hilbert $C^*$-module over the $C^*$-algebra $A$ so that elements in $\ell^2(\B N,A)$ are sequences $\{x_n\}_{n = 1}^\infty$ in $A$ satisfying that the series $\sum_{n = 1}^\infty x_n^* x_n$ converges in $C^*$-norm.

The unitalization of $A$ is denoted by $A^\sim$ and we view $A$ as a closed $*$-ideal inside $A^\sim$. Remark that if $\si : A \to B$ is a $*$-homomorphism with values in a unital $C^*$-algebra $B$, then $\si$ extends uniquely to a unital $*$-homomorphism $\si : A^\sim \to B$. Hence, applying $\si$ entry by entry we get a unital $*$-homomorphism $\si : M_n(A^\sim) \to M_n(B)$ for every $n \in \B N$.

%

For a non-empty locally compact Hausdorff space $M$ we write $C_b(M)$ for the unital $C^*$-algebra consisting of all bounded continuous complex valued functions on $M$. Inside $C_b(M)$ we have the $C^*$-subalgebra $C_0(M)$ of continuous complex valued functions on $M$ which vanish at infinity. The notation $C_c(M) \su C_0(M)$ refers to the norm-dense $*$-subalgebra of continuous functions with compact support. In the case where $M$ is compact we omit the subscripts and write $C(M)$ instead of $C_b(M) = C_0(M) = C_c(M)$. 

We define the Fourier transform of an $L^1$-function $f : \B R \to \B C$ by
\[
\widehat{f}(p) = \frac{1}{\sqrt{2\pi}} \int_{-\infty}^\infty f(x) e^{-ixp} \, dx \q p \in \B R .
\]
The $L^1$-norm of the $L^1$-function $f$ is denoted by $\| f \|_1 := \int_{-\infty}^\infty |f(x)| \, dx$.

\section{Continuous functional calculus}\label{s:func}
Throughout this section we consider a non-trivial Hilbert $C^*$-module $X$ over a $C^*$-algebra $A$. Moreover, our Hilbert $C^*$-module is equipped with a selfadjoint and regular unbounded operator $D : \T{Dom}(D) \to X$.

Let us clarify that $\T{Dom}(D) \su X$ is a norm-dense $A$-submodule and that selfadjointness and regularity amount to saying that $D$ is symmetric and satisfies that both $D + i$ and $D - i$ have image equal to $X$. We thus have a well-defined resolvent
$R_i := (D - i )^{-1}$ which is a bounded adjointable operator on $X$ with adjoint $R_i^* = (D + i)^{-1}$.

The continuous functional calculus associated to $D$ is described in \cite[Theorem 10.9]{Lan:HCM}, but see also \cite{Baa:MNB}, \cite[Theorem 1.5]{Wor:UAQ} and \cite{WoNa:OTF} for earlier accounts of this result. We now provide some further details on the main constructions. Our presentation differs from the one given in \cite{Lan:HCM} since we apply the Cayley transform instead of the bounded transform. Notice however that the continuous functional calculus presented in \cite{Wor:UAQ,WoNa:OTF} applies in the greater generality of normal and regular unbounded operators (and this requires the use of the bounded transform instead of the Cayley transform). Our exposition also relies on the unpublished work of Kustermann, see \cite{Kus:FCR}.

\begin{dfn}
  The \emph{spectrum} of the selfadjoint and regular unbounded operator $D$ consists of all elements $\la \in \B C$ where the $A$-linear map
\[
D - \la : \T{Dom}(D) \to X
\]
fails to be a bijection. The spectrum of $D$ is denoted by $\si(D) \su \B C$.
\end{dfn}

We remark that if $\la \notin \si(D)$, then the inverse of $D - \la : \T{Dom}(D) \to X$ followed by the inclusion $\T{Dom}(D) \su X$ is automatically a bounded operator $(D - \la)^{-1} : X \to X$. This is a consequence of the closed graph theorem and the fact that $D$ is closed.

Since both $i$ and $-i$ do not belong to the spectrum of $D$, the \emph{Cayley transform}
\[
C_D := (D - i)(D + i)^{-1} : X \to X
\]
defines a unitary operator with adjoint given by $C_D^* = (D + i)(D - i)^{-1} : X \to X$. Since $C_D \in \B L(X)$ is a unitary element we get that the spectrum of $C_D$, denoted by $\si(C_D)$, is contained in the unit circle $S^1 \su \B C$. Applying the identity
\[
C_D - 1 = -2i \cd (D + i)^{-1}
\]
we see that the following statements are equivalent:
\begin{enumerate}
\item $1$ belongs to the spectrum of $C_D$;
\item $\T{Dom}(D) \neq X$;
\item There is no bounded operator $T : X \to X$ such that $T(\xi) = D(\xi)$ for all $\xi \in \T{Dom}(D)$.
\end{enumerate}
Notice also that the map $c : \B C \sem \{-i\} \to \B C \sem \{1\}$ defined by $c(\la) := (\la - i)(\la + i)^{-1}$ is a homeomorphism with inverse given by $c^{-1}(\mu) = -i (\mu + 1)(\mu - 1)^{-1}$. The restriction of $c$ to $\B R$ yields a homeomorphism $c : \B R \to S^1 \sem \{1\}$.

The result of the next proposition depends on our assumption $X \neq \{0\}$.

\begin{prop}\label{p:cayley}
Let $\la \in \B C \sem \{-i\}$. It holds that $\la \in \si(D)$ if and only if $c(\la) \in \si(C_D)$. In particular, we get that $\si(D)$ is a closed and non-empty subset of $\B R$.
\end{prop}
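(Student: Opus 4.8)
The plan is to transport everything through the Cayley transform, using the homeomorphism $c$ to translate between resolvents of $D$ and resolvents of $C_D$. The crux is a single operator identity: for $\la \in \B C \sem \{-i\}$, writing $\mu := c(\la)$ (so that automatically $\mu \neq 1$), I claim that
\[
C_D - \mu = (1 - \mu)(D - \la)(D + i)^{-1}
\]
as bounded adjointable operators on $X$. To verify it I would fix $\xi \in X$, set $\eta := (D + i)^{-1}\xi \in \T{Dom}(D)$ so that $\xi = (D + i)\eta$ and $C_D \xi = (D - i)\eta$, and then compute directly
\[
C_D \xi - \mu \xi = (D - i)\eta - \mu(D + i)\eta = (1 - \mu)D\eta - i(1 + \mu)\eta = (1 - \mu)(D - \la)\eta ,
\]
where the last equality uses the recorded formula $c^{-1}(\mu) = -i(\mu + 1)(\mu - 1)^{-1}$, which gives $\la = i(1 + \mu)(1 - \mu)^{-1}$.

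Granting the identity, the first assertion is immediate: $(D + i)^{-1} : X \to \T{Dom}(D)$ is a bijection with bounded inverse $D + i$, and $1 - \mu \neq 0$, so $C_D - \mu : X \to X$ is bijective precisely when $D - \la : \T{Dom}(D) \to X$ is bijective. In other words $c(\la) \in \si(C_D)$ if and only if $\la \in \si(D)$.

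For the ``in particular'' statement I would argue as follows. Regularity and selfadjointness of $D$ say that $D \pm i : \T{Dom}(D) \to X$ are bijections, hence $\{i,-i\} \cap \si(D) = \emptyset$; together with the first part this identifies $\si(D)$ with the preimage $c^{-1}\big(\si(C_D)\big)$ under $c : \B C \sem \{-i\} \to \B C \sem \{1\}$. Now $C_D \in \B L(X)$ is a unitary, so $\si(C_D)$ is a non-empty (here I use $X \neq \{0\}$, so that $\B L(X)$ is a non-zero unital Banach algebra over $\B C$) compact subset of $S^1$. Moreover $\si(C_D) \neq \{1\}$: a unitary with singleton spectrum $\{1\}$ satisfies $\|C_D - 1\| = \sup_{\mu \in \si(C_D)} |\mu - 1| = 0$ by normality, hence $C_D = 1$ and then $(D + i)^{-1} = \tfrac{i}{2}(C_D - 1) = 0$, contradicting that $D + i$ is a bijection of the non-zero module $X$. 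Consequently $\si(C_D) \sem \{1\}$ is a non-empty subset of $S^1 \sem \{1\}$ that is closed in $S^1 \sem \{1\}$, and pulling it back through the homeomorphism $c : \B R \to S^1 \sem \{1\}$ shows that $\si(D) = c^{-1}\big(\si(C_D) \sem \{1\}\big)$ is a non-empty closed subset of $\B R$.

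All of this is routine once the operator identity is in place; the only points requiring any attention are the domain bookkeeping in that identity (each composition must land in $\T{Dom}(D)$ or $X$ as appropriate) and the role of the exceptional point $1 \in S^1 \sem c(\B R)$, which corresponds exactly to the possibility $\T{Dom}(D) \neq X$ and is simply irrelevant to locating $\si(D)$ inside $\B R$.
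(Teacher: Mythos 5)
Your proof is correct and follows essentially the same route as the paper: you establish the identity $C_D - c(\la) = \big(c(\la)-1\big)(\la - D)(D+i)^{-1}$ (verified by a direct domain computation rather than the paper's algebraic manipulation, but the same identity), deduce the spectral correspondence, and then obtain closedness and non-emptiness of $\si(D)\su\B R$ by pulling back through the homeomorphism $c$ and ruling out $\si(C_D)=\{1\}$ via $C_D - 1 = -2i(D+i)^{-1}$. The only blemish is the parenthetical "with bounded inverse $D+i$", since $D+i$ is not bounded in general, but you only use that $(D+i)^{-1}:X\to\T{Dom}(D)$ is a bijection, so this does not affect the argument.
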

\begin{proof}
  A couple of algebraic manipulations yield that
  \begin{equation}\label{eq:cayley}
  \begin{split}
    (c(\la) - 1) (\la - D)(D + i)^{-1}
    & = -2i (\la + i)^{-1} (\la - D)(D + i)^{-1} \\
    & = -2i (D + i)^{-1} + 2i (\la + i)^{-1} 
    = C_D - c(\la)
    \end{split}
  \end{equation}
  It can now be deduced from \eqref{eq:cayley} that $\la \in \si(D)$ if and only if $c(\la) \in \si(C_D)$.

  Since the homeomorphism $c : \B C \sem \{-i\} \to \B C \sem \{1\}$ restricts to a homeomorphism between $\B R$ and $S^1 \sem \{1\}$ and $\si(C_D) \su S^1$ is closed, we may conclude that $\si(D)$ is a closed subset of $\B R$. Suppose now for contradiction that $\si(D) = \emptyset$. Since $\si(C_D)$ is a non-empty subset of $S^1$ we then get that $\si(C_D) = \{1\}$ and because $C_D$ is normal we must have that $C_D = 1$. This entails that $0 = C_D - 1 = -2i (D + i)^{-1}$ which is impossible since $X \neq \{0\}$ and $(D + i)^{-1}$ has dense image in $X$. 
\end{proof}

The next result provides us with the relationship between our definition of the spectrum of $D$ and the definition from \cite{Wor:UAQ}, see \cite[Proposition 5.20]{Kus:FCR} for more details.

\begin{prop}
  It holds that $\la \in \B C \sem \si(D)$ if and only if there exists a bounded adjointable operator $R_\la : X \to X$ such that
  \[
R_\la (D - \la ) \su (D - \la ) R_\la = 1 .
\]
In this case, we have that $R_\la = (D - \la)^{-1}$ and $R_\la^* = (D - \ov{\la})^{-1}$.
\end{prop}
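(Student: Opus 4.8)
The plan is to prove both implications together, exploiting the characterization of the spectrum via the Cayley transform established in Proposition~\ref{p:cayley}. For the direction where $\la \in \B C \sem \si(D)$, I would set $R_\la := (D - \la)^{-1}$, which is a well-defined bounded operator $X \to X$ by the closed graph theorem as already remarked in the text. The inclusion $R_\la(D - \la) \su 1$ and the identity $(D - \la) R_\la = 1$ are then immediate from the definition of $R_\la$ as a two-sided inverse of the bijection $D - \la : \T{Dom}(D) \to X$. The only genuine point to check here is adjointability: one needs $R_\la^* = (D - \ov\la)^{-1}$. Since $D$ is selfadjoint, one has $(D - \la)^* = D - \ov\la$ as closed operators, and then the standard fact that the Banach-space adjoint of a bounded everywhere-defined inverse of a closed operator is the bounded inverse of the adjoint gives $R_\la^* = \big((D - \la)^{-1}\big)^* = (D - \ov\la)^{-1}$; in particular $R_\la \in \B L(X)$. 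One should also record that $\ov\la \notin \si(D)$, which follows from $\si(D) \su \B R$ (Proposition~\ref{p:cayley}), so $(D - \ov\la)^{-1}$ indeed makes sense.

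For the converse, suppose there exists a bounded adjointable operator $R_\la$ with $R_\la(D - \la) \su (D - \la) R_\la = 1$. I would argue that $D - \la : \T{Dom}(D) \to X$ is a bijection. Surjectivity is clear from $(D - \la) R_\la = 1$. For injectivity, if $\xi \in \T{Dom}(D)$ and $(D - \la)\xi = 0$, then applying $R_\la$ and using $R_\la(D - \la) \su 1$ gives $\xi = R_\la (D - \la)\xi = 0$. Hence $D - \la$ is bijective on its domain, i.e. $\la \notin \si(D)$. This direction is essentially formal.

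The main subtlety, and the step I expect to require the most care, is the adjointability claim and the identification $R_\la^* = (D - \ov\la)^{-1}$ in the Hilbert $C^*$-module setting, where one does not have the Hilbert-space luxury of freely taking adjoints of unbounded operators. The cleanest route is probably to use the Cayley transform: from the computation in \eqref{eq:cayley}, $(D - \la)^{-1}$ is expressed through $(C_D - c(\la))^{-1}$ and bounded functions of $C_D$, and since $C_D$ is a unitary (hence normal adjointable) operator with $c(\la) \notin \si(C_D)$, the operator $(C_D - c(\la))^{-1}$ lies in the commutative $C^*$-algebra generated by $C_D$ and is therefore adjointable with the expected adjoint. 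Rearranging \eqref{eq:cayley} to solve for $(\la - D)(D + i)^{-1}$ and composing with $(D + i) = 2i(C_D - 1)^{-1} + \text{(bounded)}$ — more carefully, tracking how $\T{Dom}(D)$ and the range of $(D+i)^{-1}$ interact — one then reads off that $R_\la$ must coincide with this adjointable expression, and conjugating the whole identity gives $R_\la^* = (D - \ov\la)^{-1}$. Alternatively, and perhaps more in the spirit of the surrounding exposition, one can simply cite \cite[Proposition 5.20]{Kus:FCR} for the precise comparison with Woronowicz's resolvent condition, since the statement is explicitly attributed there. I would present the self-contained argument but defer the finer points of the Cayley-transform bookkeeping to that reference.
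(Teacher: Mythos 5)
Your overall structure is correct and matches the paper: the converse direction is formal, and the substantive point is showing that $R_\la := (D-\la)^{-1}$ is adjointable with adjoint $(D-\ov\la)^{-1}$, for which you rightly observe one must first know $\ov\la \notin \si(D)$, i.e. $\si(D) \su \B R$ from Proposition~\ref{p:cayley}. Where you diverge is in how you close the adjointability gap. The paper's (terse) argument is the elementary one: once $\ov\la \notin \si(D)$, write $\xi = (D-\la)\xi'$ and $\eta = (D-\ov\la)\eta'$ with $\xi', \eta' \in \T{Dom}(D)$; then symmetry of $D$ gives
\[
\inn{(D-\la)^{-1}\xi, \eta} = \inn{\xi', (D-\ov\la)\eta'} = \inn{D\xi',\eta'} - \ov\la\inn{\xi',\eta'} = \inn{(D-\la)\xi', \eta'} = \inn{\xi, (D-\ov\la)^{-1}\eta},
\]
which exhibits $(D-\ov\la)^{-1}$ as the adjoint directly from the definition. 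You instead propose routing through the Cayley transform via \eqref{eq:cayley}, expressing $(D-\la)^{-1}$ as a product of operators in the $C^*$-algebra generated by $C_D$ and $(D+i)^{-1}$. That works, but it is heavier machinery for the same conclusion, and the intermediate remark about ``the Banach-space adjoint'' is not quite the right concept in the Hilbert $C^*$-module setting (as you yourself flag), so the first paragraph as written does not actually establish adjointability; only your Cayley-transform fallback or the citation to \cite[Proposition 5.20]{Kus:FCR} does. The symmetry argument above is the cleaner fix, is available with what the paper has set up, and is presumably what ``This follows since $\si(D) \su \B R$'' is alluding to.
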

\begin{proof}
The only non-trivial issue is to show that if $\la \in \B C \sem \si(D)$, then $\ov{\la} \in \B C \sem \si(D)$ and the bounded operator $(D - \la)^{-1} : X \to X$ is adjointable with adjoint $(D - \ov{\la})^{-1}$. This follows since $\si(D) \su \B R$ by Proposition \ref{p:cayley}.
\end{proof}

We are now ready to present a version of the continuous functional calculus in terms of the Cayley transform. The notation $t : \si(D) \to \B C$ refers to the continuous function obtained from the inclusion $\si(D) \su \B C$. An application of Proposition \ref{p:cayley} ensures us that the image of $t$ is contained in $\B R \su \B C$.

\begin{thm}\label{t:func}
  There is a unique unital $*$-homomorphism $\Psi : C_b\big(\si(D) \big) \to \B L(X)$ satisfying that
  \[
\Psi\big( (t - i)(t + i)^{-1}\big) = C_D . 
\]
It moreover holds that $\Psi$ is injective and the restriction of $\Psi$ to $C_0\big(\si(D) \big)$ is non-degenerate.
\end{thm}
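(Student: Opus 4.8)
The plan is to construct $\Psi$ by transporting the continuous functional calculus for the unitary $C_D$ to $\si(D)$ via the Cayley homeomorphism. Concretely, let $g : \si(C_D) \to \si(D)$ be the restriction of $c^{-1}$; by Proposition \ref{p:cayley} this is a well-defined homeomorphism of compact Hausdorff spaces, so precomposition yields an isomorphism of unital $C^*$-algebras $g^* : C\big(\si(D)\big) \to C\big(\si(C_D)\big)$ (note $\si(D)$ need not be compact, so one must be slightly careful: replace $C_b\big(\si(D)\big)$ by $C\big(\si(D) \cup \{\infty\}\big)$, i.e. functions on the one-point compactification, which is what $C\big(\si(C_D)\big)$ sees since $1 \in \si(C_D)$ corresponds to the point at infinity when $\si(D)$ is unbounded, and otherwise $g$ is already a homeomorphism of compacta). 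Let $\Psi_{C_D} : C\big(\si(C_D)\big) \to \B L(X)$ be the usual continuous functional calculus for the normal (unitary) element $C_D$, which exists by the Gelfand theory inside the unital $C^*$-algebra $\B L(X)$. Then define $\Psi := \Psi_{C_D} \circ g^*$.

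First I would verify the defining property: the function $t \mapsto (t-i)(t+i)^{-1}$ on $\si(D)$ is exactly $c$ restricted to $\si(D)$, and $g^*$ of it is $c \circ g = c \circ c^{-1} = \mathrm{id}$ on $\si(C_D)$, i.e. the coordinate function $z$ on $\si(C_D) \su S^1$, whose image under $\Psi_{C_D}$ is $C_D$ itself. So $\Psi\big((t-i)(t+i)^{-1}\big) = C_D$ as required. Injectivity of $\Psi$ follows because $g^*$ is an isomorphism and $\Psi_{C_D}$ is injective (the spectrum of $C_D$ as an element of $\B L(X)$ is precisely $\si(C_D)$, so the Gelfand transform $C\big(\si(C_D)\big) \cong C^*(1,C_D) \su \B L(X)$ is an isometric isomorphism onto its image). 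For uniqueness: any unital $*$-homomorphism $\Psi'$ with $\Psi'\big((t-i)(t+i)^{-1}\big) = C_D$ must agree with $\Psi$ on the unital $*$-subalgebra generated by $(t-i)(t+i)^{-1}$; since $(t-i)(t+i)^{-1}$ and its conjugate separate points of the compactification of $\si(D)$ and the constants are present, Stone–Weierstrass shows this $*$-subalgebra is dense, and both $\Psi, \Psi'$ are continuous (bounded), so they coincide. For non-degeneracy of $\Psi|_{C_0(\si(D))}$: this is equivalent under $g^*$ to non-degeneracy of $\Psi_{C_D}$ restricted to $\{f \in C(\si(C_D)) : f(1) = 0\}$, which holds because $1 - C_D = -2i(D+i)^{-1}$ has dense image in $X$ (as $D + i$ is surjective onto $X$ with bounded inverse, $(D+i)^{-1}$ has dense range), and approximating the identity on $\si(C_D) \sm \{1\}$ by functions vanishing near $1$ gives an approximate unit for the action on $X$.

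I expect the main obstacle to be the bookkeeping around compactness of $\si(D)$ versus the appearance of $C_b\big(\si(D)\big)$ in the statement. When $\si(D)$ is unbounded (the generic case, where $1 \in \si(C_D)$), the algebra $C_b\big(\si(D)\big)$ is strictly larger than the image of $C\big(\si(C_D)\big)$ under $g^*$, so $\Psi$ as defined above is only defined on a subalgebra; one must then argue that the statement of the theorem really intends the functional calculus on $C_b\big(\si(D)\big)$ to be this extended one and check it still lands in $\B L(X)$ — or, more likely, the paper's convention (perhaps via Kustermans \cite{Kus:FCR}) extends $\Psi$ to all bounded continuous functions using the strict topology and the fact that $\B L(X) = M\big(\B K(X)\big)$, extending a non-degenerate $*$-homomorphism $C_0(\si(D)) \to \B K(X) \cdot X$-densely. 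The clean route is: first establish the theorem with $C_0\big(\si(D)\big)$ in place of $C_b\big(\si(D)\big)$ as above (this is the substantive content), then invoke the universal property of the multiplier algebra to extend the resulting non-degenerate $*$-homomorphism $C_0\big(\si(D)\big) \to \B L(X)$ uniquely to its multiplier algebra $C_b\big(\si(D)\big)$, which automatically sends the bounded continuous function $(t-i)(t+i)^{-1}$ to $C_D$ and is the unique such extension. Everything else is routine manipulation of the Cayley transform already recorded in Proposition \ref{p:cayley} and the displayed identity $C_D - 1 = -2i(D+i)^{-1}$.
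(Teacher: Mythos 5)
Your proof follows essentially the same route as the paper: transport the continuous functional calculus for the unitary $C_D$ through the Cayley homeomorphism to get $\Psi$ on the one-point-compactification algebra (the paper's $C_{\pm\infty}\big(\si(D)\big)$), establish non-degeneracy on $C_0\big(\si(D)\big)$ via the dense range of $(D+i)^{-1}$, and then extend uniquely to $C_b\big(\si(D)\big) = M\big(C_0(\si(D))\big)$ by the universal property of the multiplier algebra (the paper invokes \cite[Proposition 2.1]{Lan:HCM} for exactly this step), with injectivity on $C_b\big(\si(D)\big)$ following because $C_0\big(\si(D)\big)$ is an essential ideal. The bookkeeping around compactness that you flag is handled in the paper by defining $C_{\pm\infty}\big(\si(D)\big)$ directly, but your resolution coincides with theirs.
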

\begin{proof}
Using Proposition \ref{p:cayley} we may consider the unital $C^*$-subalgebra $C_{\pm \infty}\big( \si(D)\big) \su C_b\big( \si(D)\big)$ consisting of all continuous functions $f : \si(D) \to \B C$ satisfying that $f \ci c^{-1} : \si(C_D) \sem \{1\} \to \B C$ extends to a continuous function on $\si(C_D)$. We thus have that precomposition with $c^{-1}$ yields a $*$-isomorphism $C_{\pm \infty}\big(\si(D)\big) \cong C\big( \si(C_D)\big)$. Hence by applying the usual continuous functional calculus to the Cayley transform $C_D \in \B L(X)$ we get a unique unital $*$-homomorphism $\Psi : C_{\pm \infty}\big(\si(D) \big) \to \B L(X)$ such that $\Psi\big( (t - i)(t + i)^{-1}\big) = C_D$. Moreover, the unital $*$-homomorphism $\Psi$ is injective on $C_{\pm \infty}\big( \si(D) \big)$.
  
Consider now the $C^*$-subalgebra $C_0\big(\si(D) \big) \su C_{\pm \infty}\big(\si(D) \big)$ and notice that the continuous function $(t + i)^{-1}$ belongs to $C_0\big( \si(D) \big)$ and satisfies that $\Psi\big( (t + i)^{-1} \big) = (D + i)^{-1}$. Since $\T{Dom}(D) \su X$ is norm-dense and agrees with the image of $(D + i)^{-1} : X \to X$ we conclude that the restriction of $\Psi$ to $C_0\big( \si(D) \big)$ is non-degenerate. Consequently, we obtain from \cite[Proposition 2.1]{Lan:HCM} that $\Psi$ extends uniquely to a unital $*$-homomorphism $\Psi : C_b\big(\si(D) \big) \to \B L(X)$. Moreover, observing that the ideal $C_0\big( \si(D) \big)$ is essential in $C_b\big( \si(D) \big)$ we conclude that $\Psi$ is in fact injective on $C_b\big(\si(D)\big)$. 
\end{proof}

For an element $f \in C_b\big(\si(D) \big)$ we often apply the notation $f(D)$ instead of $\Psi(f)$. The next result presented here is a well-known consequence of Theorem \ref{t:func}. We give a few details on the proof. 

\begin{prop}\label{p:core}
  The right $A$-submodule
  \[
  \C X := \Psi\big( C_c(\si(D))\big)(X) := \T{span}_{\B C}\big\{ \Psi(f)(\xi) \mid f \in C_c\big(\si(D)\big) \,  , \, \,  \xi \in X \big\} \su X
  \]
  is a core for the selfadjoint and regular unbounded operator $D$ and for every $f \in C_c\big(\si(D)\big)$ we have that
  \[
  \Psi(f) D \su \Psi(t \cd f) = D \Psi(f) .
  \]
\end{prop}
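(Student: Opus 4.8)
The plan is to establish the two assertions in turn, first the intertwining relation $\Psi(f) D \subseteq \Psi(t \cdot f) = D\Psi(f)$ for $f \in C_c(\si(D))$, and then deduce that $\C X$ is a core. For the intertwining relation I would reduce everything to a statement about functions of the Cayley transform. Write $z := (t-i)(t+i)^{-1} \in C_{\pm\infty}(\si(D))$, so that $\Psi(z) = C_D$ and $\Psi((t+i)^{-1}) = (D+i)^{-1}$ by the construction in Theorem \ref{t:func}. The key algebraic identity is $t = -i(z+1)(z-1)^{-1}$ on $\si(D)$, equivalently $t \cdot (t+i)^{-1} = \tfrac{1}{2}(1 + C_D^*)$ after a short computation (indeed $(t+i)^{-1} = \tfrac{1}{2i}(1 - C_D)$ composed appropriately), which shows that $t \cdot f \in C_0(\si(D))$ whenever $f \in C_c(\si(D))$, so $\Psi(t\cdot f)$ is a well-defined bounded adjointable operator. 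To see $D\Psi(f) = \Psi(t\cdot f)$, I would first check it on the dense-image generator: for $\xi \in X$ and $g := (t+i)^{-1} \cdot f \in C_c(\si(D))$, the element $\Psi(g)(\xi)$ lies in $\T{Dom}(D)$ because $\Psi(g)(\xi) = (D+i)^{-1}\Psi(f)(\xi)$ and the image of $(D+i)^{-1}$ is exactly $\T{Dom}(D)$; applying $D$ and using $D(D+i)^{-1} = 1 - i(D+i)^{-1}$ gives $D\Psi(g)(\xi) = \Psi(f)(\xi) - i\Psi(g)(\xi) = \Psi\big((t+i)\cdot g\big)(\xi)$, i.e. the intertwining holds with $g$ in place of $f$. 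Since $(t+i)^{-1}$ is not a zero divisor in $C_0(\si(D))$ and $C_c(\si(D)) = (t+i)^{-1}\cdot C_c(\si(D))$, every $f \in C_c(\si(D))$ is of this form, giving the relation in general.

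Next I would upgrade this from the single vector $\Psi(f)(\xi)$ to the operator statement. The inclusion $\Psi(f) D \subseteq \Psi(t\cdot f)$ means: for $\eta \in \T{Dom}(D)$ we must have $\Psi(f)(D\eta) = \Psi(t\cdot f)(\eta)$. Here I would use that $\T{Dom}(D)$ is spanned by elements $(D+i)^{-1}(\xi)$, and for such $\eta = (D+i)^{-1}(\xi)$ both sides unwind: $D\eta = \xi - i\eta$, so $\Psi(f)(D\eta) = \Psi(f)(\xi) - i\Psi(f)(\eta)$, while $\Psi(t\cdot f)(\eta) = \Psi(t\cdot f)(D+i)^{-1}(\xi) = \Psi\big(t\cdot f \cdot (t+i)^{-1}\big)(\xi) = \Psi\big(f - i f(t+i)^{-1}\big)(\xi)$, and these agree by functoriality of $\Psi$ together with $\Psi(f(t+i)^{-1})(\xi) = \Psi(f)(D+i)^{-1}(\xi) = \Psi(f)(\eta)$. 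Equality $\Psi(t\cdot f) = D\Psi(f)$ (as opposed to mere inclusion) then follows because $\Psi(f)$ has image inside $\T{Dom}(D)$: for any $\xi$, write $\Psi(f)(\xi) = \Psi((t+i)^{-1})\Psi((t+i)f)(\xi) = (D+i)^{-1}\Psi((t+i)f)(\xi) \in \T{Dom}(D)$.

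For the core statement, recall that a core for $D$ is a subspace $\C D \subseteq \T{Dom}(D)$ which is dense in $\T{Dom}(D)$ for the graph norm $\xi \mapsto \|\xi\|_X + \|D\xi\|_X$; equivalently, the closure of $D|_{\C D}$ is $D$. First, $\C X \subseteq \T{Dom}(D)$ since $\Psi(f)(\xi) = (D+i)^{-1}\Psi((t+i)f)(\xi)$ as above. To show density in the graph norm I would pick an approximate unit for $C_0(\si(D))$ sitting inside $C_c(\si(D))$, say $\{u_n\}$ with $0 \le u_n \le 1$ and $u_n \to 1$ uniformly on compacta, for instance $u_n = \chi_n$ a suitable plateau function. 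By non-degeneracy of $\Psi|_{C_0(\si(D))}$ (Theorem \ref{t:func}) we have $\Psi(u_n)(\xi) \to \xi$ in $\|\cdot\|_X$ for every $\xi \in X$. Given $\eta \in \T{Dom}(D)$, write $\eta = (D+i)^{-1}(\zeta)$; then $\Psi(u_n)(\eta) \in \C X$, and $\Psi(u_n)(\eta) \to \eta$ while $D\Psi(u_n)(\eta) = \Psi(t\cdot u_n)(\eta) = \Psi(u_n)(D\eta) \to D\eta$ by the intertwining relation just proved together with non-degeneracy applied to the vector $D\eta \in X$. Hence $\Psi(u_n)(\eta) \to \eta$ in the graph norm, so $\C X$ is dense in $\T{Dom}(D)$ for the graph norm, i.e. a core.

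\textbf{Main obstacle.} The only genuinely delicate point is handling the distinction between the inclusion $\Psi(f)D \subseteq \Psi(t\cdot f)$ and the equality $\Psi(t\cdot f) = D\Psi(f)$, together with being careful that $t\cdot f \in C_0(\si(D))$ (so that $\Psi(t\cdot f)$ is even defined) — this is where the unboundedness bites. Everything else is bookkeeping: translating $C_c$-functions of $D$ into continuous functions of the bounded operator $C_D$ via the homeomorphism $c$, and exploiting that $(t+i)^{-1}$ generates $C_0(\si(D))$ as a module over itself so that one can always factor out a resolvent.
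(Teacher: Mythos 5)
Your proof is correct and follows essentially the same route as the paper: you factor $f = (t+i)^{-1}\cdot(t+i)f$, exploit $\Psi\big((t+i)^{-1}\big) = (D+i)^{-1}$ to land in $\T{Dom}(D)$ and verify the intertwining, and then use an approximate unit in $C_c\big(\si(D)\big)$ (equivalently, the non-degeneracy of $\Psi|_{C_0(\si(D))}$) for graph-norm density. The only blemish is the parenthetical aside $t\cdot(t+i)^{-1} = \tfrac12(1 + C_D^*)$, which should read $\tfrac12(1 + C_D)$; since $tf$ is manifestly in $C_c\big(\si(D)\big)$ that aside isn't needed anyway.
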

\begin{proof}
  Let $f \in C_c\big(\si(D)\big)$. Since $\Psi\big( (t + i)^{-1}\big) = (D + i)^{-1}$ we get that $\Psi(f) = (D + i)^{-1}\cd \Psi\big( (t + i) \cd f \big)$. This entails that the image of $\Psi(f) : X \to X$ is contained in the domain of $D$ and that
  \[
  D \Psi(f) = D(D + i)^{-1} \Psi\big( (t + i) \cd f \big) = \Psi(t \cd f) .
    \]
  A similar argument shows that $\Psi(t \cd f)(\xi)$ agrees with $\Psi(f)(D \xi)$ for all $\xi \in \T{Dom}(D)$.
  
  It therefore only remains to show that $\C X$ is a core for $D : \T{Dom}(D) \to X$. Let $\xi \in \T{Dom}(D)$. Using the non-degeneracy statement in Theorem \ref{t:func} we may choose a sequence $\{ f_n\}$ in $C_c\big(\si(D) \big)$ such that for every $\eta \in X$ the sequence $\big\{ \Psi(f_n)(\eta) \big\}$ converges to $\eta$ with respect to the norm on $X$. But then we also know that the sequence $\big\{ D \Psi(f_n)(\xi) \big\} = \big\{ \Psi(f_n)(D \xi) \big\}$ converges to $D \xi$, entailing that $\C X$ is indeed a core for $D$.
\end{proof}

An application of Proposition \ref{p:core} shows that the functional calculus obtained in Theorem \ref{t:func} agrees with the functional calculus described in \cite[Theorem 1.6]{Wor:UAQ} and \cite[Theorem 3.4]{Kus:FCR} (specialized to the selfadjoint and regular setting).   
\medskip

Given a real valued (possibly unbounded) continuous function $g: \si(D) \to \B R$ we may apply the core $\C X$ from Proposition \ref{p:core} to define a selfadjoint and regular unbounded operator $g(D) : \T{Dom}\big(g(D)\big) \to X$. The selfadjoint and regular unbounded operator $g(D)$ has $\C X$ as a core and for every $f \in C_c\big(\si(D) \big)$ and $\xi \in X$ we have the explicit formula
\[
g(D) \cd \Psi(f)(\xi) := \Psi(g \cd f)(\xi) .
\]
For more details on this construction we refer to \cite[Proposition 10.7]{Lan:HCM}. The spectrum of $g(D)$ agrees with the closure of the image of $g : \si(D) \to \B R$, yielding the formula: 
\begin{equation}\label{eq:spectrum}
\si\big( g(D) \big) = \ov{ g\big( \si(D) \big) },
\end{equation}
see \cite[Result 3.10]{Kus:FCR}. The transformation rule stated in the next proposition is contained in \cite[Proposition 3.11]{Kus:FCR}. 

\begin{prop}\label{p:trans}
  Let $f : \si(D) \to \B R$ be a continuous function and let $g : \si\big(f(D)\big) \to \B R$ be another continous function. We have the identity
  \[
(g \ci f)(D) = g\big( f(D) \big) .
  \]
\end{prop}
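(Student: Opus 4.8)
The plan is to reduce the identity to the uniqueness clause of Theorem~\ref{t:func} applied to the selfadjoint regular operator $f(D)$, treating first the case of bounded functions and then bootstrapping to the general case on the core $\C X$. Throughout, let $\Psi_{f(D)} : C_b\big(\si(f(D))\big) \to \B L(X)$ denote the $*$-homomorphism of Theorem~\ref{t:func} attached to $f(D)$.

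First I would treat the case where $g$ is bounded, establishing $\ph(f(D)) = (\ph \ci f)(D)$ for all $\ph \in C_b\big(\si(f(D))\big)$; here $\ph \ci f$ is viewed as an element of $C_b(\si(D))$, which is legitimate since $f$ maps $\si(D)$ into $\si(f(D)) = \ov{f(\si(D))}$ by \eqref{eq:spectrum}. The assignment $\ph \mapsto \Psi_D(\ph \ci f)$ is plainly a unital $*$-homomorphism $C_b\big(\si(f(D))\big) \to \B L(X)$, so by the uniqueness part of Theorem~\ref{t:func} for $f(D)$ it suffices to check that it carries the Cayley generator $s \mapsto (s-i)(s+i)^{-1}$ of $\si(f(D))$ onto $C_{f(D)}$, i.e.\ that $\Psi_D\big((f-i)(f+i)^{-1}\big) = C_{f(D)}$. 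For this I would first prove the compatibility $(f(D)+i)^{-1} = \Psi_D\big((f+i)^{-1}\big)$: the function $(f+i)^{-1}$ lies in $C_b(\si(D))$, and on a generating vector $\Psi_D(h)(\eta)$ of $\C X$ with $h \in C_c(\si(D))$ one has $(f+i)^{-1} h \in C_c(\si(D))$, so the defining formula $f(D)\Psi_D(k)(\eta) = \Psi_D(f \cd k)(\eta)$ for $k \in C_c(\si(D))$ yields
\[
(f(D)+i)\Psi_D\big((f+i)^{-1}\big)\Psi_D(h)(\eta) = \Psi_D(h)(\eta) = \Psi_D\big((f+i)^{-1}\big)(f(D)+i)\Psi_D(h)(\eta);
\]
since $\C X$ is a core, $f(D)$ is closed, and $\Psi_D\big((f+i)^{-1}\big)$ is bounded adjointable, this passes to the asserted operator identity. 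Then
\[
C_{f(D)} = (f(D)-i)(f(D)+i)^{-1} = 1 - 2i(f(D)+i)^{-1} = \Psi_D\big(1 - 2i(f+i)^{-1}\big) = \Psi_D\big((f-i)(f+i)^{-1}\big),
\]
which is exactly what was needed.

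For general $g$, both $g(f(D))$ and $(g\ci f)(D)$ are selfadjoint regular unbounded operators, so it is enough to prove the inclusion $(g\ci f)(D) \su g(f(D))$; equality then follows since a selfadjoint operator has no proper symmetric extension. As $\C X = \Psi_D\big(C_c(\si(D))\big)(X)$ is a core for $(g\ci f)(D)$, it suffices, by linearity, to treat a generating vector $\xi = \Psi_D(h)(\eta)$ with $h \in C_c(\si(D))$. I would choose $\chi \in C_c(\B R)$ with $\chi \equiv 1$ on a neighbourhood of the compact set $f\big(\T{supp}(h)\big)$, so that $\chi \ci f \equiv 1$ on $\T{supp}(h)$ and hence $(\chi \ci f)\cd h = h$. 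The bounded case then gives
\[
\xi = \Psi_D\big((\chi\ci f)\cd h\big)(\eta) = \Psi_D(\chi\ci f)\Psi_D(h)(\eta) = \Psi_{f(D)}(\chi)(\xi) \in \Psi_{f(D)}\big(C_c(\si(f(D)))\big)(X),
\]
and the last space is precisely the core defining $g(f(D))$, so $\xi \in \T{Dom}\big(g(f(D))\big)$. Applying the defining formula for $g(f(D))$ and the bounded case once more,
\[
g(f(D))\xi = \Psi_{f(D)}(g\chi)(\xi) = \Psi_D\big(((g\chi)\ci f)\cd h\big)(\eta) = \Psi_D\big((g\ci f)\cd h\big)(\eta) = (g\ci f)(D)\xi,
\]
which completes the argument.

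I expect the main obstacle to be the compatibility identity $(f(D)+i)^{-1} = \Psi_D\big((f+i)^{-1}\big)$ in the bounded case: this is the point where the $C^*$-algebraic functional calculus $\Psi_D$ of Theorem~\ref{t:func} must be matched against the hands-on definition of the unbounded operator $f(D)$, forcing one to argue on the core $\C X$ and pass to the closure with some care. Everything afterwards is routine bookkeeping with the two defining formulas and the uniqueness clause of Theorem~\ref{t:func}.
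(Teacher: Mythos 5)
The paper does not supply a proof of Proposition~\ref{p:trans}: it simply cites \cite[Proposition 3.11]{Kus:FCR} (Kustermans' unpublished preprint). Your proposal therefore fills a genuine gap by providing a self-contained argument, and the argument is correct.

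Your route is well matched to the framework the paper sets up. In the bounded case you invoke the uniqueness clause of Theorem~\ref{t:func} for $f(D)$, which reduces everything to the single resolvent identity $(f(D)+i)^{-1}=\Psi_D\big((f+i)^{-1}\big)$; you establish that identity on the core $\C X$ via the explicit formula from Proposition~\ref{p:core} and then pass to $X$ using that $(f+i)^{-1}$ is bounded, $f(D)$ is closed, and $\C X$ is a core. This is precisely the Cayley-transform philosophy the paper adopts throughout Section~\ref{s:func}, so a reader of this paper would recognize it as the intended ``in-house'' argument. The unbounded step — reducing to generating vectors $\Psi_D(h)(\eta)$, inserting a cutoff $\chi\in C_c(\B R)$ with $\chi\equiv1$ near $f(\T{supp}(h))$, and observing that the inclusion $(g\ci f)(D)\su g\big(f(D)\big)$ of selfadjoint regular operators forces equality — is also sound; the cutoff trick is exactly what lets you transport the core $\Psi_D\big(C_c(\si(D))\big)(X)$ into the core $\Psi_{f(D)}\big(C_c(\si(f(D)))\big)(X)$. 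One small point worth spelling out when writing this up: the restriction of $\chi$ to the closed set $\si(f(D))\su\B R$ still has compact support, so it does lie in $C_c\big(\si(f(D))\big)$, which is what licenses the appeal to the defining formula for $g\big(f(D)\big)$. All in all, a correct and naturally structured proof where the paper offers only a reference.
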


Using the continuous functional calculus for $D$ we may define the \emph{bounded transform} $F_D = \Psi\big(t(1 + t^2)^{-1/2} \big)$ which is a selfadjoint element in $\B L(X)$ with $\| F_D \|_\infty \leq 1$. It can in fact be verified that the image of
$(1 + D^2)^{-1/2} = \big( R_i^* R_i \big)^{1/2}$ is contained in the domain of $D$, yielding the alternative description of the bounded transform $F_D = D(1 + D^2)^{-1/2}$.


\subsection{The Fourier transform}\label{ss:fourier}
In this subsection we continue in the setting described in the beginning of Section \ref{s:func}. For a particular class of bounded continuous functions on the real line we describe the associated functional calculus in terms of the Fourier transform. It is in this respect convenient for us to work with Bochner integrals and we refer to \cite{HiPh:FAS} for an account of this theory. Throughout this subsection, the restriction map from $C_b(\B R)$ to $C_b\big(\si(D)\big)$ is suppressed. 

As a consequence of Theorem \ref{t:func} we have the following lemma. Notice that the strict continuity mentioned in the statement means that the map from $\B R$ to $X$ given by $s \mapsto e^{is D}(\xi)$ is continuous for all $\xi \in X$. 

\begin{lemma}\label{l:struni}
The family $\{ e^{is D} \}_{s \in \B R}$ forms a strictly continuous group of unitary operators on $X$ and if $\xi \in \T{Dom}(D)$, then the map $s \mapsto e^{is D}(\xi)$ is continuously differentiable with derivative $s \mapsto e^{is D} i D(\xi)$. Moreover, if $f \in C_0(\B R)$, then the map $s \mapsto e^{is D} f(D)$ is continuous with respect to the operator norm on $\B L(X)$.
\end{lemma}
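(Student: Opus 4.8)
The plan is to obtain all three claims from the functional calculus $\Psi : C_b(\si(D)) \to \B L(X)$ of Theorem \ref{t:func} together with the non-degeneracy of $\Psi$ on $C_0(\si(D))$, reducing continuity statements to pointwise statements on the abelian $C^*$-algebra $C_b(\si(D))$ and then transporting them through $\Psi$. First I would note that $e^{isD} := \Psi(e^{ist})$ is well-defined and unitary for each $s \in \B R$, since $t \mapsto e^{ist}$ is a unitary element of $C_b(\si(D))$, and the group law $e^{i(s+s')D} = e^{isD} e^{is'D}$ together with $e^{i0D} = 1$ follows because $\Psi$ is a $*$-homomorphism and $e^{ist} e^{is't} = e^{i(s+s')t}$ pointwise. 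So the only substantive point in the first assertion is strict continuity: for fixed $\xi \in X$ and $s_0 \in \B R$ we must show $e^{isD}\xi \to e^{is_0 D}\xi$ in $X$ as $s \to s_0$.

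For strict continuity I would use the non-degeneracy statement in Theorem \ref{t:func}: given $\ep > 0$, pick $f \in C_c(\si(D))$ with $\| \Psi(f)\xi - \xi \|_X < \ep$ (more precisely, using non-degeneracy of $\Psi|_{C_0(\si(D))}$ and that $C_c$ is dense in $C_0$, one can approximate $\xi$ by $\Psi(f)\eta$; but in fact it is cleaner to use an approximate unit $\{f_n\} \su C_c(\si(D))$ with $\Psi(f_n)\eta \to \eta$ for all $\eta$, exactly as in the proof of Proposition \ref{p:core}). Then, since $e^{isD}$ is a contraction,
\[
\| e^{isD}\xi - e^{is_0D}\xi \|_X \leq 2\| \xi - \Psi(f)\xi\|_X + \| \big(e^{isD} - e^{is_0D}\big)\Psi(f)\xi \|_X,
\]
so it suffices to prove $\| (e^{isD} - e^{is_0D})\Psi(f)\|_\infty \to 0$ for $f \in C_c(\si(D))$. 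But $(e^{ist} - e^{is_0t})f(t)$ has fixed compact support and converges uniformly to $0$ as $s \to s_0$ by uniform continuity of $(s,t) \mapsto e^{ist}$ on the compact set; hence $\| (e^{isD} - e^{is_0D})\Psi(f)\|_\infty = \| (e^{ist}-e^{is_0t})f \|_\infty \to 0$ since $\Psi$ is isometric on $C_b(\si(D))$. This is the main point of the lemma, and the one place where non-degeneracy is genuinely needed (the unitaries $e^{isD}$ are in general not norm-continuous in $s$, so one truly must cut down by a compactly supported function).

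For the differentiability claim, let $\xi \in \T{Dom}(D)$. I would first treat the case $\xi = \Psi(f)(\eta)$ with $f \in C_c(\si(D))$, $\eta \in X$, which lies in the core $\C X$ of Proposition \ref{p:core}. Then $e^{isD}\xi = \Psi(e^{ist}f)(\eta)$ and the difference quotient is $\Psi\big(\tfrac{e^{i(s+h)t} - e^{ist}}{h} f\big)(\eta)$; the function $\tfrac{e^{i(s+h)t}-e^{ist}}{h} f(t)$ has fixed compact support and converges uniformly to $i t\, e^{ist} f(t)$ as $h \to 0$ (uniform bound $|t f(t)|$ on the support, dominated difference quotients), so the difference quotient converges in norm to $\Psi(it e^{ist} f)(\eta) = e^{isD} iD \Psi(f)(\eta) = e^{isD}iD\xi$, using $\Psi(tf) = D\Psi(f)$ from Proposition \ref{p:core}. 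For general $\xi \in \T{Dom}(D)$, approximate: choose $\{f_n\} \su C_c(\si(D))$ with $\Psi(f_n)\zeta \to \zeta$ for all $\zeta$, so $\xi_n := \Psi(f_n)\xi \to \xi$ and $D\xi_n = \Psi(f_n)(D\xi) \to D\xi$. Writing $e^{isD}\xi - e^{is_0D}\xi = \int_{s_0}^s e^{iuD} iD\xi\, du$ for each $\xi_n$ (Bochner integral, valid by the case already proven and the fundamental theorem of calculus for the norm-continuous integrand $u \mapsto e^{iuD}iD\xi_n$, which is continuous by strict continuity applied to the fixed vector $iD\xi_n$ — wait, one needs norm-continuity of $u \mapsto e^{iuD}iD\xi_n$ in $X$, which is exactly strict continuity evaluated at the vector $iD\xi_n$), and passing to the limit $n \to \infty$ (the integrands converge uniformly on the compact interval $[s_0,s]$ since $\| e^{iuD}iD\xi_n - e^{iuD}iD\xi\|_X = \| D\xi_n - D\xi\|_X \to 0$ uniformly in $u$), we get $e^{isD}\xi - e^{is_0D}\xi = \int_{s_0}^s e^{iuD}iD\xi\, du$ with $u \mapsto e^{iuD}iD\xi$ continuous in $X$; differentiating gives the claim.

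For the final assertion, let $f \in C_0(\B R)$ and fix $s_0 \in \B R$. We must show $e^{isD}f(D) \to e^{is_0D}f(D)$ in operator norm as $s \to s_0$. Approximate $f$ in the sup-norm by $g \in C_c(\B R)$, so $\| f(D) - g(D)\|_\infty = \| f - g\|_\infty$ is small; then since $e^{isD}$ is a contraction it suffices to show $s \mapsto e^{isD}g(D)$ is norm-continuous for $g \in C_c(\B R)$. But $e^{isD}g(D) = \Psi(e^{ist}g)$ and $\| e^{ist}g - e^{is_0t}g\|_\infty \to 0$ as above (fixed compact support, uniform continuity of $(s,t) \mapsto e^{ist}$), so by isometry of $\Psi$ we are done. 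I do not anticipate any obstacle beyond the bookkeeping in the differentiability step; the conceptual content is entirely the strict-continuity argument above, which is the standard Stone-type argument adapted to the Hilbert $C^*$-module setting via the non-degenerate functional calculus.
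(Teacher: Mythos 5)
Your proof is correct, and it is the argument the paper has in mind: the paper states this lemma without proof, calling it ``a consequence of Theorem \ref{t:func}'', and your derivation uses exactly that theorem together with Proposition \ref{p:core}, reducing every continuity and differentiability claim to uniform convergence in $C_c(\si(D))$ via non-degeneracy of $\Psi|_{C_0(\si(D))}$. The only cosmetic slip is the equality $\| f(D) - g(D)\|_\infty = \| f - g\|_\infty$ for $f,g \in C_0(\B R)$, which should be $\leq$ (the restriction map $C_0(\B R)\to C_0(\si(D))$ is contractive but not isometric in general), and this does not affect the argument.
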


The integrals appearing in the statement and the proof of the next proposition are Bochner integrals. 

\begin{prop}\label{p:fourier}
  Let $f : \B R \to \B C$ be a bounded continuous $L^1$-function. If the Fourier transform
  $\widehat{f} : \B R \to \B C$ is an $L^1$-function, then for every $\xi \in X$ the function $p \mapsto \widehat{f}(p) e^{i p D}(\xi)$ is Bochner integrable (as a map from $\B R$ to $X$) and
\begin{equation}\label{eq:fourier}
f(D)(\xi) = \frac{1}{\sqrt{2\pi}} \int_{-\infty}^\infty \widehat{f}(p) e^{i p D}(\xi) \, dp . 
\end{equation}
\end{prop}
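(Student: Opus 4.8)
The plan is to establish the integral formula \eqref{eq:fourier} by first verifying Bochner integrability of $p \mapsto \widehat{f}(p) e^{ipD}(\xi)$ and then identifying the value of the integral with $f(D)(\xi)$ via the uniqueness of the functional calculus in Theorem \ref{t:func}. For integrability, the map $p \mapsto \widehat{f}(p) e^{ipD}(\xi)$ is continuous (by the strict continuity in Lemma \ref{l:struni}) hence strongly measurable, and since $\{e^{ipD}\}$ are unitaries we have the pointwise bound $\| \widehat{f}(p) e^{ipD}(\xi) \|_X \le |\widehat{f}(p)| \cdot \| \xi \|_X$, which is integrable precisely because $\widehat{f} \in L^1$; Bochner integrability then follows from the standard criterion (continuity plus an $L^1$ majorant).

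For the identification, I would define $T : X \to X$ by $T(\xi) := \frac{1}{\sqrt{2\pi}} \int_{-\infty}^\infty \widehat{f}(p) e^{ipD}(\xi) \, dp$ and check that $T = f(D) = \Psi(f)$. The cleanest route is to exploit that two bounded operators agreeing on the core $\C X$ from Proposition \ref{p:core} and being bounded must be equal, combined with functoriality. Concretely, I expect one proceeds as follows: the assignment $f \mapsto T_f$ (for $f$ in the class of bounded continuous $L^1$-functions with $\widehat{f} \in L^1$) is linear; it is multiplicative because $e^{ipD} e^{iqD} = e^{i(p+q)D}$ turns the product of integrals into a convolution, and $\widehat{fg} = \frac{1}{\sqrt{2\pi}} \widehat{f} * \widehat{g}$ under our normalization, so $T_f T_g = T_{fg}$ after a Fubini argument (justified by the $L^1$ bounds together with boundedness of the unitaries); and it is $*$-preserving since $\overline{\widehat{f}(p)} = \widehat{\bar f}(-p)$ and $(e^{ipD})^* = e^{-ipD}$. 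Thus $f \mapsto T_f$ is a $*$-homomorphism on this function algebra into $\B L(X)$.

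It then remains to pin down $T_f$ on enough functions to invoke the uniqueness clause of Theorem \ref{t:func}. A natural choice is to compute $T_f$ for a resolvent-type function such as $f(x) = (x + i)^{-1}$ (or, to stay within the $L^1 \cap C_b$ class with $L^1$ Fourier transform, a function like $f(x) = (x^2 + 1)^{-1}$, whose Fourier transform is the standard exponential $\sqrt{\pi/2}\, e^{-|p|}$). For such $f$ one evaluates $\frac{1}{\sqrt{2\pi}} \int \widehat{f}(p) e^{ipD}(\xi) \, dp$ directly using Lemma \ref{l:struni} — writing $e^{ipD}(\xi)$ as a solution of an ODE and integrating by parts, or recognizing the integral as the standard contour-integral representation of the resolvent applied to $D$ — and matches it against $\Psi(f)$ computed through the functional calculus. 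Since resolvent functions (equivalently, the Cayley-type generator $(t-i)(t+i)^{-1}$, reachable as a polynomial combination of $(t+i)^{-1}$) generate a dense subalgebra on which $T$ and $\Psi$ agree, and both are bounded $*$-homomorphisms, continuity forces $T_f = \Psi(f) = f(D)$ for all $f$ in the stated class.

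The main obstacle I anticipate is the bookkeeping in the multiplicativity step — making the Fubini interchange fully rigorous for Bochner integrals of operator-valued or $X$-valued functions, and correctly tracking the $\frac{1}{\sqrt{2\pi}}$ normalization constants through the convolution theorem so that $T$ is genuinely a homomorphism (a misplaced factor of $\sqrt{2\pi}$ would break multiplicativity). A secondary but more elementary subtlety is confirming that the class of functions under consideration is rich enough: one should check that functions whose Fourier transforms are $L^1$ and which are themselves $L^1 \cap C_b$ form a $*$-subalgebra containing enough generators (e.g.\ that it separates points and contains a resolvent), so that the density argument closing the proof actually applies. Once these points are handled, the identity \eqref{eq:fourier} follows, and the extension to general $\xi \in X$ is automatic since both sides are bounded in $\xi$.
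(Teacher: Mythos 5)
Your treatment of the Bochner integrability is essentially the same as the paper's (continuity of $p\mapsto e^{ipD}(\xi)$ from Lemma \ref{l:struni}, Riemann--Lebesgue for $\widehat{f}$, and the $L^1$ bound). The identification step, however, takes a genuinely different route, and it has a gap.

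You propose to define $T_f := \tfrac{1}{\sqrt{2\pi}}\int\widehat{f}(p)e^{ipD}\,dp$, verify that $f\mapsto T_f$ is a $*$-homomorphism on the algebra $\mathcal{F}$ of bounded continuous $L^1$-functions with $L^1$ Fourier transform, pin it down on a resolvent-type generator, and then conclude $T=\Psi$ on all of $\mathcal{F}$ by density and continuity. The multiplicativity and $*$-compatibility bookkeeping you flag as the main obstacle is actually fine with the paper's normalization. The genuine problem is the final density step. You invoke that ``both are bounded $*$-homomorphisms, continuity forces $T_f=\Psi(f)$,'' but $T$ and $\Psi$ are bounded with respect to \emph{different} norms: $\Psi$ is contractive for $\|\cdot\|_\infty$, while the only a priori bound on $T$ is $\|T_f\|_\infty\leq\tfrac{1}{\sqrt{2\pi}}\|\widehat{f}\|_1$, and there is no estimate of $\|\widehat{f}\|_1$ in terms of $\|f\|_\infty$. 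Since $\mathcal{F}$ is not a $C^*$-algebra, $T$ being a $*$-homomorphism does not force continuity for $\|\cdot\|_\infty$, so agreement of $T$ and $\Psi$ on a sup-norm-dense generating set does not propagate to all of $\mathcal{F}$. There is also the secondary issue you half-notice: $(t-i)(t+i)^{-1}$ and $(t+i)^{-1}$ are not in $\mathcal{F}$ (the latter fails $L^1$), and $(t^2+1)^{-1}$ alone, being even, does not separate points, so even the sup-norm density of the proposed generating set needs repair. Finally, the uniqueness clause of Theorem \ref{t:func} concerns unital $*$-homomorphisms on $C_b(\si(D))$, which $T$ cannot be since $1\notin\mathcal{F}$; so that clause cannot be invoked directly.

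The paper sidesteps all of this by not attempting to show $T$ is a $*$-homomorphism at all. Instead it fixes $g\in C_0(\B R)$, proves the Fourier-inversion identity $f\cdot g = \tfrac{1}{\sqrt{2\pi}}\int\widehat{f}(p)e^{ipt}g\,dp$ as a Bochner integral with values in $C_0(\B R)$, and applies the bounded $*$-homomorphism $\Psi$ (which commutes with Bochner integrals by linearity and continuity) to get $f(D)g(D)=\tfrac{1}{\sqrt{2\pi}}\int\widehat{f}(p)e^{ipD}g(D)\,dp$. The result then follows from the non-degeneracy of $\Psi|_{C_0(\B R)}$. This uses only the boundedness of $\Psi$, not any boundedness of $T$, which is precisely what your argument lacks.
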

\begin{proof}
  Let $\xi \in X$. By the Riemann-Lebesgue lemma we know that the Fourier transform $\widehat{f} : \B R \to \B C$ is a continuous function vanishing at infinity and the integrand of \eqref{eq:fourier} is therefore continuous by Lemma \ref{l:struni}. Since $\widehat{f}$ is assumed to be an $L^1$-function we now get that the integrand is Bochner integrable as a map from $\B R$ to $X$. In fact, it follows that the operator on $X$ determined by the right hand side of \eqref{eq:fourier} is bounded adjointable with operator norm dominated by the Lebesgue integral $\frac{1}{\sqrt{2\pi}} \cd \int_{-\infty}^\infty |\widehat{f}(p)| \, dp$.

  Let $g \in C_0(\B R)$. An argument similar to the one just presented shows that the continuous from $\B R$ to $C_0(\B R)$ given by $p \mapsto \widehat{f}(p) e^{i p t} \cd g$ is also Bochner integrable. Moreover, upon using the inverse Fourier transform, we get that 
\begin{equation}\label{eq:product}
f \cd g = \frac{1}{\sqrt{2\pi}} \int_{-\infty}^\infty \widehat{f}(p) e^{i p t} \cd g  \, dp . 
\end{equation}
Applying the functional calculus $\Psi : C_b(\B R) \to \B L(X)$ to both sides of \eqref{eq:product} yields that
\begin{equation}\label{eq:fourier2}
f(D) g(D) = \frac{1}{\sqrt{2\pi}} \int_{-\infty}^\infty \widehat{f}(p) e^{i p D} g(D)  \, dp  . 
\end{equation}

The result of the proposition now follows by using that the restriction of $\Psi$ to $C_0(\B R)$ is non-degenerate, see Theorem \ref{t:func}.
\end{proof}

\subsection{Commutator estimates}\label{ss:commutator}
We remain in the setting described in the beginning of Section \ref{s:func}. Our aim is now to estimate commutators between certain bounded adjointable operators arising from the continuous functional calculus and Lipschitz operators in the sense of the following definition. As in the previous subsection, the various integrals appearing in this subsection are Bochner integrals.

\begin{dfn}\label{d:lipschitz}
  We say that a bounded adjointable operator $T : X \to X$ is a \emph{Lipschitz operator} with respect to $D$, if
 \begin{enumerate}
 \item $T\big( \T{Dom}(D) \big) \su \T{Dom}(D)$ and;
 \item The commutator $[D,T] = D T - T D : \T{Dom}(D) \to X$ extends to a bounded adjointable operator $d(T) : X \to X$.
 \end{enumerate}
 The set of Lipschitz operators with respect to $D$ is denoted by $\T{Lip}_D(X)$.
\end{dfn}

It can be verified that $\T{Lip}_D(X) \su \B L(X)$ is a unital $*$-subalgebra and the assignment $T \mapsto d(T)$ is a closed $*$-derivation, meaning that $d : \T{Lip}_D(X) \to \B L(X)$ is closed and satisfies that
\[
d(S T) = d(S) T + S d(T) \, \, \T{ and } \, \, \, d(T^*) = - d(T)^*
\]
for all $S,T \in \T{Lip}_D(X)$. We turn $\T{Lip}_D(X)$ into a unital Banach $*$-algebra with respect to the norm $\| T \|_1 := \| T \|_\infty + \| d(T) \|_\infty$ and record that the inclusion $\T{Lip}_D(X) \to \B L(X)$ is a contraction. For every $n \in \B N$, the closed $*$-derivation $d : \T{Lip}_D(X) \to \B L(X)$ yields a closed $*$-derivation
\begin{equation}\label{eq:derivation}
d : M_n\big( \T{Lip}_D(X)\big) \to \B L(X^{\op n})
\end{equation}
obtained by applying $d$ entry by entry and identifying $M_n\big(\B L(X)\big)$ with $\B L(X^{\op n})$.

The following lemma is a version of Duhamel's formula.

\begin{lemma}\label{l:duhamel}
  Let $T \in \T{Lip}_D(X)$. It holds that
  \[
[e^{i D},T](\xi) = i \int_0^1 e^{s  i  D} d(T) e^{(1-s) i D}(\xi) \, ds \q \mbox{for all } \xi \in X .
  \]
\end{lemma}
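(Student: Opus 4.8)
The statement is Duhamel's formula for the unitary $e^{iD}$: for $T \in \T{Lip}_D(X)$ and $\xi \in X$,
\[
[e^{iD}, T](\xi) = i \int_0^1 e^{siD} d(T) e^{(1-s)iD}(\xi) \, ds.
\]
The plan is to first establish the identity on a sufficiently large domain of vectors — say on the core $\C X$ from Proposition \ref{p:core}, or more simply on $\T{Dom}(D)$ — and then extend to all of $X$ by continuity. The right-hand side already makes sense as a Bochner integral for every $\xi \in X$: the integrand $s \mapsto e^{siD} d(T) e^{(1-s)iD}(\xi)$ is a continuous map from $[0,1]$ to $X$ by the strict continuity of $\{e^{isD}\}$ (Lemma \ref{l:struni}) together with boundedness of $d(T)$, hence Bochner integrable over the compact interval $[0,1]$, and the resulting operator is bounded adjointable with norm at most $\|d(T)\|_\infty$. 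So the whole content is the equality.

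**The core computation.** I would fix $\xi \in \T{Dom}(D)$ and consider the $X$-valued function
\[
h(t) := e^{itD} T e^{-itD}(\xi), \qquad t \in [0,1].
\]
Since $\xi \in \T{Dom}(D)$ and $T$ preserves $\T{Dom}(D)$, each $e^{-itD}(\xi)$ lies in $\T{Dom}(D)$ (the unitary group preserves $\T{Dom}(D)$, as follows from Lemma \ref{l:struni}), so $T e^{-itD}(\xi) \in \T{Dom}(D)$ as well, and the expression is legitimate. Using the differentiability statement in Lemma \ref{l:struni} — namely $\frac{d}{dt} e^{itD}(\eta) = e^{itD} iD(\eta)$ for $\eta \in \T{Dom}(D)$ — together with the product rule for the Bochner derivative, one computes
\[
h'(t) = e^{itD} (iD) T e^{-itD}(\xi) + e^{itD} T (-iD) e^{-itD}(\xi) = e^{itD} \big(i[D,T]\big) e^{-itD}(\xi) = i\, e^{itD} d(T) e^{-itD}(\xi).
\]
Here the key point is that $[D,T]$ acting on $\T{Dom}(D)$ coincides with the bounded operator $d(T)$, so $h'$ is continuous on $[0,1]$. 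Integrating from $0$ to $1$ and using the fundamental theorem of calculus for Bochner integrals gives
\[
h(1) - h(0) = e^{iD} T e^{-iD}(\xi) - T(\xi) = i \int_0^1 e^{itD} d(T) e^{-itD}(\xi)\, dt.
\]
Now apply $e^{iD}$ on the left is not quite what we want; instead substitute $\eta := e^{iD}(\xi)$, or rather rewrite: replacing $\xi$ by $e^{iD}(\xi)$ and observing $e^{-iD}e^{iD} = 1$, the left-hand side becomes $e^{iD} T (\xi) - T e^{iD}(\xi) = -[e^{iD},T](\xi)$... so I would instead apply the computation with $h(t) = e^{-itD} T e^{itD}$ or simply reorganize the change of variables $s = 1-t$ in the integral. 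Concretely, from $e^{iD} T e^{-iD} - T = i\int_0^1 e^{itD} d(T) e^{-itD}\, dt$, apply the operator identity $[e^{iD},T] = (e^{iD} T e^{-iD} - T)e^{iD}$, giving
\[
[e^{iD},T](\xi) = i \int_0^1 e^{itD} d(T) e^{-itD} e^{iD}(\xi)\, dt = i \int_0^1 e^{itD} d(T) e^{i(1-t)D}(\xi)\, dt,
\]
and the substitution $s = t$ yields exactly the claimed formula.

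**The extension step and the main obstacle.** Having proved the identity for $\xi \in \T{Dom}(D)$, I would extend to all $\xi \in X$ by density: $\T{Dom}(D)$ is norm-dense in $X$, the left-hand side $\xi \mapsto [e^{iD},T](\xi)$ is bounded, and the right-hand side $\xi \mapsto i\int_0^1 e^{siD} d(T) e^{(1-s)iD}(\xi)\, ds$ is also bounded (norm $\leq \|d(T)\|_\infty$) and depends continuously on $\xi$, since one may pull the norm estimate inside the Bochner integral. Hence the two bounded operators agreeing on a dense subspace agree everywhere. I expect the only genuinely delicate point to be the justification of the product rule for the Bochner derivative of $t \mapsto e^{itD} T e^{-itD}(\xi)$ — one must be careful that differentiating the ``inner'' factor $e^{-itD}(\xi)$ requires $\xi \in \T{Dom}(D)$, and that the resulting derivative lands back in a continuous family so that the fundamental theorem of calculus applies; this is where the hypotheses $T(\T{Dom}(D)) \subseteq \T{Dom}(D)$ and the boundedness of $d(T)$ are both used. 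Everything else is routine manipulation of the strongly continuous unitary group.
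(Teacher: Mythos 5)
Your proof is correct and follows essentially the same approach as the paper: differentiate an interpolating one-parameter family, use Lemma~\ref{l:struni} to compute the derivative via the commutator, apply the fundamental theorem of calculus for Bochner integrals, and extend from the dense domain $\T{Dom}(D)$ by continuity. The only cosmetic difference is that the paper differentiates the function $s \mapsto e^{siD} T e^{(1-s)iD}(\xi)$ directly (so that $g(1)-g(0) = [e^{iD},T](\xi)$ with no rearranging needed), whereas you use $e^{itD} T e^{-itD}(\xi)$ and then reorganize; also note a small sign slip in your intermediate remark ($e^{iD}T(\xi) - Te^{iD}(\xi) = +[e^{iD},T](\xi)$, not $-[e^{iD},T](\xi)$), which actually means your first attempted substitution already works and the operator-identity detour was unnecessary.
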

\begin{proof}
  By density of $\T{Dom}(D) \su X$ it suffices to consider a vector $\xi \in \T{Dom}(D)$. Since $T(\xi) \in \T{Dom}(D)$ we get from Lemma \ref{l:struni} that the map from $\B R$ to $X$ given by $s \mapsto e^{s i D} T e^{(1 - s) iD}(\xi)$ is continuously differentiable with derivative
  \[
  i e^{s i D} D T e^{ (1 - s)  iD}(\xi) - i  e^{s i D} T e^{(1 - s) iD} D(\xi)
  = i e^{s i D} d(T) e^{ (1 - s) iD}(\xi) .
  \]
  The result of the lemma now follows from the fundamental theorem of calculus.
\end{proof}

The next proposition is related to \cite[Theorem 3.2.32]{BrRo:OAQ}. 

\begin{prop}\label{p:estimate}
  Let $f : \B R \to \B C$ be a bounded continuous $L^1$-function satisfying that $p \cd \widehat{f}(p)$ is an $L^1$-function. We have the operator norm estimate
  \[
  \big\| [ f(D),T] \big\|_\infty \leq \| d(T) \|_\infty \cd \frac{1}{\sqrt{2\pi}} \cd \| p \cd \widehat{f}(p) \|_1 
  \q \mbox{for all } T \in \T{Lip}_D(X) .
\]
\end{prop}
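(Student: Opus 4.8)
The plan is to combine the Fourier representation of the functional calculus from Proposition \ref{p:fourier} with Duhamel's formula from Lemma \ref{l:duhamel}. First I would observe that the hypotheses allow us to apply Proposition \ref{p:fourier} to $f$: since $p \cd \widehat{f}(p)$ is an $L^1$-function and $\widehat f$ is bounded (by the Riemann--Lebesgue lemma, as $f \in L^1$), we have $\widehat f \in L^1(\B R)$ as well, because $|\widehat f(p)| \leq |\widehat f(p)| \cd 1_{[-1,1]}(p) + |p \cd \widehat f(p)|$ for $|p| \geq 1$. Hence for every $\xi \in X$ the representation $f(D)(\xi) = \frac{1}{\sqrt{2\pi}} \int_{-\infty}^\infty \widehat{f}(p) e^{i p D}(\xi) \, dp$ holds as a Bochner integral.

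Next I would fix $T \in \T{Lip}_D(X)$ and $\xi \in X$, and commute $T$ past the integral. Since $T$ is a bounded adjointable operator it commutes with the Bochner integral, so
\[
[f(D),T](\xi) = \frac{1}{\sqrt{2\pi}} \int_{-\infty}^\infty \widehat f(p) \, [e^{ipD},T](\xi) \, dp .
\]
Here one should be mildly careful: $e^{ipD}$ is the unitary group at parameter $p$, and Lemma \ref{l:duhamel} as stated handles $[e^{iD},T]$; the same argument (rescaling $D$ to $pD$, or simply repeating the proof of Lemma \ref{l:duhamel} with $e^{spiD}$ in place of $e^{siD}$) gives
\[
[e^{ipD},T](\xi) = i p \int_0^1 e^{spiD}\, d(T)\, e^{(1-s)piD}(\xi)\, ds ,
\]
the extra factor $p$ arising from differentiating $s \mapsto e^{spiD}T e^{(1-s)piD}(\xi)$. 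Taking norms inside the inner Bochner integral and using that the $e^{\,\cdot\, iD}$ are unitary (hence norm-one) gives $\big\| [e^{ipD},T](\xi) \big\|_X \leq |p| \cd \| d(T) \|_\infty \cd \| \xi \|_X$.

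Finally I would substitute this bound into the outer integral:
\[
\big\| [f(D),T](\xi) \big\|_X \leq \frac{1}{\sqrt{2\pi}} \int_{-\infty}^\infty |\widehat f(p)| \cd |p| \cd \| d(T)\|_\infty \cd \|\xi\|_X \, dp = \| d(T)\|_\infty \cd \frac{1}{\sqrt{2\pi}} \cd \| p \cd \widehat f(p)\|_1 \cd \|\xi\|_X ,
\]
and since this holds for all $\xi \in X$ with a constant independent of $\xi$, taking the supremum over $\|\xi\|_X \leq 1$ yields the claimed operator norm estimate. (Adjointability of $[f(D),T]$ is automatic since $f(D)$ and $T$ are both adjointable.) I do not expect a genuine obstacle here; the only points requiring a little care are the Fubini-type interchange of the two Bochner integrals — justified by absolute integrability of $(s,p) \mapsto \widehat f(p)\, e^{spiD} d(T) e^{(1-s)piD}(\xi)$ over $[0,1]\times\B R$, which follows from the bound $\|\widehat f(p)\| \cdot |p| \cdot \|d(T)\|_\infty \cdot \|\xi\|_X$ together with the hypothesis $p\widehat f(p) \in L^1$ — and the strict/operator-norm continuity statements from Lemma \ref{l:struni} needed to ensure the integrands are genuinely Bochner integrable.
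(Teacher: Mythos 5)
Your proof is correct and follows the same route as the paper: Fourier representation of $f(D)$ via Proposition \ref{p:fourier}, Duhamel's formula (Lemma \ref{l:duhamel}, rescaled from $e^{iD}$ to $e^{ipD}$) to extract the factor $p$, and then unitarity of the group to bound everything by $\|d(T)\|_\infty$. If anything you are slightly more careful than the paper's own write-up: you explicitly verify $\widehat f \in L^1$ so that Proposition \ref{p:fourier} applies, and your intermediate bound $\|d(T)\|_\infty \|\xi\|_X$ is the correct one, whereas the paper writes $\|d(T)(\xi)\|_X$ at the analogous step even though the vector actually fed into $d(T)$ is $e^{(1-s)ipD}(\xi)$ rather than $\xi$.
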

\begin{proof}
  Let $T \in \T{Lip}_D(X)$ and $\xi \in X$ be given. An application of Proposition \ref{p:fourier} and Lemma \ref{l:duhamel} entail that
  \[
\begin{split}
    [f(D),T](\xi) & = \frac{1}{\sqrt{2 \pi}} \int_\infty^\infty \widehat{f}(p) [ e^{ipD},T](\xi) \, dp \\
    & = \frac{1}{\sqrt{2 \pi}} \int_\infty^\infty \widehat{f}(p) i p \cd
    \left( \int_0^1 e^{s i p D} d(T) e^{(1 - s) ip D}(\xi) \, ds \right) \, dp
\end{split}
\]
and hence that
\[
\big\| [f(D),T](\xi) \big\|_X \leq \frac{1}{\sqrt{2 \pi}} \int_\infty^\infty \big| p \cd \widehat{f}(p) \big| \, dp \cd \| d(T)(\xi) \|_X .
\]
This proves the proposition.
\end{proof}

The following corollary plays a fundamental role in the present text.

\begin{cor}\label{c:scaleest}
 Let $f : \B R \to \B C$ be a bounded continuous $L^1$-function satisfying that $p \cd \widehat{f}(p)$ is an $L^1$-function.  For every $\rho > 0$ and $T \in \T{Lip}_D(X)$ we have the operator norm estimate
  \[
  \big\| [ f(D/\rho),T] \big\|_\infty \leq \frac{1}{\rho \sqrt{2\pi}} \cd \| d(T) \|_\infty \cd \| p \cd \widehat{f}(p) \|_1 .
\]
\end{cor}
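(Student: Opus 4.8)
The plan is to reduce Corollary \ref{c:scaleest} to Proposition \ref{p:estimate} by absorbing the scaling parameter $\rho$ into the function. Fix $\rho > 0$ and let $m_\rho : \rr \to \rr$ denote the homeomorphism $m_\rho(x) = x/\rho$. Setting $g := f \ci m_\rho$, we have that $g : \rr \to \cc$ is again a bounded continuous function, and a change of variables shows that $g$ is an $L^1$-function with $\| g \|_1 = \rho \| f \|_1$. Moreover, since $D/\rho = m_\rho(D)$ in the sense of the functional calculus discussed after Proposition \ref{p:core}, the transformation rule from Proposition \ref{p:trans} yields $f(D/\rho) = (f \ci m_\rho)(D) = g(D)$. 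It therefore suffices to check that $p \cd \widehat{g}(p)$ is an $L^1$-function with $\| p \cd \widehat g(p)\|_1 = \rho^{-1} \| p \cd \widehat f(p)\|_1$, for then Proposition \ref{p:estimate} applied to the function $g$ and the Lipschitz operator $T$ gives
\[
\big\| [f(D/\rho),T]\big\|_\infty = \big\| [g(D),T]\big\|_\infty \leq \| d(T)\|_\infty \cd \frac{1}{\sqrt{2\pi}} \cd \| p \cd \widehat g(p)\|_1 = \frac{1}{\rho\sqrt{2\pi}} \cd \| d(T)\|_\infty \cd \| p \cd \widehat f(p)\|_1 ,
\]
which is the claimed estimate.

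The remaining computation is a routine change of variables in the integral defining the Fourier transform. Substituting $u = x/\rho$ in $\widehat g(p) = \frac{1}{\sqrt{2\pi}}\int_{-\infty}^\infty f(x/\rho) e^{-ixp}\, dx$ one finds $\widehat g(p) = \rho \cd \widehat f(\rho p)$ for all $p \in \rr$. Consequently, substituting $q = \rho p$ gives
\[
\int_{-\infty}^\infty \big| p \cd \widehat g(p)\big|\, dp = \int_{-\infty}^\infty \rho |p| \cd |\widehat f(\rho p)|\, dp = \frac{1}{\rho}\int_{-\infty}^\infty |q| \cd |\widehat f(q)|\, dq ,
\]
which is finite by hypothesis; this establishes both that $p \cd \widehat g(p)$ is an $L^1$-function and the identity $\| p \cd \widehat g(p)\|_1 = \rho^{-1}\| p \cd \widehat f(p)\|_1$ used above. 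One should also note in passing that $\widehat f$, and hence $\widehat g$, is automatically an $L^1$-function: it is continuous by the Riemann--Lebesgue lemma, hence bounded near the origin, while its tails are dominated by the $L^1$-function $p \cd \widehat f(p)$; this is precisely what makes Proposition \ref{p:estimate}, and through its proof Proposition \ref{p:fourier}, applicable to $g$.

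There is no serious obstacle here. The only points requiring a little care are the identification $f(D/\rho) = g(D)$, which is exactly the content of Proposition \ref{p:trans} once $D/\rho$ is read as $m_\rho(D)$, and keeping track of the Jacobian factors in the two changes of variables so that the single power of $\rho^{-1}$ appears correctly rather than $\rho$ or $\rho^{-2}$.
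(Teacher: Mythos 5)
Your proof is correct and follows essentially the same route as the paper: define the scaled function $f_\rho = f\circ m_\rho$, compute $\widehat{f_\rho}(p) = \rho\,\widehat f(\rho p)$ so that $\|p\cdot\widehat{f_\rho}(p)\|_1 = \rho^{-1}\|p\cdot\widehat f(p)\|_1$, and apply Proposition~\ref{p:estimate}. You fill in a few details the paper leaves implicit — notably the appeal to Proposition~\ref{p:trans} to identify $f(D/\rho)$ with $(f\circ m_\rho)(D)$ and the remark that $\widehat f$ (hence $\widehat{f_\rho}$) is automatically $L^1$ so that Proposition~\ref{p:fourier} indeed applies — but these are harmless elaborations of the same argument.
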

\begin{proof}
  Let $\rho > 0$ be given. Define the bounded continuous $L^1$-function $f_\rho(x) := f(x/\rho)$ and remark that $p \cd \widehat{f_\rho}(p) = \rho \cd p \cd \widehat{f}(\rho \cd p)$ and hence that
  \[
\| p \cd \widehat{f_\rho}(p) \|_1 = \frac{1}{\rho} \cd \| p \cd \widehat{f}(p) \|_1 .
  \]
The result of the present corollary is now a consequence of Proposition \ref{p:estimate}. 
\end{proof}

\subsection{Perturbations}\label{ss:perturb}
We still consider the setting described in the beginning of Section \ref{s:func}. The purpose of this subsection is to study the behaviour of the functional calculus under perturbations. On top of the selfadjoint and regular unbounded operator $D : \T{Dom}(D) \to X$ we fix a selfadjoint bounded operator $R : X \to X$. Remark that the Kato-Rellich theorem for Hilbert $C^*$-modules implies that $D + R : \T{Dom}(D) \to X$ is again selfadjoint and regular, see for example \cite[Theorem 4.5]{KaLe:LGR}.

\begin{prop}\label{p:perturb}
  Let $f : \B R \to \B C$ be a bounded continuous $L^1$-function satisfying that $p \cd \widehat{f}(p)$ is an $L^1$-function. We have the operator norm estimate
  \[
\| f(D + R) - f(D) \|_\infty \leq \frac{1}{\sqrt{2\pi}} \cd \| R \|_\infty \cd \| p \cd \widehat{f}(p) \|_1 .
  \]
\end{prop}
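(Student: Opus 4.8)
The plan is to represent both $f(D)$ and $f(D+R)$ via the Fourier transform, using Proposition \ref{p:fourier}, and then to control the difference of the two unitary groups $\{e^{ipD}\}$ and $\{e^{ip(D+R)}\}$ by a Duhamel argument modelled on the proof of Lemma \ref{l:duhamel}. First I would note that $D + R$ is again selfadjoint and regular by Kato--Rellich, and that the hypotheses of Proposition \ref{p:fourier} hold for $f$: since $f \in L^1(\B R)$ its Fourier transform $\widehat f$ is continuous by Riemann--Lebesgue, and the bound $|\widehat f(p)| \leq |p \cd \widehat f(p)|$ for $|p| \geq 1$ together with continuity on $[-1,1]$ shows $\widehat f \in L^1(\B R)$. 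Hence for every $\xi \in X$ one obtains
\[
f(D + R)(\xi) - f(D)(\xi) = \frac{1}{\sqrt{2\pi}} \int_{-\infty}^\infty \widehat f(p) \big( e^{ip(D+R)} - e^{ipD}\big)(\xi) \, dp .
\]

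The key step is the estimate $\| e^{ip(D+R)} - e^{ipD} \|_\infty \leq |p| \cd \| R \|_\infty$ for each fixed $p \in \B R$. To prove it I would fix $\xi \in \T{Dom}(D) = \T{Dom}(D+R)$ and apply Lemma \ref{l:struni} to see that the $X$-valued map $s \mapsto e^{isp(D+R)} e^{i(1-s)pD}(\xi)$ is continuously differentiable, with derivative $s \mapsto e^{isp(D+R)} \, ip R \, e^{i(1-s)pD}(\xi)$ --- the two unbounded contributions involving $D$ cancel, leaving only the bounded perturbation $R$. The fundamental theorem of calculus then gives
\[
\big( e^{ip(D+R)} - e^{ipD}\big)(\xi) = ip \int_0^1 e^{isp(D+R)} \, R \, e^{i(1-s)pD}(\xi) \, ds ,
\]
and this identity passes to all $\xi \in X$ by density of $\T{Dom}(D)$ and boundedness of both sides. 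Since $e^{isp(D+R)}$ and $e^{i(1-s)pD}$ are isometries, the integrand has norm at most $|p| \cd \| R \|_\infty \cd \| \xi \|_X$, which yields the desired bound.

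Finally I would substitute this estimate into the Bochner integral in the first display and bound it by $\frac{1}{\sqrt{2\pi}} \int_{-\infty}^\infty |\widehat f(p)| \cd |p| \cd \| R \|_\infty \cd \| \xi \|_X \, dp = \frac{1}{\sqrt{2\pi}} \cd \| R \|_\infty \cd \| p \cd \widehat f(p) \|_1 \cd \| \xi \|_X$, which is exactly the claimed operator norm estimate. The only delicate point is the Duhamel differentiation step --- keeping track of domains and verifying the cancellation of the $D$-terms --- but this is essentially identical to the computation already carried out in the proof of Lemma \ref{l:duhamel}, so I do not anticipate a serious obstacle.
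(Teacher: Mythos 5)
Your proof is correct, but it takes a different route from the paper. The paper uses a slick doubling trick: it forms $\widetilde D := D \op (D+R)$ on $X^{\op 2}$ and the bounded shift $T := \sma{0&0\\1&0}$, notes that $T$ is Lipschitz with respect to $\widetilde D$ with $d(T) = \sma{0&0\\R&0}$, and observes that $[f(\widetilde D),T] = \sma{0&0\\f(D+R)-f(D)&0}$; the estimate is then an immediate application of Proposition \ref{p:estimate}. You instead reprove the essential Duhamel identity
\[
e^{ip(D+R)} - e^{ipD} = ip\int_0^1 e^{isp(D+R)}\,R\,e^{i(1-s)pD}\,ds
\]
from scratch and feed it into the Fourier representation of Proposition \ref{p:fourier}. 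In fact, your Duhamel formula is precisely the $(2,1)$-entry of what Lemma \ref{l:duhamel} yields when applied to $\widetilde D$ and $T$, so the underlying analysis is the same. The paper's reduction is shorter on the page because it recycles Proposition \ref{p:estimate}; your version is more self-contained and makes the one-parameter-group perturbation estimate $\|e^{ip(D+R)} - e^{ipD}\|_\infty \leq |p|\,\|R\|_\infty$ explicit, which some readers may find more transparent. One small point you handle correctly and that deserves emphasis: you do need to check $\widehat f \in L^1$ before invoking Proposition \ref{p:fourier} (it is not among the stated hypotheses), and your argument via continuity on $[-1,1]$ plus $|\widehat f(p)| \leq |p\,\widehat f(p)|$ for $|p|\geq 1$ is exactly right. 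The differentiation step in the Duhamel argument is justified by the same product-rule / difference-quotient argument used in the proof of Lemma \ref{l:duhamel}, noting that $\T{Dom}(D+R)=\T{Dom}(D)$ so the intermediate vectors land in the common domain; there is no gap there.
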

\begin{proof}
  Define the selfadjoint and regular unbounded operator $D \op (D+ R) : \T{Dom}(D)^{\op 2} \to X^{\op 2}$ together with the bounded adjointable operator $T := \ma{cc}{0 & 0 \\ 1 & 0} : X^{\op 2} \to X^{\op 2}$. It holds that $T$ is a Lipschitz operator with respect to $D \op (D + R)$ and the commutator $\big[ D \op (D+ R), T \big] : \T{Dom}(D)^{\op 2} \to X^{\op 2}$ extends to the bounded adjointable operator $\ma{cc}{0 & 0 \\ R & 0}$. Moreover, we have that $f\big(D \op (D + R) \big) = f(D) \op f(D + R)$ and hence that
  \[
\big[ f\big(D \op (D + R) \big), T \big] = \ma{cc}{0 & 0 \\ f(D + R) - f(D) & 0} .
\]
These observations show that the result of the current proposition follows from Proposition \ref{p:estimate}. 
\end{proof}

\section{$K$-theory}\label{s:ktheory}
Let $A$ be a $C^*$-algebra and recall that $A^\sim$ denotes the unitalization of $A$.

We start this section by reviewing some standard results concerning $K$-theory and stability under holomorphic functional calculus. Let us fix a norm-dense $*$-subalgebra $\C A \su A$ satisfying the following conditions:

  \begin{enumerate}
  \item $\C A$ is equipped with a norm $\| \cd \|_1 : \C A \to [0,\infty)$ turning $\C A$ into a Banach $*$-algebra and the inclusion $\io : \C A \to A$ is continuous; 
  \item Letting $\C A^\sim$ denote the unitalization of the Banach $*$-algebra $\C A$, the inclusion $\io : \C A^\sim \to A^\sim$ (given by $\io(a,\la) := (\io(a),\la)$) is assumed to be \emph{spectrally invariant}, thus for every element $(a,\la) \in \C A^\sim$ the spectrum of $(a,\la)$ agrees with the spectrum of $(\io(a),\la) \in A^\sim$. 
  \end{enumerate}

  We turn $\C A^\sim$ into a unital Banach $*$-algebra with respect to the norm $\| (a,\la) \|_1 := \| a \|_1 + |\la|$ and for every $n \in \B N$ we equip $M_n(\C A^\sim)$ with the structure of a unital Banach $*$-algebra satisfying that the map
  \[
\C A^\sim \to M_n(\C A^\sim) \q (a,\la) \mapsto (a,\la) e_{ij}
\]
is an isometry for all $i,j \in \{1,\ldots,n\}$ (where $e_{ij}$ is the matrix with $1$ in position $(i,j)$ and zeroes elsewhere). Let us clarify that the inclusion $\io : M_n(\C A^\sim) \to M_n(A^\sim)$ is automatically spectrally invariant and we refer to \cite[Lemma 2.1]{Swa:TEP} for a proof of this fact, but see also \cite[Proposition A.2.2]{Bos:OKA}.

The next result is a straightforward consequence of properties of the holomorphic functional calculus, see \cite[Chapitre I, \S 4, Th\'eor\`eme 3 and Proposition 7]{Bou:TS12}.

\begin{prop}\label{p:holomorphic}
Let $n \in \B N$ and let $x \in M_n(\C A^\sim)$. If $f$ is a holomorphic function defined on an open subset of the spectrum of $\io(x) \in M_n(A^\sim)$, then it holds that $f(\io(x))$ belongs to $M_n(\C A^\sim)$ and we have the identity $f(\io(x)) = \io( f(x))$. In particular, we get that $\C A^\sim$ is a local $C^*$-algebra inside $A^\sim$ in the sense of \cite[Definition 3.1.1]{Bla:KOA}.
\end{prop}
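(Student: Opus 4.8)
The plan is to unwind the statement to the two conditions imposed on the dense subalgebra $\C A \su A$ and then invoke the standard functional-calculus machinery from \cite{Bou:TS12}. First I would reduce to the matrix level: since $\io : \C A^\sim \to A^\sim$ is spectrally invariant, the cited result \cite[Lemma 2.1]{Swa:TEP} (equivalently \cite[Proposition A.2.2]{Bos:OKA}) gives that $\io : M_n(\C A^\sim) \to M_n(A^\sim)$ is spectrally invariant as well, for every $n \in \B N$. Since $M_n(\C A^\sim)$ is a unital Banach $*$-algebra (with the norm declared just before the statement) and $\io$ is a continuous unital injective homomorphism onto a norm-dense subalgebra of $M_n(A^\sim)$, we are exactly in the abstract setting of a continuous inclusion of unital Banach algebras that is spectrally invariant. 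So it suffices to prove the following purely Banach-algebraic fact: if $\io : \C B \to B$ is a continuous unital homomorphism of unital Banach algebras with $\io$ spectrally invariant, $x \in \C B$, and $f$ is holomorphic on an open neighbourhood of $\si_B(\io(x)) = \si_{\C B}(x)$, then $f(x)$ computed in $\C B$ maps under $\io$ to $f(\io(x))$ computed in $B$.

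For this fact I would use the Cauchy/Dunford integral definition of the holomorphic functional calculus. Choose a cycle $\Gamma$ in the domain of $f$ winding once around $\si_{\C B}(x) = \si_B(\io(x))$ (such a $\Gamma$ exists by the usual contour argument, using that both spectra coincide and are compact). In $\C B$ the resolvent $\la \mapsto (\la - x)^{-1}$ is defined for $\la \notin \si_{\C B}(x)$, and by spectral invariance this is the same set of $\la$ for which $(\la - \io(x))^{-1}$ exists in $B$; moreover $\io\big((\la - x)^{-1}\big) = (\la - \io(x))^{-1}$ since $\io$ is a unital homomorphism. The map $\la \mapsto (\la - x)^{-1}$ is norm-continuous (indeed holomorphic) on $\Gamma$ as a $\C B$-valued function, so the Banach-space-valued Bochner/Riemann integral $f(x) := \frac{1}{2\pi i}\int_\Gamma f(\la)(\la - x)^{-1}\, d\la$ exists in $\C B$. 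Applying the continuous linear map $\io$ commutes with this integral, giving $\io(f(x)) = \frac{1}{2\pi i}\int_\Gamma f(\la)\io\big((\la - x)^{-1}\big)\, d\la = \frac{1}{2\pi i}\int_\Gamma f(\la)(\la - \io(x))^{-1}\, d\la = f(\io(x))$, where the last equality is the definition of the holomorphic functional calculus in $B$. In particular $f(\io(x)) = \io(f(x)) \in \io(M_n(\C A^\sim))$, i.e.\ it "belongs to $M_n(\C A^\sim)$" in the sense of the statement.

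The last sentence of the proposition is then essentially a restatement of what has been proved together with the definition in \cite[Definition 3.1.1]{Bla:KOA}: a unital local $C^*$-algebra is precisely a spectrally invariant dense unital $*$-subalgebra that is closed under the holomorphic functional calculus in the ambient $C^*$-algebra (in all matrix sizes), and both properties have now been established for $\C A^\sim \su A^\sim$. I do not expect a genuine obstacle here; the only point requiring a little care is the bookkeeping that passing from $\C A$ to $M_n(\C A^\sim)$ preserves spectral invariance and the Banach $*$-algebra structure, but this is exactly what the cited \cite[Lemma 2.1]{Swa:TEP} provides, so the argument is routine once that is invoked. A secondary minor point is to make sure the contour $\Gamma$ can be taken inside the open set where $f$ is defined and simultaneously surrounding the common spectrum; this is the standard contour construction and needs no new idea.
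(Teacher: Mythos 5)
Your argument is correct and is essentially the standard content behind the paper's one-line citation of Bourbaki: the paper gives no proof, only references \cite[Chapitre I, \S 4, Th\'eor\`eme 3 and Proposition 7]{Bou:TS12}, which are precisely the Dunford-integral definition of the holomorphic functional calculus and its compatibility with continuous unital Banach-algebra morphisms. Your reduction to matrices via \cite[Lemma 2.1]{Swa:TEP}, the choice of a contour $\Gamma$ around the common spectrum, the identity $\io\big((\la - x)^{-1}\big) = (\la - \io(x))^{-1}$ on $\Gamma$, and the interchange of $\io$ with the vector-valued Cauchy integral is exactly the argument the paper is implicitly appealing to. No gap.
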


From a $K$-theoretic perspective it is impossible to distinguish $\C A$ from $A$, see for example \cite[Appendix 3]{Con:TIC}, \cite[Th\'eor\`eme A.2.1]{Bos:OKA} or \cite[Chapter II]{Bla:KOA} for a proof:

\begin{prop}\label{p:ktheory}
The $K$-theory of the Banach $*$-algebra $\C A$ is isomorphic to the $K$-theory of the $C^*$-algebra $A$ via the group homomorphism induced by the inclusion $\io : \C A \to A$.
\end{prop}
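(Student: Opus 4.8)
The plan is to reduce the statement to the standard density/spectral-invariance argument. Recall that $K_0$ and $K_1$ of a (unital) Banach $*$-algebra $\mathcal{C}$ are built from homotopy classes of idempotents, respectively invertibles, in the matrix algebras $M_n(\mathcal{C})$; since $\mathcal{C}\mathcal{A}$ is nonunital we pass to $\mathcal{C}\mathcal{A}^\sim$ and define $K_i(\mathcal{C}\mathcal{A}) := \ker\big(K_i(\mathcal{C}\mathcal{A}^\sim)\to K_i(\mathbb{C})\big)$, exactly as for the $C^*$-algebra $A$. So it suffices to show that the inclusion $\iota : \mathcal{C}\mathcal{A}^\sim \to A^\sim$ induces an isomorphism on $K_0$ and $K_1$, and then restrict to the kernels of the respective augmentation maps (the augmentations are intertwined by $\iota$, so the isomorphism descends).

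First I would treat $K_0$. \emph{Surjectivity:} given a projection $p \in M_n(A^\sim)$, by norm-density of $M_n(\mathcal{C}\mathcal{A}^\sim)$ in $M_n(A^\sim)$ choose a self-adjoint $b \in M_n(\mathcal{C}\mathcal{A}^\sim)$ with $\|b - p\|_\infty < 1/4$; then the spectrum of $\iota(b)$ misses $\{1/2\}$, hence the spectral projection $e = \chi_{(1/2,\infty)}(\iota(b))$ is given by a holomorphic function of $\iota(b)$ on a neighbourhood of $\sigma(\iota(b))$, so by Proposition~\ref{p:holomorphic} $e$ lies in $M_n(\mathcal{C}\mathcal{A}^\sim)$, and $e$ is homotopic to $p$ in $M_n(A^\sim)$ (both are close to $b$). \emph{Injectivity:} if a projection $q \in M_n(\mathcal{C}\mathcal{A}^\sim)$ becomes trivial in $K_0(A^\sim)$, then after stabilising $q\oplus 1_k$ is homotopic through projections in $M_m(A^\sim)$ to a constant projection; a standard argument (cover the homotopy path by finitely many open balls of radius $<1/4$, approximate each by a projection-valued path in $M_m(\mathcal{C}\mathcal{A}^\sim)$ using Proposition~\ref{p:holomorphic} at the overlaps, and glue) produces a homotopy inside $M_m(\mathcal{C}\mathcal{A}^\sim)$ — this is the one place that requires a genuine compactness-of-$[0,1]$ argument rather than a single approximation. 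Hence $q$ is already trivial in $K_0(\mathcal{C}\mathcal{A})$.

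The $K_1$ case is parallel and slightly easier: invertibility (unlike being a projection) is an open condition preserved under small perturbations, so surjectivity follows by directly approximating an invertible in $M_n(A^\sim)$ by an invertible in $M_n(\mathcal{C}\mathcal{A}^\sim)$ (holomorphic functional calculus is not even needed here, only spectral invariance to know the approximant stays invertible in the smaller algebra), and injectivity again uses the covering-of-the-homotopy argument, where the relevant paths of invertibles can be lifted locally. Finally, restricting the two isomorphisms $K_i(\mathcal{C}\mathcal{A}^\sim)\xrightarrow{\sim} K_i(A^\sim)$ to the kernels of the augmentation homomorphisms, which are compatible with $\iota$, yields $K_i(\mathcal{C}\mathcal{A}) \cong K_i(A)$ induced by $\iota : \mathcal{C}\mathcal{A}\to A$, as claimed. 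The main obstacle throughout is bookkeeping in the homotopy-lifting (injectivity) step; everything else is an immediate consequence of Proposition~\ref{p:holomorphic} and norm-density. Since this is entirely classical, I would in practice simply cite \cite[Th\'eor\`eme A.2.1]{Bos:OKA} or \cite[Chapter II]{Bla:KOA} and omit the details.
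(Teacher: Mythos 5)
Your proposal is correct and is exactly the standard density/spectral-invariance argument to which the paper defers by citation (Connes, Bost, Blackadar); the paper gives no proof of its own. Your last sentence — that in practice one would cite \cite[Th\'eor\`eme A.2.1]{Bos:OKA} or \cite[Chapter II]{Bla:KOA} — is precisely what the paper does, so the approaches coincide.
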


\subsection{Relative $K$-theory}\label{ss:relative}
In this paper, we focus less on the usual picture of even $K$-theory in terms of projections, see e.g. \cite{RLL:IKC}. Instead, we work with selfadjoint invertibles following, to some extent, the approach developed in \cite{GrSc:IPS,BoLo:KRU}. In order to get a sufficient amount of flexibility we let $A$ be a $C^*$-algebra and consider a unital $C^*$-algebra $L$ which contains $A$ as a norm-closed $*$-ideal. One possible choice could be $L = A^\sim$ but there are many other possibilities, for example we could consider $L = \B L(A)$ where $A$ is considered as a Hilbert $C^*$-module over itself. In this setting $A$ can be identified with the compact operators $\B K(A)$ sitting as a norm-closed $*$-ideal inside $\B L(A)$.

We define the unital $C^*$-subalgebra $D(L,A) \su L \op L$ by putting
\[
D(L,A) := \big\{ (x,y) \in L \op L \mid x - y \in A \big\} 
\]
and record that $D(L,A)$ comes equipped with two unital $*$-homomorphisms $\rho_1$ and $\rho_2 : D(L,A) \to L$ given by $\rho_1(x,y) = x$ and $\rho_2(x,y) = y$. The relative $K$-theory group $K_0(L,A)$ is then defined as the kernel of the group homorphism induced by $\rho_1$, meaning that
\[
K_0(L,A) := \T{Ker}\Big( K_0(\rho_1) : K_0\big( D(L,A) \big) \to K_0(L) \Big) . 
\]

We also have two $*$-homomorphisms $i_1$ and $i_2 : A \to D(L,A)$ satisfying that $i_1(x) = (x,0)$ and $i_2(x) = (0,x)$. In our context the excision theorem, see e.g. \cite[Theorem 1.5.9]{Ros:AKA}, can be formulated as follows:

\begin{thm}\label{t:excision}
The $*$-homomorphism $i_2 : A \to D(L,A)$ induces an isomorphism of abelian groups
\[ 
K_0(i_2) : K_0(A) \to K_0(L,A) .
\] 
\end{thm}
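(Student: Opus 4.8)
The plan is to prove Theorem~\ref{t:excision} by exhibiting an explicit inverse to $K_0(i_2)$ built from the two canonical projections $\rho_1, \rho_2 : D(L,A) \to L$. The key structural observation is that $D(L,A)$ sits in a split short exact sequence. Indeed, consider the $*$-homomorphism $\rho_1 : D(L,A) \to L$; it is surjective (given $x \in L$, the pair $(x,x)$ maps to $x$) and its kernel consists of all pairs $(0,y)$ with $-y \in A$, i.e.\ $0 - y \in A$ forces $y \in A$, so $\ker(\rho_1) = \{(0,y) \mid y \in A\} \cong A$ via $i_2$. Moreover $\rho_1$ is split by the diagonal inclusion $\Delta : L \to D(L,A)$, $\Delta(x) = (x,x)$. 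So we have a split extension
\[
0 \longrightarrow A \xrightarrow{\ i_2\ } D(L,A) \xrightarrow{\ \rho_1\ } L \longrightarrow 0 .
\]

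First I would apply the half-exactness and split-exactness of $K_0$ to this sequence. Split-exactness gives a direct sum decomposition $K_0\big(D(L,A)\big) \cong K_0(A) \oplus K_0(L)$, where the summand $K_0(L)$ is the image of $K_0(\Delta)$ and the summand $K_0(A)$ is the image of $K_0(i_2)$; in particular $K_0(i_2)$ is injective and its image is a direct complement to $\operatorname{im} K_0(\Delta)$. Second, I would identify this image with $\ker K_0(\rho_1) = K_0(L,A)$. The inclusion $\operatorname{im} K_0(i_2) \subseteq \ker K_0(\rho_1)$ is immediate since $\rho_1 \circ i_2 = 0$. For the reverse inclusion, take $\xi \in \ker K_0(\rho_1)$; by the direct sum decomposition write $\xi = K_0(i_2)(\alpha) + K_0(\Delta)(\beta)$, apply $K_0(\rho_1)$ to get $0 = K_0(\rho_1 \circ \Delta)(\beta) = \beta$ since $\rho_1 \circ \Delta = \operatorname{id}_L$, hence $\xi = K_0(i_2)(\alpha) \in \operatorname{im} K_0(i_2)$. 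Combining injectivity with this identification of the image yields that $K_0(i_2) : K_0(A) \to K_0(L,A)$ is an isomorphism.

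I do not expect a serious obstacle here: the whole argument is an instance of the standard fact that $K_0$ turns a split extension into a split extension of abelian groups, which in turn follows from the half-exactness of $K$-theory (from \cite{RLL:IKC} or the excision theorem \cite[Theorem~1.5.9]{Ros:AKA} cited in the statement) together with the splitting. If one wants to avoid invoking split-exactness as a black box, the mildest amount of work is to produce an explicit inverse map: the composite $K_0(\rho_2) : K_0\big(D(L,A)\big) \to K_0(L)$ restricted to $\ker K_0(\rho_1) = K_0(L,A)$ factors through $K_0(A)$, because on $K_0(L,A)$ one has $K_0(\rho_1) = 0$, so $K_0(\rho_2) - K_0(\rho_1)$ lands in the image of $K_0$ of the ideal $A \hookrightarrow L$; one then checks on generators (projections over $D(L,A)^\sim$, or rather selfadjoint invertibles in the picture used later in the paper) that this composite inverts $K_0(i_2)$ using $\rho_2 \circ i_2 = \operatorname{id}_A$. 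The only point requiring a line of care is the bookkeeping with unitalizations when $A$ and $L$ are non-unital — but since $L$ is assumed unital and $D(L,A)$ is a unital $C^*$-algebra containing the (possibly non-unital) ideal $A$, this is exactly the setting in which \cite[Theorem~1.5.9]{Ros:AKA} applies verbatim, so I would simply cite it.
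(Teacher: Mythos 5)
Your proof is correct. The paper gives no proof of Theorem~\ref{t:excision}; it records the statement as a reformulation of the excision theorem and simply cites \cite[Theorem~1.5.9]{Ros:AKA}, so you are supplying an argument where the paper merely points to a reference. Your argument — split-exactness of $K_0$ applied to the split short exact sequence
\[
0 \longrightarrow A \xrightarrow{\ i_2\ } D(L,A) \xrightarrow{\ \rho_1\ } L \longrightarrow 0
\]
with splitting $\Delta$, followed by the identification $\operatorname{im} K_0(i_2) = \ker K_0(\rho_1) = K_0(L,A)$ from $\rho_1 \circ i_2 = 0$ and $\rho_1 \circ \Delta = \operatorname{id}_L$ — is the standard way to derive the excision statement in this $C^*$-algebraic setting and is complete as written. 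One small caveat on the alternative sketched in your final paragraph: the inference that ``$K_0(\rho_2) - K_0(\rho_1)$ lands in the image of $K_0(A) \to K_0(L)$, hence factors through $K_0(A)$'' is not valid as stated, both because $K_0(A) \to K_0(L)$ need not be injective (so landing in its image is not a factorization) and because seeing that it lands there at all essentially presupposes the direct-sum decomposition you have just established. Since you present this only as an optional alternative and the split-exactness argument already closes the proof, the looseness is harmless.
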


For each $n \in \B N$, let $\T{GL}_n\big(D(L,A)\big)$ denote the group of invertible elements in the unital $C^*$-algebra $M_n\big( D(L,A) \big)$. Let us introduce the set
\[
W_n(L,A) := \big\{ H \in \T{GL}_n\big(D(L,A)\big) \mid H = H^* \big\} 
\]
consisting of all the selfadjoint invertible elements in $M_n\big( D(L,A) \big)$. Notice that the unit $E_n := (1,1)^{\op n}$ in $M_n\big( D(L,A)\big)$ belongs to $W_n(L,A)$. Moreover, letting $\De : L \to D(L,A)$ denote the unital $*$-homomorphism given by $\De(x) := (x,x)$ we get an element $\De(G) \in W_n(L,A)$ whenever $G$ is an invertible selfadjoint element in $M_n(L)$.

To ease the notation, for every element $H \in W_n(L,A)$ we define the mutually orthogonal projections
\begin{equation}\label{eq:mutuproj}
H_+ := \frac{ E_n + H \cd |H|^{-1}}{2} \, \, \T{ and } \, \, \, H_- := \frac{ E_n - H \cd |H|^{-1}}{2} 
\end{equation}
inside the unital $C^*$-algebra $M_n\big(D(L,A) \big)$. It holds that $H_+ + H_- = E_n$ whereas $H_+ - H_- = H \cd |H|^{-1}$. 

We endow $W_n(L,A)$ with the metric topology inherited from the $C^*$-norm on $M_n\big( D(L,A) \big)$.

\begin{dfn}\label{d:homotopy}
Let $n \in \B N$. Two elements $H_0$ and $H_1$ in $W_n(L,A)$ are said to be \emph{homotopic}, if there exists a continuous map $\al : [0,1] \to W_n(L,A)$ such that $\al(0) = H_0$ and $\al(1) = H_1$.
\end{dfn}

We define the set $W(L,A)$ as the disjoint union
\[
W(L,A) := \bigsqcup_{n \in \B N} W_n(L,A) 
\]
and equip $W(L,A)$ with the additive operation ``$\op$'' induced by the direct sum of matrices with entries in $D(L,A)$.


\begin{dfn}\label{d:stablehomo}
  Let $n,m \in \B N$. Two elements $H_0 \in W_n(L,A)$ and $H_1 \in W_m(L,A)$ are called \emph{stably homotopic}, if there exists $k \in \B N$ with $k \geq n,m$ and selfadjoint invertible elements $G_0 \in M_{k - n}(L)$ and $G_1 \in M_{k - m}(L)$ such that $H_0 \op \De(G_0)$ and $H_1 \op \De(G_1)$ are homotopic as elements in $W_k(L,A)$. In this case, we write $H_0 \sim H_1$. In the special situation we
\end{dfn}

It is not difficult to verify that the relation of being stably homotopic is an equivalence relation on $W(L,A)$ which is compatible with the additive operation. We notice in passing that the special case where $k = n = m$ in Definition \ref{d:stablehomo} just amounts to saying that $H_0$ and $H_1$ are homotopic in the sense of Definition \ref{d:homotopy}. 

For future reference, we record the following:

\begin{lemma}\label{l:phase}
Let $n\in \B N$ and let $H \in W_n(L,A)$. It holds that $H$ is homotopic to the selfadjoint unitary $H \cd |H|^{-1}$ via the continuous map $t \mapsto H \cd |H|^{-t}$.
\end{lemma}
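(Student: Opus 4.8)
The plan is to show that the path $t \mapsto H \cd |H|^{-t}$ for $t \in [0,1]$ takes values in $W_n(L,A)$, is continuous, and connects $H$ (at $t=0$) to $H \cd |H|^{-1}$ (at $t=1$). First I would check the endpoints: at $t = 0$ the element $|H|^{-0} = E_n$ so we recover $H$, and at $t = 1$ we get the claimed phase $H \cd |H|^{-1}$. The main structural point is that everything can be computed inside the unital $C^*$-algebra $M_n\big(D(L,A)\big)$ using the continuous functional calculus applied to the positive invertible element $|H| = (H^*H)^{1/2} = (H^2)^{1/2}$, whose spectrum is a compact subset of $(0,\infty)$ since $H$ is invertible and selfadjoint; in particular $|H|^{-t}$ is well defined via the continuous function $\la \mapsto \la^{-t}$ on $\si(|H|)$ for every real $t$.

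Next I would verify the three required properties of $\al(t) := H \cd |H|^{-t}$. \emph{Invertibility:} $|H|^{-t}$ is invertible with inverse $|H|^{t}$ (again by functional calculus, using $\la^{-t} \cd \la^{t} = 1$ on $\si(|H|)$), and $H$ is invertible by assumption, so the product $\al(t)$ is invertible in $M_n\big(D(L,A)\big)$. \emph{Selfadjointness:} since $H$ commutes with $|H|$ (both are functions of, or polynomials in, $H^2$, hence commute with each other), $H$ commutes with $|H|^{-t}$, so $\al(t)^* = (|H|^{-t})^* H^* = |H|^{-t} H = H |H|^{-t} = \al(t)$, using that $|H|^{-t}$ is positive hence selfadjoint. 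Thus $\al(t) \in W_n(L,A)$ for each $t$. \emph{Continuity:} the map $t \mapsto |H|^{-t}$ from $[0,1]$ to $M_n\big(D(L,A)\big)$ is continuous — this is a standard continuity property of the continuous functional calculus, since $(t,\la) \mapsto \la^{-t}$ is jointly continuous on $[0,1] \times \si(|H|)$ (a compact set bounded away from $0$), so $\| |H|^{-t} - |H|^{-t'} \| \leq \sup_{\la \in \si(|H|)} |\la^{-t} - \la^{-t'}| \to 0$ as $t' \to t$. Multiplying on the left by the fixed element $H$ preserves continuity, so $\al : [0,1] \to W_n(L,A)$ is continuous.

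I do not anticipate a serious obstacle here; this is a routine functional-calculus homotopy. The only mild point worth spelling out is that the functional calculus is being applied to $|H|$ as an element of the $C^*$-algebra $M_n\big(D(L,A)\big)$ — not of $M_n(L)$ — so one should note that $D(L,A)$ is a genuine (unital) $C^*$-algebra, that $|H| \in M_n\big(D(L,A)\big)$ (it is a continuous function of $H^2$, and $H \in M_n\big(D(L,A)\big)$), and that $\si(|H|) \su (0,\infty)$ computed in this $C^*$-algebra, so the path genuinely stays inside $W_n(L,A)$ rather than merely inside $W_n(L \op L, \,\cdot\,)$ or some larger object. With that understood, combining the endpoint identities with invertibility, selfadjointness, and continuity gives exactly the homotopy asserted in the lemma.
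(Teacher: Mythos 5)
Your proof is correct. The paper states Lemma \ref{l:phase} without proof (introducing it with ``For future reference, we record the following''), treating it as a routine functional-calculus observation; your verification — endpoints, self-adjointness via commutation of $H$ with $|H|^{-t}$, invertibility, norm-continuity of $t \mapsto |H|^{-t}$ from joint continuity of $(t,\la)\mapsto\la^{-t}$ on $[0,1]\times\si(|H|)$, and the remark that the functional calculus is taken inside the unital $C^*$-algebra $M_n\big(D(L,A)\big)$ so the path genuinely stays in $W_n(L,A)$ — is precisely the argument that the lemma's phrasing presupposes.
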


\begin{lemma}\label{l:abelian}
The quotient space $W(L,A)/ \sim$ is an abelian group with neutral element given by the equivalence class of $E_1 = (1,1) \in W(L,A)$. The inverse of an equivalence class $[H] \in W(L, A)/\sim$ agrees with $[-H]$.
\end{lemma}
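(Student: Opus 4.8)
The plan is to verify the three assertions---closure under $\op$, existence of a neutral element, and existence of inverses---essentially in this order, reducing everything to the homotopy-theoretic facts already recorded, primarily Lemma \ref{l:phase} and the observations about $W_n(L,A)$ following Definition \ref{d:stablehomo}. First I would check that $\op$ is well-defined on $W(L,A)/\!\sim$: if $H_0 \sim H_0'$ and $H_1 \sim H_1'$, then stacking the witnessing homotopies (after padding with appropriate $\De(G)$'s to make the matrix sizes agree) gives a homotopy witnessing $H_0 \op H_1 \sim H_0' \op H_1'$; commutativity with the $\De(G_i)$-summands and the block-permutation unitaries---which are of the form $\De(U)$ for a permutation matrix $U$ over $L$, hence absorbable into the $\De$-part of the stable-homotopy relation---handles the bookkeeping. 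Associativity is inherited from the direct sum of matrices.

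Next, for the neutral element, I must show $H \op E_1 \sim H$ for every $H \in W_n(L,A)$; but $E_1 = (1,1) = \De(1)$ with $1 \in M_1(L)$ selfadjoint invertible, so $H \op E_1$ and $H \op \De(1)$ are literally equal, and taking $k = n+1$, $G_0 = 1 \in M_1(L)$, $G_1$ empty, Definition \ref{d:stablehomo} gives $H \op E_1 \sim H$ directly (the required homotopy in $W_{n+1}(L,A)$ being constant). More generally the same argument shows that adjoining any $\De(G)$ does not change the class, which is the key normalization used throughout.

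For inverses, the claim is $[H] \op [-H] = [E_1]$, i.e.\ $H \op (-H) \sim E_m$ for some $m$ (here $H \in W_n(L,A)$, so I expect $m = 2n$ up to a $\De(G)$-summand). By Lemma \ref{l:phase}, $H \sim H|H|^{-1}$ and $-H \sim -H|H|^{-1}$ through homotopies staying inside $W_n(L,A)$, so it suffices to treat the case of a selfadjoint unitary $U := H|H|^{-1}$, where $U \op (-U)$ must be shown homotopic to $E_{2n}$ (possibly after a $\De(G)$). The standard device is the rotation homotopy
\[
\al(t) := \ma{cc}{\cos(\pi t)\, U & \sin(\pi t)\, U \\ \sin(\pi t)\, U & -\cos(\pi t)\, U} , \q t \in [0,1],
\]
which at $t=0$ is $U \op (-U)$, at $t=1$ is $(-U) \op U$, and at every $t$ is a selfadjoint unitary in $M_{2n}\big(D(L,A)\big)$ (one checks $\al(t)^2 = E_{2n}$ using $U^2 = E_n$); composing $\al$ with the analogous homotopy rotating $(-U)\op U$ back via $\ma{cc}{0 & E_n \\ E_n & 0}$-conjugation, or more simply observing that $\al$ together with the swap gives a loop, one extracts a homotopy from $U \op (-U)$ to the constant selfadjoint unitary $\ma{cc}{0 & U \\ U & 0}$, which in turn is $\De$-equivalent to $E_{2n}$ since $\ma{cc}{0 & U \\ U & 0} = \De\!\big(\ma{cc}{0 & 1 \\ 1 & 0}\big) \cd (U \op U)$---one must be a little careful here that the diagonal unitary $U \op U$ lies in $W_{2n}(L,A)$ and can be connected to $E_{2n}$ only after a further stabilization, which is exactly what the $\De(G)$-freedom in Definition \ref{d:stablehomo} is for. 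The main obstacle is precisely this last point: writing the endpoint of the rotation homotopy as a $\De(G)$-summand in a way that is legitimate within $W(L,A)/\!\sim$, i.e.\ ensuring every intermediate matrix genuinely has its off-diagonal (in the $L \op L$ sense) entries in $A$ and is invertible, so that the whole path lives in $W_{2n}(L,A)$; once that is arranged, $[-H]$ being the inverse of $[H]$ follows, and since the group operation is commutative by the swap homotopy, the proof is complete.
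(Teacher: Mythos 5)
Your checks for well-definedness of $\op$, associativity, commutativity and the neutral element (using $E_1 = \De(1)$, so $H \op E_1 \sim H$ directly from Definition \ref{d:stablehomo}) are all sound and match the standard bookkeeping; the paper in fact concentrates on the inverse claim only, taking the rest as immediate.

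The inverse argument, however, contains a genuine gap which you flag but do not resolve. Your rotation $\al$ does carry $U \op (-U)$ to $\ma{cc}{0 & U \\ U & 0}$ inside $W_{2n}(L,A)$, but the final step --- showing $\ma{cc}{0 & U \\ U & 0} \sim E_1$ --- is where the argument breaks down. The factorization $\ma{cc}{0 & U \\ U & 0} = \De\big(\sma{0 & 1 \\ 1 & 0}\big)\cd (U \op U)$ is a \emph{product}, not a direct sum with a $\De(G)$-summand, so the $\De(G)$-absorption built into Definition \ref{d:stablehomo} is not applicable to it. Worse, the asserted fact that $U \op U$ can be connected to $E_{2n}$ after stabilization is false in general: in the group being constructed, $[U \op U] = 2[U]$, which need not vanish, and if your claim were true the argument would prove $[U] = [E_1]$ for every selfadjoint unitary $U \in W_n(L,A)$, which is absurd. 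Salvaging this route would require a further Whitehead-type argument to convert the conjugation $W \ma{cc}{0 & U \\ U & 0} W^* = E_n \op (-E_n)$, with $W = \tfrac{1}{\sqrt 2}\ma{cc}{E_n & E_n \\ U & -U}$, into a legitimate stable homotopy.

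The paper sidesteps all of this by choosing a conjugating path that mixes the two summands using the internal structure of $H$. With $H$ a selfadjoint unitary and $H_\pm = (E_n \pm H)/2$ its spectral projections, the unitaries
\[
U_t := \ma{cc}{\cos(\pi t/2) H_- + H_+ & -\sin(\pi t/2) H_- \\ \sin(\pi t/2) H_- & \cos(\pi t/2) H_- + H_+} \in M_{2n}\big(D(L,A)\big) , \q t \in [0,1] ,
\]
give a homotopy $\al(t) := U_t \big(H \op (-H)\big) U_t^*$ in $W_{2n}(L,A)$ from $H \op (-H)$ directly to $E_n \op (-E_n) = \De(1_n \op (-1_n))$, an element exactly of the form $\De(G)$, whence $H \op (-H) \sim E_1$ at once by Definition \ref{d:stablehomo}. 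The essential difference from your attempt is that the endpoint of this homotopy lands on a $\De$-image with no leftover off-diagonal term still carrying the class of $H$.
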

\begin{proof}
  Let $H \in W_n(L,A)$ for some $n \in \B N$ and let us focus on showing that $H \op (-H)$ is homotopic to $E_n \op (- E_n)$ where we recall that $E_n = (1,1)^{\op n}$ denotes the unit in $M_n\big( D(L,A) \big)$. This entails that $[H] + [-H] = [E_1]$ inside the quotient space $W(L,A)/\sim$. Using Lemma \ref{l:phase} we may assume, without loss of generality that, $H$ is a selfadjoint unitary element in $M_n\big( D(L,A) \big)$. Applying the notation from \eqref{eq:mutuproj} we then have that $H \op (-H) = (H_+ - H_-) \op (H_- - H_+)$ and $E_n \op (- E_n) = (H_+ + H_-) \op (-H_+ - H_-)$. Upon defining the unitary matrices
\[
U_t := \ma{cc}{ \cos(\pi t/2) H_- + H_+ & -\sin(\pi t/2) H_- \\ \sin(\pi t/2) H_- & \cos(\pi t/2) H_- + H_+} \q t \in [0,1] ,
\]
we get the homotopy between $H \op (-H)$ and $E_n \op (- E_n)$ via the continuous map $\al : [0,1] \to W_{2n}(L,A)$ given by the formula
\[
\al(t) := U_t \cd \big(H \op (-H)\big) \cd U_t^* . \qedhere
\]
\end{proof}

We apply the notation
\[
K_0^{\T{inv}}(L,A) := W(L,A)/\sim
\]
for the abelian group appearing in Lemma \ref{l:abelian}.

%

\begin{thm}\label{t:inviso}
  The abelian group $K_0^{\T{inv}}(L,A)$ is isomorphic to the relative $K$-group $K_0(L,A)$. For $H \in W_n(L,A)$, the relevant isomorphism $\varphi$ sends the equivalence class $[H]$ to the formal difference of projections
  \begin{equation}\label{eq:defphi}
\varphi( [H] ) := \big[ H_+ \big] -  \big[ (\De \rho_1)(H_+) \big] .
\end{equation}
  For projections $p \in M_n( D(L,A))$ and $q \in M_m(D(L,A))$ satisfying that $[p] - [q] \in K_0(L,A)$, the inverse of $\varphi$ is given explicitly by the formula
  \[
\varphi^{-1}( [p] - [q] ) := \big[ (2 p - E_n) \op (E_m - 2 q )  \big] .
  \]
\end{thm}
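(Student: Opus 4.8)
The plan is to construct $\varphi$ in two stages: first show that the assignment $H \mapsto [H_+] - [(\Delta \rho_1)(H_+)]$ descends to a well-defined group homomorphism $K_0^{\T{inv}}(L,A) \to K_0(L,A)$, and then verify that the proposed formula $\varphi^{-1}$ yields a well-defined inverse. For the well-definedness of $\varphi$ on classes, I would first check that for $H \in W_n(L,A)$ the difference $[H_+] - [(\Delta\rho_1)(H_+)]$ actually lands in $K_0(L,A) = \T{Ker}(K_0(\rho_1))$: indeed $\rho_1(H_+) = (\rho_1 H)_+$ and $\rho_1((\Delta\rho_1)(H_+)) = \rho_1(H_+)$, so the two projections have the same image under $\rho_1$, whence the difference is killed by $K_0(\rho_1)$. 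Next, invariance under homotopy is immediate since homotopic $H_0, H_1$ give homotopic projections $(H_0)_+, (H_1)_+$ in $M_n(D(L,A))$ (using Lemma \ref{l:phase} to pass to the phase, whose positive part depends norm-continuously on $H$), and $(\Delta\rho_1)$ is a $*$-homomorphism so it transports homotopies; invariance under the stabilization $H \rightsquigarrow H \oplus \Delta(G)$ follows because $\Delta(G)_+ = \Delta(G_+)$ so the added summand contributes $[\Delta(G_+)] - [(\Delta\rho_1)(\Delta(G_+))] = [\Delta(G_+)] - [\Delta(G_+)] = 0$ (note $\rho_1 \Delta = \T{id}_L$). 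Additivity is clear from $(H_0 \oplus H_1)_+ = (H_0)_+ \oplus (H_1)_+$. This shows $\varphi$ is a well-defined homomorphism.

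For the inverse, given projections $p \in M_n(D(L,A))$, $q \in M_m(D(L,A))$ with $[p] - [q] \in K_0(L,A)$, I would set $H := (2p - E_n) \oplus (E_m - 2q)$, which is visibly a selfadjoint unitary in $M_{n+m}(D(L,A))$, hence lies in $W_{n+m}(L,A)$; the point $[p]-[q] \in \T{Ker}(K_0(\rho_1))$ guarantees $[\rho_1(p)] = [\rho_1(q)]$ in $K_0(L)$, but for $H$ to be a legitimate element of $W_{n+m}(L,A)$ we only need $H \in M_{n+m}(D(L,A))$ and invertibility, both automatic. The genuinely substantive check is that the map $[p] - [q] \mapsto [H]$ is well-defined on $K_0(L,A)$ — i.e. independent of the choice of representing projections. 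Here I would use the standard moves generating the equivalence relation on formal differences of projections in the relative group: adding a common projection to $p$ and $q$, replacing $p$ by a unitarily equivalent (or homotopic) projection, and adding a degenerate pair $(r,r)$ with $r \in M_k(L)$ viewed inside $D(L,A)$ via $\Delta$; each of these must be shown to change $H$ only by a stable homotopy. The degenerate-pair move is where $\Delta(G)$-summands enter: adding $\Delta(r)$ to both $p$ and $q$ changes $H$ by $\Delta(2r - E_k) \oplus \Delta(E_k - 2r)$, which is stably homotopic to $E_{2k}$ by an argument just like the one in Lemma \ref{l:abelian}.

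Finally I would check $\varphi \circ \varphi^{-1} = \T{id}$ and $\varphi^{-1} \circ \varphi = \T{id}$. For the first: applying $\varphi$ to $[H]$ with $H = (2p-E_n)\oplus(E_m - 2q)$ gives $H_+ = p \oplus (E_m - q)$ and $(\Delta\rho_1)(H_+) = \Delta\rho_1(p) \oplus \Delta\rho_1(E_m - q)$, so $\varphi(\varphi^{-1}([p]-[q])) = [p] + [E_m - q] - [\Delta\rho_1(p)] - [\Delta\rho_1(E_m-q)]$. Using $[E_m - q] - [\Delta\rho_1(E_m - q)] = -[q] + [\Delta\rho_1(q)]$ (since $E_m - \Delta\rho_1(E_m) = 0$ in $K_0$) this collapses to $[p] - [q] - ([\Delta\rho_1(p)] - [\Delta\rho_1(q)])$, and the bracketed term vanishes because it is the image under $K_0(\Delta) \circ K_0(\rho_1)$ of $[p]-[q] \in K_0(L,A) = \T{Ker}(K_0(\rho_1))$. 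For the reverse composite: starting from $[H] \in W_n(L,A)$, we may assume $H$ is a selfadjoint unitary by Lemma \ref{l:phase}, so $H = H_+ - H_-$ with $H_+ + H_- = E_n$; then $\varphi^{-1}(\varphi([H])) = [(2H_+ - E_n) \oplus (E_n - 2\Delta\rho_1(H_+))] = [H \oplus (-\Delta\rho_1(H))]$, and this equals $[H]$ in $K_0^{\T{inv}}(L,A)$ because $-\Delta\rho_1(H)$ is of the form $\Delta(G)$ with $G = -\rho_1(H)$ a selfadjoint unitary in $M_n(L)$, so it is a permitted stabilizing summand.

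\medskip

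The main obstacle I expect is the well-definedness of $\varphi^{-1}$ — carefully matching the generating relations of the relative $K_0$-group (addition of common projections, homotopy/unitary equivalence, and adjunction of degenerate $\Delta$-pairs) against the stable-homotopy relation on $W(L,A)$, and in particular being sure that $K_0(L,A) = \T{Ker}(K_0(\rho_1))$ really is generated by formal differences $[p]-[q]$ of honest projections over $D(L,A)$ so that the formula covers every class. Everything else is bookkeeping with $*$-homomorphisms ($\rho_1$, $\Delta$, $\Delta\rho_1$) and the functoriality of $\pm$, guided throughout by the computations already performed in Lemma \ref{l:abelian}.
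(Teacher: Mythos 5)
Your treatment of the well-definedness of $\varphi$ and of both composition checks $\varphi\varphi^{-1} = \mathrm{id}$ and $\varphi^{-1}\varphi = \mathrm{id}$ matches the paper's argument and is correct. The gap — which you yourself flag as the main obstacle — is the well-definedness of $\varphi^{-1}$, and your sketch for it is not quite right. The "degenerate-pair" move you invoke (adding $(\Delta(r),\Delta(r))$) is not part of the relevant equivalence relation: $K_0(L,A)$ is defined here simply as $\ker(K_0(\rho_1)) \subset K_0(D(L,A))$, and the relations on formal differences are exactly those of $K_0$ of the unital $C^*$-algebra $D(L,A)$ — there is no separate "relative cycle / degenerate" presentation in play. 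Relatedly, your worry about whether every class in $K_0(L,A)$ is even represented by a formal difference $[p]-[q]$ of projections over $D(L,A)$ is vacuous, since $D(L,A)$ is unital (its unit is $(1,1)$); this is one of the advantages of working with $L$ rather than $A^\sim$. Moreover, moves of the form "add a common projection" and "replace by a homotopic projection" do not obviously generate the Grothendieck-group relation on pairs, so it is unclear your checks would suffice.

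The paper sidesteps all of this by defining a homomorphism on the whole of $K_0(D(L,A))$ and only afterwards restricting to $K_0(L,A)$. Concretely: one shows the assignment on single projections $p \mapsto [2p - E_n]$ is compatible with the stable homotopy relation — if $p_0 \oplus 0_{k-n}$ and $p_1 \oplus 0_{k-m}$ are homotopic in $M_k(D(L,A))$, then $2p_0 - E_n$ and $2p_1 - E_m$ are stably homotopic in $W(L,A)$, because the stabilizers $-E_{k-n} = \Delta(-1_{k-n})$ and $-E_{k-m} = \Delta(-1_{k-m})$ are allowed $\Delta(G)$-summands. Since the assignment is visibly additive, it descends to a monoid homomorphism on the Murray--von Neumann semigroup and hence, by the universal property of the Grothendieck group (together with Lemma~\ref{l:abelian} to rewrite $-[2q - E_m]$ as $[E_m - 2q]$), to a group homomorphism $K_0(D(L,A)) \to K_0^{\mathrm{inv}}(L,A)$. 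Restricting to the subgroup $K_0(L,A)$ gives $\varphi^{-1}$. I would replace your "moves" discussion with this cleaner factorization; the rest of your argument then goes through.
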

\begin{proof}
  Let us first verify that both $\varphi$ and $\varphi^{-1}$ are well-defined. 

  To show that $\varphi$ is well-defined, remark first of all that the left hand side of \eqref{eq:defphi} does indeed belong to $K_0(L,A) \su K_0\big( D(L,A) \big)$ since $\rho_1 \De \rho_1 = \rho_1$. Suppose now that $H_0 \in W_n(L, A)$ and $H_1 \in W_m(L, A)$ are stably homotopic. Choose $k \geq n,m$ together with selfadjoint invertibles $G_0 \in M_{k-n}(L)$ and $G_1 \in M_{k-m}(L)$ such that $H_0 \op \De(G_0)$ and $H_1 \op \De(G_1)$ are homotopic. This entails that $(H_0)_+ \op \De(G_0)_+$ and $(H_1)_+ \op \De(G_1)_+$ are homotopic as projections in $M_k\big( D(L,A) \big)$ and we therefore have the identities 
  \[
  \begin{split}
  \big[ (H_0)_+ \big] - \big[ (\De \rho_1)(H_0)_+ \big] & =  \big[ (H_0)_+ \op \De(G_0)_+ \big] - \big[ (\De \rho_1)(H_0)_+ \op \De(G_0)_+ \big] \\
  & = \big[ (H_1)_+ \op \De(G_1)_+ \big] - \big[ (\De \rho_1)(H_1)_+ \op \De(G_1)_+ \big] \\
  & = \big[ (H_1)_+ \big] - \big[ (\De \rho_1)(H_1)_+ \big]
  \end{split}
 \]
 inside $K_0( L,A) \su K_0\big( D(L,A) \big)$. Notice in this respect that $\De \rho_1 \De = \De$. This shows that $\varphi$ is a well-defined group homomorphism (the compatibility with the additive structures being obvious). 

 To show that $\varphi^{-1}$ is well-defined let $p_0 \in M_n\big( D(L,A) \big)$ and $p_1 \in M_m\big( D(L,A) \big)$ be two projections and suppose that there exists a $k \geq n,m$ such that the stabilizations $p_0 \op 0_{k - n}$ and $p_1 \op 0_{k - m}$ are homotopic as projections inside $M_k\big( D(L,A) \big)$. It then holds that the elements $2 p_0 - E_n$ and $2 p_1 - E_m$ are stably homotopic as elements in $W(L,A)$, stabilizing with $-E_{k-n} = \De(-1_{k-n})$ and $-E_{k-m} = \De(-1_{k-m})$, respectively. We therefore get a well-defined group homomorphism from $K_0\big( D(L,A) \big)$ to $K_0^{\T{inv}}(L,A)$ and $\varphi^{-1}$ is the restriction of this group homomorphism to $K_0(L,A) \su K_0\big( D(L,A) \big)$.

 It remains to establish that $\varphi$ and $\varphi^{-1}$ are indeed each others inverses. To show that $\varphi^{-1} \varphi$ is equal to the identity we let $H \in W_n(L,A)$ and compute that
 \[
 \begin{split}
   (\varphi^{-1} \varphi)( [ H ] )
   & = \big[ (2 H_+ - E_n) \op ( E_n - 2 (\De \rho_1)(H_+) ) \big] = [ 2 H_+ - E_n ] \\
   & = \big[ H \cd |H|^{-1} \big] = [H] ,
 \end{split}
 \]
 where the last identity follows from Lemma \ref{l:phase}. On the other hand, to see that $\varphi \varphi^{-1}$ is equal to the identity we let $p \in M_n( D(L,A))$ and $q \in M_m(D(L,A))$ be projections such that $\big[ \rho_1(p) \big] = \big[ \rho_1(q) \big]$ inside $K_0(L)$. It then holds that
 \[
 \begin{split}
 ( \varphi \varphi^{-1} )( [p] - [q] ) & = \big[ p \op (E_m - q) \big] - \big[ (\De \rho_1)(p) \op (\De \rho_1)(E_m - q) \big] \\
 & = [p] + [E_m -q] - \big[ (\De \rho_1)(p) \big] - \big[ E_m - (\De \rho_1)(q) \big] \\
 & = [p] - [q] + \big[ (\De \rho_1)(q) \big] - \big[ (\De \rho_1)(p) \big] = [p] - [q] . \qedhere
 \end{split}
 \]
\end{proof}

\subsection{Functoriality}
We continue by investigating the functoriality properties of our $K$-groups. Let us consider two $C^*$-algebras $A$ and $B$ and fix unital $C^*$-algebras $L$ and $N$ such that $A$ sits as a norm-closed $*$-ideal inside $L$ and $B$ sits as a norm-closed $*$-ideal inside $N$.

Suppose that $\si : L \to N$ is a $*$-homomorphism (which is not necessarily unital) satisfying that $\si(a) \in B$ for all $a \in A$. We then get an induced $*$-homomorphism $\si : D(L,A) \to D(N,B)$ given by $\si(x,y) := \big( \si(x) , \si(y) \big)$ which in turn induces a group homomorphism
\[
K_0(\si) : K_0(L,A) \to K_0(N,B) \q K_0(\si)( [p] - [q]) = [\si(p)] - [\si(q)] 
\]
at the level of relative $K$-theory groups. Similarly, using the functoriality of $K$-theory we get a group homomorphism $K_0(\si) : K_0(A) \to K_0(B)$ which only depends on the restriction of $\si : L \to N$ to the closed $*$-ideal $A \su L$. For every $n \in \B N$, we also have an induced map
\[
W_n(\si) : W_n(L,A) \to W_n(N,B) \q W_n(\si)(H) := \si(H) + E_n - \si(E_n)  
\]
and these maps can be gathered into a group homomorphism
\[
K_0^{\T{inv}}(\si) : K_0^{\T{inv}}(L,A) \to K_0^{\T{inv}}(N,B) .
\]

The relationship between our various group homomorphisms is explained in the next proposition:

\begin{prop}\label{p:functoriality}
The diagram here below is commutative:
\[
\begin{CD}
  K_0(A) @>{K_0(i_2)}>> K_0(L,A) @<{\varphi}<< K_0^{\T{inv}}(L,A) \\ 
  @V{K_0(\si)}VV @V{K_0(\si)}VV @V{K_0^{\T{inv}}(\si)}VV \\
  K_0(B) @>{K_0(i_2)}>> K_0(N,B) @<{\varphi}<< K_0^{\T{inv}}(N,B) 
\end{CD}
\]
\end{prop}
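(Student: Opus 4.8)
The statement asserts that two squares commute: the left square involving the excision isomorphisms $K_0(i_2)$ and the functorial maps $K_0(\sigma)$, and the right square relating the invertible picture $K_0^{\T{inv}}$ to relative $K$-theory via $\varphi$. I would treat the two squares separately.

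For the \emph{left square}, the plan is to trace through definitions. Since $K_0(\sigma) : K_0(A) \to K_0(B)$ only depends on the restriction $\sigma|_A : A \to B$, and the induced map $\sigma : D(L,A) \to D(N,B)$ satisfies $\sigma \ci i_2 = i_2 \ci \sigma$ as $*$-homomorphisms $A \to D(N,B)$ — indeed, for $x \in A$ we have $\sigma(i_2(x)) = \sigma(0,x) = (\sigma(0),\sigma(x)) = (0,\sigma(x)) = i_2(\sigma(x))$ — commutativity of the left square follows immediately from the functoriality of $K$-theory applied to this identity of $*$-homomorphisms. The only minor point worth spelling out is that $\sigma(0) = 0$ holds even when $\sigma$ is non-unital, which is automatic.

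For the \emph{right square}, I would start with an arbitrary class $[H] \in K_0^{\T{inv}}(L,A)$ represented by $H \in W_n(L,A)$ and compute both composite images in $K_0(N,B)$. Going down-then-right: $K_0^{\T{inv}}(\sigma)[H] = [W_n(\sigma)(H)]$ where $W_n(\sigma)(H) = \sigma(H) + E_n - \sigma(E_n)$, and then $\varphi$ sends this to $[W_n(\sigma)(H)_+] - [(\De \rho_1)(W_n(\sigma)(H)_+)]$. Going right-then-down: $\varphi[H] = [H_+] - [(\De\rho_1)(H_+)]$, and then $K_0(\sigma)$ sends this to $[\sigma(H_+)] - [\sigma((\De\rho_1)(H_+))]$. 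The key computation is therefore to verify that $W_n(\sigma)(H)_+$ and $\sigma(H_+)$ determine the same class in $K_0(D(N,B))$ — and likewise after applying $\De\rho_1$. The natural approach is to show that $W_n(\sigma)(H)$ and the element $\sigma(H) + E_n - \sigma(E_n)$ have the same positive spectral projection as $\sigma(H)_+ + E_n - \sigma(E_n)_{\text{(suitably interpreted)}}$; more cleanly, I would argue that $\sigma$, being a $*$-homomorphism, commutes with the continuous functional calculus, so $\sigma(H_+) = \sigma\big((E_n + H|H|^{-1})/2\big) = (\sigma(E_n) + \sigma(H)\sigma(|H|^{-1}))/2$, and then relate this to $W_n(\sigma)(H)_+$ using that $W_n(\sigma)(H)$ differs from $\sigma(H)$ only by the orthogonal summand $E_n - \sigma(E_n)$, on which it acts as the identity. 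Concretely, $W_n(\sigma)(H)$ is selfadjoint and invertible in $M_n(D(N,B))$ with $W_n(\sigma)(H)_+ = \sigma(H_+) + (E_n - \sigma(E_n))$, since on the ``orthogonal complement'' piece the operator is $+1 > 0$ and hence contributes entirely to the positive projection. Then $[W_n(\sigma)(H)_+] = [\sigma(H_+)] + [E_n - \sigma(E_n)]$ in $K_0(D(N,B))$, and applying $\De\rho_1$ and subtracting kills the extra term $[E_n - \sigma(E_n)]$, leaving exactly $[\sigma(H_+)] - [\sigma((\De\rho_1)(H_+))] = K_0(\sigma)(\varphi[H])$.

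\textbf{Main obstacle.} The bookkeeping in the right square is the only real work: one must carefully justify the identity $W_n(\sigma)(H)_+ = \sigma(H_+) + (E_n - \sigma(E_n))$, which rests on the facts that $\sigma(E_n)$ is a projection in $M_n(N)$, that $E_n - \sigma(E_n)$ is orthogonal to $\sigma(H)$ (since $\sigma(H) = \sigma(E_n)\sigma(H)\sigma(E_n)$), and that the positive part of a block-diagonal selfadjoint invertible is the block-diagonal of the positive parts. I expect this to be a short but slightly fiddly argument about orthogonal decompositions and functional calculus inside a corner $\sigma(E_n) M_n(N) \sigma(E_n)$; once it is in place, the equality of the two composites in $K_0(N,B)$ is immediate, and an entirely parallel (simpler) computation handles the $\De\rho_1$-terms. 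No deep input beyond naturality of functional calculus and of $K$-theory is needed.
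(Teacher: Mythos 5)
Your proof is correct and follows the same overall structure as the paper: the left square is immediate from functoriality, and the right square reduces to establishing the identity $W_n(\si)(H)_+ = \si(H_+) + (E_n - \si(E_n))$ and then observing that the extra orthogonal projection $E_n - \si(E_n)$ is fixed by $\De\rho_1$ and cancels in the formal difference. The one place where you diverge from the paper is in how you establish that key identity. The paper first invokes Lemma \ref{l:phase} to reduce to the case where $H$ is a selfadjoint unitary; then $W_n(\si)(H) = \si(H) + E_n - \si(E_n)$ is itself a selfadjoint unitary and its positive spectral projection is given by the purely algebraic formula $\tfrac{1}{2}\bigl(E_n + W_n(\si)(H)\bigr)$, so no functional calculus beyond the phase reduction is needed. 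You instead work with a general selfadjoint invertible $H$ and argue via the block decomposition of $W_n(\si)(H)$ relative to $\si(E_n) \oplus (E_n - \si(E_n))$, using that $\si$ intertwines continuous functional calculus when regarded as a unital $*$-homomorphism onto the corner $\si(E_n) M_n\bigl(D(N,B)\bigr)\si(E_n)$. Both routes are short and sound; the paper's phase reduction trades a small functional-calculus argument for a one-line algebraic verification, while your version avoids the extra appeal to Lemma \ref{l:phase}. One small notational slip: the corner you should be working in is $\si(E_n) M_n\bigl(D(N,B)\bigr)\si(E_n)$, not $\si(E_n) M_n(N) \si(E_n)$ — all the elements in question live in the double algebra $D(N,B)$, not in $N$ itself.
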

\begin{proof}
  The commutativity of the leftmost diagram follows immediately by functoriality of even $K$-theory so we focus on the commutativity of the rightmost diagram. Let thus $H \in W_n\big(D(L,A)\big)$ for some $n \in \B N$ because of Lemma \ref{l:phase} we may assume without loss of generality that $H$ is a selfadjoint unitary. Remark that $\si(H) + E_n - \si(E_n)$ is a selfadjoint unitary satisfying that
  \[
\frac{E_n + \si(H) + E_n - \si(E_n)}{2} = \si(H_+) + E_n - \si(E_n) .
  \]
The result of the lemma now follows from the computation
  \[
  \begin{split}
 (\varphi K_0^{\T{inv}}(\si) \big)( [H]) & = \varphi\big( \big[ \si(H) + E_n - \si(E_n) \big] \big) \\
  & = [ \si(H_+) + E_n - \si(E_n) ] - \big[ (\De \rho_1\si)(H_+) + E_n - \si(E_n) \big] \\
  & = \big[\si(H_+)] - [ (\si \De \rho_1)(H_+)\big]
  = \big( K_0(\si) \varphi \big)( [H] )  . \qedhere
  \end{split}
  \]
\end{proof}
 
%

\subsection{Half signatures and isomorphism with the integers}
In this subsection we investigate the isomorphism between $K_0^{\T{inv}}(A,A)$ and $K_0(A)$ in the case where $A$ is a unital $C^*$-algebra. We then apply this result to obtain an explicit isomorphism between $K_0^{\T{inv}}( \B C, \B C)$ and the integers in terms of half signatures. This does, at least to some extent, explain the appearance of half signatures in \cite{LoSc:SLE,LoSc:FVC}.  

For a unital $C^*$-algebra $A$ and a selfadjoint invertible matrix $H \in M_n(A)$ we recall the definition of the spectral projections $H_+$ and $H_-$ from \eqref{eq:mutuproj}. 

\begin{prop}\label{p:isomunital}
  Let $A$ be a unital $C^*$-algebra. The map which sends a selfadjoint invertible element $(H,G) \in D( M_n(A), M_n(A) )$ to the class $[G_+] - [H_+] \in K_0(A)$ induces an isomorphism of $K$-groups $K_0^{\T{inv}}(A,A) \cong K_0(A)$. The inverse is induced by the map which sends a projection $p \in M_n(A)$ to the class $\big[ (-1_n, 2p - 1_n) \big] \in K_0^{\T{inv}}(A,A)$.
\end{prop}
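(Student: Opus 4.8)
The plan is to deduce the statement from the two isomorphisms already established, specialized to the case $L = A$: the excision isomorphism of Theorem~\ref{t:excision} and the identification $\varphi \colon K_0^{\T{inv}}(A,A) \xrightarrow{\ \cong\ } K_0(A,A)$ of Theorem~\ref{t:inviso}. The first thing to record is that for $L = A$ the $C^*$-algebra $D(L,A)$ degenerates: $D(A,A) = \{(x,y) \in A \oplus A \mid x - y \in A\} = A \oplus A$, so that $M_n\big(D(A,A)\big) = M_n(A) \oplus M_n(A) = D\big(M_n(A), M_n(A)\big)$ and $\rho_1, \rho_2$ become the two coordinate projections. Under the canonical decomposition $K_0\big(D(A,A)\big) \cong K_0(A) \oplus K_0(A)$ implemented by $\big(K_0(\rho_1), K_0(\rho_2)\big)$, the subgroup $K_0(A,A) = \ker K_0(\rho_1)$ is precisely the second summand $0 \oplus K_0(A)$; since moreover $\rho_2 \circ i_2 = \mathrm{id}_A$, the excision isomorphism $K_0(i_2) \colon K_0(A) \to K_0(A,A)$ has inverse equal to the restriction of $K_0(\rho_2)$ to $K_0(A,A)$.

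Next I would trace $\varphi$ through this picture. For a selfadjoint invertible $(H,G) \in D\big(M_n(A), M_n(A)\big)$ the functional calculus in the direct sum yields $(H,G)\,|(H,G)|^{-1} = \big(H|H|^{-1}, G|G|^{-1}\big)$, whence $(H,G)_+ = (H_+, G_+)$ and $(\Delta \rho_1)\big((H,G)_+\big) = (H_+, H_+)$. Consequently $\varphi\big([(H,G)]\big) = [(H_+,G_+)] - [(H_+,H_+)]$, which lies in $K_0(A,A)$ and is carried by $K_0(\rho_2)$ to $[G_+] - [H_+]$. Thus the assignment $(H,G) \mapsto [G_+] - [H_+]$ of the statement is exactly the composite $K_0(i_2)^{-1} \circ \varphi$; being a composite of the isomorphisms of Theorems~\ref{t:excision} and~\ref{t:inviso}, it is automatically a well-defined isomorphism of abelian groups, and no separate check of well-definedness on stable homotopy classes is required.

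Finally, for the inverse I would apply $\varphi^{-1} \circ K_0(i_2)$ to a projection $p \in M_n(A)$. Here $K_0(i_2)([p]) = [(0,p)] - [0_n]$ as an element of $K_0(A,A)$, with $\rho_1$ annihilating both projections, so the explicit formula for $\varphi^{-1}$ in Theorem~\ref{t:inviso} gives $\varphi^{-1}\big([(0,p)] - [0_n]\big) = \big[\big(2(0,p) - E_n\big) \oplus \big(E_n - 2\cdot 0_n\big)\big] = \big[(-1_n, 2p - 1_n) \oplus \Delta(1_n)\big]$. Since $\Delta(1_n)$ is $\Delta$ of an invertible selfadjoint matrix over $A$, adjoining it as a direct summand does not alter the class in $K_0^{\T{inv}}(A,A)$ by Definition~\ref{d:stablehomo}, so the class equals $[(-1_n, 2p - 1_n)]$, as asserted. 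The only real work is the bookkeeping needed to line up the two chains of identifications; there is no genuine obstacle here, since all of the mathematical content is already contained in Theorems~\ref{t:excision} and~\ref{t:inviso}.
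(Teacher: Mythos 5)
Your proof is correct, and it takes a route that is genuinely different from the paper's. You exploit the degeneration $D(A,A) = A \oplus A$, identify $K_0(A,A) = \ker K_0(\rho_1)$ with the second summand of $K_0(A) \oplus K_0(A)$, observe that $\rho_2 \circ i_2 = \mathrm{id}_A$ forces $K_0(i_2)^{-1} = K_0(\rho_2)\vert_{K_0(A,A)}$, and then simply trace both composites $K_0(i_2)^{-1} \circ \varphi$ and $\varphi^{-1} \circ K_0(i_2)$ through their explicit formulas. The paper instead works from the other end: it first records that $[p] \mapsto [(-1_n, 2p-1_n)]$ is $\varphi^{-1} \circ K_0(i_2)$, and then verifies that the proposed forward map is its inverse by establishing the class identity $[(H,G)] = [(-1_n, 2G_+-1_n)\oplus(1_n, 1_n-2H_+)]$ in $K_0^{\T{inv}}(A,A)$; this requires reducing to selfadjoint unitaries via Lemma~\ref{l:phase} and then reusing the explicit rotation homotopy from the proof of Lemma~\ref{l:abelian}. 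Your approach buys a cleaner, purely formal derivation in which the forward map is manifestly a composite of the two known isomorphisms, so no homotopy has to be produced; the paper's approach has the advantage of displaying the concrete deformation that splits a selfadjoint invertible pair into two one-sided pieces, which is perhaps more instructive but longer. Both ultimately rest on Theorems~\ref{t:excision} and~\ref{t:inviso}, and your bookkeeping (including the observation that $(H,G)_+ = (H_+, G_+)$ and $(\Delta\rho_1)((H,G)_+) = (H_+,H_+)$, and the absorption of $\Delta(1_n)$ via Definition~\ref{d:stablehomo}) is all accurate.
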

\begin{proof}
  We already know from Theorem \ref{t:excision} and Theorem \ref{t:inviso} that $K_0(A)$ is isomorphic to $K_0^{\T{inv}}(A,A)$ via the group isomorphism induced by the map $p \mapsto \big[ (-1_n, 2p - 1_n) \big]$ for $n \in \B N$ and projections $p \in M_n(A)$. Hence, given a selfadjoint invertible element $(H,G) \in D(M_n(A),M_n(A))$ it suffices to show that $\big[ (H,G) \big]$ agrees with $\big[ (-1_n, 2 G_+ - 1_n) \op (1_n, 1_n - 2H_+)\big]$ inside the $K$-group $K_0^{\T{inv}}(A,A)$. However, by Lemma \ref{l:phase} we may assume without loss of generality that $H$ and $G$ are both selfadjoint unitaries in $M_n(A)$. Remark now that the argument provided in the proof of Lemma \ref{l:abelian} shows that $H \op (-H)$ is homotopic to $(-1_n) \op (1_n)$ via a continuous path of selfadjoint unitaries in $M_{2n}(A)$. This implies that
  \[
\big[ (H,G) \big] = \big[ \big(H \op (-H), G \op (-H) \big)\big] = \big[ (-1_n, G) \op (1_n, - H) \big] 
\]
inside the $K$-group $K_0^{\T{inv}}(A,A)$. Since $G = 2 G_+ - 1_n$ and $-H = 1_n - 2 H_+$ we have proved the present proposition. 
\end{proof}

Recall that the signature of a selfadjoint invertible matrix $H \in M_m(\B C)$, denoted by $\T{sign}(H) \in \B Z$, is given by the number of positive eigenvalues minus the number of negative eigenvalues. It therefore holds that
\[
\T{sign}(H) = \T{TR}\big( H |H|^{-1} \big) = \T{TR}(H_+) - \T{TR}(H_-) ,
\]
where $\T{TR} : M_m(\B C) \to \B C$ refers to the (non-normalized) trace. In the statement and proof of the next proposition we are tacitly identifying $M_n\big(M_k(\B C)\big)$ with $M_{nk}(\B C)$ via the isomorphism which forgets the subdivisions.

\begin{prop}\label{p:signature}
  Let $k \in \B N$. The map which sends a selfadjoint invertible element $(H,G) \in D\big( M_n(M_k(\B C)), M_n( M_k(\B C)) \big)$ to the integer
  \[
\frac{1}{2} \big( \T{sign}(G) - \T{sign}(H) \big)
\]
induces an isomorphism of abelian groups $K_0^{\T{inv}}\big( M_k(\B C), M_k(\B C) \big) \cong \B Z$.
\end{prop}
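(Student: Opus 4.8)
The plan is to deduce this directly from Proposition~\ref{p:isomunital} by composing with the rank isomorphism $K_0\big(M_k(\B C)\big) \cong \B Z$.

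First I would specialise Proposition~\ref{p:isomunital} to the unital $C^*$-algebra $A = M_k(\B C)$: it says that the assignment $(H,G) \mapsto [G_+] - [H_+]$ induces an isomorphism $K_0^{\T{inv}}\big(M_k(\B C), M_k(\B C)\big) \cong K_0\big(M_k(\B C)\big)$. Next, upon identifying $M_n\big(M_k(\B C)\big)$ with $M_{nk}(\B C)$, the non-normalised trace $\T{TR}$ induces the standard rank isomorphism $K_0\big(M_k(\B C)\big) \cong \B Z$, sending the class of a projection $p \in M_n\big(M_k(\B C)\big)$ to $\T{TR}(p)$. Composing these two isomorphisms, the class $\big[(H,G)\big]$ is sent to $\T{TR}(G_+) - \T{TR}(H_+)$.

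It then remains only to rewrite this integer in terms of signatures. For any selfadjoint invertible matrix $M \in M_{nk}(\B C)$ we have $M_+ + M_- = 1_{nk}$ and $\T{sign}(M) = \T{TR}(M_+) - \T{TR}(M_-)$, whence $\T{TR}(M_+) = \frac{1}{2}\big(nk + \T{sign}(M)\big)$. Applying this with $M = G$ and with $M = H$ --- both are selfadjoint invertible in $M_{nk}(\B C)$ since $(H,G) \in D\big(M_n(M_k(\B C)), M_n(M_k(\B C))\big)$ is --- the terms $\frac{1}{2}nk$ cancel and one obtains
\[
\T{TR}(G_+) - \T{TR}(H_+) = \frac{1}{2}\big( \T{sign}(G) - \T{sign}(H)\big) .
\]
In particular the right hand side is always an integer, the assignment $(H,G) \mapsto \frac{1}{2}\big(\T{sign}(G) - \T{sign}(H)\big)$ factors through $K_0^{\T{inv}}\big(M_k(\B C), M_k(\B C)\big)$, and as a map on that group it coincides with the composition of the two isomorphisms above; hence it is itself an isomorphism onto $\B Z$.

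I do not expect any serious obstacle here; the argument is essentially bookkeeping layered on top of Proposition~\ref{p:isomunital}. The only points that need a little care are the compatibility of the identifications $M_n\big(M_k(\B C)\big) \cong M_{nk}(\B C)$ with $\T{TR}$ (so that $\T{TR}$ genuinely represents the rank isomorphism at every matrix level) and the additivity of $\T{sign}$ and $\T{TR}$ under direct sums, which is exactly what makes the stabilisation moves of Definition~\ref{d:stablehomo} compatible with the half-signature --- although, as the factorisation shows, this well-definedness is already subsumed in Proposition~\ref{p:isomunital}.
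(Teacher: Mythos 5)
Your proposal is correct and follows essentially the same route as the paper: specialise Proposition~\ref{p:isomunital} to $A = M_k(\B C)$, compose with the trace isomorphism $K_0(M_k(\B C)) \cong \B Z$, and rewrite $\T{TR}(G_+) - \T{TR}(H_+)$ as a difference of half-signatures. The only cosmetic difference is that you pass through the identity $\T{TR}(M_+) = \tfrac{1}{2}(nk + \T{sign}(M))$ while the paper writes $\T{sign}(M) = \T{TR}(2M_+ - 1_{nk})$, which is the same computation.
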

\begin{proof}
  Recall that the isomorphism between $K_0\big( M_k(\B C)\big)$ and the integers is induced by the trace, so that a formal difference of projections $[p] - [q] \in K_0\big( M_k(\B C)\big)$ is mapped to $\T{TR}(p) - \T{TR}(q)$, see \cite[Example 3.3.2]{RLL:IKC}. An application of Proposition \ref{p:isomunital} now shows that $K_0^{\T{inv}}\big( M_k(\B C), M_k(\B C) \big)$ is isomorphic to $\B Z$ via the map which sends a selfadjoint invertible element $(H,G) \in  D\big( M_n(M_k(\B C)), M_n( M_k(\B C)) \big)$ to the integer $\T{TR}(G_+) - \T{TR}(H_+)$. The result of the proposition therefore follows by noting that
  \[
  \begin{split}
  \frac{1}{2}\big( \T{sign}(G) - \T{sign}(H) \big)
  & = \frac{1}{2}\big(  \T{TR}( 2 G_+ - 1_{nk}) - \T{TR}(2 H_+ - 1_{nk}) \big) \\
  & = \T{TR}(G_+) - \T{TR}(H_+) . \qedhere
  \end{split}
  \]
\end{proof}








\section{$KK$-theory from the unbounded perspective}\label{s:unbdd}
Let us consider two $\si$-unital $C^*$-algebras $A$ and $B$. In this section we are concerned with the even $KK$-group $KK_0(A,B)$ in the sense of Kasparov, \cite{Kas:OFE}, but we are approaching this $KK$-group by applying the unbounded point of view. The fundamental notion in unbounded $KK$-theory is given in the next definition which is due to Baaj and Julg, \cite{BaJu:TBK}. Before reading this definition, the reader is advised to review the construction of the Lipschitz algebra from Definition \ref{d:lipschitz}. 

\begin{dfn}\label{d:unbkas}
  An \emph{even unbounded Kasparov module} from $A$ to $B$ is given by a triple $( X,\pi,D)$, where
  \begin{enumerate}
  \item $X$ is a countably generated $\zz/2\zz$-graded Hilbert $C^*$-module over $B$;
  \item $\pi : A \to \B L(X)$ is an even $*$-homomorphism ($A$ has the trivial grading);
  \item $D : \T{Dom}(D) \to X$ is an odd selfadjoint and regular unbounded operator acting on $X$.
  \end{enumerate}
  These three ingredients are required to satisfy the following conditions:
  \begin{enumerate}
  \item The \emph{Lipschitz algebra} 
    \[
\T{Lip}_D(A) := \big\{ a \in A \mid \pi(a) \in \T{Lip}_D(X) \big\}
\]
is norm-dense in $A$. 
\item The resolvent of $D$ is \emph{locally compact} in the sense that $\pi(a) \cd (i + D)^{-1}$ is a compact operator on $X$ for all $a \in A$.
  \end{enumerate}
  We say that $(X,\pi,D)$ is \emph{compact}, if the resolvent $(i + D)^{-1}$ is a compact operator on $X$. And, if both $A$ and $\pi : A \to \B L(X)$ are unital, then $(X,\pi,D)$ is said to be \emph{unital}. 

  The odd selfadjoint and regular unbounded operator $D$ is referred to as the \emph{abstract Dirac operator}.
\end{dfn}

We specify that a $\zz/2\zz$-grading of a Hilbert $C^*$-module $X$ is given in terms of a selfadjoint unitary operator which is often denoted by $\ga : X \to X$ (and referred to as the grading operator). Part of the above definition then says that
\[
D \ga = - \ga D  \, \, \T{ and } \, \, \, \pi(a) \ga = \ga \pi(a) \, \, \T{ for all } a \in A .
\]

We continue with a discussion of the Lipschitz algebra $\T{Lip}_D(A)$ introduced in Definition \ref{d:unbkas}. The closed $*$-derivation $d : \T{Lip}_D(X) \to \B L(X)$ introduced after Definition \ref{d:lipschitz} induces a closed $*$-derivation $d : \T{Lip}_D(A) \to \B L(X)$ satisfying that $d(a)$ agrees with the unique bounded adjointable extension of the commutator
\[
[D , \pi(a)] : \T{Dom}(D) \to X .
\]
Remark that $d(a) : X \to X$ anti-commutes with the grading operator $\ga$. We turn $\T{Lip}_D(A)$ into a Banach $*$-algebra where the norm of an element $a \in \T{Lip}_D(A)$ is given by $\| a \|_1 := \| a \| + \| d(a) \|_\infty$.

Let us record the following well-known result and refer to \cite[Proposition 8.12]{Val:IBC} for a detailed argument, see also \cite[Proposition 3.12]{BlCu:DNS}:

\begin{prop}\label{p:local}
The norm-dense $*$-subalgebra $\T{Lip}_D(A) \su A$ satisfies the conditions $(1)$ and $(2)$ stated in the beginning of Section \ref{s:ktheory}. In particular, we get that the inclusion $\io : \T{Lip}_D(A) \to A$ induces an isomorphism of $K$-theory groups. 
\end{prop}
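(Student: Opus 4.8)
The goal is to show that $\T{Lip}_D(A)$, with its Banach $*$-algebra norm $\|a\|_1 = \|a\| + \|d(a)\|_\infty$, satisfies conditions (1) and (2) from the beginning of Section \ref{s:ktheory}, and hence — by Proposition \ref{p:ktheory} — that the inclusion $\io : \T{Lip}_D(A) \to A$ induces an isomorphism on $K$-theory.

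Condition (1) is essentially bookkeeping: one checks that $\T{Lip}_D(A)$ is closed under the algebra operations and the involution (this was already asserted after Definition \ref{d:lipschitz} for $\T{Lip}_D(X)$, and it descends to $\T{Lip}_D(A)$ since $\pi$ is a $*$-homomorphism), that $\|\cdot\|_1$ is a submultiplicative $*$-norm making $\T{Lip}_D(A)$ complete — completeness follows from the closedness of the derivation $d$, exactly as for $\T{Lip}_D(X)$ — and that the inclusion $\io$ is norm-decreasing, hence continuous. Norm-density of $\T{Lip}_D(A)$ in $A$ is part of the definition of an unbounded Kasparov module, so it comes for free. The substantive content is condition (2): spectral invariance of the unitalization $\io : \T{Lip}_D(A)^\sim \to A^\sim$.

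For spectral invariance, the standard move is to show that $\T{Lip}_D(A)^\sim$ is closed under taking inverses inside $A^\sim$, i.e. if $x \in \T{Lip}_D(A)^\sim$ is invertible in $A^\sim$ then $x^{-1} \in \T{Lip}_D(A)^\sim$; this implies the two spectra agree. The key computation is the derivation identity: if $\pi(a)$ is a Lipschitz operator and $\pi(a)$ (or rather $\pi(a) + \lambda$) is invertible in $\B L(X)$, then from $0 = d(1) = d(u u^{-1}) = d(u) u^{-1} + u\, d(u^{-1})$ one gets $d(u^{-1}) = - u^{-1} d(u) u^{-1}$, which is a bounded adjointable operator; one must also check that $u^{-1}$ preserves $\T{Dom}(D)$. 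The latter point is handled by noting that $u^{-1}$ maps $\T{Dom}(D)$ into itself precisely because $u$ does and $u$ is bijective on $X$ with $u(\T{Dom}(D)) = \T{Dom}(D)$ — this last equality uses that $u^{-1}$, being bounded, sends the core $\C X$ appropriately, or more directly: for $\xi \in \T{Dom}(D)$, $u^{-1}\xi = u^{-1}\xi$ and one verifies $D u^{-1}\xi$ exists via the commutator formula. Since $\T{Lip}_D(A)$ is a dense subalgebra of $A$ that is stable under this operation, the unitalization inherits the property, and spectral invariance follows (this is the content of the references \cite[Proposition 8.12]{Val:IBC} and \cite[Proposition 3.12]{BlCu:DNS} cited in the statement).

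Once conditions (1) and (2) are verified, the final assertion — that $\io$ induces an isomorphism on $K$-theory — is immediate from Proposition \ref{p:ktheory}. The main obstacle, such as it is, lies in the careful verification that $u^{-1}$ preserves the domain of $D$ and that the resulting commutator extends boundedly; everything else is routine once the derivation calculus is set up. Since the paper explicitly points to \cite[Proposition 8.12]{Val:IBC} for "a detailed argument," I would present only the skeleton above and defer the technical details to that reference.
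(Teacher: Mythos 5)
Your overall skeleton is right, and in fact it mirrors the paper: the paper's proof \emph{is} the citation to \cite[Proposition~8.12]{Val:IBC} and \cite[Proposition~3.12]{BlCu:DNS}, so deferring to those references is exactly what the author does. You correctly identify that condition~(1) is routine (completeness of $\T{Lip}_D(A)$ in $\|\cdot\|_1$ uses closedness of $d$; norm-density is built into Definition~\ref{d:unbkas}), and that the substance lies in the spectral invariance of condition~(2), which reduces to inverse-closedness of $\T{Lip}_D(A)^\sim$ in $A^\sim$.

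However, the sketch you give of the inverse-closedness step is circular and would not survive being written out. The identity $d(u^{-1}) = -u^{-1}\,d(u)\,u^{-1}$ is obtained by applying the Leibniz rule to $u u^{-1} = 1$, but the Leibniz rule $d(uu^{-1}) = d(u)u^{-1} + u\,d(u^{-1})$ is only valid once you already know $u^{-1}$ lies in the domain of $d$ --- i.e.\ that $u^{-1}$ preserves $\T{Dom}(D)$ and has bounded commutator --- which is exactly what you need to prove. Your fallback, ``one verifies $D u^{-1}\xi$ exists via the commutator formula,'' has the same defect: you cannot invoke the commutator formula for $u^{-1}$ until you know $u^{-1}\xi \in \T{Dom}(D)$. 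And ``$u$ is bijective on $X$ with $u(\T{Dom}(D)) = \T{Dom}(D)$'' simply restates the claim to be proved (injectivity of $u|_{\T{Dom}(D)}$ is obvious; surjectivity onto $\T{Dom}(D)$ is the point). The argument in the cited references dodges this entirely: for $\|1-u\|_A<1$ one expands $u^{-1}=\sum_n(1-u)^n$, uses the Leibniz bound $\|d((1-u)^n)\|_\infty \le n\|1-u\|_A^{n-1}\|d(1-u)\|_\infty$ to show the partial sums are $\|\cdot\|_1$-Cauchy, and concludes by completeness of $\T{Lip}_D(A)^\sim$ that $u^{-1}\in\T{Lip}_D(A)^\sim$; here domain-invariance of $u^{-1}$ is a \emph{consequence}, not a hypothesis. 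The general case then follows by the density trick: choose $v\in\T{Lip}_D(A)^\sim$ with $\|v-u^{-1}\|_A < 1/\|u\|_A$ so that $\|1-uv\|_A<1$, giving $(uv)^{-1}\in\T{Lip}_D(A)^\sim$ and hence $u^{-1}=v(uv)^{-1}\in\T{Lip}_D(A)^\sim$. (A direct proof of domain-invariance of $u^{-1}$ is also possible, via the resolvent of $D+u^{-1}d(u)$ for large spectral parameter, but it requires genuine work; neither of your two attempted shortcuts supplies it.) If you are only presenting a skeleton and deferring to the references, the safe statement is: ``inverse-closedness follows from a Neumann series argument together with density, using completeness of $\T{Lip}_D(A)^\sim$ and closedness of $d$'' --- and drop the Leibniz-rule ``derivation'' of $d(u^{-1})$ as though it were a proof.
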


It is sometimes convenient to work with a slightly different notion of an even unbounded Kasparov module defined as follows:

\begin{dfn}\label{d:unbkasII}
  An \emph{even unbounded Kasparov module} over $B$ is given by a pair $( X,D)$, where
\begin{enumerate}
\item $X$ is a countably generated $\zz/2\zz$-graded Hilbert $C^*$-module over $B$;
\item $D : \T{Dom}(D) \to X$ is an odd selfadjoint and regular unbounded operator satisfying that $(i + D)^{-1}$ belongs to $\B K(X)$.
\end{enumerate}
\end{dfn}

Clearly, if $(X,\pi,D)$ is a compact even unbounded Kasparov module from $A$ to $B$, then $(X,D)$ is an even unbounded Kasparov module over $B$. Conversely, if $(X,D)$ is an even unbounded Kasparov module over $B$ and $\pi : A \to \B L(X)$ is an even $*$-homomorphism such that the Lipschitz algebra $\T{Lip}_D(A) \su A$ is norm-dense, then $(X,\pi,D)$ is a compact even unbounded Kasparov module from $A$ to $B$.

In fact, if $(X,D)$ is an even unbounded Kasparov module over $B$ we may consider the Lipschitz algebra $\T{Lip}_D(X)$ from Definition \ref{d:lipschitz} and single out the corresponding even part which we denote by
\begin{equation}\label{eq:evenlip}
\T{Lip}_D^{\T{ev}}(X) := \big\{ T \in \T{Lip}_D(X) \mid T \ga = \ga T \big\} .
\end{equation}
We view $\T{Lip}_D^{\T{ev}}(X)$ as a unital Banach $*$-subalgebra of $\T{Lip}_D(X)$. It is relevant to record that the closure of $\T{Lip}_D^{\T{ev}}(X)$ with respect to the operator norm is a unital $C^*$-subalgebra of $\B L(X)$ and we denote it by $C_D^{\T{ev}}(X)$. Writing $i : C_D^{\T{ev}}(X) \to \B L(X)$ for the inclusion we therefore get an even unital unbounded Kasparov module $(X,i,D)$ from $C_D^{\T{ev}}(X)$ to $B$ and the corresponding Lipschitz algebra agrees with $\T{Lip}_D^{\T{ev}}(X)$.
\medskip

Let us proceed with a brief discussion of the relationship between unbounded Kasparov modules and $KK$-theory in the sense of Kasparov, \cite{Kas:OFE}. For a detailed account of $KK$-theory we refer to the books \cite{Bla:KOA,JeTh:EKT}.

There are various definitions of equivalence relations on the even Kasparov modules from $A$ to $B$. However, in the case where $A$ is separable and $B$ is $\si$-unital they all give rise to the same abelian group, see e.g. \cite[Theorem 2.2.17 and Theorem 5.2.4]{JeTh:EKT}. For definiteness we apply the \emph{operator homotopy} picture of even $KK$-theory where two even Kasparov modules $(X_0,\phi_0,F_0)$ and $(X_1,\phi_1,F_1)$ from $A$ to $B$ are equivalent, if the following holds:
\begin{enumerate}
\item There exist two degenerate even Kasparov modules $(Y_0,\psi_0,G_0)$ and $(Y_1,\psi_1,G_1)$ (again from $A$ to $B$) and an even unitary operator
  \[
U : X_0 \op Y_0 \to X_1 \op Y_1
\]
which intertwines the relevant $*$-homomorphisms, meaning that
\[
U ( \phi_0(a) \op \psi_0(a) ) U^* = \phi_1(a) \op \psi_1(a) \q \T{for all } a \in A ;
\]
\item There exists an operator norm continuous map $H : [0,1] \to \B L(X_0 \op Y_0)$ satisfying that $(X_0 \op Y_0, \phi_0 \op \psi_0,H_t)$ is an even Kasparov module for all $t \in [0,1]$ and that
  \[
H_0 = F_0 \op G_0 \, \, \T{ and } \, \, \, H_1 = U^* (F_1 \op G_1) U .
  \]
\end{enumerate}
Notice that an application of \cite[Proposition 17.4.3]{Bla:KOA} shows that we may work exclusively with even Kasparov modules $(X,\phi,F)$ satisfying that $F = F^*$ and $\| F \|_\infty \leq 1$. Moreover, in the case where $A$ is unital, it suffices to look at unital even Kasparov modules $(X,\phi,F)$ (meaning that $\phi$ is unital).
\medskip

In the special case where $(X,\phi,F)$ is a unital even Kasparov module from $\B C$ to $B$, we are going to suppress $\phi$ from the notation, writing $(X,F)$ instead of $(X,\phi,F)$. A similar convention applies to unital even unbounded Kasparov modules from $\B C$ to $B$.
\medskip
%

For an even unbounded Kasparov module $(X,\pi,D)$ from $A$ to $B$ we obtain from \cite[Proposition 2.2]{BaJu:TBK} that the triple $\big(X,\pi,D(1 + D^2)^{-1/2} \big)$ is an even Kasparov module  from $A$ to $B$, see also \cite[Proposition 17.11.3]{Bla:KOA} for a more detailed proof. Recall in particular from the discussion after Proposition \ref{p:trans} that the \emph{bounded transform} $F_D = D(1 + D^2)^{-1/2} : X \to X$ is indeed a well-defined selfadjoint contraction. It is common to refer to the class $[X,\pi,F_D]$ in $KK_0(A,B)$ as the \emph{Baaj-Julg bounded transform} of $(X,\pi,D)$.

The main result of \cite{BaJu:TBK} is quoted here below (focusing on the case where $A$ and $B$ are both trivially graded):

\begin{thm}\label{t:baajjulg}
  Suppose that $A$ is separable and $B$ is $\si$-unital. Let $(X,\pi,F)$ be an even Kasparov module from $A$ to $B$ with $F = F^*$ and $F^2 = 1$. Then there exists a compact even unbounded Kasparov module of the form $(X,\pi,D)$ such that the identity
$[X,\pi,F_D] = [X,\pi,F]$ is valid in the $KK$-group $KK_0(A,B)$. In particular, it holds that the Baaj-Julg bounded transform is surjective. 
\end{thm}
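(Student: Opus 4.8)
Since $F = F^*$ and $F^2 = 1$, the operator $F$ is an odd selfadjoint unitary, so the customary first reduction to a selfadjoint contraction (\cite[Proposition 17.4.3]{Bla:KOA}) is unnecessary; the only input about $(X,\pi,F)$ that I would use is that $[F,\pi(a)]\in\B K(X)$ for every $a\in A$ (the remaining Kasparov conditions being automatic here). Using separability of $A$ I would fix a sequence $(a_n)_{n\in\B N}$ dense in the closed unit ball of $A$ in which every element occurs infinitely often. It costs nothing to enlarge $X$ beforehand by a degenerate even Kasparov module supported on $\ell^2(\B N,B)\op\ell^2(\B N,B)$ (zero representation, off-diagonal flip): this changes neither the $KK$-class nor the conclusion, so by Kasparov's stabilisation theorem I may assume $X$ is ``ample''. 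Finally, since $\B K(X)$ is $\si$-unital (as $X$ is countably generated and $B$ is $\si$-unital) and $\pi(A)\cup\{F\}$ generates a separable $C^*$-subalgebra of $\B L(X)$, I would fix an approximate unit $(e_k)_{k\in\B N}$ for $\B K(X)$ which is quasi-central with respect to $\pi(A)\cup\{F\}$, i.e. $\|[e_k,\pi(a)]\|\to0$ and $\|[e_k,F]\|\to0$ for all $a$.

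\textbf{Construction of $D$ (and the main obstacle).} The heart of the argument is the construction of Baaj and Julg. I would build an odd selfadjoint regular unbounded operator $D$ on $X$ which, along the filtration determined by the $e_k-e_{k-1}$, is (heuristically) a growing-weight positive operator $\La\geq1$ twisted by $F$ — so that $D\approx F\La$ with $\La^{-1}\in\B K(X)$ — and which houses the commutator defects $[F,\pi(a_n)]\in\B K(X)$ in \emph{off-diagonal} positions, so that in the expression for $[D,\pi(a_n)]$ these defects get multiplied only by bounded transition operators and never by the unbounded $\La$. The weights $\la_k\uparrow\infty$ would be selected by a diagonal argument: slowly enough, measured against the defects $\|[e_k,\pi(a_n)]\|$, $\|[e_k,F]\|$ and $\|[F,\pi(a_n)](1-e_k)\|$, that $[D,\pi(a_n)]$ extends to a bounded adjointable operator for every $n$ — so that $\T{Lip}_D(A)$ contains every $a_n$ and is hence norm-dense — yet the bulk term still forces $\La^{-1}\in\B K(X)$, so that $(1+D^2)^{-1}$, being order-dominated by $\La^{-2}\in\B K(X)$ and $\B K(X)$ being hereditary in $\B L(X)$, lies in $\B K(X)$. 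This yields a compact even unbounded Kasparov module $(X,\pi,D)$. I expect this step to be the main obstacle: compactness of the resolvent wants $\La$ to grow and density of $\T{Lip}_D(A)$ wants $\La$ to grow slowly, and balancing them is exactly what the quasi-central approximate unit is for. The naive attempt $D=F\La$ with $\La$ a scalar multiple of $h^{-1}$ for a single strictly positive $h\in\B K(X)$ already fails, since then $[D,\pi(a)]=F[\La,\pi(a)]+[F,\pi(a)]\La$ carries the unbounded term $[F,\pi(a)]\La$; it is genuinely necessary to quarantine the merely compact defects $[F,\pi(a)]$ off the diagonal.

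\textbf{Comparing $F_D$ with $F$.} It remains to prove $[X,\pi,F_D]=[X,\pi,F]$ in $KK_0(A,B)$, which I would do by a straight-line operator homotopy. The key point is that $F_D-F\in\B K(X)$: up to compact corrections from the off-diagonal part one has $F_D=F\cd g(\La)$ with $g(x)=x(1+x^2)^{-1/2}$, and $\La\geq1$ gives $0\leq1-g(\La)\leq\La^{-2}\in\B K(X)$, hence $1-g(\La)\in\B K(X)$ by heredity of the ideal. Setting $F_s:=(1-s)F+sF_D$ for $s\in[0,1]$, this is an operator-norm continuous path of selfadjoint operators along which, for every $a\in A$, both $[F_s,\pi(a)]=(1-s)[F,\pi(a)]+s[F_D,\pi(a)]$ and
\[
\pi(a)\big(F_s^2-1\big)=(1-s)s\,\pi(a)\{F,F_D-F\}-s^2\,\pi(a)(1+D^2)^{-1}
\]
are compact: the former because $[F,\pi(a)]$ is compact and $[F_D,\pi(a)]$ is compact by \cite[Proposition 2.2]{BaJu:TBK}, the latter because $F_D-F$ and $(1+D^2)^{-1}$ are compact. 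Hence $s\mapsto(X,\pi,F_s)$ is an operator homotopy of even Kasparov modules from $(X,\pi,F)$ to $(X,\pi,F_D)$, which gives the identity in $KK_0(A,B)$; the final assertion of the statement is then immediate.
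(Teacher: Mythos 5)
The paper offers no proof of this theorem: it is quoted directly from Baaj--Julg, \cite[Proposition 2.3]{BaJu:TBK}, so there is no argument of the paper's to measure you against. Your sketch does follow the spirit of the original source---quasi-central approximate unit for $\B K(X)$ against the separable $C^*$-algebra generated by $\pi(A)$ and $F$, a diagonal choice of weights, compactness of $F_D-F$, and finally an operator homotopy---and the homotopy paragraph is correct as written: the algebra for $\pi(a)(F_s^2-1)$ checks out using $F^2=1$, and the needed compactness inputs are $F_D-F\in\B K(X)$ and $(1+D^2)^{-1}\in\B K(X)$.

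Two points, however, should be flagged. The stabilization step is unnecessary and mildly counterproductive: $\B K(X)$ is already $\si$-unital because $X$ is countably generated and $B$ is $\si$-unital (embed $\B K(X)$ as a full corner of $B\ot\B K(\ell^2(\B N))$), so a quasi-central approximate unit exists on $X$ itself; and the theorem asserts an unbounded module on the \emph{same} pair $(X,\pi)$, so inflating $X$ by a degenerate summand changes the conclusion as stated and you would have to cut back down. More substantively, the mechanism you propose for the construction of $D$ does not match how the Baaj--Julg estimate actually closes. Their operator is a diagonal sandwiching of the form $D=\sum_n\la_n\,d_n^{1/2}Fd_n^{1/2}$ with $d_n=u_n-u_{n-1}$, and in the resulting expansion $[D,\pi(a)]=\sum_n\la_n\big(d_n^{1/2}[F,\pi(a)]d_n^{1/2}+[d_n^{1/2},\pi(a)]Fd_n^{1/2}+d_n^{1/2}F[d_n^{1/2},\pi(a)]\big)$ the compact defect $[F,\pi(a)]$ \emph{does} appear with the growing coefficient $\la_n$; it is not ``quarantined off-diagonal away from $\La$''. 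What makes the series converge is instead the norm decay $\|d_n^{1/2}Kd_n^{1/2}\|\to 0$ for $K\in\B K(X)$ together with the quasi-centrality estimates $\|[d_n^{1/2},\pi(a_m)]\|\to 0$, against which the $\la_n$ are chosen diagonally. Your off-diagonal picture does not obviously produce a selfadjoint \emph{regular} operator at all, and regularity of $D$ is never addressed. Since you explicitly flag the construction of $D$ as the main obstacle and leave it open, this is a genuine gap rather than an error of approach, but the stated mechanism would mislead a reader trying to carry the construction out.
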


It is in fact possible to describe the kernel of the Baaj-Julg bounded transform in a sensible way by studying homotopies of unbounded Kasparov modules, see the papers \cite{Kaa:UKK,DuMe:HEU}. These investigations are however less important for our current purpose and we therefore refrain from discussing the details here.

\subsection{The unbounded Kasparov product (easy case)}
Our aim is now to provide a special but very useful version of the unbounded Kasparov product. The unbounded Kasparov product was pioneered by Mesland in \cite{Mes:UCN} and soon afterwards it was further developed in \cite{KaLe:LGR,KaLe:SFU}. Other significant contributions to the general theory can be found in \cite{MeRe:NST,LeMe:SRS}. In the present text we are only dealing with a particularly simple version of the unbounded Kasparov product and it is therefore unnecessary to use the general machinery referred to above. The strategy is however the same in so far that we first find a candidate for the Kasparov product at the unbounded level and then verify that it satisfies the criteria found by Kucerovsky in \cite{Kuc:KUM}. Remark that Kucerovsky's approach to computing Kasparov products relies on the earlier work of Connes and Skandalis, see \cite{CoSk:LIF}. 


\begin{prop}\label{p:unbddprodI}
  Suppose that $(X,D)$ is an even unbounded Kasparov module over $B$ and that $p$ is a projection in $M_n\big( \T{Lip}^{\T{ev}}_D(X) \big)$ for some $n \in \B N$. Then it holds that $\big( p  X^{\op n}, p D^{\op n} p \big)$ is a unital even unbounded Kasparov module from $\B C$ to $B$, where we specify that $p D^{\op n} p$ has domain defined by $\T{Dom}( p D^{\op n} p ) := p \cd \T{Dom}(D)^{\op n}$ and that $p  X^{\op n}$ has grading operator induced by $\ga^{\op n} : X^{\op n} \to X^{\op n}$. 
\end{prop}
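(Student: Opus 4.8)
The plan is to check that $(pX^{\op n},pD^{\op n}p)$, equipped with the obvious unital $*$-homomorphism $\B C \to \B L(pX^{\op n})$, $\la \mapsto \la \cd \T{id}$, satisfies the requirements of Definition \ref{d:unbkas} with $A = \B C$. To lighten the notation I would write $D' := D^{\op n}$, $X' := X^{\op n}$, $\ga' := \ga^{\op n}$ and $q := 1 - p$, and record at the outset that $p \in M_n\big(\T{Lip}_D^{\T{ev}}(X)\big)$ means, after the identification $M_n\big(\B L(X)\big) = \B L(X')$, that $p = p^* = p^2 \in \B L(X')$ commutes with $\ga'$, preserves $\T{Dom}(D')$, and has commutator $[D',p] : \T{Dom}(D') \to X'$ that extends to a bounded adjointable operator $d(p) \in \B L(X')$ by \eqref{eq:derivation}. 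Three things then need to be verified: $(i)$ $pX'$ is a countably generated $\zz/2\zz$-graded Hilbert $C^*$-module over $B$, with grading operator the restriction of $\ga'$; $(ii)$ $pD'p$, with domain $p\cd \T{Dom}(D')$, is an odd selfadjoint and regular unbounded operator on $pX'$; and $(iii)$ $(i + pD'p)^{-1}$ is a compact operator on $pX'$. Once these are in place, the remaining conditions in Definition \ref{d:unbkas} are immediate: the $*$-homomorphism $\B C \to \B L(pX')$ is even and unital, its Lipschitz algebra equals $\B C$ and is therefore norm-dense, and local compactness of the resolvent is precisely $(iii)$; hence $(pX',pD'p)$ is a unital even unbounded Kasparov module from $\B C$ to $B$. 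Item $(i)$ is routine: the image of the projection $p$ is an orthogonally complemented submodule of the countably generated module $X'$ and hence is itself countably generated, and $\ga'$ restricts to $pX'$ because $[\ga',p] = 0$.

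The crux is item $(ii)$, and the device I would use is to realise $D'$ as a bounded selfadjoint perturbation of its ``block diagonal part'' relative to the decomposition $X' = pX' \op qX'$. Since $p$, and hence $q$, preserves $\T{Dom}(D')$, one has the internal orthogonal direct sum $\T{Dom}(D') = p\cd \T{Dom}(D') \op q\cd \T{Dom}(D')$. Writing $S := pD'q + qD'p : \T{Dom}(D') \to X'$ for the off-diagonal part, a short manipulation using $pq = qp = 0$ shows that on $\T{Dom}(D')$ one has $pD'q = -p\, d(p)$ and $qD'p = q\, d(p)$, so $S$ extends to the bounded adjointable operator $\ov{S} := (q - p)\, d(p) \in \B L(X')$; moreover $\ov{S}$ is selfadjoint, which can be seen either from the fact that $S$ is symmetric on the dense domain $\T{Dom}(D')$, or by differentiating $p^2 = p$ to obtain $p\, d(p) + d(p)\, p = d(p)$. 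One then verifies the operator identity
\[
D' - \ov{S} = (pD'p) \op (qD'q)
\]
of unbounded operators on $pX' \op qX'$ with domain $p\cd \T{Dom}(D') \op q\cd \T{Dom}(D')$. By the Kato--Rellich theorem for Hilbert $C^*$-modules (\cite[Theorem 4.5]{KaLe:LGR}, cf.\ Subsection \ref{ss:perturb}), $D' - \ov{S}$ is selfadjoint and regular with domain $\T{Dom}(D')$; since a direct sum of unbounded operators is selfadjoint and regular exactly when each summand is, we deduce that $pD'p$ is selfadjoint and regular on $pX'$. Oddness is then easy: $\ga'$ preserves $p\cd \T{Dom}(D')$, and for $\xi \in p\cd \T{Dom}(D')$ one has $\ga'\, pD'p\, \xi = p\ga' D' \xi = -pD'\ga'\xi = -pD'p\,\ga'\xi$ by $D'\ga' = -\ga' D'$.

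For item $(iii)$ I would argue as follows. Since $(i + D)^{-1} \in \B K(X)$ and $\B K(X') \cong M_n\big(\B K(X)\big)$, the resolvent $(i + D')^{-1} = \big((i + D)^{-1}\big)^{\op n}$ is compact on $X'$. The second resolvent identity $(i + D' - \ov{S})^{-1} = (i + D')^{-1} + (i + D')^{-1}\ov{S}(i + D' - \ov{S})^{-1}$ together with the ideal property of $\B K(X')$ then gives $(i + D' - \ov{S})^{-1} \in \B K(X')$. Finally, because $D' - \ov{S} = (pD'p)\op(qD'q)$, this resolvent equals $(i + pD'p)^{-1}\op(i + qD'q)^{-1}$ and in particular commutes with $p$; compressing to the corner and using the identification $\B K(pX') \cong p\,\B K(X')\,p$, we conclude that $(i + pD'p)^{-1}$ is compact on $pX'$.

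The step I expect to be the main obstacle is item $(ii)$: compressing a selfadjoint and regular unbounded operator to a corner need not, a priori, yield a selfadjoint, regular operator, and one cannot work directly with spectral projections. The whole argument hinges on the bounded-perturbation decomposition $D' = \big((pD'p)\op(qD'q)\big) + \ov{S}$, which moves the problem into the scope of the Kato--Rellich theorem and the ideal calculus for $\B K$; the only genuinely delicate bookkeeping is checking that $\ov{S}$ is truly selfadjoint and that the corner identification $\B K(pX') \cong p\,\B K(X')\,p$ transports compactness correctly, both of which are standard.
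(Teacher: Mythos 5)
Your proof is correct and follows essentially the same strategy as the paper's: reduce to showing that $pD'p \op qD'q$ is selfadjoint, regular, and has compact resolvent; observe that this operator differs from $D'$ by a bounded selfadjoint perturbation; and then invoke the Kato--Rellich theorem together with the resolvent identity. The only cosmetic difference is that you write the perturbation as $(q-p)d(p)$ while the paper writes it as $(1-p)d(p) - d(p)(1-p)$ — these coincide by the Leibniz identity $p\,d(p) + d(p)\,p = d(p)$ — and you spell out a few routine surrounding details (countable generatedness, the corner identification $\B K(pX') \cong p\,\B K(X')\,p$, and the direct-sum reduction) that the paper leaves implicit.
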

\begin{proof}
  We focus on showing that $p D^{\op n} p$ is selfadjoint and regular and has compact resolvent. To this end, it suffices to show that $p D^{\op n} p + (1 - p) D^{\op n} (1 - p) : \T{Dom}(D^{\op n}) \to X^{\op n}$ is selfadjoint and regular and has compact resolvent. However, since $(i + D)^{-1}$ is compact by assumption we get that the selfadjoint and regular unbounded operator $D^{\op n} : \T{Dom}(D)^{\op n} \to X^{\op n}$ has compact resolvent. The same is therefore true for every perturbation of $D^{\op n}$ of the form $D^{\op n} + T$, where $T : X^{\op n} \to X^{\op n}$ is a selfadjoint bounded operator. Indeed, we get from the Kato-Rellich theorem, see \cite[Theorem 4.5]{KaLe:LGR}, that $D^{\op n} + T$ is selfadjoint and regular (on the domain $\T{Dom}(D)^{\op n}$) and compactness of the resolvent then follows from the resolvent identity. The result of the proposition is now obtained by remarking that the difference
  \[
D^{\op n} - \big( p D^{\op n} p + (1 - p) D^{\op n} (1 - p) \big) : \T{Dom}(D^{\op n}) \to X^{\op n}
\]
extends to the selfadjoint bounded operator
\[
(1 - p )d(p) - d(p) (1 - p) : X^{\op n} \to X^{\op n} ,  
\]
where $d : M_n\big( \T{Lip}_D(X) \big) \to \B L(X^{\op n})$ is the closed $*$-derivation introduced around \eqref{eq:derivation}.
\end{proof}

We are now ready to present the promised computation of the Kasparov product by means of unbounded $KK$-theoretic data. Let us emphasize that we are focusing on a very special case of the Kasparov product where one of the Hilbert $C^*$-modules appearing is finitely generated projective.


\begin{prop}\label{p:unbddprodII}
  Suppose that $(X,D)$ is an even unbounded Kasparov module over $B$ and that $p$ is a projection in $M_n\big( \T{Lip}^{\T{ev}}_D(X) \big)$ for some $n \in \B N$. Then it holds that
  \begin{equation}\label{eq:kkprod}
\big[p  \cd \big( C^{\T{ev}}_D(X)\big)^{\op n},0\big] \hot_{C^{\T{ev}}_D(X)} [ X,i,F_D] = \big[ p  X^{\op n}, F_{p D^{\op n} p} \big],
  \end{equation}
  where the notation ``$\hot_{C^{\T{ev}}_D(X)}$'' refers to the Kasparov product between $KK_0\big(\B C, C_D^{\T{ev}}(X) \big)$ and $KK_0\big( C_D^{\T{ev}}(X),B\big)$ with values in $KK_0(\B C,B)$.
\end{prop}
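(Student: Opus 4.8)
The plan is to exhibit a candidate for the Kasparov product at the unbounded level and then check Kucerovsky's criteria from \cite{Kuc:KUM}, finally passing to Baaj--Julg bounded transforms. The candidate is the unital even unbounded Kasparov module $\big(p X^{\op n}, p D^{\op n} p\big)$ from $\B C$ to $B$ furnished by Proposition \ref{p:unbddprodI}. As preliminary bookkeeping I would first record that $\big(p (C^{\T{ev}}_D(X))^{\op n}, 0\big)$ genuinely is an even Kasparov module from $\B C$ to $C^{\T{ev}}_D(X)$: the underlying module is finitely generated projective, being the image of the projection $p$ acting by left multiplication on the free module, so its identity operator is compact and the Kasparov axioms hold trivially; and $[X,i,F_D]$ is the Baaj--Julg bounded transform of the compact even unbounded Kasparov module $(X,i,D)$ recorded just before the statement. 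Next I would fix the identification of interior tensor products: the standard unitary $(C^{\T{ev}}_D(X))^{\op n} \hot_{C^{\T{ev}}_D(X)} X \cong X^{\op n}$ sending $(a_1,\dots,a_n) \hot \xi$ to $\big(i(a_1)\xi,\dots,i(a_n)\xi\big)$ restricts to a unitary $p (C^{\T{ev}}_D(X))^{\op n} \hot_{C^{\T{ev}}_D(X)} X \cong p X^{\op n}$ which intertwines the actions of $\B C$ and, because $p$ has even entries, is graded for the grading $\ga^{\op n}|_{p X^{\op n}}$ appearing in Proposition \ref{p:unbddprodI}; under this identification the candidate operator $p D^{\op n} p$ is precisely the ``$0$-connection'' compression relevant to the product.

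With these identifications in place, Kucerovsky's theorem applied to $p D^{\op n} p$ relative to $\big(p (C^{\T{ev}}_D(X))^{\op n}, 0\big)$ and $(X,i,D)$ has three hypotheses, and since the left abstract Dirac operator is $0$ the associated operator ``$0\hot 1$'' on $p X^{\op n}$ is the zero operator, so the domain condition $\T{Dom}(p D^{\op n} p) \su p X^{\op n}$ and the positivity/semiboundedness condition (with constant $0$) are vacuous. Only the connection condition requires work. I would verify it on the dense submodule $p (\T{Lip}^{\T{ev}}_D(X))^{\op n} \su p (C^{\T{ev}}_D(X))^{\op n}$ (density follows since $\T{Lip}^{\T{ev}}_D(X)$ is dense in $C^{\T{ev}}_D(X)$ and $p$ has Lipschitz entries). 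For $x=(x_1,\dots,x_n)$ there, the creation operator $T_x\colon X \to p X^{\op n}$ is the column $\xi \mapsto (x_1\xi,\dots,x_n\xi)$, which does land in $p X^{\op n}$ because $p x = x$, with adjoint $T_x^*(\eta_1,\dots,\eta_n)=\sum_j x_j^*\eta_j$. Since each $x_j$ preserves $\T{Dom}(D)$ we get $T_x\xi \in p\T{Dom}(D)^{\op n}=\T{Dom}(p D^{\op n} p)$ for $\xi \in \T{Dom}(D)$, and using $D x_j\xi = x_j D\xi + [D,x_j]\xi$ together with $p T_x D\xi = T_x D\xi$ one finds $p D^{\op n} p\,T_x\xi - T_x D\xi = p\big([D,x_1]\xi,\dots,[D,x_n]\xi\big)$, which extends to a bounded adjointable operator since $[D,x_j]$ extends to $d(x_j)\in\B L(X)$; taking adjoints gives the symmetric statement $D T_x^* - T_x^* p D^{\op n} p$ extends to the operator $(\eta_j)_j \mapsto \sum_j [D,x_j^*]\eta_j$ on $\T{Dom}(p D^{\op n} p)$. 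Hence the connection condition holds.

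Kucerovsky's theorem then yields $\big[p X^{\op n}, F_{p D^{\op n} p}\big] = \big[p(C^{\T{ev}}_D(X))^{\op n}, 0\big] \hot_{C^{\T{ev}}_D(X)} [X,i,F_D]$ in $KK_0(\B C, B)$, using that the bounded transform of the zero operator is zero, which is exactly \eqref{eq:kkprod}. The main obstacle I anticipate is not any single estimate but the grading and interior-tensor-product bookkeeping: matching $\ga^{\op n}|_{p X^{\op n}}$ with the tensor-product grading (which is where the evenness of $p$ enters), identifying $T_x$ and $T_x^*$ correctly on the compressed module, and getting the signs in the graded connection condition right. The commutator estimate itself is immediate from the Lipschitz hypothesis, and the remaining two Kucerovsky hypotheses are trivially satisfied because the left-hand abstract Dirac operator vanishes.
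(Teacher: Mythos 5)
Your argument is correct and follows essentially the same route as the paper: identify the interior tensor product $p\,(C^{\T{ev}}_D(X))^{\op n}\hot_i X$ with $pX^{\op n}$, invoke Kucerovsky's criteria, observe that two of the three conditions are automatic because the left-hand Dirac operator is zero, and verify the connection condition on a dense Lipschitz subset using boundedness of the commutators $[D,x_j]$. The only cosmetic difference is that you verify the connection condition for arbitrary $x\in p\,(\T{Lip}^{\T{ev}}_D(X))^{\op n}$ whereas the paper uses spanning vectors of the form $p\,e_j S$ with $S\in\T{Lip}^{\T{ev}}_D(X)$; these are equivalent.
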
  
\begin{proof}
  We apply the criteria found by Kucerovsky in \cite[Theorem 13]{Kuc:KUM}. Notice in this respect that we may view $\big( p  \big( C^{\T{ev}}_D(X)\big)^{\op n}, 0\big)$ as a unital even unbounded Kasparov module from $\B C$ to $C^{\T{ev}}_D(X)$ and the Baaj-Julg bounded transform then agrees with the class  $\big[p  \big(C^{\T{ev}}_D(X)\big)^{\op n},0\big]$ inside $KK_0\big(\B C, C^{\T{ev}}_D(X)\big)$. 

Recall that $i : C^{\T{ev}}_D(X) \to \B L(X)$ denotes the inclusion and identify the interior tensor product $p  \big(C^{\T{ev}}_D(X)\big)^{\op n} \hot_i X$ with the Hilbert $C^*$-module $p  X^{\op n}$. The two last conditions in \cite[Theorem 13]{Kuc:KUM} are clearly satisfied since the selfadjoint and regular unbounded operator appearing in the unbounded Kasparov module $\big( p  \big( C^{\T{ev}}_D(X)\big)^{\op n}, 0\big)$ is equal to zero, see also \cite[Lemma 10]{Kuc:KUM}. We therefore focus on the first condition in \cite[Theorem 13]{Kuc:KUM}. To this end, let $S \in \T{Lip}_D^{\T{ev}}(X)$ and consider a vector of the form $x = p \cd  e_j S$ inside $p \big(C_D^{\T{ev}}(X)\big)^{\op n}$. This vector gives rise to the bounded adjointable operator
\[
T_x : X \to p X^{\op n} \q T_x(\xi) := p \cd e_j S(\xi)
\]
with adjoint given by $T_x^*( \sum_{k = 1}^n \eta_k e_k) = S^*(\eta_j)$. For every vector $\xi \in \T{Dom}(D)$ it then holds that $T_x(\xi)$ belongs to $\T{Dom}(p D^{\op n} p)$ and the relevant ``commutator'' is given by
\[
\begin{split}
p D^{\op n} p \cd T_x (\xi) - T_x \cd D(\xi) & = p D^{\op n} p \cd e_j S(\xi) 
- p \cd e_i S D(\xi) \\
& = p d(p) \cd e_j S(\xi) + p \cd  e_j d(S)(\xi) ,
\end{split}
\]
where the closed $*$-derivation $d : M_n\big( \T{Lip}_D(X) \big) \to \B L(X^{\op n})$ is described near \eqref{eq:derivation}. This shows that the ``commutator''
\[
p D^{\op n} p \cd T_x - T_x \cd D : \T{Dom}(D) \to p X^{\op n}
\]
extends to a bounded adjointable operator from $X$ to $p X^{\op n}$. Notice also that for every vector $\eta \in \T{Dom}(p D^{\op n} p)$ it holds that $T_x^*(\eta)$ belongs to $\T{Dom}(D)$. This shows that the first condition in \cite[Theorem 13]{Kuc:KUM} is satisfied as well and hence that the identity in \eqref{eq:kkprod} is correct. 
\end{proof}





\section{The index isomorphism}\label{s:index}
Let us fix a $\si$-unital $C^*$-algebra $B$ and spend a bit of time describing the isomorphism of abelian groups $K_0(B) \cong KK_0(\B C,B)$, which we refer to as the \emph{index isomorphism}, see \cite[Proposition 17.5.5]{Bla:KOA} and \cite[\S 6]{Kas:OFE}. Our description differs from the treatment given in \cite{Bla:KOA} in so far that we do not rely on the six term exact sequence nor on the triviality of the $K$-theory of $\B L\big( \ell^2(\B N,B) \big)$. The approach presented here is in fact more in line with the original approach in \cite[\S 6]{Kas:OFE} (one may however argue that Kasparov's construction is even more elementary since it does not use excision).

To ease the notation here below, we put
\[
\B K_B := \B K\big( \ell^2(\B N,B) \big) \, \, \T{ and } \, \, \, \B L_B := \B L\big( \ell^2(\B N,B) \big) .
\]
Recall from \cite[Lemma 4]{Kas:HSV} that the $C^*$-algebra of compact operators on $\ell^2(\B N,B)$ can be identified with the (minimal) tensor product of $C^*$-algebras:
\[
\B K_B \cong B \ot \B K\big( \ell^2(\B N)\big) .
\]
Since $K$-theory is stable under tensor products with the compact operators $\B K\big( \ell^2(\B N)\big)$, \cite[Proposition 6.4.1]{RLL:IKC}, we get an isomorphism of abelian groups $K_0(B) \cong K_0( \B K_B )$. Applying excision in $K$-theory, see Theorem \ref{t:excision}, it moreover holds that $K_0(\B L_B,\B K_B) \cong K_0(\B K_B)$ and it therefore suffices to identify the $KK$-group $KK_0(\B C,B)$ with the $K$-group $K_0( \B L_B,\B K_B)$. 

Let $(X,F)$ be a unital even Kasparov module from $\B C$ to $B$ satisfying that $F = F^*$ and $\| F \|_\infty \leq 1$. We are now going to construct the corresponding index class in $K_0(B)$.

Let us denote the grading operator on $X$ by $\ga : X \to X$ and consider the two projections $\ga_+ := (1 + \ga)/2$ and $\ga_- := (1 - \ga)/2$. Define the selfadjoint unitary operator 
\[
U_F := F + \ga \cd \sqrt{1 - F^2} : X \to X 
\]
together with the associated projection
\[
Q_F := U_F \cd \ga_+ \cd U_F : X \to X . 
\]
Remark that modulo the compact operators on $X$ it holds that $U_F$ agrees with the selfadjoint contraction $F$ and that $Q_F$ agrees with the projection $\ga_-$. As a consequence, we get that $(\ga_-,Q_F)$ is a projection in $D\big( \B L(X),\B K(X)\big)$ and we therefore obtain a class
\[
  [ \ga_-,Q_F] - [\ga_-,\ga_-] \in K_0\big( \B L(X), \B K(X) \big) .
  \]
  For later use, we provide a more manageable expression for the projection $Q_F$ which can be verified by a few algebraic manipulations (using that $F$ anti-commutes with $\ga$):
  \begin{equation}\label{eq:expproj}
  Q_F = \ga (1 - F^2) + F \sqrt{1 - F^2} + \ga_- .
  \end{equation}
Since $X$ is countably generated as a Hilbert $C^*$-module over $B$, an application of Kasparov's stabilization theorem allows us to choose an adjointable isometry
$V : X \to \ell^2(\B N,B)$, see \cite[Theorem 2]{Kas:HSV} (or alternatively \cite[Theorem 6.2]{Lan:HCM}). In particular, we obtain the $*$-homomorphism
\[
\T{Ad}(V) : \B L(X) \to \B L_B \q \T{Ad}(V)(T) := V T V^* 
\]
which also satisfies that $\T{Ad}(V)(K) \in \B K_B$ for all $K \in \B K(X)$. Applying functoriality in relative $K$-theory we get the following:

%

\begin{dfn}\label{d:index}
  The \emph{index class} of the unital even Kasparov module $(X,F)$ from $\B C$ to $B$ (with $F = F^*$ and $\| F \|_\infty \leq 1$) is the class in
  $K_0(\B L_B,\B K_B) \cong K_0(B)$ given by the formal difference of projections
  \[
\T{Index}( X,F) := K_0\big( \T{Ad}(V) \big)\big( [\ga_-,Q_F] - [\ga_-,\ga_-] \big) . 
\]
\end{dfn}

We notice in passing that the vanishing of the $K$-group $K_0(\B L_B)$, see \cite[Proposition 12.2.1]{Bla:KOA}, implies that $K_0\big( D(\B L_B,\B K_B)\big) = K_0(\B L_B,\B K_B)$ and hence that the class $[\ga_-,\ga_-] \in K_0\big( D(\B L_B,\B K_B) \big)$ is in fact trivial. This is however less important for our current treatment of the index isomorphism.  
\medskip

Using the operator homotopy picture of $KK$-theory described in Section \ref{s:unbdd}, the proof of the next proposition is rather elementary and has therefore been omitted.

\begin{prop}\label{p:indexiso}
The index class $\T{Index}(X,F) \in K_0(B)$ only depends on the class of the unital even Kasparov module $(X,F)$ in the $KK$-group $KK_0(\B C,B)$. In particular, we obtain a group homomorphism
\[
\T{Index} : KK_0(\B C,B) \to K_0(B) .
\]
\end{prop}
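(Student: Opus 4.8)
The plan is to verify the two facts that together make $\T{Index}$ descend to a group homomorphism: invariance under the operator homotopy equivalence relation of Section~\ref{s:unbdd}, and additivity under the direct sum of Kasparov modules. Throughout I work with the formal difference $[\ga_-,Q_F] - [\ga_-,\ga_-]$ in $K_0\big(\B L(X),\B K(X)\big)$ and only apply $K_0\big(\T{Ad}(V)\big)$ at the end, noting first that the resulting class in $K_0(B)$ does not depend on the choice of stabilization isometry $V : X \to \ell^2(\B N,B)$; this is a standard stability argument, since any two such isometries induce the same (canonical) isomorphism $K_0\big(\B K(X)\big)\cong K_0(B)$.

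For additivity I would simply observe that $U_F$, $Q_F$ and $\ga_-$ are all compatible with orthogonal direct sums: for $(X_0,F_0)$ and $(X_1,F_1)$ we have $U_{F_0\op F_1}=U_{F_0}\op U_{F_1}$ and $Q_{F_0\op F_1}=Q_{F_0}\op Q_{F_1}$, while $\ga_-^{X_0\op X_1}=\ga_-^{X_0}\op\ga_-^{X_1}$, so the pair $(\ga_-,Q_{F_0\op F_1})$ is the block-diagonal sum of $(\ga_-,Q_{F_0})$ and $(\ga_-,Q_{F_1})$ inside $D\big(\B L(X_0\op X_1),\B K(X_0\op X_1)\big)$. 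Additivity of $K_0$ under the block-diagonal inclusion, together with the freedom in choosing $V$ for $X_0\op X_1$ (take $V_0\op V_1$ followed by a unitary $\ell^2(\B N,B)^{\op2}\cong\ell^2(\B N,B)$), yields $\T{Index}(X_0\op X_1,F_0\op F_1)=\T{Index}(X_0,F_0)+\T{Index}(X_1,F_1)$. In particular a degenerate module $(Y,G)$ has $G^2=1$ on $Y$, so $Q_G=\ga_-$ by \eqref{eq:expproj}, and hence contributes $0$.

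For invariance I would treat the three moves making up the equivalence relation separately. Adding a degenerate module changes nothing by the previous paragraph. For an even unitary intertwiner $U : X_0\op Y_0\to X_1\op Y_1$ one checks (using that $U$ is even, so $U$ commutes with the grading operators) that $U_{U^*FU}=U^*U_FU$ and $Q_{U^*FU}=U^*Q_FU$ for $F=F_1\op G_1$, while $U^*\ga_-U=\ga_-$; thus the pair associated with $H_1=U^*FU$ on $X_0\op Y_0$ is the conjugate by $U^*$ of the pair associated with $(X_1\op Y_1,F)$, and since $VU$ is a stabilization isometry for $X_0\op Y_0$ whenever $V$ is one for $X_1\op Y_1$, independence of the isometry gives equality of index classes. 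The only move requiring a genuine computation is operator homotopy. By \cite[Proposition 17.4.3]{Bla:KOA} I may assume the connecting path $t\mapsto H_t$ consists of odd selfadjoint contractions, so $1-H_t^2\geq 0$ and, this being a unital Kasparov module over $\B C$, $1-H_t^2\in\B K(X)$; applying the function $x\mapsto\sqrt{x}$, which vanishes at $0$, gives $\sqrt{1-H_t^2}\in\B K(X)$. Formula \eqref{eq:expproj} then shows $Q_{H_t}-\ga_-=\ga(1-H_t^2)+H_t\sqrt{1-H_t^2}\in\B K(X)$, so $t\mapsto(\ga_-,Q_{H_t})$ is a norm-continuous path of projections in $D\big(\B L(X),\B K(X)\big)$ (continuity being clear since squaring, taking square roots and multiplying are norm-continuous), whence $[\ga_-,Q_{H_0}]=[\ga_-,Q_{H_1}]$ in $K_0\big(D(\B L(X),\B K(X))\big)$ and, after applying $K_0\big(\T{Ad}(V)\big)$, in $K_0(B)$.

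Assembling these steps: if $(X_0,F_0)$ and $(X_1,F_1)$ are operator homotopic then combining degenerate addition, operator homotopy and unitary conjugation gives $\T{Index}(X_0,F_0)=\T{Index}(X_1,F_1)$, so $\T{Index}$ is well defined on $KK_0(\B C,B)$, and with additivity it is a group homomorphism. I do not expect a serious obstacle; the only point needing care is that $Q_{H_t}$ remains a projection with $Q_{H_t}-\ga_-$ compact along a \emph{normalized} operator homotopy, which is exactly where the reduction via \cite[Proposition 17.4.3]{Bla:KOA} and the explicit formula \eqref{eq:expproj} are used, together with the bookkeeping of stabilization isometries in the additivity and unitary-conjugation steps.
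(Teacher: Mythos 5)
Your proof is correct and supplies exactly the argument the paper omits (the paper only remarks that the proof is ``rather elementary'' using the operator homotopy picture, and leaves it out). You check invariance under the three moves that define the operator homotopy equivalence relation of Section~\ref{s:unbdd}---addition of degenerate modules, conjugation by an even unitary, and norm-continuous operator homotopy---together with additivity under direct sum, and this is the natural route given Definition~\ref{d:index}. The only two points that genuinely require care are both handled correctly: the normalization of an arbitrary operator homotopy to one consisting of odd selfadjoint contractions (so that $U_{H_t}$ and $Q_{H_t}$ are defined throughout), for which \cite[Proposition 17.4.3]{Bla:KOA} is indeed the right citation; and the independence of the choice of stabilization isometry $V$, which you correctly identify as a standard stability argument---one could make it fully explicit by noting that for two isometries $V,W : X \to \ell^2(\B N,B)$ the partial isometry $WV^*$ implements a Murray--von Neumann equivalence between $\T{Ad}(V)(p)$ and $\T{Ad}(W)(p)$ for any projection $p$ over $\B K(X)$, compatibly with the relative $K$-theory picture.
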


Let now $p = (p_1,p_2)$ be a projection belonging to the matrix algebra $M_n\big( D(\B L_B, \B K_B) \big)$ for some $n \in \B N$. Since the two projections $p_1$ and $p_2$ inside $M_n(\B L_B)$ coincide modulo $M_n(\B K_B)$ we get an even unital Kasparov module from $\B C$ to $B$ of the form
\begin{equation}\label{eq:kaspmod}
\Big( p_2 \ell^2(\B N,B)^{\op n} \op p_1 \ell^2(\B N,B)^{\op n} , \ma{cc}{0 & p_2 p_1 \\ p_1 p_2 & 0} \Big) ,
\end{equation}
where the corresponding grading operator is given by $\ga = 1 \op (-1)$. It is then not difficult to verify that the above construction yields a group homomorphism from $K_0\big( D(\B L_B,\B K_B) \big)$ to $KK_0(\B C,B)$ and upon suppressing the isomorphism between $K_0(B)$ and $K_0(\B L_B,\B K_B) \su K_0\big( D(\B L_B,\B K_B) \big)$ we therefore get a group homomorphism 
\[
\T{Index}^{-1} : K_0( B) \to  KK_0(\B C,B) .
\]


\begin{thm}\label{t:index}
The two group homomorphisms $\T{Index}$ and $\T{Index}^{-1}$ are each others inverse and the abelian groups $KK_0(\B C,B)$ and $K_0(B)$ are thereby isomorphic. 
\end{thm}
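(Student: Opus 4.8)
The plan is to verify directly that both composites $\T{Index}\circ\T{Index}^{-1}$ and $\T{Index}^{-1}\circ\T{Index}$ are the identity; since both maps are group homomorphisms (Proposition~\ref{p:indexiso} together with the construction given just before the statement) it suffices to evaluate the composites on representatives. It is convenient to exploit throughout that $K_0(\B L_B)=0$ by \cite[Proposition~12.2.1]{Bla:KOA}: this makes $K_0(\B L_B,\B K_B)$ coincide with $K_0\big(D(\B L_B,\B K_B)\big)$, and it forces the diagonal correction terms $[\ga_-,\ga_-]$ and $[\De(V\ga_-V^*)]$ to vanish once pushed forward by $\T{Ad}(V)$ (they also represent degenerate Kasparov modules), so they can be discarded at every stage.

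First I would treat $\T{Index}^{-1}\circ\T{Index}$, which needs no homotopy. Start from a unital even Kasparov module $(X,F)$ from $\B C$ to $B$ with $F=F^*$ and $\|F\|_\infty\leq 1$ --- every class in $KK_0(\B C,B)$ has such a representative. Then $\T{Index}(X,F)$ is, up to $\T{Ad}(V)$ and the discarded correction, the class of the projection pair $(\ga_-,Q_F)$, and feeding this into $\T{Index}^{-1}$ returns, after transporting back along the stabilizing isometry $V$, the Kasparov module $\big(Q_F X\op\ga_- X,\ \ma{cc}{0 & Q_F\ga_- \\ \ga_- Q_F & 0}\big)$ with grading $1\op(-1)$. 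Now recall that $U_F=F+\ga\sqrt{1-F^2}$ is a selfadjoint unitary with $U_F Q_F U_F=\ga_+$; hence $U_F$ restricts to a unitary $Q_F X\to\ga_+ X$, and $W:=U_F|_{Q_F X}\op\T{id}_{\ga_- X}$ is a unitary intertwining the grading $1\op(-1)$ on $Q_F X\op\ga_- X$ with the grading $\ga$ on $X=\ga_+X\op\ga_-X$. A short computation using $F\ga=-\ga F$ together with $U_F Q_F=\ga_+ U_F$ and $Q_F U_F=U_F\ga_+$ shows that conjugation by $W$ sends $\ma{cc}{0 & Q_F\ga_- \\ \ga_- Q_F & 0}$ to $F$ itself. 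Thus the returned Kasparov module is literally unitarily equivalent to $(X,F)$, proving $\T{Index}^{-1}\circ\T{Index}=\T{id}$.

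Next I would treat $\T{Index}\circ\T{Index}^{-1}$. Given a projection $p=(p_1,p_2)\in M_n\big(D(\B L_B,\B K_B)\big)$, apply $\T{Index}^{-1}$ to obtain the Kasparov module $Y$ of \eqref{eq:kaspmod}; its operator $\ma{cc}{0 & p_2p_1 \\ p_1p_2 & 0}$ is selfadjoint of norm $\leq 1$, so $\T{Index}$ applies, and the grading of $Y$ makes $\ga_+^Y=p_2\op 0$ and $\ga_-^Y=0\op p_1$. From \eqref{eq:expproj} one computes $1-F_Y^2=\big(p_2(1-p_1)p_2\big)\op\big(p_1(1-p_2)p_1\big)$, which is compact because $p_1\equiv p_2$ modulo $M_n(\B K_B)$, and one checks that $U_{F_Y}$ implements a Murray--von Neumann equivalence $Q_{F_Y}\sim\ga_+^Y$ via the partial isometry $\ga_+^Y U_{F_Y}$. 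Choosing the stabilizing isometry $V$ to be the inclusion of the Hilbert $B$-module underlying $Y$ into $\ell^2(\B N,B)^{\op 2n}$ followed by a fixed isomorphism $\ell^2(\B N,B)^{\op 2n}\cong\ell^2(\B N,B)$ and transporting along it, the class $\T{Index}(Y)$ is represented by a projection pair in $D(\B L_B,\B K_B)$ whose two legs are $K_0$-equivalent to $p_1$ and $p_2$ respectively; a rotation homotopy of projections inside $D(\B L_B,\B K_B)$ --- keeping the difference of the two legs compact at all times --- then identifies $\T{Index}(Y)$ with $[p]$, after invoking the excision isomorphism of Theorem~\ref{t:excision} and the stability isomorphism $K_0(\B K_B)\cong K_0(B)$. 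This gives $\T{Index}\circ\T{Index}^{-1}=\T{id}$, so the two maps are mutually inverse.

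I expect the genuine obstacle to be this last step: one must carry the grading and the block decomposition of $Q_{F_Y}$ supplied by \eqref{eq:expproj} along the whole computation, realize the Murray--von Neumann rotation so that it stays inside $D(\B L_B,\B K_B)$ (with compact off-diagonal corrections throughout), confirm via the uniqueness-up-to-homotopy arguments attached to Kasparov's stabilization theorem that the outcome does not depend on the chosen isometry $V$, and reconcile the matrix size $n$ with the stability isomorphism. By contrast the first composite is essentially free, being settled by a single explicit unitary equivalence.
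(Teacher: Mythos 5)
Your proposal follows the same two-step strategy as the paper, and the first composite is handled in essentially the same way: both you and the paper use the selfadjoint unitary $U_F$ and the relation $Q_F = U_F\ga_+ U_F$ to produce a unitary equivalence between the Kasparov module $\big(Q_F X\op\ga_- X,\sma{0 & Q_F\ga_- \\ \ga_- Q_F & 0}\big)$ and $(X,F)$. Your explicit $W=U_F|_{Q_FX}\op\T{id}_{\ga_-X}$ is exactly the intertwiner implicit in the paper's chain of unitary equivalences, and the claimed conjugation computation is correct.

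For the second composite the two arguments share the same key ingredient but diverge in how they finish. You correctly identify that, with $F_Y := \sma{0 & p_2p_1 \\ p_1p_2 & 0}$, the selfadjoint unitary $U_{F_Y}$ coincides with the paper's $U_p$, and that $\ga_+^Y U_{F_Y}$ (equivalently its adjoint $U_p(p_2\op 0)$) implements the Murray--von Neumann equivalence $\ga_+^Y\sim Q_{F_Y}$ inside $M_{2n}(\B L_B)$. Where you then appeal to a ``rotation homotopy of projections inside $D(\B L_B,\B K_B)$'' and flag this as the genuine obstacle, the paper instead avoids any homotopy: it exhibits the pair
\[
\left(\ma{cc}{0 & 0 \\ p_1 & 0},\ U_p(p_2\op 0)\right)\in M_{2n}\big(D(\B L_B,\B K_B)\big),
\]
checks that this is a partial isometry, and observes that its support and range projections are $(p_1\op 0,\,p_2\op 0)$ and $\big(0\op p_1,\,U_p(p_2\op 0)U_p\big)$ respectively, which gives the needed $K_0$-identification directly via Murray--von Neumann equivalence of projection \emph{pairs}. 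This is cleaner than a homotopy argument: the only thing to verify is that the first leg is compactly close to the second leg (which holds because $p_1\equiv p_2$ modulo $M_n(\B K_B)$), and no continuity-in-$t$ control is required. One further small difference: you invoke $K_0(\B L_B)=0$ to kill the diagonal correction terms $[p_1,p_1]$; the paper handles this cancellation without that fact, since a general element of $K_0(\B L_B,\B K_B)$ is a difference $[p]-[q]$ with $[\rho_1(p)]=[\rho_1(q)]$ in $K_0(\B L_B)$, so the diagonal contributions from $p$ and $q$ cancel against each other. Both routes are valid; yours uses a stronger ambient fact, the paper's argument is slightly more intrinsic.

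In short: the proposal is correct and takes the same overall route; the one place you leave vague (the rotation homotopy staying inside $D(\B L_B,\B K_B)$) is precisely where the paper substitutes an explicit partial isometry pair, which is the cleaner way to close the argument.
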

\begin{proof}
  First of all, let $(X,F)$ be a unital even Kasparov module from $\B C$ to $B$ with $F = F^*$ and $\| F \|_\infty \leq 1$. Denoting the grading operator on $X$ by $\ga$, let us argue that the class $[X,F]$ in $KK_0(\B C,B)$ agrees with the class coming from the unital even Kasparov module
  \begin{equation}\label{eq:inverseI}
  \Big( Q_F X \op \ga_- X , \ma{cc}{ 0 & Q_F \ga_- \\ \ga_- Q_F & 0} \Big)
  \end{equation}
  This will in turn imply that the composition $\T{Index}^{-1} \ci \T{Index} : KK_0(\B C,B) \to KK_0(\B C,B)$ is equal to the identity. Using the selfadjoint unitary $U_F \in \B L(X)$ and the fact that $Q_F = U_F \ga_+ U_F$ we see that the unital even Kasparov module in \eqref{eq:inverseI} is in fact unitarily equivalent to the unital even Kasparov module
  \[
\Big( \ga_+ X \op \ga_- X, \ma{cc}{ 0 & \ga_+ U_F \ga_- \\ \ga_- U_F \ga_+ & 0 } \Big) .
\]
But it clearly holds that $\ga_+ U_F \ga_- = \ga_+ F \ga_-$ and similarly that $\ga_- U_F \ga_+ = \ga_- F \ga_+$ implying that the unital even Kasparov module in \eqref{eq:inverseI} is in fact unitarily equivalent to $(X,F)$.

Consider now a projection $p = (p_1,p_2)$ in the matrix algebra $M_n\big( D(\B L_B,\B K_B) \big)$ for some $n \in \B N$. Recall that the difference $p_1 - p_2$ belongs to $M_n(\B K_B)$ and define the selfadjoint unitary
\[
U_p := \ma{cc}{ \sqrt{1- p_2 p_1 p_2} & p_2 p_1 \\ p_1 p_2 & - \sqrt{1 - p_1 p_2 p_1}} 
\]
in $M_{2n}(\B L_B)$. We then record that
\[
\begin{split}
& \T{Index}\Big[ p_2 \ell^2(\B N,B)^{\op n} \op p_1 \ell^2(\B N,B)^{\op n}, \ma{cc}{0 & p_2 p_1 \\ p_1 p_2 & 0} \Big] \\
  & \q = \big[ 0 \op p_1, U_p (p_2 \op 0) U_p \big] - [p_1, p_1] .
\end{split}
\]
Thus, in order to see that the composition $\T{Index} \ci \T{Index}^{-1}$ agrees with the identity on $K_0(B) \cong K_0(\B L_B,\B K_B)$ it suffices to show that $[p] = \big[0 \op p_1, U_p (p_2 \op 0) U_p \big]$ inside the $K$-group $K_0\big( D(\B L_B,\B K_B) \big)$. But this follows immediately by noting that the projections $(p_1 \op 0, p_2 \op 0)$ and $\big(0 \op p_1, U_p(p_2 \op 0) U_p \big)$ are Murray-von Neumann equivalent via the partial isometry $\Big( \ma{cc}{0 & 0 \\ p_1 & 0}, U_p (p_2 \op 0) \Big)$ which indeed belongs to $M_{2n}\big( D(\B L_B,\B K_B) \big)$.
\end{proof}

It is relevant to compute the inverse of the index map in the case where the input comes from a projection $p \in M_n(B^\sim)$. To this end, introduce the unital $*$-homomorphism
\begin{equation}\label{eq:scalar}
s : B^\sim \to B^\sim \q s(b,\la) := (0,\la) .
\end{equation}
Next, identify $B$ with the compact operators on $B$ viewed as a Hilbert $C^*$-module over itself. In this fashion, we get a $*$-homomorphism
\begin{equation}\label{eq:compacts}
j : B \to \B L(B)
\end{equation}
and hence, as described in Subsection \ref{ss:standing}, we obtain a unital $*$-homomorphism $j : M_n(B^\sim) \to \B L(B^{\op n})$ for every $n \in \B N$ (upon identifying $M_n\big( \B L(B) \big)$ with $\B L(B^{\op n})$). Viewing $B^{\op n}$ as a closed Hilbert $C^*$-module of $\ell^2(\B N,B)$ (corresponding to the subset $\{1,\ldots,n\} \su \B N$) we also get the $*$-homomorphism $j_n : M_n(B^\sim) \to \B L_B$. It can then be verified that the isomorphism $K_0(B) \cong K_0( \B L_B,\B K_B)$ is induced by the map which sends a projection $p \in M_n(B^\sim)$ to the projection $\big( j_n( s(p)), j_n(p) \big) \in D\big( \B L_B,\B K_B\big)$. Using this description in combination with \eqref{eq:kaspmod} and Theorem \ref{t:index} we obtain the following:
%
%
%
\begin{lemma}\label{l:indexinverse}
  Let $p \in M_n(B^\sim)$ be a projection. We have the identity
  \[
\T{Index}^{-1}\big( [p] - [s(p)] \big) = \Big[ j(p) B^{\op n} \op j( s(p)) B^{\op n} , \ma{cc}{0 & j\big( p s(p)\big) \\ j\big(s(p)p\big) & 0} \Big] .
  \]
  inside the $KK$-group $KK_0(\B C,B)$.
\end{lemma}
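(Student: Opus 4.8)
The plan is to unwind the two constructions whose composition defines $\T{Index}^{-1}$ and to match the outcome with the Kasparov module in the statement. By definition, $\T{Index}^{-1}$ sends a class of $K_0(B)$ first through the isomorphism $K_0(B) \cong K_0(\B L_B,\B K_B)$ recalled just before the statement, and then through the group homomorphism $K_0\big(D(\B L_B,\B K_B)\big) \to KK_0(\B C,B)$ induced by the construction \eqref{eq:kaspmod}. According to that recollection the class $[p]-[s(p)] \in K_0(B)$ corresponds to the class of the projection $\big(j_n(s(p)),j_n(p)\big) \in D(\B L_B,\B K_B)$, so the first step is to substitute $p_1 := j_n(s(p))$ and $p_2 := j_n(p)$ into \eqref{eq:kaspmod} and simplify, using that $j_n : M_n(B^\sim) \to \B L_B$ is a $*$-homomorphism to obtain $j_n(p)j_n(s(p)) = j_n(p s(p))$ and $j_n(s(p))j_n(p) = j_n(s(p) p)$. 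This exhibits $\T{Index}^{-1}\big([p]-[s(p)]\big)$ as the class of the unital even Kasparov module
\[
\Big(\, j_n(p)\ell^2(\B N,B) \op j_n(s(p))\ell^2(\B N,B),\ \ma{cc}{0 & j_n\big(p s(p)\big) \\ j_n\big(s(p) p\big) & 0}\,\Big),
\]
with grading operator $1 \op (-1)$.

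The second step is to identify this Kasparov module with the one in the statement. Here I would use that $j_n$ factors as $\T{Ad}(\iota) \ci j$, where $j : M_n(B^\sim) \to \B L(B^{\op n})$ is the entrywise extension and $\iota : B^{\op n} \to \ell^2(\B N,B)$ is the canonical adjointable isometry corresponding to the subset $\{1,\ldots,n\} \su \B N$. Since $\iota^*\iota = 1$ on $B^{\op n}$, one has $j_n(x)\iota = \iota j(x)$ for every $x \in M_n(B^\sim)$, and hence, for a projection $x$, the isometry $\iota$ restricts to a unitary from $j(x)B^{\op n}$ onto $j_n(x)\ell^2(\B N,B)$. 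Applying this with $x = p$ and with $x = s(p)$ yields a unitary $\iota \op \iota$ respecting the grading $1 \op (-1)$ between the two underlying $\zz/2\zz$-graded Hilbert $C^*$-modules, and the intertwining relation $j_n(x)\iota = \iota j(x)$ shows that, under $\iota \op \iota$, the bounded operator of the Kasparov module in the statement corresponds to the one displayed above. As unitarily equivalent Kasparov modules represent the same class in $KK_0(\B C,B)$, the claimed identity follows.

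I do not expect a genuine obstacle: the argument is formal, the substantive inputs being the explicit description of the index isomorphism recalled just before the statement (in particular that $p$ corresponds to $\big(j_n(s(p)),j_n(p)\big)$), the fact established just above it that \eqref{eq:kaspmod} defines a group homomorphism on $K_0\big(D(\B L_B,\B K_B)\big)$, and invariance of $KK$-classes under unitary equivalence. The only point requiring real care is the bookkeeping relating the three avatars $j$, $j_n$ and $\T{Ad}(\iota)$ of the same $*$-homomorphism, together with checking that $\iota \op \iota$ respects the $\zz/2\zz$-gradings $1 \op (-1)$; if anything is delicate it is merely threading these identifications through consistently.
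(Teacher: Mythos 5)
Your proof is correct and follows exactly the route the paper intends: the paper states the lemma without a separate proof, asserting in the preceding paragraph that it follows from the description of the isomorphism $K_0(B)\cong K_0(\B L_B,\B K_B)$ (sending $p$ to $(j_n(s(p)),j_n(p))$) together with \eqref{eq:kaspmod}, and your argument simply carries out that substitution and supplies the straightforward check that the canonical isometry $\iota\colon B^{\op n}\to \ell^2(\B N,B)$ induces a grading-preserving unitary equivalence between the resulting Kasparov module and the one in the statement.
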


\subsection{Naturality}
In this subsection we review the naturality property of the index isomorphism. As far as we can tell, a proof of this property is not contained in Blackadar's book, see \cite[Proposition 17.5.5]{Bla:KOA}, and for the sake of completeness we therefore present some details on it here. 

Consider two $\si$-unital $C^*$-algebras $B$ and $C$ together with a $*$-homomorphism $\si : B \to C$. Using functoriality of $K$-theory and $KK$-theory we get two group homomorphisms
\[
K_0(\si) : K_0(B) \to K_0(C) \, \, \T{ and } \, \, \, \si_* : KK_0(\B C,B) \to KK_0(\B C,C) .
\]
We emphasize that the proof of naturality is less obvious when $\si : B \to C$ fails to be non-degenerate, thus when
\[
\si(B) C := \T{span}_{\B C}\big\{ \si(b) c \mid b \in B \, \, \T{ and } \, \, \, c \in C \big\} 
\]
fails to be norm-dense in $C$. In this case, we have been unable to find a proof which only uses the operator homotopy picture of $KK$-theory (as described in Section \ref{s:unbdd}) and our proof therefore eventually relies on \cite[Theorem 2.2.17]{JeTh:EKT}.

\begin{prop}\label{p:naturality}
  The following diagram is commutative:
  \[
  \begin{CD}
    KK_0(\B C,B) @>{\si_*}>> KK_0(\B C,C) \\
    @V\T{Index}VV @V{\T{Index}}VV \\
K_0(B) @>{K_0(\si)}>> K_0(C)
  \end{CD}
  \]
\end{prop}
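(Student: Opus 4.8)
The plan is to split into two cases. First suppose $\si$ is \emph{non-degenerate}, i.e.\ $\overline{\si(B)C}=C$. I would use that $\si_*$ sends the class of a unital even Kasparov module $(X,F)$ from $\B C$ to $B$ to the class of its pushforward $(X\hot_\si C, F\hot 1)$, with grading operator $\ga\hot 1$. Non-degeneracy gives an isometric isomorphism $\ell^2(\B N,B)\hot_\si C\cong\ell^2(\B N,C)$, so that $T\mapsto T\hot 1$ becomes a unital $*$-homomorphism $\hat\si:\B L_B\to\B L_C$ carrying $\B K_B$ into $\B K_C$; restricting along $B\cong\B K(B)$ and $C\cong\B K(C)$ one recognises the induced map on $K$-theory as $K_0(\si)$ under the stability isomorphisms $K_0(B)\cong K_0(\B K_B)$ and $K_0(C)\cong K_0(\B K_C)$. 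It then remains to note that every ingredient of Definition \ref{d:index} is compatible with $\,\cdot\hot 1$: one has $U_{F\hot 1}=U_F\hot 1$, hence $Q_{F\hot 1}=Q_F\hot 1$, so the induced $*$-homomorphism on the relative matrix algebras sends $(\ga_-,Q_F)\in D\big(\B L(X),\B K(X)\big)$ to $(\ga_-\hot 1,Q_{F\hot 1})$, and a stabilising isometry $V:X\to\ell^2(\B N,B)$ yields an isometry $V\hot 1:X\hot_\si C\to\ell^2(\B N,C)$ with $\hat\si\circ\T{Ad}(V)=\T{Ad}(V\hot 1)\circ(\,\cdot\hot 1)$. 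Feeding these identities into the functoriality of relative $K$-theory from Proposition \ref{p:functoriality} would give $\T{Index}(\si_*[X,F])=K_0(\si)(\T{Index}(X,F))$ in this case.

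For arbitrary $\si$ the module $\ell^2(\B N,B)\hot_\si C$ is in general only isometrically isomorphic to $\ell^2(\B N,\overline{\si(B)C})$, there is no honest $*$-homomorphism $\B L_B\to\B L_C$ along which to transport the relative class, and the argument above breaks down. Here I would instead use that $\T{Index}$ is bijective (Theorem \ref{t:index}) and that every element of $K_0(B)$ is of the form $[p]-[s(p)]$ for some projection $p\in M_n(B^\sim)$, reducing the assertion to the identity
\[
\si_*\big(\T{Index}^{-1}([p]-[s(p)])\big)=\T{Index}^{-1}\big(K_0(\si)([p]-[s(p)])\big)
\]
for such $p$. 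Writing $\si:B^\sim\to C^\sim$ for the unital extension and using $s\circ\si=\si\circ s$, the right-hand side equals $\T{Index}^{-1}\big([\si(p)]-[s(\si(p))]\big)$, which Lemma \ref{l:indexinverse} presents as the class of an explicit Kasparov module over $C$ built from $j(\si(p))$ and $j(s(\si(p)))$. By Lemma \ref{l:indexinverse} again, the left-hand side is the $\si$-pushforward of the corresponding Kasparov module over $B$ built from $j(p)$ and $j(s(p))$, whose underlying Hilbert $C^*$-module $j(p)B^{\op n}\hot_\si C$ sits only as a (generally non-complemented) submodule of $j(\si(p))C^{\op n}$ rather than coinciding with it. What remains is to produce a homotopy identifying this pushforward with the standard representative over $C$.

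I expect this last identification to be the main obstacle. Since the pushforward is supported on a non-standard Hilbert $C^*$-module over $C$, the operator homotopy picture of $KK$-theory recalled in Section \ref{s:unbdd} is not flexible enough to carry it out --- one cannot absorb the discrepancy between $\overline{\si(B)C}^{\op n}$ and $C^{\op n}$ merely by a compact perturbation together with a unitary. To get around this I would invoke \cite[Theorem 2.2.17]{JeTh:EKT} in order to work with the full homotopy relation on $KK_0(\B C,C)$ --- equivalently, to standardise the underlying module by adjoining a degenerate Kasparov module on $\ell^2(\B N,C)$ and applying Kasparov's stabilisation theorem --- after which the two representatives can be matched and the displayed identity verified.
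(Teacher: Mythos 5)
Your diagnosis and overall strategy match the paper's: you correctly reduce to showing $\si_*\T{Index}^{-1}=\T{Index}^{-1}K_0(\si)$ on classes $[p]-[s(p)]$ with $p\in M_n(B^\sim)$, correctly observe that the resulting module $j(p)B^{\op n}\hot_\si C$ is only a (generally non-complemented) submodule of $j(\si(p))C^{\op n}$, correctly note that the operator homotopy picture is insufficient when $\si$ is degenerate, and correctly identify \cite[Theorem~2.2.17]{JeTh:EKT} as the tool to invoke. The paper even makes the same remark about the limits of the operator homotopy picture. Your opening case analysis for non-degenerate $\si$, while fine, is unnecessary --- the paper's argument treats both cases uniformly, and you never use the non-degenerate case to build the general one.

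The genuine gap is the last step, which you flag but do not fill. You propose to ``standardise the underlying module by adjoining a degenerate Kasparov module on $\ell^2(\B N,C)$ and applying Kasparov's stabilisation theorem, after which the two representatives can be matched.'' But stabilisation only identifies the underlying modules up to unitary isomorphism; it does not by itself produce a homotopy between the two Kasparov modules with their given gradings and operators, and it is not clear how to then ``match'' them without doing more work than the assertion suggests. The paper closes this gap with a much more direct construction: setting $E_0:=\si_0(p)X^{\op n}\op\si_0(s(p))X^{\op n}$ (where $X=\overline{\si(B)C}$) and $E_1:=\si(p)C^{\op n}\op\si(s(p))C^{\op n}$, one forms the Hilbert $C([0,1],C)$-module $E$ of norm-continuous maps $f:[0,1]\to E_1$ with $f(0)\in E_0$, letting $F$ act pointwise by $\sma{0 & \si(p s(p)) \\ \si(s(p)p) & 0}$. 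Since $E_0\su E_1$ and the operator and grading are the same on both, this mapping-cylinder module is an honest homotopy of unital even Kasparov modules from $\B C$ to $C$ connecting the two representatives, which is precisely what \cite[Theorem~2.2.17]{JeTh:EKT} needs. This explicit homotopy is the missing idea in your proposal; your stabilisation route is not shown to deliver it.
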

\begin{proof}
  Let us view $C$ as a Hilbert $C^*$-module over itself and let $X \su C$ denote the Hilbert $C^*$-submodule obtained as the norm-closure of $\si(B) C$ inside $C$. Suppressing the $*$-homomorphism $j : C \to \B L(C)$, see \eqref{eq:compacts}, we get a $*$-homomorphism $\si : B \to \B L(C)$ which in turn restricts to another $*$-homomorphism $\si_0 : B \to \B L(X)$ (satisfying that $\si_0(b)(x) = \si(b)(x)$ for all $x \in X$). 

  For a projection $p \in M_n(B^\sim)$ we define the $\zz/2\zz$-graded Hilbert $C^*$-module $E_1 := \si(p) C^{\op n} \op \si(s(p))C^{\op n}$ together with the $\zz/2\zz$-graded Hilbert $C^*$-submodule $E_0 := \si_0(p) X^{\op n} \op \si_0(s(p)) X^{\op n} \su E_1$. Using Lemma \ref{l:indexinverse} we get that 
  \begin{equation}\label{eq:naturalI}
    \big( \T{Index}^{-1} K_0(\si) \big)\big( [p] - [s(p)] \big)
  = \Big[ E_1 , \ma{cc}{0 & \si\big(p s(p)\big) \\ \si\big(s(p) p\big) & 0 } \Big]
  \end{equation}
  and, on the other hand, it can be verified that
  \begin{equation}\label{eq:naturalII}
  \big( \si_* \T{Index}^{-1}) \big)\big( [p] - [s(p)] \big)
  = \Big[ E_0 , \ma{cc}{0 & \si\big(p s(p)\big) \\ \si\big(s(p) p\big) & 0} \Big] .
  \end{equation}
  
 Let now $E$ denote the $\zz/2\zz$-graded Hilbert $C^*$-module over $C([0,1],C)$ consisting of all norm-continuous maps $f$ from $[0,1]$ to $E_1$ satisfying that $f(0) \in E_0$ and define the selfadjoint bounded operator $F : E \to E$ by putting
\[
F(f)(t) = \ma{cc}{0 & \si\big(p s(p) \big) \\ \si\big(s(p) p\big) & 0} f(t) \q \T{for all } t \in [0,1] .
\]
This yields a unital even Kasparov module $(E,F)$ from $\B C$ to $C( [0,1], C)$ which in turn constitutes a homotopy between the unital even Kasparov modules
\[
\big( E_0, \ma{cc}{0 & \si\big(p s(p)\big) \\ \si\big(s(p) p\big) & 0} \big) \, \, \T{ and } \, \, \,
\big( E_1, \ma{cc}{0 & \si\big(p s(p)\big) \\ \si\big(s(p) p\big) & 0} \big) .
\]
Appealing to \cite[Theorem 2.2.17]{JeTh:EKT}, this shows that the classes in $KK_0(\B C,C)$ appearing in \eqref{eq:naturalI} and \eqref{eq:naturalII} do in fact coincide. 
\end{proof}

\subsection{The index isomorphism for the complex numbers}
We end this section by revisiting the index isomorphism $KK_0(\B C,\B C) \cong K_0(\B C) \cong \B Z$ in view of the conventions applied in the present text.


\begin{prop}\label{p:indexC}
  Let $\big( G_+ \op G_- ,\ma{cc}{ 0 & F_0^* \\ F_0 & 0} \big)$ be a unital even Kasparov module from $\B C$ to $\B C$ with grading operator $\ga = \ma{cc}{1 & 0 \\ 0 & - 1}$. It holds that $F_0 : G_+ \to G_-$ is a Fredholm operator and upon identifying $K_0(\B C)$ with $\B Z$ we have the identities
  \[
  \T{Index}(F_0) = \T{Dim}_{\B C}\big( \T{Ker}(F_0) \big) - \T{Dim}_{\B C}\big( \T{Ker}(F_0^*) \big)
  = \T{Index}\big(  G_+ \op G_- ,\ma{cc}{ 0 & F_0^* \\ F_0 & 0} \big) .
  \]
\end{prop}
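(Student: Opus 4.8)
The plan is to reduce the identity to a single evaluation on the generator of $KK_0(\B C,\B C)$, using the explicit formula for the inverse index isomorphism from Lemma \ref{l:indexinverse} together with the fact that $\T{Index}$ depends only on the $KK$-class and is a group homomorphism (Proposition \ref{p:indexiso}). First I would note that the Kasparov module condition forces $F^2-1$ to be compact, which in the given block form says precisely that $F_0^*F_0-1$ and $F_0F_0^*-1$ are compact; hence $F_0$ is invertible modulo the compacts and thus Fredholm, so that $\T{Index}(F_0)=\T{Dim}_{\B C}\big(\T{Ker}(F_0)\big)-\T{Dim}_{\B C}\big(\T{Ker}(F_0^*)\big)$ is a well-defined integer, equal to $\T{Dim}_{\B C}\big(\T{Ker}(F_0)\big)-\T{Dim}_{\B C}\big(\T{Coker}(F_0)\big)$ (which gives the first asserted equality). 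By Proposition \ref{p:indexiso} it then suffices to compute $\T{Index}$ of the class $\big[G_+\op G_-,\ma{cc}{0&F_0^*\\F_0&0}\big]\in KK_0(\B C,\B C)$.

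The key step is to show that this class equals $\T{Index}(F_0)\cd\be$, where $\be:=[\B C\op 0,0]$ denotes the class of the one-dimensional Hilbert space in even degree carrying the zero operator. Writing $F_0=V_0|F_0|$ for the polar decomposition, the affine path $s\mapsto\ma{cc}{0&\big(V_0(s+(1-s)|F_0|)\big)^*\\V_0(s+(1-s)|F_0|)&0}$, $s\in[0,1]$, is a norm-continuous operator homotopy of even Kasparov modules: one checks the squares stay congruent to $1$ modulo the compacts since $|F_0|^2-1$, and therefore $|F_0|-1=(|F_0|^2-1)(|F_0|+1)^{-1}$, is compact. It connects the given module to the one with operator $\ma{cc}{0&V_0^*\\V_0&0}$; splitting off the part of $V_0$ on $\T{Ker}(F_0)^\perp$, which is a unitary onto $\T{Ker}(F_0^*)^\perp$ and hence yields a degenerate summand, leaves the class $[\T{Ker}(F_0)\op\T{Ker}(F_0^*),0]$. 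Since $\T{Ker}(F_0)$ sits in even degree and $\T{Ker}(F_0^*)$ in odd degree, and reversing the grading of a Kasparov module negates its $KK$-class, this class equals $\big(\T{Dim}_{\B C}(\T{Ker}(F_0))-\T{Dim}_{\B C}(\T{Ker}(F_0^*))\big)\cd\be=\T{Index}(F_0)\cd\be$.

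Finally I would identify $\T{Index}(\be)$. Applying Lemma \ref{l:indexinverse} with $B=\B C$, $n=1$ and $p=1\in\B C\su\B C^\sim$ — so that $s(p)=0$ and $[p]-[s(p)]$ is the standard generator of $K_0(\B C)\cong\B Z$ corresponding to $1$ under the operator trace — the off-diagonal entries on the right-hand side vanish and the formula returns exactly the module $\be$; thus $\be=\T{Index}^{-1}(1)$ and $\T{Index}(\be)=1$ by Theorem \ref{t:index}. Since $\T{Index}$ is a group homomorphism this yields $\T{Index}\big(G_+\op G_-,\ma{cc}{0&F_0^*\\F_0&0}\big)=\T{Index}(F_0)\cd\T{Index}(\be)=\T{Index}(F_0)$, the desired identity. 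The points that genuinely need care are verifying that the polar-decomposition homotopy stays inside the class of Kasparov modules, and — the main pitfall — tracking the chain of identifications ($K_0(\B C)\cong\B Z$, the maps $s$ and $j$ of Lemma \ref{l:indexinverse} specialized to $B=\B C$, and the grading conventions) so that the generator $\be$ is matched with $+1$ rather than $-1$.
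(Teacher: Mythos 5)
Your proof is correct and takes essentially the same route as the paper: both pass through the polar decomposition $F_0 = V_0|F_0|$, reduce the class of $\bigl(G,\begin{smallmatrix}0 & F_0^* \\ F_0 & 0\end{smallmatrix}\bigr)$ to that of $\bigl(\T{Ker}(F_0)\op\T{Ker}(F_0^*),0\bigr)$, and then apply Lemma \ref{l:indexinverse} to read off $[1_n]-[1_m]$, identified with $n - m$ via the trace. The only difference is presentational: you spell out the affine homotopy from $F_0$ to $V_0$ and the splitting-off of the degenerate unitary summand, and factor the last step through the generator $\be = [\B C\op 0,0]$ with $\T{Index}(\be)=1$, whereas the paper states the reduction to the kernel module without proof and computes $\T{Index}(\B C^n\op\B C^m,0)=[1_n]-[1_m]$ directly.
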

\begin{proof}
  To ease the notation, put $G := G_+ \op G_-$ and $F := \ma{cc}{ 0 & F_0^* \\ F_0 & 0}$. Consider the polar decomposition $F_0 = v|F_0|$ and record that the unital even Kasparov modules $( G, F)$ and $\big( (1 - v^* v) G_+ \op (1 - vv^*) G_-, 0 \big)$ determine the same class in $KK_0(\B C,\B C)$. Hence, with $n := \T{Dim}_{\B C}\big( \T{Ker}(F_0) \big)$ and $m := \T{Dim}_{\B C}\big( \T{Ker}(F_0^*) \big)$ we get from Lemma \ref{l:indexinverse} that
  \[
  \T{Index}( G,F) = \T{Index}( \B C^n \op \B C^m, 0) = [1_n] - [1_m] .
  \]
  This proves the proposition since the isomorphism between $K_0(\B C)$ and $\B Z$ is given in terms of the non-normalized trace $\T{TR}$ (sending the unit $1_k \in M_k(\B C)$ to $k$).
\end{proof}

\section{Spectral localizers}\label{s:spectral}
Throughout this section, we consider an even unbounded Kasparov module $(X,D)$ over a $\si$-unital $C^*$-algebra $B$, see Definition \ref{d:unbkasII}. The grading operator on $X$ is denoted by $\ga : X \to X$. We emphasize that the selfadjoint and regular unbounded operator $D$ is assumed to have compact resolvent. On top of this data, we suppose that $H$ is a selfadjoint invertible element in $\T{Lip}^{\T{ev}}_D(X)$, where we recall the definition of the even part of the Lipschitz algebra $\T{Lip}_D^{\T{ev}}(X) \su \T{Lip}_D(X)$ from \eqref{eq:evenlip} and Definition \ref{d:lipschitz}.  

Our aim is now to provide a Hilbert $C^*$-module version of the even spectral localizer introduced in \cite{LoSc:SLE}. One of the main differences with the Hilbert space setting described in \cite{LoSc:SLE} is that the spectral projections for $D$ are not in general available in the context of Hilbert $C^*$-modules. We therefore need to find a version of the spectral localizer which only uses the continuous functional calculus described in Section \ref{s:func}. The terminology ``spectral localizer'' is however still appropriate since our definition only depends on the part of the spectrum of $D$ which is contained in an interval of the form $[-\rho,\rho]$ for a fixed value of $\rho > 0$. 

Let us present the details.

Since $D$ anti-commutes with $\ga$ we get that the spectrum of $D$ is invariant under change of sign so that
\[
\si(D) = \big\{ - \la \mid \la \in \si(D) \big\} = \si( - D) .
\]
For $f \in C_b\big( \si(D) \big)$ we therefore get a well-defined bounded continuous function $f_{-1} : \si(D) \to \B C$ given by $f_{-1}(x) = f(-x)$. Moreover, the uniqueness property of the continuous functional calculus $\Psi : C_b\big( \si(D)\big) \to \B L(X)$ (see Theorem \ref{t:func}) in combination with Proposition \ref{p:trans} show that
\begin{equation}\label{eq:funcgrad}
\ga f(D) \ga = f(-D) = f_{-1}(D) .
\end{equation}

\begin{dfn}\label{d:fctlocal}
We say that an even bounded continuous $L^1$-function $\phi : \B R \to [0,1]$ is \emph{localizing}, if it satisfies the following three conditions:
\begin{enumerate}
\item The support of $\phi$ is contained in $[-1,1]$ and $\phi(x) = 1$ for all $x \in [-1/2,1/2]$;
\item $\phi(x_0) \geq \phi(x_1)$ whenever $0 \leq x_0 \leq x_1$;
\item $p \cd \widehat{\phi}(p)$ is an $L^1$-function on $\B R$.
\end{enumerate}
\end{dfn}

Remark that it is possible to choose a localizing function $\phi : \B R \to [0,1]$ satisfying that $\| p \cd \widehat{\phi}(p) \|_1 \leq 8 \cd \sqrt{2 \pi}$. A detailed construction of such a localizing function is provided in \cite[Lemma 4]{LoSc:FVC}. 
\medskip

Let us fix a general localizing function $\phi : \B R \to [0,1]$. 

For every strictly positive real number $\rho > 0$ we define the smooth even function $\phi_\rho(x) := \phi(x/\rho)$ and record that $\T{supp}(\phi_\rho) \su [-\rho,\rho]$ whereas $\phi_\rho(x) = 1$ for all $x \in [-\rho/2,\rho/2]$. To ease the notation, we put
\[
\Phi_\rho := \phi_\rho(D) .
\]
Since $\phi_\rho$ is even it follows from \eqref{eq:funcgrad} that $\Phi_\rho$ commutes with the grading operator $\ga$. Moreover, for $\rho \geq \rho' > 0$ we have the inequalities
\[
1 \geq \Phi_\rho \geq \Phi_{\rho'} \geq 0
\]
inside the unital $C^*$-algebra $\B L(X)$. 


\begin{dfn}\label{d:localizer}
   For every $\ka > 0$ and $\rho > 0$, we define the \emph{spectral localizer} as the selfadjoint element 
  \[
  L_{\ka,\rho}(H,D,\phi)
  := \Phi_\rho \ga H \Phi_\rho + \ka \cd \Phi_{2 \rho} D \Phi_{2 \rho}
  - \big(1 - \Phi_{2 \rho}^4 \big)^{1/2} \cd \ga
  \]
  inside the unital $C^*$-algebra $\B L(X)$.
\end{dfn}  

Since the triple $(H,D,\phi)$ is currently fixed we simply write $L_{\ka,\rho}$ instead of $L_{\ka,\rho}(H,D,\phi)$. We immediately record the following result:

\begin{lemma}\label{l:modcomp}
  The spectral localizer $L_{\ka,\rho}$ is equal to $-\ga$ modulo $\B K(X)$. In particular, it holds that $(-\ga, L_{\ka,\rho})$ belongs to $D\big(\B L(X),\B K(X)\big)$.
\end{lemma}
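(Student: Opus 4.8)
The plan is to show that each of the three summands in $L_{\ka,\rho}$ either lies in $\B K(X)$ or differs from the corresponding piece of $-\ga$ by a compact operator. The key observation driving everything is that $D$ has compact resolvent, so that $\psi(D) \in \B K(X)$ for every $\psi \in C_0\big(\si(D)\big)$; this follows by writing $\psi = \psi \cd (t+i)^{-1} \cd (t+i)$ (continuous functional calculus), noting $\Psi\big((t+i)^{-1}\big) = (D+i)^{-1} \in \B K(X)$ by hypothesis, and using that $\B K(X)$ is an ideal in $\B L(X)$.

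First I would treat the middle term. The function $x \mapsto \phi_{2\rho}(x)^2 \, x$ has support contained in $[-2\rho, 2\rho]$, hence belongs to $C_c\big(\si(D)\big) \su C_0\big(\si(D)\big)$, and by Proposition \ref{p:core} it equals $\Phi_{2\rho} D \Phi_{2\rho}$ as a bounded adjointable operator. Therefore $\ka \cd \Phi_{2\rho} D \Phi_{2\rho} \in \B K(X)$. Next, the function $x \mapsto \phi_\rho(x)^2$ lies in $C_c\big(\si(D)\big)$ as well, so $\Phi_\rho^2 \in \B K(X)$, and since $\Phi_\rho \ga H \Phi_\rho = \ga (\ga \Phi_\rho \ga) H \Phi_\rho = \ga \Phi_\rho H \Phi_\rho$ (using that $\Phi_\rho$ commutes with $\ga$), this term factors through $\Phi_\rho^2$-type behaviour and in fact $\Phi_\rho \ga H \Phi_\rho \in \B K(X)$ because $\Phi_\rho \in \B K(X)$ (the function $\phi_\rho$ itself is in $C_c(\si(D))$). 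Finally, for the third term, consider $g(x) := 1 - \big(1 - \phi_{2\rho}(x)^4\big)^{1/2}$. Since $\phi$ is localizing, $\phi_{2\rho}$ has support in $[-2\rho,2\rho]$, so $g$ is supported in $[-2\rho,2\rho]$ and is continuous; hence $g \in C_c\big(\si(D)\big)$ and $g(D) \in \B K(X)$. But $g(D) = 1 - \big(1 - \Phi_{2\rho}^4\big)^{1/2}$, so $\big(1 - \Phi_{2\rho}^4\big)^{1/2} = 1 - g(D)$ and therefore
\[
-\big(1 - \Phi_{2\rho}^4\big)^{1/2} \ga = -\ga + g(D)\ga \in -\ga + \B K(X).
\]
Adding the three contributions gives $L_{\ka,\rho} \equiv -\ga$ modulo $\B K(X)$, and since $-\ga \in \B L(X)$ is selfadjoint with $(-\ga) - (-\ga) = 0 \in \B K(X)$, the pair $(-\ga, L_{\ka,\rho})$ satisfies the defining condition of $D\big(\B L(X),\B K(X)\big)$, namely that the difference of the two entries lies in $\B K(X)$.

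The only mildly delicate point — and the step I would be most careful about — is confirming that $\big(1 - \Phi_{2\rho}^4\big)^{1/2}$ genuinely equals $\big(1 - \phi_{2\rho}^4\big)^{1/2}(D)$ via the continuous functional calculus, i.e.\ that taking the positive square root commutes with applying $\Psi$. This is immediate from Proposition \ref{p:trans} (the transformation rule): the function $x \mapsto (1 - \phi_{2\rho}(x)^4)^{1/2}$ is a composition of the bounded continuous function $\phi_{2\rho}$ with $y \mapsto (1-y^4)^{1/2}$ on $[0,1]$, so its value under $\Psi$ is obtained by applying $y \mapsto (1-y^4)^{1/2}$ to $\Phi_{2\rho} = \phi_{2\rho}(D)$, which is exactly $\big(1 - \Phi_{2\rho}^4\big)^{1/2}$ since $0 \le \Phi_{2\rho} \le 1$. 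Everything else is bookkeeping with the ideal property of $\B K(X)$.
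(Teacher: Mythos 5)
Your proof is correct and takes essentially the same approach as the paper, which just observes that $D$ having compact resolvent together with the compact support of $\phi$ forces $\Phi_\rho, \Phi_{2\rho} \in \B K(X)$, and hence $L_{\ka,\rho} + \ga \in \B K(X)$; you are simply supplying the term-by-term bookkeeping the paper leaves implicit. One small imprecision worth flagging: your opening claim that $\psi(D) \in \B K(X)$ for all $\psi \in C_0\big(\si(D)\big)$ is true, but the factorization $\psi = \big[\psi\cdot(t+i)\big]\cdot(t+i)^{-1}$ you offer as justification requires $\psi\cdot(t+i)$ to be bounded, which can fail for general $\psi \in C_0$ (take $\psi(x)=(1+x^2)^{-1/4}$); the clean argument for the full $C_0$ statement is to approximate $\psi$ in supremum norm by $C_c$ functions and use that $\Psi$ is contractive and $\B K(X)$ is closed. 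Since you only ever apply the claim to compactly supported functions, for which your factorization is perfectly valid, this does not affect the correctness of the proof.
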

\begin{proof}
Recall that $D$ has compact resolvent. Since the localizing function $\phi : \B R \to [0,1]$ has compact support we get that both $\Phi_\rho$ and $\Phi_{2\rho}$ lie in the closed $*$-ideal $\B K(X) \su \B L(X)$ and hence that $L_{\ka,\rho} + \ga \in \B K(X)$.   
\end{proof}

\begin{lemma}\label{l:specsquare}
  We have the identity
  \[
  \begin{split}
  L_{\ka,\rho}^2 & = 1 - \Phi_{2 \rho}^4 + \ka^2 \cd D^2  \Phi_{2 \rho}^4 + \Phi_\rho^2 \cd H^2 \cd \Phi_\rho^2 \\
  & \q + \ka \cd \Phi_\rho \cd d(H) \ga \cd \Phi_\rho
  + \Phi_\rho \cd \big[ \Phi_\rho H , [\Phi_\rho,H] \big] \cd \Phi_\rho .
  \end{split}
  \]
\end{lemma}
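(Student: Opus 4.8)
The plan is to decompose the spectral localizer into its three natural selfadjoint summands, expand the square, and evaluate the resulting nine products using that all the ingredients built from the functional calculus of $D$ are either mutually commuting or behave simply under the grading. Concretely, write $L := L_{\ka,\rho} = A + B + C$ with $A := \Phi_\rho\ga H\Phi_\rho$, $B := \ka\,\Phi_{2\rho}D\Phi_{2\rho}$ and $C := -(1-\Phi_{2\rho}^4)^{1/2}\ga$, all selfadjoint in $\B L(X)$, so that $L^2 = A^2 + B^2 + C^2 + (AB+BA) + (AC+CA) + (BC+CB)$.

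First I would collect the structural facts that produce the cancellations. By Proposition~\ref{p:core} the operators $D\Phi_\rho := \Psi(t\phi_\rho)$ and $D\Phi_{2\rho} := \Psi(t\phi_{2\rho})$ are bounded adjointable and, together with $\Phi_\rho$, $\Phi_{2\rho}$ and $(1-\Phi_{2\rho}^4)^{1/2}$, lie in the commutative $C^*$-algebra $\Psi\big(C_b(\si(D))\big)$; hence they commute with one another, the ones arising from even functions commute with $\ga$ by \eqref{eq:funcgrad}, and $D\Phi_\rho$, $D\Phi_{2\rho}$ (odd functions of $D$) anticommute with $\ga$. The decisive numerical input is that $\phi_{2\rho}$ equals $1$ on $[-\rho,\rho]\supseteq\T{supp}(\phi_\rho)$, so $\phi_\rho\phi_{2\rho} = \phi_\rho$; consequently $\Phi_\rho\Phi_{2\rho} = \Phi_\rho = \Phi_\rho\Phi_{2\rho}^4$ and, crucially, $\Phi_\rho(1-\Phi_{2\rho}^4)^{1/2} = 0$. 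Finally $H\ga = \ga H$, while $d(H)$ anticommutes with $\ga$, and one has the Leibniz identity $\Phi_\rho[D,H]\Phi_\rho = (D\Phi_\rho)H\Phi_\rho - \Phi_\rho H(D\Phi_\rho) = \Phi_\rho\,d(H)\,\Phi_\rho$, which I would verify directly from the facts that $\Phi_\rho(X)\su\T{Dom}(D)$ and that $H$ preserves $\T{Dom}(D)$.

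The six contributions then follow. Since $\ga^2 = 1$ one gets $C^2 = 1-\Phi_{2\rho}^4$, and from pairwise commutativity $B^2 = \ka^2\Psi(t^2\phi_{2\rho}^4) = \ka^2 D^2\Phi_{2\rho}^4$. Using $H\ga = \ga H$ one collapses $A^2 = \Phi_\rho\ga H\Phi_\rho^2\ga H\Phi_\rho$ to $(\Phi_\rho H\Phi_\rho)^2$, and a short rearrangement (substituting $\Phi_\rho H = H\Phi_\rho + [\Phi_\rho,H]$ twice) yields $(\Phi_\rho H\Phi_\rho)^2 = \Phi_\rho^2 H^2\Phi_\rho^2 + \Phi_\rho\big[\Phi_\rho H,[\Phi_\rho,H]\big]\Phi_\rho$, which supplies the third and last terms of the claimed formula. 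For $AB+BA$ I would use $\Phi_\rho\Phi_{2\rho} = \Phi_\rho$ to replace every $\Phi_{2\rho}$ sitting next to a $\Phi_\rho$, so that both $AB$ and $BA$ become $\pm\ka$ times $\Phi_\rho(\cdots)(D\Phi_\rho)\ga$; moving $\ga$ through $H$ and past $D\Phi_\rho$ (picking up a sign each time) and invoking the Leibniz identity gives $AB+BA = \ka\,\Phi_\rho d(H)\Phi_\rho\ga = \ka\,\Phi_\rho d(H)\ga\Phi_\rho$. The remaining cross terms vanish: $AC = CA = 0$ because each carries the factor $\Phi_\rho(1-\Phi_{2\rho}^4)^{1/2} = 0$, and $BC + CB = 0$ because $B$ anticommutes with $\ga$ while $(1-\Phi_{2\rho}^4)^{1/2}$ commutes with $B$, whence $CB = -BC$. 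Adding the six pieces produces exactly the asserted identity.

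The one genuinely delicate point is the bookkeeping around the unbounded $D$: one must consistently read $D\Phi_\rho$, $D\Phi_{2\rho}$ and $D^2\Phi_{2\rho}^4$ as the functional-calculus elements $\Psi(t\phi_\rho)$, $\Psi(t\phi_{2\rho})$, $\Psi(t^2\phi_{2\rho}^4)$, and must justify the Leibniz identity carefully rather than manipulate $D$ formally. I expect this identity, together with tracking the sign generated when $\ga$ is commuted past $D\Phi_\rho$, to be the place where care is needed; everything else is routine computation with mutually commuting or anticommuting bounded operators.
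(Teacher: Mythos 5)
Your proof is correct and follows essentially the same route as the paper: fix the product relation $\Phi_\rho\Phi_{2\rho}=\Phi_\rho$, exploit that functional-calculus elements commute with each other and (anti)commute with $\ga$ according to parity, expand the square of $L_{\ka,\rho}$, kill the cross terms involving $(1-\Phi_{2\rho}^4)^{1/2}$, turn the $AB+BA$ term into $\ka\,\Phi_\rho d(H)\ga\Phi_\rho$ via the Leibniz rule, and finish with the algebraic identity $H\Phi_\rho^2 H = \Phi_\rho H^2\Phi_\rho + [\Phi_\rho H,[\Phi_\rho,H]]$. The only cosmetic difference is that you identify $BC+CB=0$ by anticommuting $\ga$ through $B$, whereas the paper records the same cancellation as $-\ka(D\ga+\ga D)\Phi_{2\rho}^2(1-\Phi_{2\rho}^4)^{1/2}=0$.
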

\begin{proof}
  Remark first that $\Phi_\rho \cd \Phi_{2 \rho} = \Phi_\rho = \Phi_{2 \rho} \cd \Phi_\rho$ since $\phi_{2 \rho}(x) = 1$ for all $x$ in the support of $\phi_\rho$. Notice also that both $\Phi_\rho$ and $\Phi_{2\rho}$ commute with all of the other operators involved except for $H$. Using that $H$ commutes with $\ga$ and that $D$ anti-commutes with $\ga$, we now compute that
  \[
  \begin{split}
    L_{\ka,\rho}^2
    & =  \big( \Phi_\rho \ga H \Phi_\rho \big)^2 + \ka^2\cd  D^2 \Phi_{2 \rho}^4 + 1 - \Phi_{2 \rho}^4 \\
  & \q + \ka \cd \big( \Phi_\rho \ga H D \Phi_\rho + \Phi_\rho D \ga H \Phi_\rho \big) 
  - \ka \cd \big(D \ga  + \ga  D \big) \cd \Phi_{2 \rho}^2 \big(1 - \Phi_{2 \rho}^4 \big)^{1/2} \\
  & = \big( \Phi_\rho H \Phi_\rho \big)^2 + \ka^2 \cd D^2 \Phi_{2 \rho}^4 + 1 - \Phi_{2 \rho}^4
  + \ka \cd \Phi_\rho \cd d(H) \ga \cd \Phi_\rho .
  \end{split}
  \]
  The lemma is now proved by noting that
  \[
  \begin{split}
    & H \Phi_\rho^2 H = \Phi_\rho H^2 \Phi_\rho +  [H,\Phi_\rho] \Phi_\rho H + \Phi_\rho H [\Phi_\rho, H] \\
    & \q = \Phi_\rho H^2 \Phi_\rho +  \big[\Phi_\rho H, [\Phi_\rho,H] \big]  . \qedhere
  \end{split}
  \]
\end{proof}

Let us define the constant
\begin{equation}\label{eq:localconst}
c_\phi := \frac{2 \cd \| p \cd \widehat{\phi}(p) \|_1}{\sqrt{2 \pi}} \in (0,\infty)
\end{equation}
which only depends on our fixed choice of localizing function $\phi : \B R \to [0,1]$. For $\ka,\rho \in (0,\infty)$ we then introduce the quantity
\begin{equation}\label{eq:constant}
  C_{\ka,\rho} := \Big(\ka + \frac{c_\phi \cd \| H \|_\infty}{\rho} \Big)\cd \| d(H) \|_\infty \in [0,\infty) .
\end{equation}

We define the \emph{gap} of the selfadjoint invertible bounded operator $H : X \to X$ as the strictly positive number
\[
g := \| H^{-1} \|_\infty^{-1} > 0 .
\]

\begin{lemma}\label{l:square}
  Let $\ka,\rho \in (0,\infty)$. We have the estimate
  \[
  \begin{split}
    L_{\ka,\rho}^2 & \geq 1 - \Phi_{2\rho}^4 + \big( \frac{\ka^2\rho^2}{4} - C_{\ka,\rho} \big) \cd (\Phi_{2\rho}^4 - \Phi_{\rho}^4)
    + ( g^2 - C_{\ka,\rho} ) \cd \Phi_{\rho}^4 
  \end{split}
  \]
\end{lemma}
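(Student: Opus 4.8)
The plan is to work directly from the identity for $L_{\ka,\rho}^2$ recorded in Lemma \ref{l:specsquare}, namely
\[
L_{\ka,\rho}^2 = (1 - \Phi_{2\rho}^4) + \ka^2 D^2 \Phi_{2\rho}^4 + \Phi_\rho^2 H^2 \Phi_\rho^2 + \ka \cd \Phi_\rho d(H)\ga\Phi_\rho + \Phi_\rho\big[\Phi_\rho H, [\Phi_\rho, H]\big]\Phi_\rho ,
\]
and to estimate its five summands from below. The first term is kept as is; the ``potential'' term $\Phi_\rho^2 H^2 \Phi_\rho^2$ and the ``kinetic'' term $\ka^2 D^2\Phi_{2\rho}^4$ are treated by positivity, while the last two ``error'' terms are controlled in operator norm. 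Throughout we use that $D$, $\Phi_\rho$ and $\Phi_{2\rho}$ are mutually commuting functions of $D$, and that $\Phi_\rho \Phi_{2\rho}^{\,n} = \Phi_\rho$ for all $n \geq 1$ since $\phi_{2\rho} \equiv 1$ on the support of $\phi_\rho$.

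For the potential term: as $H = H^*$ is invertible one has $H^2 \geq g^2$ with $g = \|H^{-1}\|_\infty^{-1}$, and conjugating by the selfadjoint operator $\Phi_\rho^2$ gives $\Phi_\rho^2 H^2 \Phi_\rho^2 \geq g^2 \Phi_\rho^4$. For the kinetic term I would pass through the functional calculus: it suffices to verify the scalar inequality $x^2 \phi_{2\rho}(x)^4 \geq \frac{\rho^2}{4}\big(\phi_{2\rho}(x)^4 - \phi_\rho(x)^4\big)$ for all $x \in \B R$ (equivalently on $\si(D) \su \B R$). This follows by splitting into $|x| \geq \rho/2$ (where $x^2 - \rho^2/4 \geq 0$ and the remaining two terms are manifestly nonnegative) and $|x| < \rho/2$ (where $\phi_\rho(x) = \phi_{2\rho}(x) = 1$, so the right side vanishes and the left side equals $x^2 \geq 0$); since the continuous functional calculus of Theorem \ref{t:func} is order preserving on real-valued functions, this yields $\ka^2 D^2 \Phi_{2\rho}^4 \geq \frac{\ka^2 \rho^2}{4}(\Phi_{2\rho}^4 - \Phi_\rho^4)$.

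The two error terms together equal $\Phi_\rho M \Phi_\rho$ with $M := \ka \cd d(H)\ga + [\Phi_\rho H, [\Phi_\rho, H]]$, which is selfadjoint: $d(H)$ is skew-adjoint and odd, so $(d(H)\ga)^* = d(H)\ga$, while $[\Phi_\rho H, [\Phi_\rho, H]] = H\Phi_\rho^2 H - \Phi_\rho H^2 \Phi_\rho$. Corollary \ref{c:scaleest} applied to $\phi$ gives $\|[\Phi_\rho, H]\|_\infty = \|[\phi(D/\rho),H]\|_\infty \leq \frac{1}{\rho\sqrt{2\pi}}\|d(H)\|_\infty \|p \cd \widehat{\phi}(p)\|_1$, hence $\|[\Phi_\rho H, [\Phi_\rho, H]]\|_\infty \leq 2\|H\|_\infty \|[\Phi_\rho,H]\|_\infty \leq \frac{c_\phi}{\rho}\|H\|_\infty \|d(H)\|_\infty$; adding the bound $\ka\|d(H)\|_\infty$ for the first summand of $M$ gives $\|M\|_\infty \leq C_{\ka,\rho}$, so $M \geq -C_{\ka,\rho}$ and $\Phi_\rho M \Phi_\rho \geq -C_{\ka,\rho}\Phi_\rho^2$. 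Finally $\phi_\rho(x)^2 \leq \phi_{2\rho}(x)^4$ pointwise (if $\phi_\rho(x) \neq 0$ then $|x| < \rho$, forcing $\phi_{2\rho}(x) = 1$), i.e.\ $\Phi_\rho^2 \leq \Phi_{2\rho}^4$, so the error term is $\geq -C_{\ka,\rho}\Phi_{2\rho}^4$.

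Assembling the four estimates and rewriting $-C_{\ka,\rho}\Phi_{2\rho}^4 = -C_{\ka,\rho}(\Phi_{2\rho}^4 - \Phi_\rho^4) - C_{\ka,\rho}\Phi_\rho^4$ (both pieces being nonnegative) regroups everything into the asserted inequality. I expect the only genuinely delicate point — the main obstacle — to be this last bookkeeping step: the crude operator-norm bound on the error terms produces a $\Phi_\rho^2$ rather than the $\Phi_\rho^4$ appearing in the statement, and the two are reconciled precisely via $\Phi_\rho^2 \leq \Phi_{2\rho}^4$ together with a careful redistribution of the $\Phi_{2\rho}^4$-error across the two nonnegative ``bins'' $\Phi_{2\rho}^4 - \Phi_\rho^4$ and $\Phi_\rho^4$; a secondary point is simply checking that the scalar comparisons feeding the functional calculus hold on all of $\B R$.
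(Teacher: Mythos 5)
Your proof is correct and follows essentially the same route as the paper's: starting from Lemma \ref{l:specsquare}, bounding the potential term by $g^2\Phi_\rho^4$, the kinetic term by $\tfrac{\ka^2\rho^2}{4}(\Phi_{2\rho}^4-\Phi_\rho^4)$, and the error terms by $-C_{\ka,\rho}\Phi_\rho^2$ via Corollary \ref{c:scaleest}, then reconciling the $\Phi_\rho^2$ with $\Phi_{2\rho}^4$ (the paper factors $\Phi_{2\rho}^4-\Phi_\rho^2 = \Phi_{2\rho}^2(\Phi_{2\rho}^2-\Phi_\rho^2)\geq 0$; your pointwise argument on supports gives the same thing). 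The only cosmetic difference is that you prove the kinetic lower bound by a direct scalar case analysis while the paper reasons about the spectrum of $(D^2-(\rho/2)^2)(\Phi_{2\rho}^4-\Phi_\rho^4)$, and your final regrouping replaces $-C_{\ka,\rho}\Phi_\rho^2$ by $-C_{\ka,\rho}\Phi_{2\rho}^4$ before splitting, whereas the paper adds and then drops $C_{\ka,\rho}(\Phi_{2\rho}^4-\Phi_\rho^2)$; these are algebraically equivalent.
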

\begin{proof}
To begin with, record that the bounded operator
  \[
\ka \cd d(H) \ga + \big[ \Phi_\rho H , [\Phi_\rho,H] \big] : X \to X
\]
is selfadjoint and that an application of Corollary \ref{c:scaleest} yields the operator norm estimate
\[
\begin{split}
& \big\| \ka \cd d(H) \ga + \big[ \Phi_\rho H , [\Phi_\rho,H] \big] \big\|_\infty 
\leq \ka \cd \| d(H) \|_\infty + 2  \| H \|_\infty \cd \big\| [\Phi_\rho,H] \big\|_\infty \\
  & \q \leq \ka \cd \| d(H) \|_\infty +  \frac{2  \| H \|_\infty \cd \| p \cd \widehat{\phi}(p) \|_1}{\rho \cd \sqrt{2 \pi}} \cd \| d(H) \|_\infty
  = C_{\ka,\rho} .
\end{split}
\]
These observations imply that
\begin{equation}\label{eq:rest}
\ka \cd \Phi_\rho \cd d(H) \ga \cd \Phi_\rho + \Phi_\rho \cd \big[ \Phi_\rho H , [\Phi_\rho,H] \big] \cd \Phi_\rho
\geq - C_{\ka,\rho} \cd \Phi_\rho^2 .
\end{equation}

It moreover holds that
\begin{equation}\label{eq:extrarest}
\Phi_\rho^2 \cd H^2 \cd \Phi_\rho^2 \geq \| H^{-2} \|^{-1}_\infty \cd \Phi_\rho^4 = g^2 \cd \Phi_\rho^4 .
\end{equation}

Next, notice that the support of the continuous function $\phi_{2 \rho}^4 - \phi_{\rho}^4 : \B R \to [0,1]$ is contained in the union $[-2\rho,-\rho/2] \cup [\rho/2,2\rho]$. Hence, using \eqref{eq:spectrum} and Proposition \ref{p:core} we see that the spectrum of the selfadjoint bounded operator $\big(D^2 - (\rho/2)^2 \big) \cd (\Phi_{2 \rho}^4 - \Phi_{\rho}^4)$ is contained in the closed interval $\big[0, (2\rho)^2 -  (\rho/2)^2\big]$. This entails the inequalities
  \begin{equation}\label{eq:dsquare}
D^2 \cd \Phi_{2 \rho}^4 \geq  D^2  \cd (\Phi_{2 \rho}^4 - \Phi_{\rho}^4) \geq (\rho/2)^2 \cd (\Phi_{2 \rho}^4 - \Phi_{\rho}^4) .
  \end{equation}

  Remark that the selfadjoint bounded operator $\Phi_{2\rho}^4 - \Phi_\rho^2 = \Phi_{2\rho}^2 \cd ( \Phi_{2 \rho}^2 - \Phi_\rho^2)$ is positive. 
  
Combining the inequalities in \eqref{eq:rest}, \eqref{eq:extrarest} and \eqref{eq:dsquare} with the identity from Lemma \ref{l:specsquare} it is now possible to deduce that
 \[
 \begin{split}
   L_{\ka,\rho}^2 & \geq 1 - \Phi_{2\rho}^4 + \ka^2 \cd D^2 \cd \Phi_{2 \rho}^4 + g^2 \cd \Phi_\rho^4 - C_{\ka,\rho} \cd \Phi_\rho^2 \\
   & \geq  1 - \Phi_{2\rho}^4 + \frac{\ka^2 \rho^2}{4} \cd (\Phi_{2 \rho}^4 - \Phi_{\rho}^4) 
  + g^2 \cd \Phi_\rho^4 - C_{\ka,\rho} \cd \Phi_\rho^2 \\
   & \geq 1 - \Phi_{2\rho}^4 + \big( \frac{\ka^2 \rho^2}{4} - C_{\ka,\rho} \big) \cd (\Phi_{2 \rho}^4 - \Phi_{\rho}^4) \\
  & \q + ( g^2 - C_{\ka,\rho} ) \cd \Phi_\rho^4 
   + C_{\ka,\rho} \cd ( \Phi_{2 \rho}^4 - \Phi_\rho^2 ) \\
  & \geq 1 - \Phi_{2\rho}^4 + \big( \frac{\ka^2 \rho^2}{4} - C_{\ka,\rho} \big) \cd (\Phi_{2 \rho}^4 - \Phi_{\rho}^4) 
   + ( g^2 - C_{\ka,\rho} ) \cd \Phi_\rho^4 . \qedhere
  \end{split}
  \]
\end{proof}


\begin{dfn}\label{d:admiss}
  We say that the pair $(\ka,\rho)$ of strictly positive numbers is \emph{admissible} with respect to $(H,D,\phi)$, if it holds that
  \[
C_{\ka,\rho} < \min\{ g^2, \frac{\ka^2 \rho^2}{4} \} .
\]
\end{dfn}

\begin{prop}\label{p:specinv}
  If the pair $(\ka,\rho)$ is admissible, then the spectral localizer $L_{\ka,\rho}$ is invertible. In this case, we get a class $\big[ \big( -\ga, L_{\ka,\rho}(H,D,\phi) \big)\big]$ in the $K$-group $K_0^{\T{inv}}\big(\B L(X),\B K(X)\big)$.
\end{prop}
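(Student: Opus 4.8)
The plan is to read off invertibility of $L_{\ka,\rho}$ directly from the operator inequality established in Lemma~\ref{l:square}, and then to note that $(-\ga, L_{\ka,\rho})$ is a selfadjoint invertible element of $D\big(\B L(X),\B K(X)\big)$, so that it determines a class in $W_1\big(\B L(X),\B K(X)\big) \su K_0^{\T{inv}}\big(\B L(X),\B K(X)\big)$.

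First I would record the elementary point that the three bounded adjointable operators $P_1 := 1 - \Phi_{2\rho}^4$, $P_2 := \Phi_{2\rho}^4 - \Phi_\rho^4$ and $P_3 := \Phi_\rho^4$ are positive and sum to the identity: positivity follows from the inequalities $0 \leq \Phi_\rho \leq \Phi_{2\rho} \leq 1$ in $\B L(X)$ recorded in Section~\ref{s:spectral} together with the fact that $\Phi_\rho$ and $\Phi_{2\rho}$ commute, and the identity $P_1 + P_2 + P_3 = 1$ is immediate. Assume now that $(\ka,\rho)$ is admissible, so that $C_{\ka,\rho} < \min\{ g^2, \ka^2\rho^2/4 \}$; then the scalars $1$, $\ka^2\rho^2/4 - C_{\ka,\rho}$ and $g^2 - C_{\ka,\rho}$ are all strictly positive, and writing $m > 0$ for their minimum, Lemma~\ref{l:square} yields
\[
L_{\ka,\rho}^2 \;\geq\; P_1 + \big( \tfrac{\ka^2\rho^2}{4} - C_{\ka,\rho} \big) P_2 + \big( g^2 - C_{\ka,\rho} \big) P_3 \;\geq\; m \cd (P_1 + P_2 + P_3) \;=\; m \cd 1
\]
inside $\B L(X)$. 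Consequently the spectrum of the positive operator $L_{\ka,\rho}^2$ is contained in $[m,\infty)$, so $0$ is not in the spectrum of the selfadjoint operator $L_{\ka,\rho}$, and $L_{\ka,\rho}$ is invertible in $\B L(X)$.

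For the last assertion I would combine this with Lemma~\ref{l:modcomp}, which gives $(-\ga, L_{\ka,\rho}) \in D\big(\B L(X),\B K(X)\big)$. Since $-\ga$ is a selfadjoint unitary and $L_{\ka,\rho}$ is selfadjoint and invertible, the pair $(-\ga, L_{\ka,\rho})$ is selfadjoint with both components invertible in $\B L(X)$, and its inverse $\big(-\ga, L_{\ka,\rho}^{-1}\big)$ again lies in $D\big(\B L(X),\B K(X)\big)$ because $-\ga - L_{\ka,\rho}^{-1} = (-\ga)\big( L_{\ka,\rho} + \ga \big) L_{\ka,\rho}^{-1}$ belongs to the ideal $\B K(X)$. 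Thus $(-\ga, L_{\ka,\rho}) \in W_1\big(\B L(X),\B K(X)\big)$ and determines the desired class in $K_0^{\T{inv}}\big(\B L(X),\B K(X)\big) = W\big(\B L(X),\B K(X)\big)/{\sim}$. I do not expect a serious obstacle here: the substantive estimate is already carried out in Lemma~\ref{l:square}, and the only point requiring (minor) care is the convexity-type passage from the operator inequality to the uniform lower bound $L_{\ka,\rho}^2 \geq m \cd 1$, which uses precisely the identity $P_1 + P_2 + P_3 = 1$ with all three summands positive.
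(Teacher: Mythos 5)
Your proof is correct and takes essentially the same route as the paper: extract a uniform scalar lower bound from Lemma~\ref{l:square} using that the three positive summands $1-\Phi_{2\rho}^4$, $\Phi_{2\rho}^4-\Phi_\rho^4$, $\Phi_\rho^4$ add up to the identity, then invoke Lemma~\ref{l:modcomp}. (The paper phrases the lower bound as $L_{\ka,\rho}^2 \geq (1-\Phi_{2\rho}^4)+\ep\Phi_{2\rho}^4 \geq \ep$ for some $\ep\in(0,1]$, which is the same estimate; and your explicit check that the inverse of $(-\ga,L_{\ka,\rho})$ lies in $D(\B L(X),\B K(X))$ is fine but automatic, since $D(\B L(X),\B K(X))$ is a unital $C^*$-subalgebra of $\B L(X)\op\B L(X)$ and is therefore inverse-closed.)
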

\begin{proof}
  The constraints on $C_{\ka,\rho}$ in combination with Lemma \ref{l:square} entail that there exists an $\ep \in (0,1]$ such that
  \[
  L_{\ka,\rho}^2 \geq (1 - \Phi_{2\rho}^4) + \ep \cd  \Phi_{2\rho}^4  \geq \ep .
  \]
  This shows that $L_{\ka,\rho}$ is invertible. The last statement of the proposition is now a consequence of Lemma \ref{l:modcomp}. 
\end{proof}


Notice that if $(\ka, \rho)$ is admissible, then it holds that $\ka  \cd \| d(H) \|_\infty < g^2$ and that $(\ka, \rho')$ is admissible for all $\rho' \geq \rho$.


We now discuss how to arrange that $(\ka,\rho)$ becomes admissible. First of all, notice that if $\| d(H) \|_\infty = 0$, then $C_{\ka,\rho} = 0$ and all pairs of strictly positive constants are admissible. The next proposition also takes care of the case where $\| d(H) \|_\infty > 0$.

\begin{prop}\label{p:specinvII}
Let $\ka \in (0,\infty)$ satisfy that $\ka \cd \| d(H) \|_\infty < g^2$.  If $\rho \in (0,\infty)$ satisfies that
  \[
\rho \geq \frac{2g}{\ka} \, \, \mbox{ and } \, \, \, \rho > \frac{c_\phi \cd \| H \|_\infty \cd \|d(H) \|_\infty}{ g^2 - \ka \cd \| d(H) \|_\infty}, 
\]
then the pair $(\ka, \rho)$ is admissible with respect to $(H,D,\phi)$.
\end{prop}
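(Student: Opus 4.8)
The plan is to unwind Definition \ref{d:admiss} and reduce admissibility of the pair $(\ka,\rho)$ to the single scalar inequality $C_{\ka,\rho} < g^2$, after which the conclusion follows by elementary manipulation of \eqref{eq:constant}.

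First I would use the first hypothesis $\rho \geq 2g/\ka$ to pin down which term realizes the minimum in Definition \ref{d:admiss}. Since $\ka > 0$ this hypothesis is equivalent to $\ka \rho \geq 2g$, hence $\frac{\ka^2\rho^2}{4} \geq g^2$, so that $\min\{g^2, \frac{\ka^2\rho^2}{4}\} = g^2$. Consequently $(\ka,\rho)$ is admissible with respect to $(H,D,\phi)$ as soon as $C_{\ka,\rho} < g^2$, and it is this last inequality that I would now establish.

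Next I would recall from \eqref{eq:constant} that
\[
C_{\ka,\rho} = \ka \cd \| d(H) \|_\infty + \frac{ c_\phi \cd \| H \|_\infty \cd \| d(H) \|_\infty}{\rho},
\]
so that $C_{\ka,\rho} < g^2$ is equivalent to
\[
\frac{ c_\phi \cd \| H \|_\infty \cd \| d(H) \|_\infty}{\rho} < g^2 - \ka \cd \| d(H) \|_\infty .
\]
By the running assumption $\ka \cd \| d(H) \|_\infty < g^2$, the right-hand side is strictly positive. If $\| d(H) \|_\infty = 0$ the left-hand side vanishes and the inequality is immediate (this is also the case dealt with in the remark preceding the proposition). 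If $\| d(H) \|_\infty > 0$ the inequality is equivalent to
\[
\rho > \frac{ c_\phi \cd \| H \|_\infty \cd \| d(H) \|_\infty}{ g^2 - \ka \cd \| d(H) \|_\infty},
\]
which is exactly the second hypothesis; this settles $C_{\ka,\rho} < g^2$ and hence admissibility.

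I do not expect any genuine obstacle here: the argument is pure bookkeeping. The only two points requiring a moment's care are that the minimum in Definition \ref{d:admiss} is attained at $g^2$ under the first hypothesis — which is precisely where $\rho \geq 2g/\ka$ enters — and that the degenerate case $\| d(H) \|_\infty = 0$, where one cannot clear the denominator, must be noted separately. Both are handled in the outline above.
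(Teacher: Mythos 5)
Your proof is correct and takes essentially the same approach as the paper: both verify $C_{\ka,\rho} < g^2$ from the lower bound on $\rho$ by clearing the denominator in \eqref{eq:constant}, and both use $\ka\rho \geq 2g$ to deal with the term $\ka^2\rho^2/4$. The only (cosmetic) difference is the ordering — you first identify $g^2$ as the minimum in Definition \ref{d:admiss} and then check $C_{\ka,\rho} < g^2$, whereas the paper establishes $C_{\ka,\rho} < g^2$ first and then derives $C_{\ka,\rho} < \ka^2\rho^2/4$ from it via the chain $\frac{C_{\ka,\rho}}{\ka^2\rho^2} \leq \frac{C_{\ka,\rho}}{4g^2} < \frac{1}{4}$.
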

\begin{proof}
  Recall from \eqref{eq:constant} that
\[
C_{\ka,\rho} = \ka \cd \| d(H) \|_\infty + \frac{c_\phi \cd \| H \|_\infty \cd \| d(H) \|_\infty}{\rho}  .
\]
If $\rho > \frac{c_\phi \cd \| H \|_\infty \cd \|d(H) \|_\infty }{g^2 - \ka \cd \| d(H) \|_\infty}$, then we get that
\[
C_{\ka,\rho} < \ka \cd \| d(H) \|_\infty + g^2 - \ka \cd \| d(H) \|_\infty  = g^2 .
\]
Moreover, if $C_{\ka,\rho} < g^2$ and $\rho \geq \frac{2 g}{\ka}$, then we have that
\[
\frac{C_{\ka,\rho}}{\ka^2 \cd \rho^2} \leq \frac{C_{\ka,\rho}}{4 \cd g^2} < \frac{1}{4} .
\]
This shows that the pair $(\ka,\rho)$ is admissible under the constraints stated in the proposition. 
\end{proof}

%

\section{The spectral localizer class and its invariance properties}\label{s:specprop}
Let us fix an even unbounded Kasparov module $(X,D)$ over a $\si$-unital $C^*$-algebra $B$.

Choose a localizing function $\phi : \B R \to [0,1]$ as in Definition \ref{d:fctlocal} and assume that $H \in \T{Lip}_D(X)$ is an even selfadjoint invertible element. According to Proposition \ref{p:specinvII} there exists a pair $(\ka , \rho)$ of strictly positive constants which is admissible with respect to $(H,D,\phi)$ and Proposition \ref{p:specinv} therefore provides us with the corresponding \emph{spectral localizer class}
\[
\big[ \big(-\ga, L_{\ka,\rho}(H,D,\phi)\big) \big] \in K_0^{\T{inv}}\big( \B L(X),\B K(X) \big) .
\]

The aim of this section is to establish a couple of invariance properties for this $K$-theory class. 

\begin{lemma}\label{l:locindepI}
  The spectral localizer class
  \[
\big[ \big(-\ga, L_{\ka,\rho}(H,D,\phi)\big) \big] \in K_0^{\T{inv}}\big( \B L(X),\B K(X) \big) 
\]
is independent of the choice of localizing function $\phi$ and of the choice of admissible pair $(\ka,\rho)$. 
\end{lemma}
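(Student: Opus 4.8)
The plan is to show that any two spectral localizer classes, built from possibly different localizing functions $\phi_0,\phi_1$ and admissible pairs $(\ka_0,\rho_0)$, $(\ka_1,\rho_1)$, are connected by a continuous path of selfadjoint invertibles in $W_n\big(\B L(X),\B K(X)\big)$. Since the class only changes by stable homotopy, it suffices to exhibit a homotopy within $W\big(\B L(X),\B K(X)\big)$. The natural idea is a two-stage interpolation: first interpolate the pairs $(\ka,\rho)$ with the localizing function held fixed, and then interpolate the localizing functions with $(\ka,\rho)$ held fixed.

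First I would handle the case of a fixed localizing function $\phi$ and two admissible pairs $(\ka_0,\rho_0)$ and $(\ka_1,\rho_1)$. The key observation is that the set of admissible pairs is not obviously path-connected, but one can first move $\rho$ up: by the remark after Proposition~\ref{p:specinv}, if $(\ka,\rho)$ is admissible then so is $(\ka,\rho')$ for every $\rho'\ge\rho$, and the map $t\mapsto L_{\ka,\rho(t)}$ is norm-continuous for any continuous $\rho(t)$ because $\phi_{\rho}(D)$ depends norm-continuously on $\rho$ (this follows from the functional calculus, e.g. via Proposition~\ref{p:fourier} applied to $\phi_\rho$ together with the fact that $\phi_\rho\to\phi_{\rho'}$ uniformly as $\rho\to\rho'$). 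So I can enlarge both $\rho_0$ and $\rho_1$ to a common large value $\rho$ through admissible pairs. Then, with $\rho$ fixed and large, I interpolate $\ka$ linearly from $\ka_0$ to $\ka_1$; I must check this path stays admissible. Using the quantitative criterion from Proposition~\ref{p:specinvII}, for $\rho$ large enough the constraint $C_{\ka,\rho}<\min\{g^2,\ka^2\rho^2/4\}$ holds uniformly for $\ka$ in the compact interval $[\min\{\ka_0,\ka_1\},\max\{\ka_0,\ka_1\}]$, provided we also enlarged $\rho$ enough that $\ka\cdot\|d(H)\|_\infty<g^2$ throughout and $\rho\ge 2g/\ka$ for all such $\ka$; so I choose $\rho$ large at the outset to accommodate both the enlargement step and this $\ka$-interpolation. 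Along this path $t\mapsto L_{\ka(t),\rho}$ is norm-continuous and each $L_{\ka(t),\rho}$ is invertible, and by Lemma~\ref{l:modcomp} it equals $-\ga$ modulo $\B K(X)$, hence defines a path in $W_1\big(\B L(X),\B K(X)\big)$; this gives the homotopy invariance in $(\ka,\rho)$.

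Next I would fix an admissible pair and vary the localizing function. Given two localizing functions $\phi_0,\phi_1$, the convex combination $\phi_s:=(1-s)\phi_0+s\phi_1$ is again an even continuous function with values in $[0,1]$, supported in $[-1,1]$, equal to $1$ on $[-1/2,1/2]$, and monotone in the required sense; moreover $\|p\cdot\widehat{\phi_s}(p)\|_1\le(1-s)\|p\cdot\widehat{\phi_0}(p)\|_1+s\|p\cdot\widehat{\phi_1}(p)\|_1$, so the constant $c_{\phi_s}$ is bounded by $\max\{c_{\phi_0},c_{\phi_1}\}$ uniformly in $s$. Hence, by Proposition~\ref{p:specinvII}, there is a single pair $(\ka,\rho)$ — obtained by first enlarging $\rho$ as in the previous paragraph to beat the uniform bound on $c_{\phi_s}$ — that is admissible with respect to $(H,D,\phi_s)$ for all $s\in[0,1]$. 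The map $s\mapsto L_{\ka,\rho}(H,D,\phi_s)$ is norm-continuous since $\phi_{s}\circ(\cdot/\rho)$ and $\phi_{s}\circ(\cdot/2\rho)$ depend norm-continuously on $s$ in the functional calculus (again using Proposition~\ref{p:fourier}, noting that $\widehat{\phi_s}$ and $p\widehat{\phi_s}(p)$ vary continuously in $L^1$), and each term in Definition~\ref{d:localizer} — including $(1-\Phi_{2\rho}^4)^{1/2}\ga$, where the square root is continuous on the relevant positive operators — is norm-continuous in $s$. Each $L_{\ka,\rho}(H,D,\phi_s)$ is invertible by Proposition~\ref{p:specinv} and equals $-\ga$ modulo $\B K(X)$ by Lemma~\ref{l:modcomp}, so this is a homotopy in $W_1\big(\B L(X),\B K(X)\big)$. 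Composing the two homotopies — first changing $\phi_0$ to $\phi_1$ at a safely large fixed $(\ka,\rho)$, then moving $(\ka_0,\rho_0)$ to this $(\ka,\rho)$ and $(\ka,\rho)$ to $(\ka_1,\rho_1)$ through admissible pairs for $\phi_1$ — yields the desired homotopy, and hence equality of classes in $K_0^{\T{inv}}\big(\B L(X),\B K(X)\big)$.

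I expect the main obstacle to be the bookkeeping around \emph{uniform} admissibility: one must choose a single large $\rho$ that simultaneously works for an interval of $\ka$'s and for a family of localizing functions with uniformly bounded $c_\phi$, and one must be careful that the order of operations (enlarge $\rho$ first, then interpolate $\ka$ or $\phi$) keeps every intermediate pair admissible. The norm-continuity of $\rho\mapsto\phi_\rho(D)$ and of $s\mapsto\phi_s(D)$ is a soft point that should be dispatched quickly via Proposition~\ref{p:estimate}/Proposition~\ref{p:fourier}, once one notes that the relevant functions converge in the Fourier-$L^1$ sense; the remaining estimates are direct applications of Proposition~\ref{p:specinvII} and Lemma~\ref{l:square}.
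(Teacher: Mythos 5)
Your proof is correct and follows essentially the same three-step strategy as the paper: interpolate $\rho$ linearly at fixed $\ka$ (using upward stability of admissibility), interpolate $\ka$ linearly at a sufficiently large fixed $\rho$ chosen via Proposition~\ref{p:specinvII}, and interpolate $\phi$ convexly using the bound $c_{(1-s)\phi_0+s\phi_1}\leq(1-s)c_{\phi_0}+sc_{\phi_1}$. The only cosmetic difference is that you invoke Proposition~\ref{p:fourier} for norm-continuity of $\rho\mapsto\phi_\rho(D)$, whereas the paper appeals more directly to continuity of the $*$-homomorphism $\Psi$ combined with continuity of $\mu\mapsto\phi_\mu$ into $C_b(\B R)$.
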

\begin{proof}
  Let us start out by showing that $\big[ -\ga,L_{\ka,\rho_0}(H,D,\phi) \big] = \big[ -\ga,L_{\ka,\rho_1}(H,D,\phi) \big]$ whenever $(\ka,\rho_0)$ and $(\ka,\rho_1)$ are admissible with respect to $(H,D,\phi)$. This follows by noting that the pair $\big(\ka, (1 - t)\rho_0 + t \rho_1\big)$ is admissible for all $t \in [0,1]$ and that the map $t \mapsto L_{\ka, (1-t) \rho_0 + t \rho_1}(H,D,\phi)$ defines a homotopy between $\big(-\ga,L_{\ka,\rho_0}(H,D,\phi) \big)$ and $\big(-\ga,L_{\ka,\rho_1}(H,D,\phi) \big)$ in the sense of Definition \ref{d:homotopy}. Indeed, this is a consequence of Theorem \ref{t:func} together with the observation that the assignment $\mu \mapsto \phi_\mu$ yields a continuous map from $(0,\infty)$ to $C_b( \B R)$.

  Consider now two constants $\ka_0, \ka_1 \in (0, \infty)$ with $\ka_0 \leq \ka_1$ and with $\ka_1 \cd \| d(H) \|_\infty < g^2$, where $g = \| H^{-1} \|^{-1}_\infty > 0$ is the gap of $H$. According to Proposition \ref{p:specinvII} we may choose a $\rho \in (0,\infty)$ such that $(\ka,\rho)$ is admissible for all $\ka \in [\ka_0,\ka_1]$. We then get that the map $t \mapsto L_{ (1-t) \ka_0 + t \ka_1,\rho}(H,D,\phi)$ provides a homotopy between $\big(-\ga,L_{\ka_0,\rho}(H,D,\phi) \big)$ and $\big( -\ga,L_{\ka_1,\rho}(H,D,\phi) \big)$. 

  Finally, let $\phi$ and $\psi : \B R \to [0,1]$ be two localizing functions and notice that $(1 - t) \cd \phi + t \cd \psi$ is a localizing function for all $t \in [0,1]$. Consulting \eqref{eq:localconst} we see that $c_{ (1 - t) \cd \phi + t \cd \psi} \leq (1 - t) \cd c_\phi + t \cd c_\psi$. Thus, given $\ka \in (0,\infty)$ with $\ka \cd \| d( H) \|_\infty < g^2$ and using Proposition \ref{p:specinvII}, we may choose a $\rho \in (0,\infty)$ such that $(\ka,\rho)$ is admissible with respect to $\big(H,D, (1 - t) \cd \phi + t \cd \psi \big)$ for all $t \in [0,1]$. In particular, we get the identity $\big[ -\ga,L_{\ka,\rho}(H,D,\phi) \big] = \big[-\ga, L_{\ka,\rho}(H,D,\psi) \big]$ in the $K$-group $K_0^{\T{inv}}\big( \B L(X),\B K(X) \big)$.
\end{proof}

Because of Lemma \ref{l:locindepI} we apply the notation $L(H,D) \in K_0^{\T{inv}}\big(\B L(X), \B K(X) \big)$ for the spectral localizer class. We thus have that
\[
L(H,D) = \big[ \big(-\ga, L_{\ka,\rho}(H,D,\phi)\big) \big]
\]
for every localizing function $\phi : \B R \to [0,1]$ and for every pair $(\ka,\rho)$ which is admissible with respect to $(H,D,\phi)$.

\begin{lemma}\label{l:locindepII}
Let $\al : [0,1] \to  \T{Lip}_D^{\T{ev}}(X)$ be a continuous map with $\al(t)$ selfadjoint and invertible for all $t \in [0,1]$. Then it holds that $L\big( \al(0),D\big)  = L\big( \al(1),D\big)$ as an identity of classes in $K_0^{\T{inv}}\big(\B L(X), \B K(X) \big)$. 
\end{lemma}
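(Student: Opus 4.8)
The plan is to reduce the statement to the existence of a single homotopy inside $W_1\big(\B L(X),\B K(X)\big)$. Recall from Lemma \ref{l:locindepI} that the spectral localizer class satisfies $L\big(\al(t),D\big)=\big[\big(-\ga,L_{\ka,\rho}(\al(t),D,\phi)\big)\big]$ for \emph{any} localizing function $\phi$ and \emph{any} pair $(\ka,\rho)$ which is admissible with respect to $\big(\al(t),D,\phi\big)$. So it suffices to fix one $\phi$ and one $(\ka,\rho)$ that is simultaneously admissible for every $t\in[0,1]$, and then to observe that $t\mapsto\big(-\ga,L_{\ka,\rho}(\al(t),D,\phi)\big)$ is a continuous path of selfadjoint invertibles in $D\big(\B L(X),\B K(X)\big)$.

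First I would extract the uniform bounds that come from compactness of $[0,1]$. Since $\al:[0,1]\to\T{Lip}_D^{\T{ev}}(X)$ is continuous for the norm $\|\cd\|_1=\|\cd\|_\infty+\|d(\cd)\|_\infty$, the numbers $M:=\sup_t\|\al(t)\|_\infty$ and $N:=\sup_t\|d(\al(t))\|_\infty$ are finite. Composing $\al$ with the contractive inclusion $\T{Lip}_D^{\T{ev}}(X)\to\B L(X)$ and then with the (continuous) inversion map on the group of invertibles of the unital $C^*$-algebra $\B L(X)$, we see that $t\mapsto\|\al(t)^{-1}\|_\infty$ is continuous, hence the common gap $g_0:=\inf_t\|\al(t)^{-1}\|_\infty^{-1}$ is strictly positive. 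Fixing any localizing function $\phi$ and running the estimate from the proof of Proposition \ref{p:specinvII} with $M,N,g_0$ in place of $\|H\|_\infty,\|d(H)\|_\infty,g$, we may choose $\ka>0$ with $\ka N<g_0^2$ and then $\rho>0$ with $\rho\geq 2g_0/\ka$ and $\rho>c_\phi MN/(g_0^2-\ka N)$. For this choice the quantity \eqref{eq:constant} associated to $\al(t)$ is bounded by $\big(\ka+c_\phi M/\rho\big)N<g_0^2\leq\ka^2\rho^2/4$ for all $t$, so $(\ka,\rho)$ is admissible with respect to $\big(\al(t),D,\phi\big)$ for every $t\in[0,1]$ in the sense of Definition \ref{d:admiss}.

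Next I would verify that $t\mapsto\big(-\ga,L_{\ka,\rho}(\al(t),D,\phi)\big)$ is a continuous path in $W_1\big(\B L(X),\B K(X)\big)$. In the formula of Definition \ref{d:localizer} only the term $\Phi_\rho\ga\al(t)\Phi_\rho$ depends on $t$, and it does so norm-continuously because $t\mapsto\al(t)\in\B L(X)$ is continuous; hence $t\mapsto L_{\ka,\rho}(\al(t),D,\phi)$ is norm-continuous and selfadjoint. By Lemma \ref{l:modcomp} each value differs from $-\ga$ by a compact operator, so the pair lies in $D\big(\B L(X),\B K(X)\big)$; by Proposition \ref{p:specinv} together with admissibility each $L_{\ka,\rho}(\al(t),D,\phi)$ is invertible, and since $-\ga$ is a selfadjoint unitary and $\B K(X)$ is an ideal the pair $\big(-\ga,L_{\ka,\rho}(\al(t),D,\phi)\big)$ is invertible in $D\big(\B L(X),\B K(X)\big)$. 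Thus this path is a homotopy in the sense of Definition \ref{d:homotopy} between $\big(-\ga,L_{\ka,\rho}(\al(0),D,\phi)\big)$ and $\big(-\ga,L_{\ka,\rho}(\al(1),D,\phi)\big)$, so the endpoints represent the same class in $K_0^{\T{inv}}\big(\B L(X),\B K(X)\big)$; combined with the first paragraph this yields $L\big(\al(0),D\big)=L\big(\al(1),D\big)$.

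The main obstacle is the uniformity in the second step: one must ensure that the admissibility window — governed by the gap, $\|\al(t)\|_\infty$ and $\|d(\al(t))\|_\infty$ — does not degenerate along the path, and in particular that the gaps $\|\al(t)^{-1}\|_\infty^{-1}$ stay bounded away from $0$. This is where continuity of $\al$ into the Lipschitz algebra (controlling $\|d(\al(t))\|_\infty$) and continuity of inversion in $\B L(X)$ (controlling the gaps) are both used, together with compactness of $[0,1]$. Once a single admissible $(\ka,\rho)$ is secured, the remaining arguments are a routine continuity estimate and an elementary ideal-theoretic check.
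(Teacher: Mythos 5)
Your proposal is correct and takes essentially the same route as the paper: both arguments fix a single localizing function $\phi$, use compactness of $[0,1]$ (continuity of $\al$ into $\T{Lip}_D^{\T{ev}}(X)$ and continuity of inversion in $\B L(X)$) to find one pair $(\ka,\rho)$ admissible for $(\al(t),D,\phi)$ uniformly in $t$, and then observe that $t\mapsto L_{\ka,\rho}(\al(t),D,\phi)$ is a norm-continuous path of invertibles giving a homotopy in $W_1\big(\B L(X),\B K(X)\big)$. Your write-up just spells out the compactness extremes ($M$, $N$, $g_0$) that the paper's terser proof leaves implicit.
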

\begin{proof}
Put $g_t := \| \al(t)^{-1} \|_\infty^{-1}$ for all $t \in [0,1]$ and let $\phi : \B R \to [0,1]$ be a localizing function. Choose a $\ka \in (0,\infty)$ such that $\ka  \cd \big\| d( \al(t)) \big\|_\infty < g_t^2$ for all $t \in [0,1]$. Applying Proposition \ref{p:specinvII} we may find a $\rho \in (0,\infty)$ such that $(\ka,\rho)$ is admissible with respect to $\big( \al(t),D,\phi\big)$ for all $t \in [0,1]$.  We thus get that $\big(-\ga,L_{\ka,\rho}(\al(0),D,\phi)\big)$ and $\big(-\ga,L_{\ka,\rho}(\al(1),D,\phi) \big) \in W_1\big(\B L(X),\B K(X)\big)$ are homotopic in the sense of Definition \ref{d:homotopy}. This proves the lemma. 
\end{proof}

\begin{prop}\label{p:continuity}
  Let $T : X \to X$ be an odd selfadjoint bounded operator. It holds that $(X,D + T)$ is an even unbounded Kasparov module over $B$ and we have the identity
  \[
L(H, D + T) = L(H,D)
\]
inside the $K$-group $K_0^{\T{inv}}\big( \B L(X), \B K(X) \big)$.
\end{prop}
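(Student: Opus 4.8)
The plan is to prove the two assertions in turn. That $(X,D+T)$ is again an even unbounded Kasparov module over $B$ follows exactly as in the perturbation step of the proof of Proposition \ref{p:unbddprodI}: since $T$ is bounded and selfadjoint, the Kato--Rellich theorem \cite[Theorem 4.5]{KaLe:LGR} shows that $D+T : \T{Dom}(D) \to X$ is selfadjoint and regular, it is odd because $D$ and $T$ are, and $(i + D + T)^{-1} = (i+D)^{-1} - (i+D)^{-1}T(i+D+T)^{-1}$ is compact because $(i+D)^{-1}$ is. More generally, writing $D_s := D + sT$ for $s \in [0,1]$, every $(X,D_s)$ is an even unbounded Kasparov module over $B$, and $D_0 = D$, $D_1 = D+T$; the idea is then to interpolate along the $D_s$.

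First I would record that $H$ still preserves $\T{Dom}(D_s) = \T{Dom}(D)$ and that $[D_s,H] = [D,H] + s[T,H]$ extends to the bounded adjointable operator $d(H) + s[T,H]$, so $H \in \T{Lip}_{D_s}^{\T{ev}}(X)$ is selfadjoint and invertible with derivative $d_s(H) := d(H) + s[T,H]$ satisfying $\| d_s(H) \|_\infty \leq M := \| d(H) \|_\infty + \| [T,H] \|_\infty$ for all $s \in [0,1]$, while the gap $g = \| H^{-1} \|_\infty^{-1}$ is independent of $s$. Fix a localizing function $\phi$. If $M = 0$ every pair of positive constants is admissible with respect to $(H,D_s,\phi)$ for all $s$; otherwise choose $\ka > 0$ with $\ka M < g^2$ and then $\rho > 0$ with $\rho \geq 2g/\ka$ and $\rho > c_\phi \| H \|_\infty M/(g^2 - \ka M)$. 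The one-line estimate in the proof of Proposition \ref{p:specinvII}, with $M$ replacing $\| d(H) \|_\infty$, then gives
\[
C_{\ka,\rho}^{(s)} := \Big( \ka + \frac{c_\phi \| H \|_\infty}{\rho} \Big) \| d_s(H) \|_\infty \leq \Big( \ka + \frac{c_\phi \| H \|_\infty}{\rho} \Big) M < \min\Big\{ g^2, \frac{\ka^2 \rho^2}{4} \Big\} ,
\]
so the single pair $(\ka,\rho)$ is admissible with respect to $(H,D_s,\phi)$ for every $s \in [0,1]$, and by Lemma \ref{l:locindepI} we get $L(H,D_s) = \big[ \big( -\ga, L_{\ka,\rho}(H,D_s,\phi) \big) \big]$ for each $s$.

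It then remains to show that $s \mapsto L_{\ka,\rho}(H,D_s,\phi)$ is norm-continuous; granting this, $s \mapsto \big( -\ga, L_{\ka,\rho}(H,D_s,\phi) \big)$ is a continuous path in $W_1\big( \B L(X), \B K(X) \big)$ — each value lies there by Proposition \ref{p:specinv} — hence a homotopy in the sense of Definition \ref{d:homotopy}, and $L(H,D) = L(H,D+T)$ follows by evaluating at $s = 0,1$. Inspecting Definition \ref{d:localizer}, the localizer is assembled from $\phi_\rho(D_s)$, $\phi_{2\rho}(D_s)$, the middle term $\phi_{2\rho}(D_s) D_s \phi_{2\rho}(D_s) = (t \cd \phi_{2\rho}^2)(D_s)$ (using Proposition \ref{p:core} and $t \cd \phi_{2\rho}^2 \in C_c(\B R)$), and $\big( 1 - \phi_{2\rho}(D_s)^4 \big)^{1/2} = \big( 1 - (\phi_{2\rho}^4)(D_s) \big)^{1/2}$, together with the $s$-independent operators $\ga$ and $H$. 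Since the positive square root is norm-continuous on $\B L(X)$ and $\phi_\rho, \phi_{2\rho}^4, t \cd \phi_{2\rho}^2 \in C_0(\B R)$, everything reduces to the claim that $s \mapsto f(D_s)$ is norm-continuous for $f \in C_0(\B R)$. This last point is where the real work sits and I expect it to be the main obstacle: it should follow from norm-continuity of the resolvents $s \mapsto (D_s \pm i)^{-1}$ (immediate from the resolvent identity and $\| (D_s \pm i)^{-1} \|_\infty \leq 1$), hence of the Cayley transforms $s \mapsto C_{D_s}$, combined with continuity of the continuous functional calculus on the uniformly bounded normal elements $C_{D_s}$ and the identity $f(D_s) = g(C_{D_s})$, where $g \in C(S^1)$ is the continuous extension of $f \ci c^{-1}$ as in Theorem \ref{t:func}. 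Alternatively, for the $f$ to which it applies, Proposition \ref{p:perturb} gives this continuity (with a quantitative rate) directly; only the term $\big( 1 - \phi_{2\rho}(D_s)^4 \big)^{1/2}$ genuinely needs the square-root shortcut, since $(1 - \phi_{2\rho}^4)^{1/2}$ is in general merely Lipschitz and hence outside the scope of Proposition \ref{p:perturb}.
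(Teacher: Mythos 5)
Your proof takes essentially the same route as the paper's: interpolate along $D_s = D + sT$, observe that $H \in \T{Lip}_{D_s}^{\T{ev}}(X)$ with derivative $d(H) + s[T,H]$, use Proposition \ref{p:specinvII} to choose a pair $(\ka,\rho)$ admissible with respect to $(H,D_s,\phi)$ uniformly in $s$, and conclude from a norm-continuous homotopy of the localizers. The only real difference is that you unpack the continuity of $s \mapsto L_{\ka,\rho}(H,D_s,\phi)$ term by term and correctly flag that the term $(1-\Phi_{2\rho}^4)^{1/2}\ga$ does not directly fall under Proposition \ref{p:perturb}, patching it via norm-continuity of the positive square root, whereas the paper simply cites Proposition \ref{p:perturb} in one line --- a worthwhile clarification of an otherwise terse step.
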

\begin{proof}
  Remark that since $T$ is odd selfadjoint and bounded we get that $(X,D+ t \cd T)$ is an even unbounded Kasparov module over $B$ for all $t \in [0,1]$. This is a well-known result and a proof can be found in \cite[Definition and Proposition 4.8]{Kaa:UKK}, see also the argument given in the proof of Proposition \ref{p:unbddprodI}. It is moreover clear that $H$ belongs to $\T{Lip}_{D + t T}^{\T{ev}}(X)$ and the corresponding derivative agrees with $d(H) + t \cd [T,H]$ for all $t \in [0,1]$.

  Letting $\phi : \B R \to [0,1]$ be a localizing function we may now use Proposition \ref{p:specinvII} to choose a pair $(\ka,\rho)$ of strictly positive numbers which is admissible for $(H, D + t T, \phi)$ for all $t \in [0,1]$. An application of Proposition \ref{p:perturb} then shows that the map $t \mapsto L_{\ka,\rho}(H, D + t T, \phi)$ is continuous with respect to the operator norm on $\B L(X)$ and we therefore get that $\big(-\ga, L_{\ka,\rho}(H, D,\phi) \big)$ and $\big(-\ga, L_{\ka,\rho}(H, D,\phi) \big)$ are homotopic in the sense of Definition \ref{d:homotopy}.
\end{proof}

\section{Computation of the spectral localizer class}\label{s:compu}
Let us consider an even unbounded Kasparov module $(X,D)$ over a $\si$-unital $C^*$-algebra $B$ as well as a selfadjoint invertible element $H \in \T{Lip}_D^{\T{ev}}(X)$. Associated to $H$ we have the mutually orthogonal projections
\[
H_+ := \frac{1 + H |H|^{-1}}{2} \, \, \T{ and } \, \, \, H_- := \frac{1 - H |H|^{-1}}{2}
\]
which both belong to $\T{Lip}_D^{\T{ev}}(X)$. The present section deals with the key computation of this paper showing that the spectral localizer class $L(H,D)$ constructed in Section \ref{s:specprop} can be described in terms of the index class $\T{Index}(H_+ X, F_{H_+ D H_+})$. Notice in this respect that the class $[ H_+ X, F_{H_+ D H_+} ]$ in $KK_0(\B C,B)$ is obtained by applying the Baaj-Julg bounded transform to the unital even unbounded Kasparov module $(H_+ X, H_+ D H_+)$ constructed in Proposition \ref{p:unbddprodI}. Our strategy is to start out by establishing our key identity in the case where $d(H) = 0$ and $H^2 = 1$. The general case then follows easily by an application of Lemma \ref{l:locindepII} and Proposition \ref{p:continuity}.


Let us fix a localizing function $\phi : \B R \to [0,1]$ in the sense of Definition \ref{d:fctlocal} and define the odd smooth function $g : \B R \to [-1,1]$ by putting $g(x) := (1 - \phi^4(x))^{1/2} x |x|^{-1}$ for all $x \in \B R$. Notice that $g(x) = 0$ for all $x \in [-1/2,1/2]$ and that $g^2 = 1 - \phi^4$. An application of the continuous functional calculus yields the odd selfadjoint bounded operator 
\[
G := g(D) \in \B L(X) .
\]
To ease the notation, we put $\Phi := \phi(D)$ and $\Phi_2 := \phi(D/2)$ and define the even selfadjoint bounded operators
\[
A := \Phi H \Phi \, \, \T{ and } \, \, \, C := (1 - \Phi_2^4 )^{1/2} .
\]
Remark that since $\Phi \cd \Phi_2 = \Phi$ we get have that $A \cd C = C \cd A = 0$.
%

\begin{lemma}\label{l:compI}
  Suppose that $d(H) = 0$ and that $H^2 = 1$. It holds that $\ga \cd (A - C) + G$ and $-\ga + G$ are selfadjoint invertible bounded operators on $X$ which are equal to one another modulo the compact operators on $X$. Moreover, we have the identity
  \[
L(H,D) = \big[ \big(-\ga  + G, \ga \cd (A - C) + G\big)\big]
\]
inside the $K$-group $K_0^{\T{inv}}\big( \B L(X), \B K(X) \big)$.
\end{lemma}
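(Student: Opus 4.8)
The plan is to verify the two invertibility‑and‑compactness assertions by direct continuous functional calculus, and then to deduce the $K$‑theoretic identity by connecting $\big(-\ga,L_{\ka,1}(H,D,\phi)\big)$ — which represents $L(H,D)$ by Lemma~\ref{l:locindepI} — to $\big(-\ga+G,\ga\cd(A-C)+G\big)$ through an explicit homotopy inside $W_1\big(\B L(X),\B K(X)\big)$.

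First I would exploit the hypotheses $d(H)=0$ and $H^2=1$. Since $[D,H]$ vanishes on $\T{Dom}(D)$, the operator $H$ commutes with $(D\pm i)^{-1}$, hence with the Cayley transform, hence with every operator in the range of the functional calculus of Theorem~\ref{t:func}; in particular $H$ commutes with $\Phi$, $\Phi_2$ and $G$. Combining this with $H^2=1$, with $\Phi\cd\Phi_2=\Phi$ (so $AC=CA=0$), and with the relations $\ga\Phi=\Phi\ga$, $\ga G=-G\ga$ from \eqref{eq:funcgrad}, one computes $G^2=1-\Phi^4$, $A^2=\Phi^4$, $C^2=1-\Phi_2^4$, whence
\[
(-\ga+G)^2 = 2-\Phi^4 \geq 1 \qquad \text{and} \qquad \big(\ga\cd(A-C)+G\big)^2 = A^2+C^2+1-\Phi^4 = 2-\Phi_2^4 \geq 1 .
\]
Both operators are manifestly selfadjoint, hence selfadjoint invertible. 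Moreover $D$ has compact resolvent and $\phi$ has compact support, so $\Phi,\Phi_2\in\B K(X)$; thus $A=\Phi H\Phi\in\B K(X)$ and, writing $1-C=\Phi_2^4(1+C)^{-1}$, also $1-C\in\B K(X)$, so that $\big(\ga(A-C)+G\big)-(-\ga+G)=\ga\cd(A+1-C)\in\B K(X)$. Hence $\big(-\ga+G,\ga\cd(A-C)+G\big)\in W_1\big(\B L(X),\B K(X)\big)$, which also produces the class in $K_0^{\T{inv}}\big(\B L(X),\B K(X)\big)$.

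Next I would specialise the spectral localizer to $\rho=1$. Because $d(H)=0$ we have $C_{\ka,1}=0$, so $(\ka,1)$ is admissible for every $\ka>0$ and $L(H,D)=\big[\big(-\ga,L_{\ka,1}(H,D,\phi)\big)\big]$; and since $H$ commutes with $\Phi$ and $\ga$, the definition of the localizer collapses to $L_{\ka,1}(H,D,\phi)=\ga\cd(A-C)+\ka\cd\Phi_2 D\Phi_2$. Now fix $\ka\in(0,\infty)$ and consider, for $t\in[0,1]$, the pair
\[
\al(t) := \big(\,-\ga+tG\,,\ \ga\cd(A-C)+(1-t)\ka\cd\Phi_2 D\Phi_2+tG\,\big),
\]
which is norm‑continuous and equals $\big(-\ga,L_{\ka,1}(H,D,\phi)\big)$ at $t=0$ and $\big(-\ga+G,\ga\cd(A-C)+G\big)$ at $t=1$. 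Its first component squares to $1+t^2(1-\Phi^4)\geq 1$, so it is selfadjoint invertible, and the difference of the two components equals $-\ga(1+A-C)-(1-t)\ka\cd\Phi_2 D\Phi_2\in\B K(X)$ for all $t$.

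The main obstacle — and the heart of the argument — is the invertibility of the second component of $\al(t)$. Here all the operators other than $\ga$ commute among themselves and with $D$, whereas $\ga$ anticommutes with the odd operators $G$ and $\Phi_2 D\Phi_2$; consequently the square of the second component equals $h_t(D)$, where a short computation collecting terms into two squares gives
\[
h_t(x) = \big(1+\phi(x)^4-\phi(x/2)^4\big) + \big((1-t)\ka\,|x|\,\phi(x/2)^2 + t\,(1-\phi(x)^4)^{1/2}\big)^2 .
\]
Both summands are non‑negative; the first vanishes precisely where $\phi(x)=0$ and $\phi(x/2)=1$, which forces $|x|\geq 1/2$ because $\phi\equiv 1$ on $[-1/2,1/2]$, and there the second summand equals $\big((1-t)\ka\,|x|+t\big)^2>0$ since $(1-t)\ka\,|x|$ and $t$ cannot both vanish. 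Thus $h_t>0$ everywhere; since also $h_t(x)=1+t^2\geq 1$ for $|x|\geq 2$, a compactness argument on $[-2,2]\times[0,1]$ provides $\ep>0$ with $h_t\geq\ep$ uniformly in $t\in[0,1]$, so the second component of $\al(t)$ is invertible for every $t$. Therefore $\al$ is a homotopy within $W_1\big(\B L(X),\B K(X)\big)$, and $L(H,D)=\big[\big(-\ga,L_{\ka,1}(H,D,\phi)\big)\big]=\big[\big(-\ga+G,\ga\cd(A-C)+G\big)\big]$, as desired. The only delicate point is the uniform positivity of $h_t$, which rests on the sum‑of‑two‑squares identity above together with the support and normalisation properties of the localizing function.
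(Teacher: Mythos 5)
Your proof is correct, and it follows the same overall strategy as the paper — identify $L(H,D)$ with $\big[(-\ga,L_{\ka,1}(H,D,\phi))\big]$ using $d(H)=0$, then connect this pair to $\big(-\ga+G,\ga\cd(A-C)+G\big)$ by an explicit path in $W_1\big(\B L(X),\B K(X)\big)$ — but the details diverge in an interesting way. The paper fixes $\ka=1$ and uses the square-root parametrization $K_t:=(1-t)^{1/2}\Phi_2 D\Phi_2+t^{1/2}G$ with first component $-\ga+t^{1/2}G$; the point of the square roots is that $(A-C)^2+K_t^2=\Phi^4+1-\Phi_2^4+(1-t)D^2\Phi_2^4+tG^2+2\sqrt{t(1-t)}\,\Phi_2 DG\Phi_2$ is then bounded below by $\Phi^4+t(1-\Phi^4)\geq t$ directly, once one notes that $\Phi_2 DG\Phi_2\geq 0$ (because $xg(x)\geq 0$). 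This gives invertibility for $t\in(0,1]$, and the endpoint $t=0$ is handled by appealing to the already-proved Proposition~\ref{p:specinv}. Your version keeps $\ka$ free, interpolates linearly with coefficients $(1-t)$ and $t$, and consequently the square of the second component is a general function $h_t(D)$ of $D$ whose positivity is not immediate term-by-term; you recover it by the sum-of-two-squares rewriting
\[
h_t(x)=\big(1+\phi(x)^4-\phi(x/2)^4\big)+\big((1-t)\ka\,|x|\,\phi(x/2)^2+t(1-\phi(x)^4)^{1/2}\big)^2 ,
\]
which, combined with a compactness argument on $[-2,2]\times[0,1]$ and the observation $h_t\equiv 1+t^2$ outside $[-2,2]$, yields a uniform lower bound over all $t\in[0,1]$ at once — so Proposition~\ref{p:specinv} is not needed. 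Both arguments are valid; the paper's square-root trick makes the operator lower bound drop out with almost no computation (at the cost of splitting off $t=0$), while your linear homotopy is slightly more laborious at the level of the functional calculus but is self-contained and uniform in $t$.
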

\begin{proof}
  Remark first of all that $G^2 = 1 - \Phi^4$ and that $(A - C)^2 = \Phi^4 + 1 - \Phi_2^4$, where the second equality relies on the identities $d(H) = 0$ and $H^2 = 1$. 

For each $t \in [0,1]$, define the odd selfadjoint bounded operator
\[
K_t := (1 - t)^{1/2} \Phi_2 D \Phi_2 + t^{1/2} G .
\]
Remark that $\ga \cd (A -C) + K_1 = \ga \cd (A-C) + G$ and that
\[
\ga \cd (A - C) + K_0 = \ga \cd \big( \Phi H \Phi - (1 - \Phi_2^4)^{1/2} \big) + \Phi_2 D \Phi_2 = L_{1,1}(H,D,\phi),
\]
see Definition \ref{d:localizer}. In particular, we already know from Proposition \ref{p:specinv} that $\ga \cd (A - C) + K_0$ is invertible. We now show that $\ga \cd (A-C) + K_t$ is invertible for $t \in (0,1]$. Indeed, using that $d(H) = 0$ and that all of the bounded operators $1 - \Phi_2^4$, $\Phi_2 D G \Phi_2$ and $D^2 \Phi_2^4$ are positive, we get that
\[
\begin{split}
  & \big(\ga \cd (A - C) + K_t\big)^2 = (A - C)^2 + K_t^2 \\
  & \q = \Phi^4 + 1 - \Phi_2^4 + (1 - t)  D^2 \Phi_2^4 + t G^2 +
2 \cd t^{1/2} (1 - t)^{1/2} \Phi_2 D G \Phi_2  \\
& \q \geq \Phi^4 + t (1 - \Phi^4)
\geq t .
\end{split}
\]
We also remark that $\Phi$ and $\Phi_2$ are both compact, entailing that $\ga \cd ( A - C) + K_t$ agrees with $-\ga + t^{1/2} G$ modulo the compact operators for all $t \in [0,1]$. Since it clearly holds that $-\ga + t^{1/2} G$ is invertible for all $t \in [0,1]$ we obtain a continuous map 
\[
[0,1] \to W_1\big(\B L(X), \B K(X)\big) \q t \mapsto \big( -\ga + t^{1/2} G, \ga \cd (A-C) + K_t \big) 
\]
and hence that $\big(-\ga, L_{1,1}(H,D,\phi)\big)$ and $\big(-\ga + G, \ga \cd (A-C) + G\big)$ are homotopic.
\end{proof}

\begin{lemma}\label{l:compII}
  Suppose that $d(H) = 0$ and that $H^2 = 1$. Let $\mu : [-1,1] \to [-1,1]$ be a continuous function with $\mu(0) \leq 0$ and with $\mu(-1) = -1$ and $\mu(1) = 1$. Suppose moreover that there exists a $\de \in (0,1)$ such that $\mu$ is continuously differentiable on $[-1,-\de]$ and on $[\de,1]$. Let $\nu : [-1,1] \to [0,\infty)$ be the unique continuous function such that $\nu(x) \cd (1 - x^2)^{1/2} = ( 1 - \mu^2(x))^{1/2}$. Then it holds that the pair
\[
\big( -\ga + \nu(0)G,\ga \cd \big( \mu(A) - (1 + \mu(0)) C \big) + \nu( A) G \big)
\]
belongs to $W_1\big( \B L(X),\B K(X)\big)$ and the corresponding class in $K_0^{\T{inv}}\big(\B L(X), \B K(X)\big)$ agrees with the spectral localizer class $L(H,D)$.
\end{lemma}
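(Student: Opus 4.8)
The plan is to follow the two-part strategy already used for Lemma~\ref{l:compI}: first show that the displayed pair is a genuine element of $W_1\big(\B L(X),\B K(X)\big)$, and then identify its class with $L(H,D)$ via a homotopy that deforms $\mu$ to the identity, at which point Lemma~\ref{l:compI} applies. Throughout I would exploit that $d(H) = 0$ forces $H$ to commute with $\Phi$, $\Phi_2$ and $G$, so that (writing $A = \Phi^2 H$ and using $H^2 = 1$) the operator $A$ is a \emph{compact} selfadjoint operator with $\|A\|_\infty \le 1$, and that $\Phi$, $\Phi_2$, $A$ and $1 - C$ are compact since $\phi$ has compact support.

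For the first part I would check selfadjointness (immediate: $\ga$ commutes with $A$, $C$, hence with $\mu(A)$, $\nu(A)$, while $\ga$ anticommutes with $G$, which in turn commutes with $\mu(A)$, $\nu(A)$, $C$), the congruence modulo $\B K(X)$ (rewrite the difference of the two operators as $-\ga\big((1+\mu(0))(1-C) + (\mu - \mu(0))(A)\big) - (\nu(A) - \nu(0))G$ and note that $1 - C$ is compact and that functions of the compact operator $A$ vanishing at $0$ are compact), and finally invertibility. For invertibility the crucial computation is $T^2$, where $T$ is the second operator of the pair: the cross term cancels because $G$ anticommutes with $\ga$ while commuting with everything else, and $AC = CA = 0$ forces $\mu(A)C = C\mu(A) = \mu(0)C$; this reduces $T^2$ to $\big(\mu(A) - (1+\mu(0))C\big)^2 + \nu(A)^2(1-\Phi^4)$, which — since $\mu(0) \le 0$ makes the $C$-contribution a positive operator commuting with $\mu(A)$ — is $\ge \mu(A)^2 + \nu(A)^2(1-\Phi^4)$. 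Decomposing $X = H_+X \op H_-X$ (legitimate since $H = H_+ - H_-$ commutes with $D$), on which $A$ acts as $\pm\Phi^2$, the lower bound becomes $\mu(\pm\Phi^2)^2 + \nu(\pm\Phi^2)^2(1-\Phi^4)$, and the defining relation $\nu(y)^2(1-y^2) = 1 - \mu(y)^2$ together with $1 - (\pm\phi^2)^2 = 1 - \phi^4$ collapses this to the identity operator; hence $T^2 \ge 1$. The first operator squares to $1 + \nu(0)^2 G^2 \ge 1$.

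For the second part I would set $\mu_s := (1-s)\mu + s\,\T{id}_{[-1,1]}$ and let $\nu_s$ be the associated function; each $\mu_s$ again satisfies the hypotheses ($\mu_s(0) = (1-s)\mu(0) \le 0$, $\mu_s(\pm1) = \pm1$, $\mu_s([-1,1]) \su [-1,1]$, and $\mu_s$ is $C^1$ on $[-1,-\de]\cup[\de,1]$), so by Part~1 the pair $\big(-\ga + \nu_s(0)G,\ \ga(\mu_s(A) - (1+\mu_s(0))C) + \nu_s(A)G\big)$ lies in $W_1\big(\B L(X),\B K(X)\big)$ for every $s$. Once one knows $s \mapsto \nu_s$ is continuous into $C([-1,1])$, this pair depends norm-continuously on $s$ and thus defines a homotopy in $W_1\big(\B L(X),\B K(X)\big)$; since homotopic elements determine the same class in $K_0^{\T{inv}}\big(\B L(X),\B K(X)\big)$, and since at $s=1$ the pair reduces to $\big(-\ga + G,\ \ga(A-C) + G\big)$ whose class is $L(H,D)$ by Lemma~\ref{l:compI}, while at $s=0$ it is the pair in the statement, the lemma follows.

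I expect the main obstacle to be this continuity of $s \mapsto \nu_s$ near $x = \pm1$, where $(1-x^2)^{1/2}$ vanishes: one must use the $C^1$-hypothesis on $\mu$ — inherited uniformly in $s$ by each $\mu_s$ — to see that $\nu_s(x)^2 = \frac{1-\mu_s(x)}{1-x}\cd\frac{1+\mu_s(x)}{1+x}$ extends continuously to $x = \pm1$ jointly in $(x,s)$, which on the compact $[-1,1]\times[0,1]$ upgrades to uniform convergence. The algebraic core of Part~1 — in particular that $T^2 \ge 1$ after the $H_\pm$-decomposition — is where the normalization $\nu(x)(1-x^2)^{1/2} = (1-\mu^2(x))^{1/2}$ is exactly what is needed, and is routine once that identity is exploited.
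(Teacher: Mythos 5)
Your proposal is correct and follows the same overall strategy as the paper: verify invertibility and the mod-compact congruence algebraically, then deform $\mu$ linearly and reduce to Lemma~\ref{l:compI}. The one genuinely different step is the continuity of the deformation near $x=\pm 1$. The paper explicitly declines to assert that $t\mapsto\nu_t$ is continuous into $C([-1,1])$, and instead side-steps the endpoint degeneracy by factorizing $G = (1-\Phi^4)^{1/4}G_0$ and proving the weaker statement that $t\mapsto \nu_t(A)(1-A^2)^{1/4}$ is norm-continuous, using the uniform bound $\nu_t(x)\le\sqrt{C}$ near $\pm 1$. You instead prove the stronger claim head-on: since $\mu_s = (1-s)\mu + s\cdot\mathrm{id}$ is affine in $s$, the derivative $\mu_s'(x) = (1-s)\mu'(x) + s$ is jointly continuous on $[\delta,1]\times[0,1]$, and the mean-value theorem applied to $\frac{1-\mu_s(x)}{1-x}$ yields joint continuity of $(x,s)\mapsto\nu_s(x)^2$ up to the boundary, hence uniform continuity by compactness, hence $s\mapsto\nu_s$ continuous into $C([-1,1])$. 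This argument is correct for the affine interpolation in question (the paper's caveat concerns generic paths $t\mapsto\mu_t$ where the $C^1$-data need not vary continuously), and it buys you a somewhat cleaner homotopy with no need to introduce $G_0$; the paper's factorization trick buys robustness against more erratic dependence on $t$. Both the algebraic invertibility computation (your use of the $H_\pm$ decomposition is equivalent to the paper's identity $A^2 = \Phi^4$) and the mod-compact verification agree with the paper up to notation.
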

\begin{proof}
To ease the notation, we put
\[
B_\mu := \mu(A) - (1 + \mu(0)) \cd C 
\]
Since $\mu(0) \cd (1 + \mu(0)) \leq 0$ and $A \cd C = C \cd A = 0$ we get that
\[
B_\mu^2 = \mu^2(A) + (1 + \mu(0))^2 C^2 - 2 \mu(0) (1 + \mu(0)) C  \geq \mu^2(A) .
\]
Since $B_\mu$ is even, $\nu(A) G$ is odd, $d(H) = 0$ and $H^2 = 1$ we then obtain that
\[
\begin{split}
  \big(\ga \cd B_\mu  + \nu(A) G \big)^2 & = B_\mu^2 + \nu^2(A) \cd G^2 = B_\mu^2 + \nu^2(A) \cd (1 - \Phi^4) \\
  & = B_\mu^2 + \nu^2(A) \cd (1 - A^2) \geq 1 .
  \end{split}
\]
This shows that $\ga \cd B_\mu + \nu(A) G$ is a selfadjoint invertible element in $\B L(X)$. It moreover clearly holds that $-\ga + \nu(0) G$ is selfadjoint and invertible. Since $B_\mu$ agrees with $-1$ modulo compacts and $\nu(A) G$ agrees with $\nu(0) G$ modulo compacts we thus get that
\[
\big( -\ga + \nu(0)G, \ga \cd B_\mu + \nu(A) G \big)
\]
belongs to $W_1\big(\B L(X),\B K(X)\big)$.

Our aim is now to show that the class $\big[ \big( -\ga + \nu(0)G, \ga \cd B_\mu + \nu(A) G \big) \big]$ in $K_0^{\T{inv}}\big( \B L(X), \B K(X) \big)$ agrees with the spectral localizer class $L(H,D)$. To this end, we first remark that $\mu(x) = x$ and $\nu(x) = 1$ satisfy the conditions of the present lemma and in this case we get from Lemma \ref{l:compI} that
\[
\big[ ( -\ga + \nu(0)G, \ga \cd B_\mu + \nu(A) G ) \big] = \big[ \big( -\ga + G, \ga \cd (A-C) + G \big) \big] = L(H,D) .
\]

Hence, to establish the result of the lemma it suffices to show that the class $\big[ ( -\ga + \nu(0)G, \ga \cd B_\mu + \nu(A) G ) \big]$ is in fact independent of the choice of continuous function $\mu : [-1,1] \to [-1,1]$. Let thus $\mu_0 : [-1,1] \to [-1,1]$ be an alternative continuous function which satisfies the conditions of the lemma. Choose a $\de \in (0,1)$ such that both $\mu$ and $\mu_0$ are continously differentiable on $[-1,1] \sem (-\de,\de)$. For each $t \in [0,1]$, define the continuous function
    \[
\mu_t := (1 - t) \mu_0 + t \mu
\]
and record that $\mu_t$ also satisfies the conditions in the statement of the lemma (and our chosen $\de \in (0,1)$ of course applies here as well). Let $\nu_t : [-1,1] \to [0,\infty)$ denote the unique continuous function satisfying that $\nu_t(x) (1 - x^2)^{1/2} = ( 1 - \mu_t^2(x) )^{1/2}$. It holds that the map from $[0,1] \to C( [-1,1])$ which sends $t$ to $\mu_t$ is continuous once $C([-1,1])$ is equipped with the supremum norm. The same statement does however not hold in general for the map which sends $t$ to $\nu_t$ and we therefore need to be a bit careful at this point.

    To finish the proof of the lemma it suffices to show that the map $t \mapsto \nu_t(A) G$ is continuous with respect to the operator norm on $\B L(X)$. Notice in this respect that the assignments $t \mapsto B_{\mu_t}$ and $t \mapsto \nu_t(0) G  = ( 1 - \mu_t(0) )^{1/2} G$ clearly depend continuously on $t$. Since $\mu_t$ is continuously differentiable on $[-1,1] \sem (-\de,\de)$ and satisfies that $\mu_t(-1) = -1$ and $\mu_t(1) = 1$ we may choose a constant $C \geq 0$ such that
    \[
    1 - \mu_t^2(x) \leq C \cd (1 - x^2) 
    \]
    for all $x \in [-1,1] \sem (-\de,\de)$ and all $t \in [0,1]$. We thus get that
    \begin{equation}\label{eq:estimatenu}
\nu_t(x) \leq \sqrt{C} \q \T{for all } x \in [-1,1] \sem (-\de,\de) \T{ and } t \in [0,1].
\end{equation}
Notice next that $G$ can be factorized as $(1 - \Phi^4)^{1/4} \cd G_0$ where $G_0$ is another odd selfadjoint and bounded operator (coming from the function $x \mapsto (1 - \phi^4(x))^{1/4} x |x|^{-1}$). Moreover, it holds that $(1 - \Phi^4)^{1/4} = (1 - A^2)^{1/4}$. So we just need to show that $\nu_t(A) (1 - A^2)^{1/4}$ depends continuously on $t$. But this now follows from the estimate in \eqref{eq:estimatenu} and the construction of $\nu_t$. 
\end{proof}

Recall the definition of the isomorphism $\varphi$ from Theorem \ref{t:inviso} and the unbounded $KK$-theoretic constructions from Section \ref{s:unbdd}. 


\begin{prop}\label{p:zerocommu}
Suppose that $d(H) = 0$ and that $H^2 = 1$. For every adjointable isometry $V : X \to \ell^2(\B N,B)$, it holds that
  \[
K_0^{\T{inv}}\big( \T{Ad}(V) \big)\big( L(H,D) \big) = \varphi^{-1}\big(  \T{Index}( H_+ X, F_{H_+ D H_+} ) \big)
\]
inside the $K$-group $K_0^{\T{inv}}( \B L_B, \B K_B )$.
\end{prop}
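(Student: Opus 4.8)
The plan is to exploit the flexibility of Lemma \ref{l:compII}: by choosing the transition function $\mu$ so that it collapses onto $-1$ near the origin, the spectral localizer class becomes the relative $K$-theory class of an explicit projection which, after a homotopy, coincides with the projection $Q_F$ of Definition \ref{d:index} attached to the Kasparov module $(H_+X,F_{H_+DH_+})$. Concretely, I would fix $\de \in (0,1)$ and a continuous $\mu : [-1,1] \to [-1,1]$ with $\mu \equiv -1$ on $[-1,\de]$, continuously differentiable and increasing on $[\de,1]$ with $\mu(1) = 1$; this satisfies the hypotheses of Lemma \ref{l:compII} (note $\mu(0)=-1\le 0$) and the associated function $\nu$ vanishes on $[-1,\de]$. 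Since $1+\mu(0)=0$ and $\nu(0)=0$, Lemma \ref{l:compII} gives $L(H,D)=\big[\big(-\ga,\ga\mu(A)+\nu(A)G\big)\big]$, and because $d(H)=0$, $H^2=1$ and $G^2=1-\Phi^4=1-A^2$ the element $\ga\mu(A)+\nu(A)G$ is a selfadjoint unitary. Setting $p:=\tfrac12\big(1+\ga\mu(A)+\nu(A)G\big)$, Theorem \ref{t:inviso} yields $\varphi\big(L(H,D)\big)=[(\ga_-,p)]-[(\ga_-,\ga_-)]$ inside $K_0\big(\B L(X),\B K(X)\big)$, with $\ga_-:=(1-\ga)/2$.

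Next I would decompose along the spectral projections $H_\pm$ of $H$. As $A=\Phi^2 H$ commutes with $H_\pm$ and $H=H_+-H_-$, functional calculus gives $\mu(A)=\mu(\Phi^2)H_++\mu(-\Phi^2)H_-$ and similarly for $\nu$, whence $p=p_+H_++p_-H_-$ with $p_\pm:=\tfrac12\big(1+\ga\mu(\pm\Phi^2)+\nu(\pm\Phi^2)G\big)$. Since $\si(-\Phi^2)\su[-1,0]$ and $\mu\equiv -1$, $\nu\equiv 0$ there, one gets $p_-=\ga_-$, so the $H_-$-part cancels in the formal difference, leaving $\varphi(L(H,D))=[(H_+\ga_-,p_+H_+)]-[(H_+\ga_-,H_+\ga_-)]$. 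Restricting to the $D$-invariant complemented submodule $H_+X$ and writing $D_+:=D|_{H_+X}$ (odd, selfadjoint, regular, with compact resolvent) one computes $2p_+-1=\ga\,\tilde\mu(D_+)+\eta(D_+)$ with $\tilde\mu:=\mu\ci\phi^2$ and $\eta:=(\nu\ci\phi^2)\cd g$; these are continuous, $\tilde\mu+1$ and $\eta$ are compactly supported, and $\tilde\mu^2+\eta^2=1$. On the other hand, since $H$ commutes with $D$ we have $F_{H_+DH_+}=F_{D_+}$, and a short computation from \eqref{eq:expproj} gives $2Q_{F_{D_+}}-1=\ga\,m(D_+)+e(D_+)$ with $m(x)=\frac{1-x^2}{1+x^2}$, $e(x)=\frac{2x}{1+x^2}$, again with $m+1,e\in C_0(\B R)$ and $m^2+e^2=1$.

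The core of the argument is to join the two selfadjoint unitaries $\ga\,m(D_+)+e(D_+)$ and $\ga\,\tilde\mu(D_+)+\eta(D_+)$ by a path of the same shape. Writing $(m,e)=(\cos\theta_0,\sin\theta_0)$ and $(\tilde\mu,\eta)=(\cos\theta_1,\sin\theta_1)$ on $[0,\infty)$ for continuous $\theta_0,\theta_1:[0,\infty)\to[0,\pi]$ with $\theta_i(0)=0$ and $\theta_i\to\pi$ at infinity (e.g.\ $\theta_0(x)=2\arctan x$), the odd extensions of $\theta_t:=(1-t)\theta_0+t\theta_1$ give even/odd functions $f_t=\cos\theta_t$, $h_t=\sin\theta_t$ with $f_t^2+h_t^2=1$ and $f_t+1,h_t\in C_0(\B R)$. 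Then $q_t:=\tfrac12\big(1+\ga f_t(D_+)+h_t(D_+)\big)$ is an operator-norm continuous path of projections on $H_+X$ with $q_t-\ga_-\in\B K(H_+X)$ (using that $C_0$-functions of an operator with compact resolvent are compact), running from $Q_{F_{D_+}}$ to $p_+$. Extending by zero to $X$, this shows $[(H_+\ga_-,p_+H_+)]=[(H_+\ga_-,Q')]$ in $K_0\big(D(\B L(X),\B K(X))\big)$, where $Q'$ denotes the zero-extension of $Q_{F_{D_+}}$.

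Finally I would take the stabilizing isometry for $(H_+X,F_{D_+})$ to be $V_0:=V|_{H_+X}$, so that $\T{Ad}(V_0)$ equals $\T{Ad}(V)$ composed with zero-extension; combining the displayed identities with Definition \ref{d:index} and the commutative square of Proposition \ref{p:functoriality} gives $\varphi\big(K_0^{\T{inv}}(\T{Ad}(V))(L(H,D))\big)=\T{Index}(H_+X,F_{H_+DH_+})$, and applying the isomorphism $\varphi^{-1}$ finishes the proof. The main obstacle is precisely the identification in the third paragraph — showing that the projection $p_+$ distilled from the spectral localizer is homotopic, through projections congruent to $\ga_-$ modulo compacts, to the Kasparov-module projection $Q_{F_{D_+}}$ — which is what the angle-function interpolation and the tailored choice of $\mu$ are engineered to deliver. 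Everything else (functoriality of relative $K$-theory, the cancellation of the $H_-$-summand, compactness of functions of $D$ vanishing at infinity) is routine bookkeeping.
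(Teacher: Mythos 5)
Your argument is correct, and it arrives at the same final reduction (restrict to $H_+X$ after showing the $H_-$-summand contributes trivially, then invoke functoriality and Definition \ref{d:index}), but it takes a genuinely different route at the crucial identification step. The paper picks the \emph{specific} function $\mu(x)=2x^2-1$ on $[0,1]$ (still $\equiv -1$ on $[-1,0]$, hence $\mu(0)=-1$ and $\nu(0)=0$ just as in your choice); with this $\mu$, after the $H_\pm$-decomposition the localizer projection on $H_+X$ is \emph{literally} $Q_{g(D_+)}$, which follows by plugging $F=g(D_+)$, $F^2=1-\phi^4(D_+)$ into \eqref{eq:expproj} and observing $2Q_{g(D_+)}-1=\ga(2\phi^4(D_+)-1)+2\phi^2(D_+)g(D_+)$. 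The only homotopy/approximation the paper then needs is the one-line remark that $x(1+x^2)^{-1/2}-g(x)\in C_0(\B R)$, so $F_{D_+}-g(D_+)$ is compact and $Q_{F_{D_+}}$, $Q_{g(D_+)}$ give the same class in $K_0\big(\B L(H_+X),\B K(H_+X)\big)$. You instead take a generic $\mu$ collapsing to $-1$ near the origin and then build an explicit angle-function interpolation between the resulting $p_+=\frac12\big(1+\ga\tilde\mu(D_+)+\eta(D_+)\big)$ and $Q_{F_{D_+}}=\frac12\big(1+\ga m(D_+)+e(D_+)\big)$; your checks that $f_t+1,h_t\in C_0$, that $f_t^2+h_t^2=1$ (so $q_t$ is a projection), and that $t\mapsto q_t$ is norm-continuous (because $\theta_0-\theta_1$ is bounded and continuous) are all sound. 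What the paper's choice buys is the elimination of the homotopy entirely via an exact algebraic match against \eqref{eq:expproj}; what your approach buys is a construction that works uniformly over any admissible $\mu$ with $\mu(0)=-1$ and makes the underlying angle-function homotopy visible. Both are valid; the paper's is shorter at the cost of a tailored computation.

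One minor point worth tightening if you were to write this up: in Lemma \ref{l:compII} the function $\nu$ is only asserted to exist and be continuous; your generic increasing $C^1$ choice of $\mu$ on $[\de,1]$ with $\mu(1)=1$ guarantees the limit $\nu(1)=\lim_{x\to 1^-}(1-\mu^2(x))^{1/2}(1-x^2)^{-1/2}=\mu'(1)^{1/2}$ exists, but this should be stated rather than taken for granted, since for arbitrary continuous $\mu$ the quotient could fail to converge at the endpoint.
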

\begin{proof}
  We start out by applying Lemma \ref{l:compII} in the situation where $\mu : [-1,1] \to [-1,1]$ and $\nu : [-1,1] \to [0,\infty)$ are given by
    \[
    \mu(x) = \fork{ccc}{2x^2 - 1 & \T{for} & x \in [0,1] \\ -1 & \T{for} & x \in [-1,0]} \, \, \T{ and } \, \, \,
    \nu(x) = \fork{ccc}{ 2x & \T{for} & x \in [0,1] \\ 0 & \T{for} & x \in [-1,0] } .
\]
We thus get that
\begin{equation}\label{eq:zeroI}
L(H,D) = \big[ \big(-\ga, \ga \cd \mu( \Phi H \Phi) + \nu( \Phi H \Phi) G \big) \big] .
\end{equation}
Next, since $H^2 = 1$ and $d(H) = 0$, we may identify $X$ with the direct sum $H_+ X \op H_- X$ and under this identification $H$ and $D$ agree with $1 \op (-1)$ and $H_+ D H_+ \op H_- D H_-$, respectively. In particular, we obtain that
\[
\Phi H \Phi = \phi^2(D) H = \phi^2(H_+ D H_+) \op ( - \phi^2(H_- D H_-) ) 
\]
and hence that 
\begin{equation}\label{eq:zeroII}
\begin{split}
  \mu(\Phi H \Phi)  & = ( 2 \phi^4(H_+ D H_+) - 1 ) \op (-1) \q \T{and} \\
  \nu( \Phi H \Phi) G  & = 2 \phi^2( H_+ D H_+) g( H_+ D H_+) \op 0 .
\end{split}
\end{equation}
Let $i_{H_+} : H_+ X \to X$ denote the inclusion (which is indeed an adjointable isometry). By a slight abuse of notation we also write $\ga$ for the grading operator on $H_+ X$ which is induced by the grading operator on $X$. It then follows from \eqref{eq:zeroI} and \eqref{eq:zeroII} that
\[
\begin{split}
L(H,D) = K_0^{\T{inv}}\big( \T{Ad}(i_{H_+} ) \big)
\big[ \big( -\ga, & \ga \cd ( 2 \phi^4(H_+ D H_+) - 1 ) \\ & \q + 2 \phi^2( H_+ D H_+) g( H_+ D H_+) \big) \big] .
\end{split}
\]

Applying the naturality properties from Proposition \ref{p:functoriality} and consulting Definition \ref{d:index} it now suffices to establish the identity
\[
\begin{split}
& \big[ \big( -\ga, \ga \cd ( 2 \phi^4(H_+ D H_+) - 1 ) + 2 \phi^2( H_+ D H_+) g( H_+ D H_+) \big) \big] \\
& \q = \varphi^{-1}\big( [ \ga_-, Q_{F_{H_+ D H_+}}] - [\ga_-,\ga_-] \big)
\end{split}
\]
inside the $K$-group $K_0^{\T{inv}}\big( \B L( H_+ X), \B K(H_+ X) \big)$. To this end, remark that the difference $x(1 + x^2)^{-1/2} - g(x)$ yields an element in $C_0(\B R)$ and this entails that $F_{H_+ D H_+} - g(H_+ D H_+)$ is a compact operator on $H_+ X$. We thus obtain that
\[
[ \ga_-, Q_{F_{H_+ D H_+}}] - [\ga_-, \ga_-] = [ \ga_-, Q_{g(H_+ D H_+)}] - [\ga_-,\ga_-]
\]
inside the $K$-group $K_0\big( \B L(H_+ X), \B K(H_+ X) \big)$. An application of the identity in \eqref{eq:expproj} shows that
\[
Q_{g(H_+ D H_+)} = \ga \phi^4(H_+ D H_+) + g( H_+ D H_+) \phi^2(H_+ D H_+) + \ga_- 
\]
and it therefore holds that
\[
\begin{split}
& \varphi^{-1}\big( [ \ga_-, Q_{g(H_+ D H_+)}] - [\ga_-,\ga_-] \big)
= \big[ \big( 2 \ga_- - 1, 2 Q_{g(H_+ D H_+)} - 1 \big) \big] \\
& \q = \big[ \big( -\ga, \ga \cd ( 2 \phi^4(H_+ D H_+) - 1 ) + 2 \phi^2( H_+ D H_+) g( H_+ D H_+)  \big) \big] .
\end{split}
\]
This proves the present proposition.
\end{proof}

\begin{thm}\label{t:specindex}
For every adjointable isometry $V : X \to \ell^2(\B N,B)$, it holds that
  \[
K_0^{\T{inv}}\big( \T{Ad}(V) \big)\big( L(H,D) \big) = \varphi^{-1}\big( \T{Index}( H_+ X, F_{H_+ D H_+} ) \big)
\]
inside the $K$-group $K_0^{\T{inv}}( \B L_B, \B K_B )$.
\end{thm}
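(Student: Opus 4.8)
Our plan is to reduce the statement to the special case already treated in Proposition~\ref{p:zerocommu} by means of two invariance results from Section~\ref{s:specprop}. In the first move we homotope $H$ to its phase $H|H|^{-1} = 2 H_+ - 1$ without changing the spectral localizer class; in the second move we absorb the now possibly nonzero commutator $[D, 2 H_+ - 1] = 2\,d(H_+)$ into an odd bounded perturbation of $D$, again without changing the class, arriving at a Dirac operator that commutes with the phase. At that point Proposition~\ref{p:zerocommu} applies verbatim.

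For the first move we apply Lemma~\ref{l:locindepII} to the path $\al(t) := H |H|^{-t}$, $t \in [0,1]$, of even selfadjoint invertibles (the same path as in Lemma~\ref{l:phase}). Here one needs $\al(t) \in \T{Lip}_D^{\T{ev}}(X)$ together with uniform control of $\| \al(t) \|_\infty$, $\| d(\al(t)) \|_\infty$ and $\| \al(t)^{-1} \|_\infty^{-1}$ along the path, so that the construction in the proof of Lemma~\ref{l:locindepII} produces a single pair $(\ka,\rho)$ admissible for every $t$. Since $H$ is invertible, the spectrum of $H^2$ is a compact subset of $(0,\infty)$ and $|H|^{-t} = (H^2)^{-t/2}$ is obtained by holomorphic functional calculus of $H^2$ along a contour avoiding $0$. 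The algebra $\T{Lip}_D^{\T{ev}}(X)$ is the Lipschitz algebra of the even unbounded Kasparov module $(X,i,D)$ over $C_D^{\T{ev}}(X)$, hence a local $C^*$-algebra in $C_D^{\T{ev}}(X)$ by Proposition~\ref{p:local}; combined with spectral permanence for the $C^*$-subalgebra $C_D^{\T{ev}}(X) \su \B L(X)$, this makes $\T{Lip}_D^{\T{ev}}(X)$ closed under holomorphic functional calculus, so that $\al(t) \in \T{Lip}_D^{\T{ev}}(X)$. The standard contour estimates then also give the required uniform bounds, since the relevant norms are dominated by suprema over the contour of $|z^{-t/2}|$, which are bounded uniformly in $t \in [0,1]$. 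Lemma~\ref{l:locindepII} yields $L(H,D) = L(2 H_+ - 1, D)$.

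For the second move we put $H' := 2 H_+ - 1$, so that $(H')^2 = 1$ and $H'_+ = H_+$, and introduce the odd selfadjoint bounded operator $T := H_- d(H_+) - d(H_+) H_-$. This is precisely the operator appearing in the proof of Proposition~\ref{p:unbddprodI} (specialised to $n = 1$ and $p = H_+$), so that $\tilde D := D - T$ is a selfadjoint regular unbounded operator on $\T{Dom}(D)$ which, under $X = H_+ X \op H_- X$, is the block-diagonal operator $H_+ D H_+ \op H_- D H_-$; it has compact resolvent by the resolvent identity, hence $(X,\tilde D)$ is an even unbounded Kasparov module over $B$. In particular $\tilde D$ commutes with $H'$ and $H'_+ \tilde D H'_+ = H_+ D H_+$, with $H_+ D H_+$ the operator of Proposition~\ref{p:unbddprodI}. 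Since $[\tilde D, H'] = [D, H'] - [T, H']$ extends to a bounded adjointable operator, $H' \in \T{Lip}_{\tilde D}^{\T{ev}}(X)$, and Proposition~\ref{p:continuity} applied to the even unbounded Kasparov module $(X,\tilde D)$ with the perturbation $T$ gives $L(H', D) = L(H', \tilde D)$.

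Finally, Proposition~\ref{p:zerocommu} applied to $(X,\tilde D)$ and $H'$ --- legitimate since $[\tilde D, H'] = 0$ and $(H')^2 = 1$ --- gives, for every adjointable isometry $V : X \to \ell^2(\B N,B)$,
\[
K_0^{\T{inv}}\big( \T{Ad}(V) \big)\big( L(H',\tilde D) \big)
= \varphi^{-1}\big( \T{Index}(H'_+ X, F_{H'_+ \tilde D H'_+}) \big)
= \varphi^{-1}\big( \T{Index}(H_+ X, F_{H_+ D H_+}) \big) .
\]
Combining this with the identities $L(H,D) = L(H',D) = L(H',\tilde D)$ from the two moves and applying $K_0^{\T{inv}}\big(\T{Ad}(V)\big)$ proves the theorem. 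We expect the main obstacle to be the first move: one must verify carefully that the phase path $H |H|^{-t}$ stays inside $\T{Lip}_D^{\T{ev}}(X)$ with uniformly bounded Lipschitz data, since only then does a uniform admissible pair $(\ka,\rho)$ exist along the homotopy. Everything downstream --- the identification of $T$, the block-diagonal form of $\tilde D$, and the final chain --- is routine bookkeeping built on Propositions~\ref{p:continuity}, \ref{p:unbddprodI} and \ref{p:zerocommu}.
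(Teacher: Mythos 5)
Your proposal is correct and follows essentially the same two-step reduction as the paper's own proof: first homotope $H$ to the symmetry $2H_+ - 1$ via Lemma~\ref{l:phase}, Proposition~\ref{p:local} and Lemma~\ref{l:locindepII}, then replace $D$ by the block-diagonal operator $H_+ D H_+ + H_- D H_-$ using Proposition~\ref{p:continuity} and conclude with Proposition~\ref{p:zerocommu}. The bounded perturbation you write, $H_- d(H_+) - d(H_+) H_-$, agrees (up to the sign convention $\tilde D = D - T$ versus $D + T$) with the paper's $-H_+ d(H_-) - H_- d(H_+)$ via the projection identity $H_+ d(H_+) = d(H_+) H_-$; and the extra care you take in Move~1 with holomorphic functional calculus and uniform Lipschitz control is precisely what the paper dispatches by the citation of those three results.
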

\begin{proof}
  Because of Lemma \ref{l:phase}, Proposition \ref{p:local} and Lemma \ref{l:locindepII} we may assume without loss of generality that $H^2 = 1$. Record then that $D - H_+ d(H_-) - H_- d(H_+) = H_+ D H_+ + H_- D H_-$ and that $T = - H_+ d(H_-) - H_- d(H_+)$ is an odd selfadjoint bounded operator. It therefore follows from Proposition \ref{p:continuity} that
\begin{equation}\label{eq:indexI}
L(H,D) = L\big(H, H_+ D H_+ + H_- D H_-\big) .
\end{equation}
Moreover, since $H$ commutes with $H_+ D H_+ + H_- D H_-$ we get from Proposition \ref{p:zerocommu} that
\begin{equation}\label{eq:indexII}
K_0^{\T{inv}}\big( \T{Ad}(V) \big)\big( L(H,H_+ D H_+ + H_- D H_-) \big) = \varphi^{-1}\big(\T{Index}(H_+ X, F_{H_+ D H_+}) \big) .
\end{equation}
A combination of the two identities in \eqref{eq:indexI} and \eqref{eq:indexII} proves the present theorem.
\end{proof}

\section{Spectral localizers and the index homomorphism}\label{s:specindex}
Let us fix a separable $C^*$-algebra $A$ and a $\si$-unital $C^*$-algebra $B$. It is the aim of this section to establish the main theorem of this paper which amounts to a computation of the Kasparov product
\begin{equation}\label{eq:kasprod}
\hot_A : KK_0(\B C,A) \ti KK_0(A,B) \to KK_0(\B C,B)
\end{equation}
in terms of spectral localizers.

An application of Theorem \ref{t:baajjulg} shows that we may, without loss of generality, fix an even compact unbounded Kasparov module $(X,\pi,D)$ from $A$ to $B$ and consider the associated Baaj-Julg bounded transform $[X,\pi,F_D]$ in $KK_0(A,B)$. Let us denote the corresponding \emph{index homomorphism} between the even $K$-groups by
\[
\binn{ \bullet, [X,\pi,F_D] } : K_0(A) \to K_0(B) .
\]
This index homomorphism is given in terms of the Kasparov product in \eqref{eq:kasprod} and the index isomorphism from Theorem \ref{t:index}. Moreover, because of Proposition \ref{p:local} it suffices to consider an input for the index homomorphism given as a formal difference of the form $[p] - [s(p)]$ where $p$ is a projection belonging to $M_n\big( \T{Lip}_D(A)^\sim \big)$ (see \eqref{eq:scalar} for the definition of $s(p)$). To avoid pathologies we assume that $X \neq \{0\}$.


Recall the definition of the group isomorphism $\varphi : K_0^{\T{inv}}\big( \B L_B, \B K_B) \to K_0(\B L_B,\B K_B)$ from Theorem \ref{t:inviso} and let us suppress the isomorphism $K_0(\B L_B,\B K_B) \cong K_0(B)$, see Theorem \ref{t:excision} and the discussion in the beginning of Section \ref{s:index}.

We are now ready to state and prove the main theorem of this paper.

\begin{thm}\label{t:main}
  Let $(X,\pi,D)$ be an even compact unbounded Kasparov module from $A$ to $B$ and let $p \in M_n\big( \T{Lip}_D(A)^\sim\big)$ be a projection for some $n \in \B N$. For every adjointable isometry $V : X^{\op n} \to \ell^2(\B N,B)$ we have the identity 
  \begin{equation}\label{eq:mainide}
    \begin{split}
    & \varphi K_0^{\T{inv}}\big(\T{Ad}(V) \big)\big( L( 2 \pi(p) - 1, D^{\op n}) - L( 2 \pi(s(p)) - 1, D^{\op n} ) \big) \\
      & \q = \binn{ [p] - [s(p)], [X,\pi,F_D]  }
      \end{split}
  \end{equation}
  inside the $K$-group $K_0(B)$.
\end{thm}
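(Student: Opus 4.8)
The strategy is to chain together the results already established in the paper, reducing the index homomorphism first to an unbounded Kasparov product and then to the spectral localizer class. First, I would reformulate the left-hand side using Theorem~\ref{t:specindex}: for the selfadjoint unitary $2\pi(p)-1$ (respectively $2\pi(s(p))-1$) in $\T{Lip}_{D^{\op n}}^{\T{ev}}(X^{\op n})$, with positive spectral projection $\pi(p)$ (respectively $\pi(s(p))$), Theorem~\ref{t:specindex} gives
\[
K_0^{\T{inv}}\big(\T{Ad}(V)\big)\big( L(2\pi(p)-1,D^{\op n}) \big) = \varphi^{-1}\big( \T{Index}(\pi(p) X^{\op n}, F_{\pi(p) D^{\op n} \pi(p)}) \big)
\]
and similarly for $s(p)$. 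Applying $\varphi$ and using that $\varphi$ is a group homomorphism, the left-hand side of \eqref{eq:mainide} becomes $\T{Index}(\pi(p) X^{\op n}, F_{\pi(p) D^{\op n} \pi(p)}) - \T{Index}(\pi(s(p)) X^{\op n}, F_{\pi(s(p)) D^{\op n} \pi(s(p))})$ inside $K_0(B)$, where I am suppressing the isomorphism $K_0(\B L_B,\B K_B)\cong K_0(B)$ and using naturality of $\varphi$ under $\T{Ad}(V)$ via Proposition~\ref{p:functoriality}.

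Next I would identify the right-hand side of \eqref{eq:mainide}. The class $[p]-[s(p)]\in K_0(A)$ corresponds under $\T{Index}^{-1}$ (or rather its restriction to $\T{Lip}_D(A)$, using Proposition~\ref{p:local}) to a specific class in $KK_0(\B C, A)$; but more usefully, $[p]$ itself, viewed as a projection in $M_n(\T{Lip}_D(A)^\sim)$ and pushed to $M_n(\T{Lip}_{D^{\op n}}^{\T{ev}}(X^{\op n}))$ via $\pi$, gives the class $\big[ \pi(p)\big(C_{D^{\op n}}^{\T{ev}}(X^{\op n})\big)^{\op n}, 0 \big]$ in $KK_0(\B C, C_{D^{\op n}}^{\T{ev}}(X^{\op n}))$. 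The key input is Proposition~\ref{p:unbddprodII}, which computes the unbounded Kasparov product of this class with $[X^{\op n}, i, F_{D^{\op n}}]$ as $\big[ \pi(p) X^{\op n}, F_{\pi(p) D^{\op n} \pi(p)} \big]$ in $KK_0(\B C, B)$. Composing with $\T{Index}: KK_0(\B C,B)\to K_0(B)$ and using associativity of the Kasparov product together with the fact that $[X,\pi,F_D]$ factors appropriately through $C_D^{\T{ev}}(X)$, I get that $\T{Index}\big[\pi(p) X^{\op n}, F_{\pi(p)D^{\op n}\pi(p)}\big] = \binn{[p], [X,\pi,F_D]}$ (the Kasparov product with the class represented by $p$), and likewise for $s(p)$. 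Subtracting and using bilinearity of the Kasparov product and additivity of $\T{Index}$ yields the right-hand side of \eqref{eq:mainide}.

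The remaining work is bookkeeping: (i) carefully matching the $K$-theoretic description of $[p]-[s(p)]$ with the $KK$-class fed into the Kasparov product — here I would invoke Lemma~\ref{l:indexinverse} and the description of $\T{Index}^{-1}$ to see that the Kasparov product $\binn{[p]-[s(p)], [X,\pi,F_D]}$ unwinds to $\T{Index}\big( \pi(p)X^{\op n}\op\pi(s(p))X^{\op n}, \text{off-diagonal}\big)$, and then check this agrees with the difference of the two index classes; (ii) confirming compatibility of $\pi$ with the passage from $\T{Lip}_D(A)$ to $\T{Lip}_{D^{\op n}}^{\T{ev}}(X^{\op n})$ and with the $C^*$-algebra $C_{D^{\op n}}^{\T{ev}}(X^{\op n})$, so that Proposition~\ref{p:unbddprodII} genuinely applies. \textbf{The main obstacle} I anticipate is step (i): one must verify that the difference of two separate index classes $\T{Index}(\pi(p)X^{\op n}, \cdots) - \T{Index}(\pi(s(p))X^{\op n},\cdots)$, which arises naturally from Theorem~\ref{t:specindex} applied to the two selfadjoint unitaries separately, is exactly the single index class attached to the formal difference $[p]-[s(p)]$ under the index homomorphism. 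This requires knowing that $\T{Index}$ is additive in the appropriate sense and that $F_{\pi(p)D^{\op n}\pi(p)}$ and $F_{\pi(s(p))D^{\op n}\pi(s(p))}$ can be assembled into the off-diagonal Kasparov module of \eqref{eq:kaspmod}; the compactness of $\pi(p)-\pi(s(p))$ (which holds because $p - s(p)\in M_n(\T{Lip}_D(A))$ acts locally compactly, though here one needs $\pi(p)-\pi(s(p))$ to be genuinely compact on $X^{\op n}$ — this uses that $D$ has compact resolvent and $\pi(a)(i+D)^{-1}$ is compact, hence $\pi(a)$ itself need not be compact, so one must instead argue directly via the $KK$-product formula rather than via compactness of $\pi(p)-\pi(s(p))$) is the technical crux. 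I would resolve this by working entirely on the $KK$-theory side, computing $\binn{[p]-[s(p)], [X,\pi,F_D]}$ directly as an unbounded Kasparov product using Proposition~\ref{p:unbddprodII} applied to the projection $p\op s(p)$ over $D^{\op 2n}$, and only at the end translating back to half-signatures / index classes.
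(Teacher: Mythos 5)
Your proposal follows essentially the same route as the paper: reduce the index homomorphism to $\T{Index}\big(\pi(p)X^{\op n},F_{\pi(p)D^{\op n}\pi(p)}\big)-\T{Index}\big(\pi(s(p))X^{\op n},F_{\pi(s(p))D^{\op n}\pi(s(p))}\big)$ via Proposition \ref{p:unbddprodII} and functoriality of the Kasparov product, then invoke Theorem \ref{t:specindex}. The ``main obstacle'' you anticipate (reassembling the two index terms into the single off-diagonal class of \eqref{eq:kaspmod}) does not actually arise in the paper's argument: naturality of the index isomorphism (Proposition \ref{p:naturality}) together with Lemma \ref{l:indexinverse} expresses $\pi_*\big(\T{Index}^{-1}([p]-[s(p)])\big)$ directly as the difference $\big[\pi(p)\big(C_D^{\T{ev}}(X)\big)^{\op n},0\big]-\big[\pi(s(p))\big(C_D^{\T{ev}}(X)\big)^{\op n},0\big]$ in $KK_0\big(\B C,C_D^{\T{ev}}(X)\big)$ (the off-diagonal operator is operator-homotopic to zero on these finitely generated projective modules), after which Proposition \ref{p:unbddprodII} applies termwise and no compactness of $\pi(p)-\pi(s(p))$ is ever needed.
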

\begin{proof}
  Since $(X,\pi,D)$ is compact we obtain a unital even unbounded Kasparov module $(X,i,D)$ from $C_D^{\T{ev}}(X)$ to $B$. Using the functoriality properties of the Kasparov product, see \cite[Proposition 18.7.1]{Bla:KOA}, and the fact that $\pi^*[X,i,F_D] = [X,\pi,F_D]$ we get the identity
  \begin{equation}\label{eq:func}
    \begin{split}
& \T{Index}^{-1}\big( [p] - [s(p)]\big) \hot_A [X,\pi,F_D] \\
      & \q = \pi_*\big( \T{Index}^{-1}\big( [p] - [s(p)]\big) \big) \hot_{C_D^{\T{ev}}(X)} [X,i,F_D] .
    \end{split}
\end{equation}

By naturality of the index isomorphism, see Proposition \ref{p:naturality}, and the concrete expression for $\T{Index}^{-1}$ from Lemma \ref{l:indexinverse} we then obtain that
\begin{equation}\label{eq:natural}
  \begin{split}
    & \pi_*\big( \T{Index}^{-1}\big( [p] - [s(p)]\big) 
    = \T{Index}^{-1}\big( \big[\pi(p)\big] - \big[\pi(s(p))\big]\big) \big) \\
& \q = \big[ \pi(p) \cd \big(C_D^{\T{ev}}(X)\big)^{\op n}, 0 \big] - \big[ \pi(s(p)) \cd \big(C_D^{\T{ev}}(X)\big)^{\op n}, 0 \big] .
\end{split}
\end{equation}

Remark next that both $\pi(p)$ and $\pi(s(p))$ belong to $M_n\big( \T{Lip}_D(X)\big)$. Hence, combining Proposition \ref{p:unbddprodII} with \eqref{eq:func} and \eqref{eq:natural}, we see that the right hand side of \eqref{eq:mainide} can be rewritten as
\[
\begin{split}
\binn{ [p] - [s(p)], [X,\pi,F_D]  } & =
\T{Index}(\pi(p) X^{\op n}, F_{ \pi(p) D^{\op n} \pi(p)}) \\
& \q - \T{Index}\big( \pi(s(p)) X^{\op n}, F_{ \pi(s(p)) D^{\op n} \pi(s(p))} \big) .
\end{split}
\]

Using that $(X^{\op n},D^{\op n})$ can be viewed as an even unbounded Kasparov module over $B$ and that both $\pi(p)$ and $\pi(s(p))$ can be identified with elements in $\T{Lip}_{D^{\op n}}^{\T{ev}}(X^{\op n})$ we now obtain the result of the present theorem as a consequence of Theorem \ref{t:specindex}. 
\end{proof}

\section{Spectral projections and relationship with earlier results}\label{s:specproj}
Throughout this section we still work with a fixed even unbounded Kasparov module $(X,D)$ over a $\si$-unital $C^*$-algebra $B$. We also assume that $H : X \to X$ is an even selfadjoint invertible operator which belongs to the Lipschitz algebra $\T{Lip}_D(X) \su \B L(X)$. On top of this data, we consider an even projection $P : X \to X$ which is supposed to commute with $D$. In particular, it holds that $P \in \T{Lip}_D(X)$ and that $d(P) = 0$.

Let $\phi : \B R \to [0,1]$ be a localizing function in the sense of Definition \ref{d:fctlocal} and let $\ka$ and $\rho$ be strictly positive real numbers. Recall from Proposition \ref{p:specinv} that if the pair $(\ka,\rho)$ is admissible with respect to $(H,D,\phi)$, then we have the spectral localizer class
\[
L(H,D) = \big[ (- \ga, L_{\ka,\rho}(H,D,\phi) ) \big] \in K_0^{\T{inv}}\big( \B L(X), \B K(X) \big) .
\]

\begin{prop}\label{p:specproj}
  Let $(\ka, \rho)$ be admissible with respect to $(H,D,\phi)$. Suppose that $P : X \to X$ is an even projection which commutes with $D$ and satisfies that $P \Phi_\rho = \Phi_\rho$ and $P \Phi_{2 \rho} = P$. Then the selfadjoint bounded operator
  \[
L_\ka(H,D,P) := \ga \cd \big( P H P - (1 - P) \big) + \ka \cd P D P : X \to X
\]
is invertible and determines a class $\big[ (-\ga, L_\ka(H,D,P) ) \big]$ in $K_0^{\T{inv}}\big( \B L(X), \B K(X) \big)$. Moreover, we have the identity
\[
\big[ (-\ga, L_\ka(H,D,P) ) \big] = \big[ (-\ga, L_{\ka,\rho}(H,D,\phi) ) \big] .
\]
\end{prop}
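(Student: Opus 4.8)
The strategy is to produce an operator–norm continuous path inside $W_1\big(\B L(X),\B K(X)\big)$ (in the sense of Definition \ref{d:homotopy}) joining $\big(-\ga,L_{\ka,\rho}(H,D,\phi)\big)$ to $\big(-\ga,L_\ka(H,D,P)\big)$. Invertibility of $L_\ka(H,D,P)$ then appears as the right endpoint, and equality of the two $K$-theory classes is immediate from the homotopy.

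First, some preliminary structure. Since $P$ commutes with $D$ it commutes with the whole continuous functional calculus of $D$ (in particular with $\Phi_\rho$ and $\Phi_{2\rho}$) and with $\ga$, and it is compact because $P=P\Phi_{2\rho}$ with $\Phi_{2\rho}=\phi_{2\rho}(D)$ compact. Decompose $X=PX\op(1-P)X$. From $P\Phi_{2\rho}=P$ one reads off that $\Phi_{2\rho}$ acts as the identity on $PX$; from $P\Phi_\rho=\Phi_\rho$ one reads off that $\Phi_\rho$ vanishes on $(1-P)X$, and since $1_{[-\rho/2,\rho/2]}(D)\le \phi_\rho(D)$ this also forces $1_{[-\rho/2,\rho/2]}(D)\le P$, hence $D^2>(\rho/2)^2$ on $(1-P)X$. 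Consequently both $L_{\ka,\rho}(H,D,\phi)$ and $L_\ka(H,D,P)$ are block diagonal for this decomposition: on $PX$ they read $\Phi_\rho\ga H\Phi_\rho+\ka\,PDP$ and $\ga\,PHP+\ka\,PDP$, while on $(1-P)X$ they read $\ka\,\Phi_{2\rho}D\Phi_{2\rho}-(1-\Phi_{2\rho}^4)^{1/2}\ga$ and $-\ga$.

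Now the homotopy, built as a direct sum of two pieces parametrised by $t\in[0,1]$. On $(1-P)X$ take $t\mapsto (1-t)\ka\,\Phi_{2\rho}D\Phi_{2\rho}-\big(1-(1-t)\Phi_{2\rho}^4\big)^{1/2}\ga$, which joins the two $(1-P)X$-parts; as $\Phi_{2\rho}D\Phi_{2\rho}$ is odd and commutes with $\big(1-(1-t)\Phi_{2\rho}^4\big)^{1/2}$, its square equals $(1-t)^2\ka^2\Phi_{2\rho}^4D^2+1-(1-t)\Phi_{2\rho}^4$, and using $D^2>(\rho/2)^2$ on $(1-P)X$ together with the functional-calculus inequality $\phi_{2\rho}^4(x)\big(x^2-(\rho/2)^2\big)\ge 0$ one checks this is bounded below by a positive constant for every $t$; each operator in this path differs from $-\ga$ by a compact operator on $(1-P)X$ since $\phi_{2\rho}^2(x)x$ and $1-\big(1-(1-t)\phi_{2\rho}^4(x)\big)^{1/2}$ lie in $C_0(\B R)$. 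On $PX$ set $\Psi_t:=(1-t)\Phi_\rho+tP$, a positive contraction with $\Psi_t=P\Psi_t$, $d(\Psi_t)=0$, $\Psi_0=\Phi_\rho$, $\Psi_1=P$, and take $t\mapsto \ga\,\Psi_t H\Psi_t+\ka\,PDP$, which joins $\Phi_\rho\ga H\Phi_\rho+\ka\,PDP$ to $\ga\,PHP+\ka\,PDP$. The direct sum of these two paths is block diagonal, selfadjoint, equals $L_{\ka,\rho}(H,D,\phi)$ at $t=0$ and $L_\ka(H,D,P)$ at $t=1$, and (using compactness of $P$ on $PX$ and the previous remark on $(1-P)X$) always agrees with $-\ga$ modulo $\B K(X)$.

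It remains to see that $\ga\,\Psi_t H\Psi_t+\ka\,PDP$ is invertible on $PX$ for all $t$. Using $d(\Psi_t)=0$ (so $[\Psi_t H\Psi_t,PDP]=-\Psi_t d(H)\Psi_t$) together with the identity $H\Psi_t^2H=\Psi_t H^2\Psi_t+[\Psi_t H,[\Psi_t,H]]$, exactly as in Lemma \ref{l:specsquare}, one gets
\[
\big(\ga\,\Psi_t H\Psi_t+\ka\,PDP\big)^2=\Psi_t^2H^2\Psi_t^2+\ka^2\,PD^2P+\Psi_t\big[\Psi_t H,[\Psi_t,H]\big]\Psi_t-\ka\,\ga\,\Psi_t d(H)\Psi_t,
\]
with $\Psi_t^2H^2\Psi_t^2\ge g^2\Psi_t^4$. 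Writing $[\Psi_t,H]=(1-t)[\Phi_\rho,H]+t[P,H]$ and using $(1-P)\Psi_t=0$, every contribution of $[P,H]$ sandwiched between copies of $\Psi_t$ collapses into a multiple of $(1-P)H\Phi_\rho=(1-P)[H,\Phi_\rho]$, so that only commutators of the form $[\Phi_\rho,H]$ (and $[\phi_\rho^2(D),H]$) survive; these are $O\big(\|d(H)\|/\rho\big)$ by Corollary \ref{c:scaleest}. The part of $PHP$ not seen by $\Phi_\rho$ is supported where $\phi_\rho<1$, i.e. where $|D|>\rho/2$, hence is dominated by $\ka^2\,PD^2P$. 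Splitting $PX$ into the region where $\Phi_\rho$ acts effectively as the identity (there $g^2\Psi_t^4$ gives the bound) and its complement (there $\ka^2\,PD^2P\ge\ka^2(\rho/2)^2$ gives it), and absorbing the commutator corrections into the admissibility inequality $C_{\ka,\rho}<\min\{g^2,\ka^2\rho^2/4\}$ exactly as in the proof of Lemma \ref{l:square}, one concludes the square is bounded below by a positive constant uniformly in $t$. This proves invertibility of $L_\ka(H,D,P)$ (the case $t=1$), places the whole path in $W_1\big(\B L(X),\B K(X)\big)$, and yields $\big[(-\ga,L_\ka(H,D,P))\big]=\big[(-\ga,L_{\ka,\rho}(H,D,\phi))\big]$. \emph{The main obstacle} is this last estimate on $PX$: the terms involving $(1-P)HP$ (equivalently $[P,H]$) are not globally small and must be controlled region by region, mirroring, with $P$ alongside $\Phi_\rho$, the interplay between the gap $g$ and the Dirac term already present in Lemma \ref{l:square}.
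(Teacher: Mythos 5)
Your overall strategy — an operator-norm continuous path inside $W_1\big(\B L(X),\B K(X)\big)$ interpolating between the two spectral localizers, with the invertibility of $L_\ka(H,D,P)$ read off at the endpoint — is exactly the paper's strategy. After the reparametrization $t\mapsto 1-t$, your $\Psi_t$ is the paper's $\Phi_\rho(t)$, and on $PX$ your path coincides with the paper's; on $(1-P)X$ the paths differ (your interpolation is linear in $(1-t)$, the paper's is $\Phi_{2\rho}(t)=(1-t)P+t\Phi_{2\rho}$), but this is cosmetic. The block decomposition $X=PX\op(1-P)X$ is a perfectly reasonable additional observation, although the paper works globally and does not need it.

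The genuine gap is in the invertibility estimate on $PX$, which you yourself flag as ``the main obstacle'' but do not actually resolve. Your computation of the square introduces the term $\Psi_t\big[\Psi_t H,[\Psi_t,H]\big]\Psi_t$, and since $[\Psi_t,H]=(1-t)[\Phi_\rho,H]+t[P,H]$, the commutator $[P,H]$ enters. Your claim that its contribution, sandwiched between copies of $\Psi_t$, ``collapses into a multiple of $(1-P)[H,\Phi_\rho]$'' is not correct: for instance $[P,H]\Psi_t = -(1-P)H\Psi_t = -(1-t)(1-P)[H,\Phi_\rho]-t(1-P)HP$, and the term $(1-P)HP$ carries no Lipschitz smallness whatsoever. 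The subsequent ``region by region'' suggestion — splitting $PX$ according to whether $\Phi_\rho$ acts essentially as the identity or not — is implicitly a spectral-projection argument and is not available in the Hilbert $C^*$-module framework (note that you also use $1_{[-\rho/2,\rho/2]}(D)$ in the $(1-P)X$ discussion, which faces the same objection; the conclusion there is salvageable via \eqref{eq:dsquare}, but the method is not). The paper's proof sidesteps the $[P,H]$ problem entirely: rather than expand $H\big(\Phi_\rho(t)\big)^2H$ exactly, it first inserts the operator inequality $\big(\Phi_\rho(t)\big)^2\ge\Phi_\rho^2$ (valid since $\Phi_\rho(t)\ge\Phi_\rho\ge 0$ and they commute), passes to $H\Phi_\rho^2H$, and only then applies the identity from Lemma \ref{l:specsquare}. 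After that step, \emph{only} commutators of $H$ against $\Phi_\rho$ appear, all of which are controlled by Corollary \ref{c:scaleest}; the Dirac term is bounded below by $\frac{\ka^2\rho^2}{4}\big(\big(\Phi_{2\rho}(t)\big)^4-\Phi_\rho^4\big)$ as in \eqref{eq:estiB}, and the admissibility inequality closes the estimate. If you replace your exact expansion on $PX$ by this inequality step (using $\Psi_t^2\geq\Phi_\rho^2$ before invoking the Lemma \ref{l:specsquare} identity), your proof can be completed along the same lines; as written, the estimate does not go through.
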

\begin{proof}
  Remark that $P$ is a compact operator on $X$ with image contained in $\T{Dom}(D)$. This follows since $\Phi_{2\rho} P = P$ and since $D$ has compact resolvent. In particular, we get that $PDP$ is a compact operator which is defined on all of $X$.

  For each $t \in [0,1]$ and $\mu \in \{\rho,2\rho\}$, define the even compact operator
  \[
\Phi_\mu(t) := (1 - t) P + t \Phi_\mu
\]
and notice the following string of inequalities
\begin{equation}\label{eq:string}
\Phi_\rho \leq \Phi_\rho(t) \leq P\leq \Phi_{2\rho}(t) \leq \Phi_{2\rho} \leq 1 .
\end{equation}
Introduce the selfadjoint bounded operator
\[
L_{\ka,\rho}(t) := \Phi_\rho(t) \ga H \Phi_\rho(t) + \ka \cd \Phi_{2\rho}(t) D \Phi_{2\rho}(t) - \ga \cd \big( 1 - ( \Phi_{2\rho}(t) )^4 \big)^{1/2} 
\]
which agrees with $-\ga$ modulo the compact operators on $X$. Notice that the corresponding map from $[0,1]$ to $\B L(X)$ is continuous with respect to the operator norm on $\B L(X)$ and that the endpoints are given by
\[
L_\ka(H,D,P) = L_{\ka,\rho}(0) \, \, \T{ and } \, \, \, L_{\ka,\rho}(H,D,\phi) = L_{\ka,\rho}(1) . 
\]
To obtain the result of the proposition it therefore suffices to show that $L_{\ka,\rho}(t)$ is invertible for all $t \in [0,1]$.

Let thus $t \in [0,1]$ be fixed. Remark first of all that
$P \Phi_{2 \rho}(t) = P$ and $\Phi_\rho \Phi_{2 \rho}(t) = \Phi_\rho$ and hence that $\Phi_\rho(t) \Phi_{2 \rho}(t) = \Phi_\rho(t)$. Following the computation given in the proof of Lemma \ref{l:specsquare} we obtain that
\begin{equation}\label{eq:squaret}
\begin{split}
\big( L_{\ka,\rho}(t) \big)^2
& = \big( \Phi_\rho(t) H \Phi_\rho(t) \big)^2  + \ka^2 \cd  D^2 \big( \Phi_{2\rho}(t) \big)^4 + 1 - \big( \Phi_{2\rho}(t) \big)^4 \\
& \q + \ka \cd \Phi_\rho(t) \cd d(H) \ga \cd \Phi_\rho(t) .
\end{split}
\end{equation}
Referring one more time to the proof of Lemma \ref{l:specsquare} as well as \eqref{eq:string} we also get that
\[
H \big( \Phi_\rho(t) \big)^2 H \geq H \Phi_\rho^2 H = \Phi_\rho H^2 \Phi_\rho + \big[ \Phi_\rho H, [\Phi_\rho,H] \big] .
\]
Putting $g := \| H^{-1} \|_\infty^{-1}$, the proof of Lemma \ref{l:square} and \eqref{eq:string} now entail that
\begin{equation}\label{eq:estiA}
\begin{split}
& \big( \Phi_\rho(t) H \Phi_\rho(t) \big)^2 + \ka \cd \Phi_\rho(t) \cd d(H) \ga \cd \Phi_\rho(t) \\
& \q \geq \Phi_\rho(t) \cd \Phi_\rho H^2 \Phi_\rho \cd \Phi_\rho(t) \\
& \qq + \Phi_\rho(t) \cd \big( \big[ \Phi_\rho H, [\Phi_\rho,H] \big] + \ka \cd d(H) \ga \big)  \cd \Phi_\rho(t) \\
& \q \geq g^2 \cd \Phi_\rho^4 - C_{\ka,\rho} \cd \big( \Phi_\rho(t) \big)^2 
\geq g^2 \cd \Phi_\rho^4 - C_{\ka,\rho} \cd P .
\end{split}
\end{equation}

In order to estimate the term $\ka^2 \cd  D^2 \big( \Phi_{2\rho}(t) \big)^4$ from below, remark that
\[
\Phi_{2\rho}(t) - \Phi_\rho = t (\Phi_{2 \rho} - \Phi_\rho) +  (1 - t) ( P - \Phi_\rho)
= (\Phi_{2 \rho} - \Phi_\rho)\big( t + (1 - t) P \big)
\]
and hence that $D^2( \Phi_{2\rho}(t) - \Phi_\rho) \geq \frac{\rho^2}{4} ( \Phi_{2\rho}(t) - \Phi_\rho)$. We thus get that
\begin{equation}\label{eq:estiB}
\ka^2 \cd  D^2 \big( \Phi_{2\rho}(t) \big)^4 \geq \ka^2 \cd D^2\big( \big( \Phi_{2 \rho}(t) \big)^4 - \Phi_\rho^4 \big) 
\geq \frac{\ka^2 \rho^2}{4} \cd \big( \big( \Phi_{2 \rho}(t) \big)^4 - \Phi_\rho^4 \big) .
\end{equation}
A combination of the computations and estimates in \eqref{eq:squaret}, \eqref{eq:estiA} and \eqref{eq:estiB}, now yields that
\begin{equation}\label{eq:estiC}
\big( L_{\ka,\rho}(t) \big)^2 
\geq 1 - \big( \Phi_{2\rho}(t) \big)^4 + g^2 \cd \Phi_\rho^4 - C_{\ka,\rho} \cd P + \frac{\ka^2 \rho^2}{4} \cd \big( \big( \Phi_{2 \rho}(t) \big)^4 - \Phi_\rho^4 \big) .
\end{equation}
Finally, using that the pair $(\ka,\rho)$ is admissible we may choose an $\ep \in (0,1]$ such that $g^2 \geq \ep + C_{\ka,\rho}$ and $\frac{\ka^2 \rho^2}{4} \geq \ep + C_{\ka,\rho}$. The estimate in \eqref{eq:estiC} therefore entails that
\[
\big( L_{\ka,\rho}(t) \big)^2 \geq 1 - \big( \Phi_{2\rho}(t) \big)^4 + \ep \cd \big( \Phi_{2 \rho}(t) \big)^4 + C_{\ka,\rho} \cd \big( \big(\Phi_{2\rho}(t) \big)^4 - P \big)
\geq \ep > 0 
\]
and we conclude that $L_{\ka,\rho}(t)$ is invertible. 
\end{proof}

\subsection{Relationship with earlier results}
In this last subsection we explain why our results recover the results obtained by Loring and Schulz-Baldes in \cite{LoSc:SLE} in the special case where the $\si$-unital $C^*$-algebra $B$ is equal to $\B C$. We also refer the reader to the recent book \cite{DSW:SF} which, among many other things, contains a detailed argument for the main theorem in \cite{LoSc:SLE} based on spectral flow computations. This approach is in line with the papers \cite{LSS:CHS,ScSt:SLS}. 
%
%


Consider a non-trivial separable $\zz/2\zz$-graded Hilbert space $G$ and let $D : \T{Dom}(D) \to G$ be an odd selfadjoint unbounded operator with compact resolvent. For $\rho > 0$, let $P_\rho := 1_{(-\rho,\rho)}(D)$ denote the spectral projection associated to the open interval $(-\rho,\rho)$ and the abstract Dirac operator $D$. Remark that $P_\rho$ has finite dimensional image. Let $\ga : G \to G$ denote the grading operator and decompose $G$ as $G_+ \op G_-$ where $G_+$ and $G_-$ are the images of the projections $\ga_+$ and $\ga_-$, respectively. Since $D$ is odd and selfadjoint we may identify $D$ with an off-diagonal matrix
\[
D = \ma{cc}{0 & D_0^* \\ D_0 & 0} : \T{Dom}(D_0) \op \T{Dom}( D_0^*) \to G_+ \op G_- .
\]

%

\begin{thm}
  Let $H : G \to G$ be a selfadjoint invertible operator inside $\T{Lip}_D^{\T{ev}}(G)$ and put
\[
Q := 1_{[0,\infty)}(H) = \frac{1 + H |H|^{-1}}{2} .
\]
Let $\phi : \B R \to [0,1]$ be a localizing function and let $(\ka,\rho)$ be an admissible pair with respect to $(H,D, \phi )$. For
\[
P_\rho = 1_{(-\rho,\rho)}(D) \, \, \mbox{ and } \, \, \, Q D Q = \ma{cc}{0 &  (QDQ)_0^* \\ (QDQ)_0 & 0} : Q \T{Dom}(D) \to Q G
\]
it holds that $P_\rho (\ka D + \ga H ) P_\rho$ is a selfadjoint invertible operator on $P_\rho G$ and that $(Q D Q)_0$ is an unbounded Fredholm operator. Moreover, we have the identity
  \[
    \T{Index}\big( (Q D Q )_0 \big) = \frac{1}{2}\T{sign}\big(  P_\rho (\ka D + \ga H ) P_\rho\big) + \frac{1}{2}\T{sign}( \ga P_\rho ) .
  \]
\end{thm}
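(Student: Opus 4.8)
The plan is to reduce the spectral localizer to its ``spectral projection'' form via Proposition~\ref{p:specproj}, feed the outcome into Theorem~\ref{t:specindex} and Proposition~\ref{p:indexC}, and finish with a finite-dimensional trace computation. First I would take $P:=P_\rho=1_{(-\rho,\rho)}(D)$ as the projection in Proposition~\ref{p:specproj}. Since $D$ is odd, $P_\rho$ commutes with both $D$ and $\ga$; since the localizing function $\phi$ is supported in $[-1,1]$ and is constantly $1$ on $[-1/2,1/2]$, one checks directly that $P_\rho\Phi_\rho=\Phi_\rho$ and $P_\rho\Phi_{2\rho}=P_\rho$; and $P_\rho$ has finite-dimensional range because $D$ has compact resolvent. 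Proposition~\ref{p:specproj} then applies and yields that
\[
L_\ka(H,D,P_\rho)=\ga\bigl(P_\rho HP_\rho-(1-P_\rho)\bigr)+\ka\,P_\rho DP_\rho
\]
is invertible on $G$ with $L(H,D)=\bigl[\bigl(-\ga,L_\ka(H,D,P_\rho)\bigr)\bigr]$ inside $K_0^{\T{inv}}\bigl(\B L(G),\B K(G)\bigr)$.

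Next I would observe that $L_\ka(H,D,P_\rho)$ is block diagonal for $G=P_\rho G\op(1-P_\rho)G$: using $\ga P_\rho=P_\rho\ga$, it equals $-\ga$ on $(1-P_\rho)G$ and equals $P_\rho(\ka D+\ga H)P_\rho$ on $P_\rho G$. Hence invertibility of $L_\ka(H,D,P_\rho)$ forces $P_\rho(\ka D+\ga H)P_\rho$ to be a selfadjoint invertible operator on the finite-dimensional space $P_\rho G$, which is the first assertion. Separately, $(QG,QDQ)$ is a unital even unbounded Kasparov module from $\B C$ to $\B C$ by Proposition~\ref{p:unbddprodI}, so $QDQ$ is selfadjoint with compact resolvent; reading off its grading decomposition shows that $(QDQ)_0$ has finite-dimensional kernel and cokernel and closed range, i.e.\ is an unbounded Fredholm operator.

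Now I would fix an adjointable isometry $V\colon G\to\ell^2(\B N)$ and apply Theorem~\ref{t:specindex} with $X=G$, $B=\B C$ and $H_+=Q$, $H_+DH_+=QDQ$, obtaining $K_0^{\T{inv}}\bigl(\T{Ad}(V)\bigr)\bigl(L(H,D)\bigr)=\varphi^{-1}\bigl(\T{Index}(QG,F_{QDQ})\bigr)$ in $K_0^{\T{inv}}(\B L_{\B C},\B K_{\B C})$. Applying $\varphi$ and then the excision-and-trace isomorphism $K_0(\B L_{\B C},\B K_{\B C})\cong K_0(\B K_{\B C})\cong\B Z$, the right-hand side becomes the integer attached to $\T{Index}(QG,F_{QDQ})$, which by Proposition~\ref{p:indexC}, applied to the off-diagonal block of $F_{QDQ}$ (whose kernel equals $\T{Ker}(QDQ)_0$ and similarly for the adjoint), equals $\T{Index}\bigl((QDQ)_0\bigr)$. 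On the other hand, pushing $\bigl[\bigl(-\ga,L_\ka(H,D,P_\rho)\bigr)\bigr]$ through $\T{Ad}(V)$ and $\varphi$ produces the formal difference of projections $\bigl[\bigl(\ga_-,1_{(0,\infty)}(L_\ka(H,D,P_\rho))\bigr)\bigr]-\bigl[(\ga_-,\ga_-)\bigr]$ (conjugated by $V$, which plays no role since traces are computed in $G$); since $L_\ka(H,D,P_\rho)$ agrees with $-\ga$ off $P_\rho G$, the two projections differ by a finite-rank operator supported on $P_\rho G$, and the description of the excision isomorphism in Section~\ref{s:index} sends this class to $\T{TR}\bigl(1_{(0,\infty)}(P_\rho(\ka D+\ga H)P_\rho)\bigr)-\T{TR}(\ga_-P_\rho)$. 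A direct computation in the finite-dimensional space $P_\rho G$ — essentially the manipulation behind Proposition~\ref{p:signature}, together with the fact that $\ga P_\rho$ is a selfadjoint symmetry there with $\T{TR}(\ga P_\rho)=\T{sign}(\ga P_\rho)$ — rewrites this as $\tfrac12\T{sign}\bigl(P_\rho(\ka D+\ga H)P_\rho\bigr)+\tfrac12\T{sign}(\ga P_\rho)$. Equating the two evaluations of the left-hand side gives the stated identity.

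The step I expect to require the most care is the precise identification of the composite isomorphism $K_0^{\T{inv}}(\B L_{\B C},\B K_{\B C})\cong\B Z$ on classes represented by a selfadjoint invertible pair $(\hat a,\hat b)$ with $\hat a-\hat b$ of finite rank — in particular confirming that it is computed by $\T{TR}\bigl(1_{(0,\infty)}(\hat b)-1_{(0,\infty)}(\hat a)\bigr)$ with exactly the sign used above — and, relatedly, the routine but slightly fiddly verification that $(QDQ)_0$ and the relevant block of the bounded transform $F_{QDQ}$ have equal kernel and cokernel dimensions. Both facts follow from the material of Section~\ref{s:ktheory} and Section~\ref{s:index}, but they deserve to be written out.
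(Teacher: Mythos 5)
Your proposal is correct and follows essentially the same route as the paper: reduce to the spectral-projection form of the localizer via Proposition~\ref{p:specproj}, invoke Theorem~\ref{t:specindex} together with Propositions~\ref{p:unbddprodI} and~\ref{p:indexC} to relate the localizer class to $\T{Index}\bigl((QDQ)_0\bigr)$, and finish with the half-signature formula of Proposition~\ref{p:signature} on the finite-dimensional space $P_\rho G$. The paper phrases the last step via the inclusion $i_\rho\colon P_\rho G\to G$ and functoriality of $K_0^{\T{inv}}$ rather than through an explicit trace of spectral projections, but these are the same computation.
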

\begin{proof}
  Remark first of all that $(G,D)$ is an even unbounded Kasparov module over $\B C$ and that $Q$ belongs to $\T{Lip}_D^{\T{ev}}(G)$. It therefore follows from Proposition \ref{p:unbddprodI} that $(Q G, Q D Q)$ is a unital even unbounded Kasparov module from $\B C$ to $\B C$. In particular, we get that $(Q D Q)_0$ is an unbounded Fredholm operator with index equal to the index of the bounded Fredholm operator $(Q D Q)_0\big( 1 + (Q D Q)_0^* (Q D Q)_0\big)^{-1/2}$. We therefore obtain from Proposition \ref{p:indexC} that
  \begin{equation}\label{eq:indfinalI}
    \begin{split}
    \T{Index}\big( (Q D Q )_0 \big) & = \T{Index}\big( Q G, Q D Q \big(1 + (Q D Q)^2\big)^{-1/2} \big) \\
    & = \T{Index}( Q G, F_{Q D Q} ) .
    \end{split}
  \end{equation}

Next, notice that $P_\rho \Phi_\rho = \Phi_\rho$ and that $P_\rho \Phi_{2 \rho} = P_\rho$. This follows since the support of $\phi_\rho$ is contained in $[-\rho,\rho]$ and since $\phi_{2\rho}(x) = 1$ for all $x \in [-\rho,\rho]$. The conditions in Proposition \ref{p:specproj} are therefore satisfied and we get that $P_\rho (\ka D + \ga H ) P_\rho$ is invertible on $P_\rho G$ and that 
  \begin{equation}\label{eq:sigprojI}
  \big[ (-\ga,L_\ka(H,D, P_\rho) )\big] = L(H,D)
  \end{equation}
  inside $K_0^{\T{inv}}\big( \B L(G), \B K(G) \big)$. Applying the notation $i_\rho : P_\rho G \to G$ for the inclusion we moreover have that
  \begin{equation}\label{eq:sigprojII}
  K_0^{\T{inv}}\big( \T{Ad}(i_\rho) \big)\big( \big[ (-\ga P_\rho, P_\rho (\ka D + \ga H) P_\rho) \big] \big)
  = \big[ (-\ga,L_\ka(H,D, P_\rho) )\big] .
  \end{equation}
  Hence, suppressing the isomorphism $K_0^{\T{inv}}\big( \B L(G),\B K(G) \big) \cong \B Z$ we see from \eqref{eq:sigprojI} and \eqref{eq:sigprojII} in combination with Proposition \ref{p:signature} that the spectral localizer class $L(H,D)$ identifies with the integer
  \[
\frac{1}{2}\T{sign}\big(  P_\rho (\ka D + \ga H ) P_\rho\big) + \frac{1}{2}\T{sign}( \ga P_\rho ) .
  \]
  Identifying $K_0(\B C)$ with $\B Z$ we thus obtain from Theorem \ref{t:specindex} that
  \begin{equation}\label{eq:indfinalII}
\T{Index}\big( Q G, F_{Q D Q} \big) = \frac{1}{2}\T{sign}\big(  P_\rho (\ka D + \ga H ) P_\rho\big) + \frac{1}{2}\T{sign}( \ga P_\rho ) .
\end{equation}
The result of the present theorem now follows by combining \eqref{eq:indfinalI} and \eqref{eq:indfinalII}. 
\end{proof}

\bibliographystyle{amsalpha-lmp}

\providecommand{\bysame}{\leavevmode\hbox to3em{\hrulefill}\thinspace}
\providecommand{\MR}{\relax\ifhmode\unskip\space\fi MR }
\providecommand{\MRhref}[2]{%
  \href{http://www.ams.org/mathscinet-getitem?mr=#1}{#2}
}
\providecommand{\href}[2]{#2}

\end{document}